\newcommand{\Rbar}{\overline{\mathbb{R}}}
\newcommand{\1}{\mathbb{1}}
\newcommand{\N}{\mathbb{N}}
\newcommand{\R}{\mathbb{R}}
\xdef\csname cal\x\endcsname{\noexpand\ensuremath{\noexpand\mathcal{\x}}}
\renewcommand{\implies}{\Rightarrow}
\newcommand{\setto}{\rightrightarrows}
\newcommand{\equivalent}{\Leftrightarrow}
\newcommand{\eps}{\varepsilon}
\renewcommand{\phi}{\varphi}
\newcommand{\dom}{\operatorname{\mathrm{dom}}}
\renewcommand{\ker}{\operatorname{\mathrm{ker}}}
\newcommand{\rg}{\operatorname{\mathrm{ran}}}
\newcommand{\graph}{\operatorname{\mathrm{graph}}}
\newcommand{\epi}{\operatorname{\mathrm{epi}}}
\newcommand{\sign}{\operatorname{\mathrm{sign}}}
\newcommand{\Id}{\mathrm{Id}}
\newcommand{\prox}{\mathrm{prox}}
\newcommand{\proj}{\mathrm{proj}}
\DeclareMathOperator{\co}{\mathrm{co}}
\DeclareMathOperator*{\argmin}{\mathrm{arg\,min}}
\newtheorem{theorem}{Theorem}[chapter]
\newtheorem{lemma}[theorem]{Lemma}
\newtheorem{cor}[theorem]{Corollary}
\theoremstyle{definition}
\newtheorem{example}[theorem]{Example}
\newtheorem*{remark}{Remark}
\newcommand{\norm}[1]{\|#1\|}
\newcommand{\inner}[1]{\left(#1\right)}
\newcommand{\dual}[1]{\langle #1 \rangle}
\newcommand{\setof}[2]{\left\{#1:#2\right\}}
\newcommand{\weakto}{\rightharpoonup}
\title{Nonsmooth Analysis and Optimization}
\author{Christian Clason}
\date{%
    \today
    \\
    {\small\sffamily\textsc{arxiv}:\,\href{https://arxiv.org/abs/1708.04180v3}{\nolinkurl{1708.04180v3}}}
}
\begin{document}
\maketitle

\frontmatter

\tableofcontents

\mainmatter

\chapter*{Introduction}
\markboth{Introduction}{Introduction}

Optimization is concerned with finding solutions to problems of the form
\begin{equation*}
    \min_{x\in U} F(x)
\end{equation*}
for a function $F:X\to\R$ and a set $U\subset X$. Specifically, one considers the following questions:
\begin{enumerate}
    \item Does this problem admit a solution, i.e., is there an $\bar x\in U$ such that
        \begin{equation*}
            F(\bar x)\leq F(x)  \qquad\text{for all }x\in U?
        \end{equation*}
    \item Is there an intrinsic characterization of $\bar x$, i.e., one not requiring comparison with all other $x\in U$?
    \item How can this $\bar x$ be computed (efficiently)?
\end{enumerate}
For $U\subset \R^n$, these questions can be answered roughly as follows.
\begin{enumerate}
    \item If $U$ is compact and $F$ is continuous, the Weierstraß Theorem yields that $F$ attains its minimum at a point $\bar x \in U$ (as well as its maximum).
    \item If $F$ is differentiable, the \emph{Fermat principle}
        \begin{equation*}
            0 =  F'(\bar x)
        \end{equation*}
        holds.
    \item If $F$ is continuously differentiable and $U$ is open, one can apply the \emph{steepest descent} or gradient method to compute an $\bar x$ satisfying the Fermat principle:
        Choosing a starting point $x^0$ and setting
        \begin{equation*}
            x^{k+1} = x^k - t_k F'(x^k),\qquad k=0,\dots,
        \end{equation*}
        for suitable step sizes $t_k$, we have that $x^k \to \bar x$ for $k\to\infty$.

        If $F$ is even twice continuously differentiable, one can apply Newton's method to the Fermat principle: Choosing a suitable starting point $x^0$ and setting
        \begin{equation*}
            x^{k+1} = x^k - F''(x^k)^{-1} F'(x^k), \qquad k=0,\dots,
        \end{equation*}
        we have that $x^k\to \bar x$ for $k\to\infty$.
\end{enumerate}

However, there are many practically relevant functions that are \emph{not} differentiable, such as the absolute value or maximum function. The aim of nonsmooth analysis is therefore to find generalized derivative concepts that on the one hand allow the above sketched approach for such functions and on the other hand admit a sufficiently rich calculus to give \emph{explicit} derivatives for a sufficiently large class of functions. Here we concentrate on the two classes of
\begin{enumerate}[i)]
    \item convex functions, 
    \item locally Lipschitz continuous functions,
\end{enumerate}
which together cover a wide spectrum of applications. In particular, the first class will lead us to generalized gradient methods, while the second class are the basis for generalized Newton methods. To fix ideas, we aim at treating problems of the form
\begin{equation*}
    \min_{x\in C} \frac1p\norm{F(x)-z}_Y^p + \frac{\alpha}{q}\norm{x}_X^q
\end{equation*}
for a convex set $C\subset X$, a (possibly nonlinear but differentiable) operator $F:X\to Y$, $\alpha\geq 0$ and $p,q\in[1,\infty)$ (in particular, $p=1$ and/or $q=1$). Such problems are ubiquitous in inverse problems, imaging, and optimal control of differential equations.
Hence, we consider optimization in \emph{infinite-dimensional} function spaces; i.e., we are looking for functions as minimizers. The main benefit (beyond the frequently cleaner notation) is that the developed algorithms become \emph{discretization independent}: they can be applied to any (reasonable) finite-dimensional approximation, and the details -- in particular, the fineness -- of the approximation do not influence the convergence behavior of the algorithm.

Since we deal with infinite-dimensional spaces, some knowledge of functional analysis is assumed, but the necessary background will be summarized in \cref{chap:functan}. The results on pointwise operators on Lebesgue spaces also require elementary (Lebesgue) measure and integration theory. Basic familiarity with classical nonlinear optimization is helpful but not necessary.

\bigskip

These notes are based on graduate lectures given 2014 (in slightly different form), 2016--2020 at the University of Duisburg-Essen, and 2021 at the University of Graz. As such, no claim is made of originality (beyond possibly the selection -- and, more importantly, omission -- of material). Rather, like a magpie, I have collected the shiniest results and proofs I could find, mainly from \cite{Brokate,Schirotzek:2007,Attouch,Bauschke,Clarke:2013,Ulbrich:2002a,Schiela:2008a}. All mistakes, of course, are entirely my own.

\part{Background}

\chapter{Functional analysis}\label{chap:functan}

In this chapter we collect the basic concepts and results (and, more importantly, fix notations) from linear functional analysis that will be used in the following. For details and proofs, the reader is referred to the standard literature, e.g., \cite{Alt:2016,Brezis:2010a} and to \cite{Clason}.

\section{Normed vector spaces}

In the following, $X$ will denote a vector space over the field $\mathbb{K}$, where we restrict ourselves for the sake of simplicity to the case $\mathbb{K}=\R$.
A mapping $\norm{\cdot}:X\to \R^+:=[0,\infty)$ is called a \emph{norm} (on $X$), if for all $x\in X$ there holds
\begin{enumerate}[(i)]
    \item $\norm{\lambda x} = |\lambda| \norm{x}$ for all $\lambda\in\mathbb{K}$,
    \item $\norm{x+y} \leq \norm{x} + \norm{y}$ for all $y\in X$,
    \item $\norm{x} = 0$ if and only if $x = 0\in X$.
\end{enumerate}
\begin{example}\label{ex:functan:norm}
    \begin{enumerate}[(i)]
        \item The following mappings define norms on $X = \R^N$:
            \begin{equation*}
                \begin{aligned}
                    \norm{x}_p &= \left(\sum_{i=1}^N |x_i|^p\right)^{1/p}\qquad1\leq p<\infty,\\
                    \norm{x}_\infty &= \max_{i=1,\dots,N} |x_i|.
                \end{aligned}
            \end{equation*}
        \item The following mappings define norms on  $X = \ell^p$ (the space of real-valued sequences for which these terms are finite):
            \begin{equation*}
                \begin{aligned}
                    \norm{x}_p &= \left(\sum_{i=1}^\infty |x_i|^p\right)^{1/p}\qquad 1\leq p<\infty,\\
                    \norm{x}_\infty &= \sup_{i=1,\dots,\infty} |x_i|.
                \end{aligned}
            \end{equation*}
        \item The following mappings define norms on $X = L^p(\Omega)$ (the space of real-valued measurable functions on the domain $\Omega\subset \R^n$ for which these terms are finite):
            \begin{equation*}
                \begin{aligned}
                    \norm{u}_{L^p} &= \left(\int_\Omega |u(x)|^p\right)^{1/p}\qquad 1\leq p<\infty,\\
                    \norm{u}_{L^\infty} &= \mathop\mathrm{ess\,\sup}_{x\in \Omega} |u(x)|.
                \end{aligned}
            \end{equation*}
        \item The following mapping defines a norm on $X = C(\overline\Omega)$ (the space of continuous functions on $\overline\Omega$):
            \begin{equation*}
                \norm{u}_C = \sup_{x\in \overline\Omega} |u(x)|.
            \end{equation*}
            An analogous norm is defined on $X=C_0(\Omega)$ (the space of continuous functions on $\Omega$ with compact support), if the supremum is taken only over the space of continuous functions on $\Omega$ with compact support), if the supremum is taken only over $x\in\Omega$.
    \end{enumerate}
\end{example}
If $\norm{\cdot}$ is a norm on $X$, the tuple $(X,\norm{\cdot})$ is called a \emph{normed vector space}, and one frequently denotes this by writing $\norm{\cdot}_X$. If the norm is canonical (as in \cref{ex:functan:norm}\,(ii)--(iv)), it is often omitted and one speaks simply of ``the normed vector space $X$''.

Two norms $\norm{\cdot}_1$, $\norm{\cdot}_2$ are called \emph{equivalent} on $X$, if there are constants $c_1,c_2 >0$ such that
\begin{equation*}
    c_1 \norm{x}_2 \leq \norm{x}_1 \leq c_2 \norm{x}_2 \qquad\text{for all } x\in X.
\end{equation*}
If $X$ is finite-dimensional, all norms on $X$ are equivalent. However, the corresponding constants $c_1$ and $c_2$ may depend on the dimension $N$ of $X$; avoiding such dimension-dependent constants is one of the main reasons to consider optimization in infinite-dimensional spaces.

If $(X,\norm{\cdot}_X)$ and $(Y,\norm{\cdot}_Y)$ are normed vector spaces with $X\subset Y$, we call $X$ \emph{continuously embedded} in $Y$, denoted by $X\hookrightarrow Y$, if there exists a $C>0$ with
\begin{equation*}
    \norm{x}_Y \leq C\norm{x}_X \qquad\text{for all } x\in X.
\end{equation*}

\bigskip

We now consider mappings between normed vector spaces. In the following, let $(X,\norm{\cdot}_X)$ and $(Y,\norm{\cdot}_Y)$ be normed vector spaces, $U\subset X$, and $F: U\to Y$ be a mapping. We denote by
\begin{itemize}
    \item $\dom F  := U$ the \emph{domain of definition} of $F$;
    \item $\ker F  := \setof{x\in U}{F(x) = 0}$ \emph{kernel} or \emph{null space} of $F$;
    \item $\rg F  := \setof{F(x)\in Y}{x\in U}$ the \emph{range} of $F$;
    \item $\graph F  := \setof{(x,y)\in X\times Y}{y=F(x)}$ the \emph{graph} of $F$.
\end{itemize}
We call $F:U\to Y$
\begin{itemize}
    \item \emph{continuous} in $x\in U$, if for all $\eps>0$ there exists a $\delta >0$ with
        \begin{equation*}
            \norm{F(x)-F(z)}_Y \leq \eps\qquad \text{for all } z\in U \text{ with }  \norm{x-z}_X \leq \delta;
        \end{equation*}
    \item \emph{Lipschitz continuous}, if there exists an $L>0$ (called \emph{Lipschitz constant}) with
        \begin{equation*}
            \norm{F(x_1)-F(x_2)}_Y \leq L \norm{x_1-x_2}_X \qquad\text{for all } x_1,x_2 \in U.
        \end{equation*}
    \item \emph{locally Lipschitz continuous} in $x\in U$, if there exists a $\delta>0$ and a $L=L(x,\delta)>0$ with
        \begin{equation*}
            \norm{F(x)-F(z)}_Y \leq L \norm{x-z}_X \qquad\text{for all } z\in U \text{ with }  \norm{x-z}_X \leq \delta.
        \end{equation*}
\end{itemize}

If $T:X\to Y$ is linear, continuity is equivalent to the existence of a constant $C>0$ with
\begin{equation*}
    \norm{Tx}_Y \leq C\norm{x}_X \qquad\text{for all }x\in X.
\end{equation*}
For this reason, continuous linear mappings are called \emph{bounded}; one speaks of a bounded linear \emph{operator}.
The space $L(X,Y)$ of bounded linear operators is itself a normed vector space if endowed with the \emph{operator norm}
\begin{equation*}
    \norm{T}_{L(X,Y)}     = \sup_{x\in X\setminus\{0\}}\frac{\norm{Tx}_Y}{\norm{x}_X}
    = \sup_{\norm{x}_X= 1} \norm{Tx}_Y =   \sup_{\norm{x}_X\leq 1} \norm{Tx}_Y
\end{equation*}
(which is equal to the smallest possible constant $C$ in the definition of continuity). If $T\in L(X,Y)$ is bijective, the inverse $T^{-1}:Y\to X$ is continuous if and only if there exists a $c>0$ with
\begin{equation*}
    c\norm{x}_X \leq \norm{Tx}_Y \qquad\text{for all }x\in X.
\end{equation*}
In this case, $\norm{T^{-1}}_{L(Y,X)} = c^{-1}$ for the largest possible choice of $c$.

\section{Strong and weak convergence}

A norm directly induces a notion of convergence, the so-called \emph{strong convergence}:
A sequence $\{x_n\}_{n\in\N}\subset X$ converges (strongly in $X$) to a $x\in X$, denoted by $x_n\to x$, if
\begin{equation*}
    \lim_{n\to\infty} \norm{x_n-x}_X = 0.
\end{equation*}

A subset $U\subset X$ is called
\begin{itemize}
    \item \emph{closed}, if for every convergent sequence $\{x_n\}_{n\in\N}\subset U$ the limit $x\in U$ as well;
    \item \emph{compact}, if every sequence $\{x_n\}_{n\in\N}\subset U$ contains a convergent subsequence $\{x_{n_k}\}_{k\in\N}$ with limit $x\in U$.
\end{itemize}
A mapping $F:X\to Y$ is continuous if and only if $x_n\to x$ implies $F(x_n)\to F(x)$, and \emph{closed}, if $x_n\to x$ and $F(x_n)\to y$ imply $F(x) = y$ (i.e., $\graph F\subset X\times Y$ is a closed set).

Further we define for later use for $x\in X$ and $r>0$
\begin{itemize}
    \item the \emph{open ball} $O_r(x) := \setof{z\in X}{\norm{x-z}_X< r}$ and
    \item the \emph{closed ball} $K_r(x) := \setof{z\in X}{\norm{x-z}_X\leq r}$.
\end{itemize}
The closed ball around $0\in X$ with radius $1$ is also referred to a the \emph{unit ball} $B_X$.
A set $U\subset X$ is called
\begin{itemize}
    \item \emph{open}, if for all $x\in U$ there exists an $r>0$ with $O_r(x)\subset U$ (i.e., all $x\in U$ are \emph{interior points} of $U$, which together form the \emph{interior} $U^o$);
    \item \emph{bounded}, if it is contained in $K_r(0)$ for a $r>0$;
    \item \emph{convex}, if for any $x,y\in U$ and $\lambda\in[0,1]$ also $\lambda x + (1-\lambda)y\in U$.
\end{itemize}
In normed vector spaces it always holds that the complement of an open set is closed and vice versa (i.e., the closed sets in the sense of topology are exactly the (sequentially) closed set as defined above). The definition of a norm directly implies that both open and closed balls are convex.

A normed vector space $X$ is called \emph{complete} if every Cauchy sequence in $X$ is convergent; in this case, $X$ is called a \emph{Banach space}.
All spaces in \cref{ex:functan:norm} are Banach spaces. If $Y$ is a Banach space, so is $L(X,Y)$ if endowed with the operator norm.
Convex subsets of Banach spaces have the following useful property which derives from the Baire Theorem.
\begin{lemma}\label{lem:functan:coreint}
    Let $X$ be a Banach space and $U\subset X$ be closed and convex. Then
    \begin{equation*}
        U^o = \setof{x\in U}{\text{for all } h\in X \text{ there is a } \delta>0      \text{ with } x+ t h \in U \text{ for all } t\in [0,\delta]}.
    \end{equation*}
\end{lemma}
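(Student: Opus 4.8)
The plan is to establish the two inclusions separately; the inclusion ``$\subseteq$'' is elementary, while ``$\supseteq$'' is where completeness (via the Baire theorem) enters. For ``$\subseteq$'': if $x\in U^o$, choose $r>0$ with $O_r(x)\subseteq U$; then for an arbitrary $h\in X$ the number $\delta:=r/(1+\norm{h}_X)>0$ does the job, since $\norm{th}_X<r$ and hence $x+th\in O_r(x)\subseteq U$ for every $t\in[0,\delta]$.

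For ``$\supseteq$'', let $x$ lie in the right-hand set. After replacing $U$ by $U-x$ (still closed and convex), under which ``$x\in U^o$'' becomes ``$0\in(U-x)^o$'', we may assume $x=0\in U$. I would then \emph{symmetrize}: put $C:=U\cap(-U)$, a closed convex set with $C=-C$ and $0\in C$. The point to verify is that $0$ still lies in the ``core'' of $C$: given $h\in X$, applying the hypothesis to $h$ and to $-h$ produces $\delta_1,\delta_2>0$ with $th\in U$ for $t\in[0,\delta_1]$ and $th\in -U$ for $t\in[0,\delta_2]$, hence $th\in C$ for all $t\in[0,\delta]$ with $\delta:=\min\{\delta_1,\delta_2\}$. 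Since $0\in C$ and $C$ is convex we have $\lambda C\subseteq C$ for $\lambda\in[0,1]$, so $h\in\tfrac1\delta C\subseteq nC$ whenever $n\geq 1/\delta$; this shows $X=\bigcup_{n\in\N}nC$.

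Now completeness of $X$ comes in: each $nC$ is closed (scaling by $n\neq 0$ is a homeomorphism), so by the Baire theorem some $n_0C$, and hence $C$ itself, has nonempty interior, say $O_r(y)\subseteq C$ for suitable $y\in X$ and $r>0$. Using $C=-C$ we get $O_r(-y)\subseteq C$ as well, and then convexity gives, for every $z$ with $\norm{z}_X<r$, that $z=\tfrac12(y+z)+\tfrac12(-y+z)\in C\subseteq U$; thus $O_r(0)\subseteq U$, i.e.\ $0\in U^o$. The step I expect to be the crux is precisely the passage from $U$ to the symmetric set $C$: without $C=-C$ there is no reason the dilates of the set should exhaust $X$, yet that exhaustion is exactly what makes Baire applicable; once an interior point of $C$ is in hand, sliding it to the origin via symmetry and convexity is routine.
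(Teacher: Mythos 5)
Your proof is correct: the elementary inclusion, the reduction to $x=0$, the symmetrization $C=U\cap(-U)$, the exhaustion $X=\bigcup_{n}nC$, and the Baire category argument followed by the convexity trick to slide the interior point to the origin are all sound. The paper states this lemma without proof, noting only that it \enquote{derives from the Baire Theorem}, and your argument is precisely the standard Baire-category proof it alludes to.
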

The set on the right-hand side is called \emph{algebraic interior} or \emph{core}. For this reason, \cref{lem:functan:coreint} is sometimes referred to as the \enquote{core-int Lemma}. Note that the inclusion \enquote{$\subset$} always holds in normed vector spaces due to the definition of interior points via open balls.

Of particular importance to us is the special case $L(X,Y)$ for $Y=\R$, the space of \emph{bounded linear functionals} on $X$. In this case, $X^*:=L(X,\R)$ is called the \emph{dual space} (or just \emph{dual} of $X$. For $x^*\in X^*$ and $x\in X$, we set
\begin{equation*}
    \dual{x^*,x}_X := x^*(x) \in\R.
\end{equation*}
This \emph{duality pairing} indicates that we can also interpret it as $x$ acting on $x^*$, which will become important later. The definition of the operator norm immediately implies that
\begin{equation}\label{eq:functan:cs_banach}
    \dual{x^*,x}_X \leq \norm{x^*}_{X^*}\norm{x}_X\qquad\text{for all }x\in X,x^*\in X^*.
\end{equation}

In many cases, the dual of a Banach space can be identified with another known Banach space.
\begin{example}\label{ex:functan:dual}
    \begin{enumerate}[(i)]
        \item $(\R^N,\norm{\cdot}_p)^* \cong (\R^N,\norm{\cdot}_q)$ with $p^{-1}+q^{-1} = 1$, where we set $0^{-1}=\infty$ and $\infty^{-1} = 0$. The duality pairing is given by
            \begin{equation*}
                \dual{x^*,x}_{p} = \sum_{i=1}^N x^*_i x_i.
            \end{equation*}
        \item $(\ell^p)^* \cong (\ell^q)$ for $1< p < \infty$. The duality pairing is given by
            \begin{equation*}
                \dual{x^*,x}_{p} = \sum_{i=1}^\infty x^*_i x_i.
            \end{equation*}
            Furthermore, $(\ell^1)^*=\ell^\infty$, but $(\ell^\infty)^*$ is not a sequence space.
        \item Analogously, $L^p(\Omega)^* \cong L^q(\Omega)$ for $1< p < \infty$. The duality pairing is given by
            \begin{equation*}
                \dual{u^*,u}_{p} = \int_\Omega u^*(x)u(x)\,dx.
            \end{equation*}
            Furthermore, $L^1(\Omega)^*\cong L^\infty(\Omega)$, but $L^\infty(\Omega)^*$ is not a function space.
        \item $C_0(\Omega)^*\cong \mathcal{M}(\Omega)$, the space of \emph{Radon measure}; it contains among others the Lebesgue measure as well as Dirac measures $\delta_x$ for $x\in\Omega$, defined via $\delta_x(u) = u(x)$ for $u\in C_0(\Omega)$. The duality pairing is given by
            \begin{equation*}
                \dual{u^*,u}_{C} = \int_\Omega u(x)\,du^*.
            \end{equation*}
    \end{enumerate}
\end{example}

A central result on dual spaces is the Hahn--Banach Theorem, which comes in both an algebraic and a geometric version.
\begin{theorem}[Hahn--Banach, algebraic]\label{thm:hb_extension}
    Let $X$ be a normed vector space. For any $x\in X$ there exists a $x^*\in X^*$ with
    \begin{equation*}
        \norm{x^*}_{X^*} = 1 \qquad\text{and}\qquad \dual{x^*,x}_X = \norm{x}_X.
    \end{equation*}
\end{theorem}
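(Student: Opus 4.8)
The plan is to reduce the statement to the one-dimensional case and then invoke the extension form of the Hahn--Banach theorem. If $x=0$, any functional of norm one — which exists as long as $X\neq\{0\}$ — does the job, since both sides of the second equality vanish; so assume $x\neq 0$. On the one-dimensional subspace $Y:=\setof{tx}{t\in\R}$ define $f:Y\to\R$ by $f(tx):=t\norm{x}_X$. This $f$ is linear, satisfies $f(x)=\norm{x}_X$, and obeys $f(tx)\leq|t|\,\norm{x}_X=\norm{tx}_X$; in particular $f$ is bounded on $Y$ with $\norm{f}_{Y^*}=1$.

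Next I would record that $p(\cdot):=\norm{\cdot}_X$ is a sublinear functional on $X$: it is positively homogeneous by axiom (i) of a norm and subadditive by the triangle inequality (ii). Since $f(y)\leq p(y)$ for all $y\in Y$ by construction, the Hahn--Banach extension theorem produces a linear functional $x^*:X\to\R$ with $x^*|_Y=f$ and $x^*(z)\leq p(z)=\norm{z}_X$ for all $z\in X$. Applying this bound to $-z$ gives $-x^*(z)=x^*(-z)\leq\norm{z}_X$, hence $|x^*(z)|\leq\norm{z}_X$, so $x^*\in X^*$ with $\norm{x^*}_{X^*}\leq 1$; and since $x^*(x/\norm{x}_X)=1$ we get $\norm{x^*}_{X^*}=1$. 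Finally $\dual{x^*,x}_X=f(x)=\norm{x}_X$, which is the claim.

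The one piece of genuine content is the extension theorem invoked above, so if it is not taken as given I would prove it next. The engine is the \emph{one-step extension lemma}: if $Y$ is a proper subspace of $X$, $f:Y\to\R$ is linear with $f\leq p$ on $Y$, and $z\in X\setminus Y$, then $f$ extends to $Y\oplus\R z$ while staying below $p$. This amounts to choosing $\alpha:=x^*(z)$ so that $f(y)+t\alpha\leq p(y+tz)$ for all $y\in Y$ and $t\in\R$; splitting into $t>0$ and $t<0$ and using homogeneity, this is equivalent to
\[
    \sup_{y'\in Y}\big(f(y')-p(y'-z)\big)\ \leq\ \inf_{y\in Y}\big(p(y+z)-f(y)\big),
\]
and an $\alpha$ lying between the two sides exists because for any $y,y'\in Y$ one has $f(y)+f(y')=f(y+y')\leq p(y+y')\leq p(y+z)+p(y'-z)$ by linearity of $f$ and subadditivity of $p$. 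With the one-step lemma in hand, a Zorn's-lemma argument on the partially ordered set of $p$-dominated linear extensions of $f$ (ordered by graph inclusion) yields a maximal element, which by the one-step lemma must already be defined on all of $X$.

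The main obstacle: there is no analytic difficulty here — the sole nontrivial ingredient is the transfinite/Zorn step in the extension theorem (equivalently, an appeal to the axiom of choice), together with the elementary but slightly fiddly inequality manipulation in the one-step extension lemma. Everything else is bookkeeping with the three norm axioms and the definition of the operator norm.
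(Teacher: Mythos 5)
Your proof is correct; note that the paper states this theorem as background material without proof (deferring to the standard literature), so there is nothing internal to compare it against. Your argument is the classical one: define the norming functional $f(tx)=t\norm{x}_X$ on the line $\R x$, observe it is dominated by the sublinear functional $p=\norm{\cdot}_X$, and extend by the Hahn--Banach extension theorem, whose one-step lemma and Zorn argument you sketch accurately. The only caveat, which you already flag implicitly, is the degenerate case $X=\{0\}$, where the statement fails as literally written since the only functional is $0$; for $X\neq\{0\}$ and $x=0$ your reduction to the case $x\neq 0$ (pick any nonzero vector and norm it) closes that gap.
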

\begin{theorem}[Hahn--Banach, geometric]\label{thm:hb_separation}
    Let $X$ be a normed vector space and $A,B\subset X$ be convex, nonempty, and disjoint.
    \begin{enumerate}[(i)]
        \item If $A$ is open, there exists an $x^*\in X^*$ and a $\lambda\in \R$ with
            \begin{equation*}
                \dual{x^*,x_1}_X < \lambda \leq \dual{x^*,x_2}_X \qquad\text{for all }x_1\in A, x_2\in B.
            \end{equation*}
        \item If $A$ is closed and $B$ is compact, there exists an $x^*\in X^*$ and a $\lambda\in \R$ with
            \begin{equation*}
                \dual{x^*,x_1}_X \leq \lambda < \dual{x^*,x_2}_X \qquad\text{for all }x_1\in A, x_2\in B.
            \end{equation*}
    \end{enumerate}
\end{theorem}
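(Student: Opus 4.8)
The plan is to prove (i) by the classical Minkowski‑gauge argument and then obtain (ii) from (i) by thickening the closed set.

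For (i), I would fix $a_0\in A$, $b_0\in B$, put $w:=b_0-a_0$ and consider
\[
  C:=\setof{a-b+w}{a\in A,\ b\in B}=A-B+w.
\]
First I would check that $C$ is convex (a sum of convex sets up to translation), open (a union over $b\in B$ of the open sets $A-b+w$), contains $0$ (take $a=a_0$, $b=b_0$), and does \emph{not} contain $w$ (otherwise $a=b$ for some $a\in A$, $b\in B$, contradicting $A\cap B=\emptyset$). Then I introduce the Minkowski gauge $p(x):=\inf\setof{t>0}{t^{-1}x\in C}$ and verify the standard properties: since $C$ is open with $0\in C$ it is absorbing, so $p$ is finite, positively homogeneous and subadditive, satisfies $p(x)\le\norm{x}_X/r$ whenever $O_r(0)\subset C$, and $C=\setof{x}{p(x)<1}$; in particular $p(w)\ge1$.

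Next I would define $\ell_0$ on the line $\R w$ by $\ell_0(tw):=t$; from $\ell_0(tw)=t\le t\,p(w)=p(tw)$ for $t\ge0$ and $\ell_0(tw)=t<0\le p(tw)$ for $t<0$ we get $\ell_0\le p$ on $\R w$. By the analytic Hahn--Banach theorem in the form allowing a sublinear majorant (a mild generalization of \cref{thm:hb_extension}), $\ell_0$ extends to a linear $\ell:X\to\R$ with $\ell\le p$ on $X$; then $-\norm{x}_X/r\le-p(-x)\le\ell(x)\le p(x)\le\norm{x}_X/r$ shows $\ell\in X^*$. Setting $x^*:=\ell$, which is nonzero since $\dual{x^*,w}=1$, and using that $a-b+w\in C$ for all $a\in A$, $b\in B$, we obtain $\dual{x^*,a-b+w}\le p(a-b+w)<1=\dual{x^*,w}$, i.e.\ $\dual{x^*,a}<\dual{x^*,b}$. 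Hence $\sup_{a\in A}\dual{x^*,a}\le\inf_{b\in B}\dual{x^*,b}=:\lambda$ (both finite), and since $A$ is open and $x^*\ne0$ its image $x^*(A)$ is an open subset of $\R$ bounded above by $\lambda$, so $\lambda\notin x^*(A)$; thus $\dual{x^*,x_1}<\lambda\le\dual{x^*,x_2}$ for all $x_1\in A$, $x_2\in B$.

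For (ii), I would first show $d:=\inf\setof{\norm{a-b}_X}{a\in A,\ b\in B}>0$: if not, choose $a_n\in A$, $b_n\in B$ with $\norm{a_n-b_n}_X\to0$; by compactness of $B$ a subsequence $b_{n_k}\to b\in B$, whence $a_{n_k}\to b$ and, $A$ being closed, $b\in A$ --- contradicting disjointness. Now $A':=A+O_{d/2}(0)$ is open, convex, nonempty and still disjoint from $B$ (a common point would give $a\in A$, $b\in B$ with $\norm{a-b}_X<d/2<d$). Applying (i) to $A'$ and $B$ yields a nonzero $x^*\in X^*$ and $\mu\in\R$ with $\dual{x^*,a+h}<\mu\le\dual{x^*,b}$ for all $a\in A$, $\norm{h}_X<d/2$, $b\in B$; taking the supremum over $h$ gives $\dual{x^*,a}+\tfrac d2\norm{x^*}_{X^*}\le\mu$ for every $a\in A$. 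Since $d>0$ and $\norm{x^*}_{X^*}>0$, the choice $\lambda:=\mu-\tfrac d2\norm{x^*}_{X^*}$ gives $\dual{x^*,x_1}\le\lambda<\mu\le\dual{x^*,x_2}$ for all $x_1\in A$, $x_2\in B$, which is the claim. The main obstacle is the core of part (i): building the Minkowski gauge and checking it is finite and sublinear, and --- since the excerpt only records the norming‑functional form in \cref{thm:hb_extension} --- supplying or invoking the general analytic Hahn--Banach extension theorem with a sublinear dominating functional; the thickening step in (ii), the compactness/closedness argument for $d>0$, and the bookkeeping that extracts $\lambda$ are comparatively routine.
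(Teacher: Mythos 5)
Your proposal is correct. Note, however, that the paper does not prove this theorem at all: it appears in the background chapter on functional analysis, where the reader is explicitly referred to the standard literature (e.g., Brezis) for proofs. Your argument is precisely the standard one found there — the Minkowski gauge of the open convex set $A-B+w$, the analytic Hahn--Banach extension with a sublinear majorant (which, as you correctly observe, is a stronger form than the norming-functional version recorded in \cref{thm:hb_extension} and must be supplied separately), the openness of $x^*(A)$ to upgrade the inequality to a strict one, and the thickening $A+O_{d/2}(0)$ combined with the compactness argument for $d>0$ to deduce (ii) from (i). All the individual verifications you list (convexity, openness, $0\in C$, $w\notin C$, finiteness and sublinearity of the gauge, disjointness of $A'$ and $B$) go through as stated, so the proof is complete modulo the quoted extension theorem.
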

Particularly the geometric version -- also referred to as \emph{separation theorems} -- is of crucial importance in convex analysis. We will also require their following variant, which is known as \emph{Eidelheit's Theorem}.
\begin{cor}\label{lem:convex:eidelheit}
    Let $X$ be a normed vector space and $A,B\subset X$ be convex and nonempty. If the interior $A^o$ of $A$ is nonempty and disjoint with $B$, there exists an $x^*\in X^*\setminus\{0\}$ and a $\lambda\in \R$ with
    \begin{equation*}
        \dual{x^*,x_1}_X \leq \lambda \leq \dual{x^*,x_2}_X \qquad\text{for all }x_1\in A, x_2\in B.
    \end{equation*}
\end{cor}
\begin{proof}
    \Cref{thm:hb_separation}\,(i) yields the existence of $x^*$ and $\lambda$ satisfying the claim for all $x \in A^o$ (even with strict inequality, which also implies $x^*\neq 0$).
    It thus remains to show that $\dual{x^*,x}_X\leq \lambda$ also for $x\in A\setminus A^o$.
    Since $A^o$ is nonempty, there exists an $x_0\in A^o$, i.e., there is an $r>0$ with $O_r(x_0)\subset A$. The convexity of $A$ then implies that $t\tilde x + (1-t)x\in A$ for all $\tilde x \in O_r(x_0)$ and $t\in [0,1]$. Hence,
    \begin{equation*}
        tO_r(x_0) + (1-t) x = O_{tr}(tx_0 + (1-t) x) \subset A,
    \end{equation*}
    and in particular $x(t) := t x_0 + (1-t) x \in A^o$ for all $t\in (0,1)$.

    We can thus find a sequence $\{x_n\}_{n\in\N}\subset A^o$ (e.g., $x_n = x(n^{-1})$) with $x_n\to x$. Due to the continuity of $x^*\in X=L(X,\R)$ we can thus pass to the limit $n\to\infty$ and obtain
    \begin{equation*}
        \begin{split}
            \dual{x^*,x}_X = \lim_{n\to \infty} \dual{x^*,x_n}_X \leq \lambda.
            \qedhere
        \end{split}
    \end{equation*}
\end{proof}

In a certain way, a normed vector space is thus characterized by its dual.
A direct consequence of \cref{thm:hb_extension} is that the norm on a Banach space can be expressed in the manner of an operator norm.
\begin{cor}\label{cor:functan:norm_dual}
    Let $X$ be a Banach space. Then for all $x\in X$,
    \begin{equation*}
        \norm{x}_X = \sup_{\norm{x^*}_{X^*}\leq 1} |\dual{x^*,x}_X|,
    \end{equation*}
    and the supremum is attained.
\end{cor}
A vector $x\in X$ can therefore be considered as a linear and, by \eqref{eq:functan:cs_banach}, bounded functional on $X^*$, i.e., as an element of the \emph{bidual} $X^{**}:=(X^*)^*$.
The embedding $X\subset X^{**}$ is realized by the \emph{canonical injection}
\begin{equation*}
    J:X\to X^{**},\qquad \dual{Jx,x^*}_{X^{*}} := \dual{x^*,x}_X\quad\text{for all }x^*\in X^*.
\end{equation*}
Clearly, $J$ is linear; \cref{thm:hb_extension} furthermore implies that $\norm{Jx}_{X^{**}} = \norm{x}_X$.
If the canonical injection is surjective and we can thus identify $X^{**}$ with $X$, the space $X$ is called \emph{reflexive}. All finite-dimensional spaces are reflexive, as are \cref{ex:functan:norm}\,(ii) and (iii) for $1<p<\infty$ but not $\ell^1,\ell^\infty$ as well as $L^1(\Omega),L^\infty(\Omega)$ and $C(\overline\Omega)$.

\bigskip

The duality pairing induces further notions of convergence: the \emph{weak convergence} on $X$ as well as the \emph{weak-$*$ convergence} on $X^*$.
\begin{enumerate}[(i)]
    \item A sequence $\{x_n\}_{n\in\N}\subset X$ converges weakly (in $X$) to $x\in X$, denoted by  $x_n\weakto x$, if
        \begin{equation*}
            \dual{x^*,x_n}_X\to  \dual{x^*,x}_X \qquad\text{for all }x^*\in X^*.
        \end{equation*}
    \item A sequence $\{x^*_n\}_{n\in\N}\subset X^*$ converges weakly-$*$ (in $X^*$) to $x^*\in X^*$, denoted by $x^*_n\weakto^* x^*$, if
        \begin{equation*}
            \dual{x_n^*,x}_X\to  \dual{x^*,x}_X \qquad\text{for all }x\in X.
        \end{equation*}
\end{enumerate}
Weak convergence generalizes the concept of componentwise convergence in $\R^n$, which -- as can be seen from the proof of the Heine--Borel Theorem -- is the appropriate concept in the context of compactness.
Strong convergence implies weak convergence by continuity of the duality pairing; in the same way, convergence with respect to the operator norm (also called \emph{pointwise convergence}) implies weak-$*$ convergence. If $X$ is reflexive, weak and weak-$*$ convergence (both in $X=X^{**}$!) coincide. In finite-dimensional spaces, all convergence notions coincide.

If $x_n\to x$ and $x_n^*\weakto^* x^*$ or $x_n\weakto x$ and $x_n^*\to x^*$, then  $\dual{x_n^*,x_n}_X\to \dual{x^*,x}_X$. However, the duality pairing of weak(-$*$) convergent sequences does not converge in general.

As for strong convergence, one defines weak(-$*$) continuity and closedness of mappings as well as weak(-$*$) closedness and compactness of sets. The last property is of fundamental importance in optimization; its characterization is therefore a central result of this chapter.
\begin{theorem}[Eberlein--\u{S}mulyan]\label{thm:ebsmul}
    If $X$ is a normed vector space, $B_X$ is weakly compact if and only if $X$ is reflexive.
\end{theorem}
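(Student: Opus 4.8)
The plan is to prove the two implications separately (here \enquote{weakly compact} is understood sequentially, as everywhere in this chapter). \emph{If $X$ is reflexive, then $B_X$ is weakly compact.} Given $\{x_n\}_{n\in\N}\subset B_X$, put $Y:=\overline{\mathrm{span}}\{x_n:n\in\N\}$, a separable closed subspace of $X$. It is itself reflexive: an $\eta\in Y^{**}$ induces $\tilde\eta\in X^{**}$ by $\tilde\eta(x^*):=\eta(x^*|_Y)$, which equals $Jx$ for some $x\in X$ by reflexivity of $X$, and \cref{thm:hb_separation} forces $x\in Y$ (otherwise a separating functional $x^*\in X^*$ would have $x^*|_Y=0$ yet $\dual{x^*,x}_X\neq 0$, contradicting $\dual{x^*,x}_X=\dual{\tilde\eta,x^*}_{X^*}=\eta(x^*|_Y)=0$). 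Moreover $(Y^*)^*\cong Y$ is separable, hence so is $Y^*$ (again by Hahn--Banach). Fixing a dense sequence $\{y^*_k\}\subset Y^*$, a diagonal extraction gives a subsequence along which $\dual{y^*_k,x_{n_j}}_Y$ converges for every $k$; the bound $\norm{x_{n_j}}_Y\le 1$ then upgrades this, by a $3\eps$-argument, to convergence of $\dual{y^*,x_{n_j}}_Y$ for every $y^*\in Y^*$. The limit defines an element of $Y^{**}$, hence equals $J_Yy$ for some $y\in Y$ with $\norm{y}_Y\le 1$; thus $x_{n_j}\weakto y$ in $Y$, and since every $x^*\in X^*$ restricts to an element of $Y^*$, also $x_{n_j}\weakto y$ in $X$, with $y\in B_X$.

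\emph{If $B_X$ is weakly compact, then $X$ is reflexive.} It suffices to show that the canonical injection $J:X\to X^{**}$ is onto, so fix $z^{**}\in B_{X^{**}}$. By Goldstine's theorem (a consequence of \cref{thm:hb_separation}), every weak-$*$ neighbourhood of $z^{**}$ contains a point of $J(B_X)$. I would build, by an interleaved induction, sequences $\{a_k\}_{k\in\N}\subset B_X$ and $\{x^*_i\}_{i\in\N}\subset B_{X^*}$ as follows: once $a_1,\dots,a_k$ are chosen, append finitely many new functionals $x^*_i$ so that all those selected so far form a $(1-1/k)$-norming set for the finite-dimensional subspace $V_k:=\mathrm{span}\{z^{**}-Ja_1,\dots,z^{**}-Ja_k\}\subset X^{**}$ --- possible because the unit sphere of $V_k$ is compact and $\norm{\zeta}_{X^{**}}=\sup_{x^*\in B_{X^*}}|\dual{\zeta,x^*}_{X^*}|$ by definition of the bidual norm --- and then, using Goldstine, pick $a_{k+1}\in B_X$ with $|\dual{x^*_i,a_{k+1}}_X-\dual{z^{**},x^*_i}_{X^*}|<1/k$ for every $x^*_i$ selected so far.

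By hypothesis some subsequence satisfies $a_{k_j}\weakto x$ with $x\in B_X$. Letting $j\to\infty$ in the defining inequalities gives $\dual{x^*_i,x}_X=\dual{z^{**},x^*_i}_{X^*}$ for all $i$, i.e.\ $z^{**}-Jx$ annihilates every $x^*_i$. On the other hand, Mazur's theorem provides convex combinations $b_m$ of the $a_{k_j}$ with $b_m\to x$ in norm; since their coefficients sum to $1$, each $z^{**}-Jb_m$ lies in $\mathrm{span}\{z^{**}-Ja_k:k\in\N\}$, whence $z^{**}-Jx=\lim_m(z^{**}-Jb_m)$ lies in $V_\infty:=\overline{\mathrm{span}}\{z^{**}-Ja_k:k\in\N\}$. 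A density argument (approximate a vector of $V_\infty$ by one lying in some $V_k$ and use the $(1-1/k)$-norming) shows that $\{x^*_i\}$ norms all of $V_\infty$; applied to $z^{**}-Jx$ this yields $\norm{z^{**}-Jx}_{X^{**}}=\sup_i|\dual{z^{**}-Jx,x^*_i}_{X^*}|=0$. Hence $z^{**}=Jx$, and as $z^{**}\in B_{X^{**}}$ was arbitrary, $J$ is surjective and $X$ reflexive.

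The second implication is where the real work lies, and its delicate point is the interleaving of the two inductions: the $x^*_i$ must, in the limit, norm exactly the element $z^{**}-Jx$ whose vanishing they are meant to certify, although $x$ emerges only after the $x^*_i$ have been fixed. The resolution is the interplay of the two ideas above --- choosing each batch of $x^*_i$ to norm $V_k$, and then invoking Mazur's theorem --- which together force $z^{**}-Jx$ back into $V_\infty$, on which the $x^*_i$ are norming. (Alternatively the equivalence could be split into Kakutani's theorem --- reflexivity is equivalent to weak compactness of $B_X$ in the topological sense --- and the Eberlein--Šmulian theorem proper --- weak compactness coincides with weak sequential compactness --- but the direct route avoids introducing the weak topology.) The argument also uses, without proof, a few standard facts: closed subspaces of reflexive spaces are reflexive, $Z^*$ separable implies $Z$ separable, and Goldstine's and Mazur's theorems, all of which follow from \cref{thm:hb_extension} and \cref{thm:hb_separation}.
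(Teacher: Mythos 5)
Your argument is correct, but note that the paper itself does not prove this theorem: it appears in the background chapter on functional analysis, where the reader is referred to the standard literature, so there is no in-text proof to compare against. What you give is the classical argument, and it fits the paper's conventions well, since compactness is defined sequentially in this chapter and your proof never needs the weak topology as a topology. The easy direction (reflexive $\Rightarrow$ $B_X$ weakly sequentially compact) via reduction to the separable closed subspace $Y$, reflexivity of $Y$, separability of $Y^*$, and a diagonal-plus-$3\eps$ extraction is sound; the delicate direction is your interleaved construction, and you have the order of quantifiers right: the functionals chosen at stage $k$ form a $(1-1/k)$-norming family for $V_k$, the increasing union makes the whole countable family norming on $V_\infty=\overline{\mathrm{span}}\{z^{**}-Ja_k\}$, Goldstine forces $\dual{z^{**}-Jx,x^*_i}_{X^*}=0$ for the weak subsequential limit $x$, and Mazur (which in this paper is exactly \cref{lem:convex_closed} applied to the closed convex hull) places $z^{**}-Jx$ in $V_\infty$, so it vanishes. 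The facts you import without proof (Goldstine, closed subspaces of reflexive spaces are reflexive, separability of a space with separable dual) are indeed consequences of \cref{thm:hb_extension}, \cref{thm:hb_separation}, and the weak-$*$ separation result \cref{thm:clarke:hb}, so citing them is consistent with the paper's practice of outsourcing standard functional-analytic background; if one wanted the topological statement instead, one would, as you remark, additionally need Eberlein--Šmulian proper or Kakutani's theorem, but for the sequential reading used throughout these notes your proof is complete.
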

Hence in a reflexive space, all bounded and weakly closed sets are weakly compact. Note that weak closedness is a \emph{stronger} claim than closedness, since the property has to hold for more sequences. For convex sets, however, both concepts coincide.
\begin{lemma}\label{lem:convex_closed}
    A convex set $U\subset X$ is closed if and only if it is weakly closed.
\end{lemma}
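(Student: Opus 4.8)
The plan is to prove the two implications separately, with all the work concentrated in one of them. The implication \enquote{weakly closed $\implies$ closed} is immediate and requires no convexity: if $\{x_n\}_{n\in\N}\subset U$ with $x_n\to x$, then $x_n\weakto x$ by continuity of the duality pairing, so weak closedness of $U$ forces $x\in U$. Hence $U$ is closed.

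For the converse, suppose $U$ is closed and convex, and let $\{x_n\}_{n\in\N}\subset U$ with $x_n\weakto x$; the goal is to show $x\in U$. I would argue by contradiction: assume $x\notin U$. Then $U$ and $\{x\}$ are nonempty, convex, and disjoint, with $U$ closed and $\{x\}$ compact, so \cref{thm:hb_separation}\,(ii) yields an $x^*\in X^*$ and a $\lambda\in\R$ with
\begin{equation}
    \dual{x^*,u}_X \leq \lambda < \dual{x^*,x}_X \qquad\text{for all }u\in U.
\end{equation}
In particular $\dual{x^*,x_n}_X\leq\lambda$ for every $n$. Since $x_n\weakto x$ means precisely that $\dual{x^*,x_n}_X\to\dual{x^*,x}_X$, passing to the limit gives $\dual{x^*,x}_X\leq\lambda$, contradicting the strict inequality above. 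Therefore $x\in U$, and $U$ is weakly closed.

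There is essentially no hard step here: the entire content is the separation theorem, and the rest is the routine observation that testing against a single fixed functional is exactly what weak convergence controls. The only point deserving care is the correct invocation of \cref{thm:hb_separation}\,(ii) — one must separate the closed convex set $U$ from the \emph{compact} singleton $\{x\}$ (not the other way around), which is what provides the strict inequality on the side of $x$ needed to close the contradiction. One could alternatively phrase the proof without sequences by noting that $U$ equals the intersection of all closed half-spaces containing it (again via \cref{thm:hb_separation}\,(ii)) and that each such half-space is weakly closed, but the sequential argument above is the most economical given the definitions in force.
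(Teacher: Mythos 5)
Your proposal is correct and follows essentially the same route as the paper: the forward direction is the trivial observation that strong convergence implies weak convergence, and the converse separates the closed convex set $U$ from the compact singleton $\{x\}$ via \cref{thm:hb_separation}\,(ii) and passes to the limit in $\dual{x^*,x_n}_X\leq\lambda$ to contradict the strict inequality. Your remark about the half-space reformulation is a nice aside but not needed.
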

\begin{proof}
    Weakly closed sets are always closed since a convergent sequence is also weakly convergent.
    Let now $U\subset X$ be convex closed and nonempty (otherwise nothing has to be shown) and consider a sequence $\{x_n\}_{n\in\N}\subset U$ with $x_n\weakto x\in X$. Assume that $x\in X\setminus U$. Then, the sets $U$ and $\{x\}$ satisfy the premise of \cref{thm:hb_separation}\,(ii); we thus find an $x^*\in X^*$ and a $\lambda\in\R$ with
    \begin{equation*}
        \dual{x^*,x_n}_X \leq \lambda < \dual{x^*,x}_X\quad \text{for all } n\in\N.
    \end{equation*}
    Passing to the limit $n\to\infty$ in the first inequality yields the contradiction
    \begin{equation*}
        \begin{split}
            \dual{x^*,x}_X < \dual{x^*,x}_X.
            \qedhere
        \end{split}
    \end{equation*}
\end{proof}

If $X$ is not reflexive (e.g., $X=L^\infty(\Omega)$), we have to turn to weak-$*$ convergence.
\begin{theorem}[Banach--Alaoglu]\label{thm:banachal}
    If $X$ is a separable normed vector space (i.e., contains a countable dense subset), $B_{X^*}$ is weakly-$*$ compact.
\end{theorem}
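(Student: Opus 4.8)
The plan is to run a Cantor diagonal extraction, using the separability of $X$ to reduce the test for weak-$*$ convergence to a countable set. Since in this text compactness means \emph{sequential} compactness, it suffices to show that an arbitrary sequence $\{x^*_n\}_{n\in\N}\subset B_{X^*}$ has a subsequence converging weakly-$*$ to some limit that again lies in $B_{X^*}$. Fix a countable dense subset $\{y_k\}_{k\in\N}\subset X$.

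First I would observe that for each fixed $k$ the real sequence $\{\dual{x^*_n,y_k}_X\}_{n\in\N}$ is bounded, since $|\dual{x^*_n,y_k}_X|\le\norm{x^*_n}_{X^*}\norm{y_k}_X\le\norm{y_k}_X$ by \eqref{eq:functan:cs_banach}. Applying the Bolzano--Weierstraß theorem repeatedly, I extract a nested family of subsequences of $\{x^*_n\}_{n\in\N}$, the $k$-th of which is chosen so that $\dual{\cdot,y_k}_X$ converges along it while convergence of $\dual{\cdot,y_1}_X,\dots,\dual{\cdot,y_{k-1}}_X$ is retained. Passing to the diagonal sequence $\{x^*_{n_j}\}_{j\in\N}$ -- whose $j$-th term is the $j$-th member of the $j$-th of these subsequences -- yields a sequence that, for every fixed $k$, is eventually a subsequence of the $k$-th one; hence the limit $\lim_{j\to\infty}\dual{x^*_{n_j},y_k}_X$ exists for every $k$.

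Next I would promote this to convergence of $\dual{x^*_{n_j},x}_X$ for \emph{every} $x\in X$ by a $3\eps$-argument: given $x$ and $\eps>0$, choose $y_k$ with $\norm{x-y_k}_X<\eps$ and estimate $|\dual{x^*_{n_i}-x^*_{n_j},x}_X|\le|\dual{x^*_{n_i},x-y_k}_X|+|\dual{x^*_{n_i}-x^*_{n_j},y_k}_X|+|\dual{x^*_{n_j},y_k-x}_X|<2\eps+|\dual{x^*_{n_i}-x^*_{n_j},y_k}_X|$, where the middle term vanishes as $i,j\to\infty$ because $\{\dual{x^*_{n_j},y_k}_X\}_j$ is Cauchy in $\R$. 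Therefore $\{\dual{x^*_{n_j},x}_X\}_j$ is Cauchy, hence convergent, and I define $x^*(x):=\lim_{j\to\infty}\dual{x^*_{n_j},x}_X$.

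It then remains to identify $x^*$. Linearity of $x^*$ is inherited from the $x^*_{n_j}$ and the linearity of limits, and boundedness with $\norm{x^*}_{X^*}\le1$ follows from $|x^*(x)|=\lim_j|\dual{x^*_{n_j},x}_X|\le\norm{x}_X$; thus $x^*\in B_{X^*}$, and $x^*_{n_j}\weaktostar x^*$ by the very definition of $x^*$. The only point that needs care is the bookkeeping of the diagonal construction -- namely that the diagonal sequence is eventually a tail of each nested subsequence -- but this is purely combinatorial; there is no genuine analytic difficulty here, only the need to organize the extraction cleanly.
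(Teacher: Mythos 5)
Your proof is correct and complete. The paper itself does not prove this theorem -- it is stated as background, with proofs deferred to the standard functional-analysis literature -- so there is no in-text argument to compare against; but your Cantor diagonal extraction is exactly the standard proof of the \emph{sequential} Banach--Alaoglu theorem, and it is the right one here, since compactness in this text is defined via sequences. Each step checks out: the bound $|\dual{x^*_n,y_k}_X|\leq\norm{y_k}_X$ from \eqref{eq:functan:cs_banach}, the nested extraction and diagonal bookkeeping, the $3\eps$-argument (which uses $\norm{x^*_{n_j}}_{X^*}\leq 1$ to control the first and third terms uniformly in $j$), and the verification that the pointwise limit is linear with $\norm{x^*}_{X^*}\leq 1$, so that the limit indeed lies in $B_{X^*}$. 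It is worth being aware that this argument genuinely needs separability and proves only sequential compactness: the general Banach--Alaoglu theorem asserts compactness of $B_{X^*}$ in the weak-$*$ \emph{topology} for arbitrary $X$ and is proved via Tychonoff's theorem, and without separability the two notions of compactness for the weak-$*$ topology come apart. For the purposes of this text -- where the theorem is invoked to extract weak-$*$ convergent subsequences from bounded sequences, e.g.\ in the direct method and in the proof of the chain rule for Clarke subdifferentials -- your sequential version is precisely what is used.
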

By the Weierstraß Approximation Theorem, both $C(\overline\Omega)$ and $L^p(\Omega)$ for $1\leq p<\infty$ are separable; also, $\ell^p$ is separable for $1\leq p <\infty$.
Hence, bounded and weakly-$*$ closed balls in  $\ell^\infty$, $L^\infty(\Omega)$, and $\mathcal{M}(\Omega)$ are weakly-$*$ compact. However, these spaces themselves are not separable.

Finally, we will also need the following \enquote{weak-$*$} separation theorem, whose proof is analogous to the proof of \cref{thm:hb_separation} (using the fact that the linear weakly-$*$ continuous functionals are exactly those of the form $x^*\mapsto \dual{x^*,x}_X$ for some $x\in X$); see also \cite[Theorem~3.4(b)]{Rudin:1991}.
\begin{theorem}\label{thm:clarke:hb}
    Let $A\subset X^*$ be a non-empty, convex, and weakly-$*$ closed subset and $x^*\in X^*\setminus A$. Then there exist an $x\in X$ and a $\lambda\in\R$ with
    \begin{equation*}
        \dual{z^*,x}_X \leq \lambda < \dual{x^*,x}_X \qquad\text{for all }z^*\in A.
    \end{equation*}
\end{theorem}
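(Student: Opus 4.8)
The plan is to transport the separation argument of \cref{thm:hb_separation} from a normed space into the dual $X^*$ equipped with its weak-$*$ topology $\weaktostar$. That topology is locally convex, and — as recalled just before the theorem — its continuous linear functionals are exactly the maps $z^*\mapsto\dual{z^*,x}_X$ with $x\in X$. Granting this, the statement is precisely \cref{thm:hb_separation}\,(ii) read inside $(X^*,\weaktostar)$, applied to the two disjoint convex sets $A$ (weakly-$*$ closed by hypothesis, playing the role of the closed set) and $\{x^*\}$ (weakly-$*$ compact, being a singleton, playing the role of the compact set): it produces a weakly-$*$ continuous linear functional $\ell$ on $X^*$ and a $\lambda\in\R$ with $\ell(z^*)\leq\lambda<\ell(x^*)$ for all $z^*\in A$, and rewriting $\ell=\dual{\cdot,x}_X$ for the corresponding $x\in X$ is exactly the claimed inequality.

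What must actually be verified is that \cref{thm:hb_separation}, stated only for normed spaces, transfers to $(X^*,\weaktostar)$. Its proof uses only two topology-dependent facts, and both hold in any locally convex space: first, the existence around $0$ of a neighborhood basis of \emph{convex} sets — here the sets $\setof{w^*\in X^*}{|\dual{w^*,x_i}_X|<\eps,\ i=1,\dots,n}$ with $x_1,\dots,x_n\in X$, $\eps>0$ — together with the fact that a sublinear functional bounded above on a neighborhood of $0$ is continuous; second, the purely algebraic Hahn--Banach extension principle underlying \cref{thm:hb_extension}. Concretely one would translate by $-x^*$ to reduce to strictly separating the weakly-$*$ closed convex set $C:=A-x^*$ (which misses $0$) from $\{0\}$, pick a basic weak-$*$ neighborhood $V$ of $0$ disjoint from $C$ and a point $c_0\in C$, form the weakly-$*$ open convex set $U:=V-C+c_0\ni 0$ with $c_0\notin U$, take its Minkowski gauge $p$ (sublinear, weakly-$*$ continuous, with $p(c_0)\geq 1$), extend $tc_0\mapsto t$ from $\R c_0$ under the majorant $p$ by algebraic Hahn--Banach, and read off the separating functional — which is weakly-$*$ continuous because it is dominated by the continuous gauge $p$.

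The last step is the identification of the resulting functional with an element of $X$: a weakly-$*$ continuous linear $\ell$ satisfies $|\ell|<1$ on some basic neighborhood $\setof{w^*}{|\dual{w^*,x_i}_X|<\eps,\ i=1,\dots,n}$ of $0$, hence $\bigcap_{i=1}^n\ker\dual{\cdot,x_i}_X\subseteq\ker\ell$, and the elementary lemma on linear functionals then yields scalars $a_i$ with $\ell=\sum_{i=1}^n a_i\dual{\cdot,x_i}_X=\dual{\cdot,x}_X$ for $x:=\sum_i a_ix_i\in X$. I do not expect a genuinely hard step here; the only real obstacle is discipline — making sure the separation machinery is applied in the weak-$*$ topology rather than the norm topology, i.e., honestly reproving \cref{thm:hb_separation} in $(X^*,\weaktostar)$ instead of merely citing it — after which, exactly as in the references the text points to, the characterization of weak-$*$ continuous functionals closes the argument.
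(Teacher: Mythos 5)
Your proposal is correct and follows the same route the paper intends: it only sketches this result, remarking that the proof is analogous to that of \cref{thm:hb_separation} carried out in the weak-$*$ topology together with the fact that the weak-$*$ continuous linear functionals on $X^*$ are exactly those of the form $z^*\mapsto\dual{z^*,x}_X$ for $x\in X$ (citing Rudin, Theorem 3.4(b)). You have simply filled in the details of that same argument — the gauge/Hahn--Banach separation in the locally convex space $(X^*,\text{weak-}*)$ and the kernel-inclusion lemma identifying the separating functional with an element of $X$ — so nothing further is needed.
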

Note, however, that closed convex sets in non-reflexive spaces do \emph{not} have to be weakly-$*$ closed.

\bigskip

Since a normed vector space is characterized by its dual, this is also the case for linear operators acting on this space.
For any $T\in L(X,Y)$, the \emph{adjoint operator} $T^*\in L(Y^*,X^*)$ is defined via
\begin{equation*}
    \dual{T^*y^*,x}_X = \dual{y^*,Tx}_Y\qquad\text{for all } x\in X, y^*\in Y^*.
\end{equation*}
It always holds that $\norm{T^*}_{L(Y^*,X^*)} = \norm{T}_{L(X,Y)}$.
Furthermore, the continuity of $T$ implies that $T^*$ is weakly-$*$ continuous (and $T$ weakly continuous).

\section{Hilbert spaces}

Especially strong duality properties hold in Hilbert spaces.
A mapping $\inner{\cdot,\cdot}:X\times X\to \R$ on a vector space $X$ over $\R$ is called \emph{inner product}, if
\begin{enumerate}[(i)]
    \item $\inner{\alpha x+\beta y,z} = \alpha\inner{x,z} + \beta \inner{y,z}$ for all $x,y,z\in X$ and $\alpha,\beta\in \R$;
    \item $\inner{x,y} = \inner{y,x}$ for all $x,y\in X$;
    \item $\inner{x,x} \geq 0$ for all $x\in X$ with equality if and only if $x=0$.
\end{enumerate}
A Banach space together with an inner product $(X,\inner{\cdot,\cdot}_X)$ is called a \emph{Hilbert space}; if the inner product is canonical, it is frequently omitted, and the Hilbert space is simply denoted by $X$. An inner product induces a norm
\begin{equation*}
    \norm{x}_X := \sqrt{\inner{x,x}_X},
\end{equation*}
which satisfies the \emph{Cauchy--Schwarz inequality}:
\begin{equation*}
    \inner{x,y}_X \leq \norm{x}_X\norm{y}_X.
\end{equation*}
The spaces in \cref{ex:functan:dual}\,(i--iii) for $p=2(=q)$ are all Hilbert spaces, where the inner product coincides with the duality pairing and induces the canonical norm.

\bigskip

The relevant point in our context is that the dual of a Hilbert space $X$ can be identified with $X$ itself.
\begin{theorem}[Fréchet--Riesz]\label{thm:frechetriesz}
    Let $X$ be a Hilbert space. Then for each $x^*\in X^*$ there exists a unique $z_{x^*}\in X$ with $\norm{x^*}_{X^*} = \norm{z_{x^*}}_X$ and
    \begin{equation*}
        \dual{x^*,x}_X = \inner{x,z_{x^*}}_X \qquad\text{for all } x\in X.
    \end{equation*}
\end{theorem}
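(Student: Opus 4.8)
The plan is to treat the trivial case $x^*=0$ separately (taking $z_{x^*}=0$) and otherwise exploit the geometry of $N := \ker x^*$, which is a proper closed subspace of $X$ because $x^*$ is continuous and nonzero. The heart of the argument is to produce a single nonzero vector $w$ orthogonal to $N$; once that is in hand, the representing vector will be an explicit scalar multiple of $w$.

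First I would establish the existence of such a $w$ by an orthogonal-projection argument. Picking any $x_0\in X\setminus N$, set $d := \inf_{n\in N}\norm{x_0-n}_X>0$ and take a minimizing sequence $\{n_k\}_{k\in\N}\subset N$. Applying the parallelogram law to $x_0-n_j$ and $x_0-n_k$ (and using that $\tfrac12(n_j+n_k)\in N$, so $\norm{x_0-\tfrac12(n_j+n_k)}_X\ge d$) shows that $\{n_k\}$ is Cauchy; completeness of $X$ together with closedness of $N$ then yields a limit $\bar n\in N$ with $\norm{x_0-\bar n}_X=d$. Put $w:=x_0-\bar n\neq 0$. A standard variational argument — the map $t\mapsto\norm{w-tn}_X^2=\norm{w}_X^2-2t\inner{w,n}_X+t^2\norm{n}_X^2$ attains its minimum over $t\in\R$ at $t=0$ for every $n\in N$ — gives $\inner{w,n}_X=0$ for all $n\in N$.

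Next, for an arbitrary $x\in X$ I would observe that $u := \dual{x^*,x}_X\,w - \dual{x^*,w}_X\,x$ satisfies $\dual{x^*,u}_X=0$, hence $u\in N$ and therefore $\inner{u,w}_X=0$. Since $\dual{x^*,w}_X\neq 0$ (as $w\notin N$), solving $\inner{u,w}_X=0$ for $\dual{x^*,x}_X$ yields $\dual{x^*,x}_X=\inner{x,z_{x^*}}_X$ with $z_{x^*}:=\dfrac{\dual{x^*,w}_X}{\norm{w}_X^2}\,w$. Uniqueness is immediate: if $\inner{x,z_1}_X=\inner{x,z_2}_X$ for all $x$, testing with $x=z_1-z_2$ gives $\norm{z_1-z_2}_X^2=0$. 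For the norm identity, Cauchy--Schwarz gives $|\dual{x^*,x}_X|\le\norm{z_{x^*}}_X\norm{x}_X$, hence $\norm{x^*}_{X^*}\le\norm{z_{x^*}}_X$, while the choice $x=z_{x^*}$ (when $z_{x^*}\neq0$; otherwise both sides vanish) gives the reverse inequality.

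The main obstacle is the orthogonal-projection step: showing that the distance from $x_0$ to the closed subspace $N$ is actually attained. This is exactly where completeness of $X$ is indispensable (the conclusion can fail in an incomplete pre-Hilbert space), and the parallelogram law is the decisive tool that turns a minimizing sequence into a Cauchy sequence. Everything downstream of that — the algebraic identity producing $z_{x^*}$, uniqueness, and the norm equality — is routine.
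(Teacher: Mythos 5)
Your proof is correct and complete: the kernel-plus-orthogonal-projection argument (parallelogram law to get a Cauchy minimizing sequence, completeness to obtain the nearest point, then the algebraic identity $u:=\dual{x^*,x}_X w-\dual{x^*,w}_X x\in\ker x^*$ to produce the representer) is the standard proof of the Fréchet--Riesz theorem, and the uniqueness and norm-equality steps are handled properly. The paper itself states this result as background without proof, deferring to the standard literature, so there is nothing to compare against; your argument is exactly the one those references give.
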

The element $z_{x^*}$ is called \emph{Riesz representation} of $x^*$.
The (linear) mapping $J_X:X^*\to X$, $x^*\mapsto z_{x^*}$, is called \emph{Riesz isomorphism}, and can be used to show that every Hilbert space is reflexive.

\Cref{thm:frechetriesz} allows to use the inner product instead of the duality pairing in Hilbert spaces. For example, a sequence $\{x_n\}_{n\in\N}\subset X$ converges weakly to $x\in X$ if and only if
\begin{equation*}
    \inner{x_n,z}_X \to \inner{x,z}_X\qquad\text{for all } z\in X.
\end{equation*}

Similar statements hold for linear operators on Hilbert spaces. For a linear operator $T\in L(X,Y)$ between Hilbert spaces $X$ and $Y$, the \emph{Hilbert space adjoint operator} $T^\star\in L(Y,X)$ is defined via
\begin{equation*}
    \inner{T^\star y,x}_X = \inner{Tx,y}_Y \qquad\text{for all } x\in X, y\in Y.
\end{equation*}
If $T^\star = T$, the operator $T$ is called \emph{self-adjoint}.
Both definitions of adjoints are related via $T^\star = J_X T^* J_Y^{-1}$.
If the context is obvious, we will not distinguish the two in notation.

\chapter{Calculus of variations}

We first consider the question about the existence of minimizers of a (nonlinear) functional $F:U\to\R$ for a subset $U$ of a Banach space $X$. Answering such questions is one of the goals of the \emph{calculus of variations}.

It is helpful to include the constraint $x\in U$ into the functional by extending $F$ to all of $X$ with the value $\infty$. We thus consider
\begin{equation*}
    \overline F:X\to\Rbar:=\R\cup \{\infty\},\qquad
    \overline F(x) =
    \begin{cases}
        F(x) & \text{if }x\in U,\\
        \infty & \text{if }x\in X\setminus U.
    \end{cases}
\end{equation*}
We extend the usual arithmetic on $\R$ to $\Rbar$ by letting $t<\infty$ and $t+\infty = \infty$ for all $t\in\R$;
subtraction and multiplication of negative numbers with $\infty$ and in particular $F(x) = -\infty$ is not allowed, however. Thus if there is any $x\in U$ at all, a minimizer $\bar x$ of $\overline F$ necessarily must lie in $U$ and coincide with a minimizer of $F$ over $U$.

We thus consider from now on functionals $F:X\to\Rbar$.
The set on which $F$ is finite is called the \emph{effective domain}
\begin{equation*}
    \dom F := \setof{x\in X}{F(x)<\infty}.
\end{equation*}
If $\dom F \neq \emptyset$, the functional $F$ is called \emph{proper}.

\section{The direct method}

We now generalize the Weierstraß Theorem (every real-valued continuous function on a compact set attains its minimum and maximum) to Banach spaces and in particular to functions of the form $\overline F$. Since we are only interested in minimizers, we only require a \enquote{one-sided} continuity: We call $F$ \emph{lower semicontinuous} in $x\in X$ if
\begin{equation*}
    F(x) \leq \liminf_{n\to\infty} F(x_n)\qquad\text{for every } \{x_n\}_{n\in\N}\subset X \text{ with  }x_n\to x.
\end{equation*}
Analogously, we define \emph{weakly(-$*$) lower semicontinuous} functionals via weakly(-$*$) convergent sequences.
Finally, $F$ is called \emph{coercive} if for every sequence $\{x_n\}_{n\in\N}\subset X$ with $\norm{x_n}_X\to\infty$ we also have $F(x_n)\to \infty$.

We now have all concepts at hand for proving the central existence result in the calculus of variations. The strategy for its proof is known as the \emph{direct method}.\footnote{This strategy is applied so often in the literature that one usually just writes \enquote{Existence of a minimizer follows from the direct method} or even just \enquote{Existence follows from standard arguments}.
The basic idea goes back to Hilbert; the version based on lower semicontinuity which we use here is due to \href{http://www-history.mcs.st-and.ac.uk/Biographies/Tonelli.html}{Leonida Tonelli} (1885--1946), who had a lasting influence on the modern calculus of variations through it.}
\begin{theorem}[direct method]\label{thm:variation:existence}
    Let $X$ be a reflexive Banach space and $F:X\to\Rbar$ be proper, coercive, and weakly lower semicontinuous. Then the minimization problem
    \begin{equation*}
        \min_{x\in X} F(x)
    \end{equation*}
    has a solution $\bar x\in \dom F$.
\end{theorem}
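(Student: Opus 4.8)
The plan is to run the direct method in its standard three-step form: extract a minimizing sequence, use coercivity to bound it, use reflexivity (via Eberlein--Šmulyan, \cref{thm:ebsmul}) to pass to a weakly convergent subsequence, and finally invoke weak lower semicontinuity to conclude that the weak limit is a minimizer.

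\begin{proof}[Proof proposal]
First I would set $m := \inf_{x\in X} F(x) \in [-\infty,\infty)$; this is well-defined and $m < \infty$ since $F$ is proper. Choose a minimizing sequence $\{x_n\}_{n\in\N}\subset X$ with $F(x_n)\to m$. Since each $F(x_n)$ is eventually finite (say bounded above by $m+1$), coercivity implies that $\{x_n\}_{n\in\N}$ is bounded in $X$: if it were not, some subsequence would have $\norm{x_{n_k}}_X\to\infty$, forcing $F(x_{n_k})\to\infty$, contradicting $F(x_n)\to m<\infty$.

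Next I would use reflexivity: by the Eberlein--Šmulyan Theorem (\cref{thm:ebsmul}), the closed unit ball $B_X$ is weakly compact, hence any bounded sequence in the reflexive space $X$ has a weakly convergent subsequence. So pass to a subsequence, still denoted $\{x_n\}_{n\in\N}$, with $x_n\weakto \bar x$ for some $\bar x\in X$. By weak lower semicontinuity of $F$,
\begin{equation}
    F(\bar x) \leq \liminf_{n\to\infty} F(x_n) = m.
\end{equation}
Since $m$ is the infimum, we also have $F(\bar x)\geq m$, so $F(\bar x)=m$. In particular $m>-\infty$ (as $F$ takes values in $\Rbar$, not $-\infty$), $\bar x\in\dom F$, and $\bar x$ solves the minimization problem.
\end{proof}

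The only subtlety worth flagging is the appeal to weak sequential compactness of bounded sets in a reflexive space: Eberlein--Šmulyan as stated gives weak compactness of $B_X$, and one rescales and uses that weak compactness coincides with weak sequential compactness here; everything else is routine once the three ingredients --- coercivity (boundedness), reflexivity (subsequential weak limit), weak lower semicontinuity (the limit is optimal) --- are lined up. I expect no genuine obstacle; the ``hard part,'' such as it is, is simply remembering that coercivity is what converts ``minimizing sequence'' into ``bounded sequence'' so that reflexivity can be brought to bear.
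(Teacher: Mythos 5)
Your proposal is correct and follows essentially the same three-step direct method as the paper's proof: minimizing sequence, coercivity plus Eberlein--Šmulyan for a weak subsequential limit, and weak lower semicontinuity to identify that limit as a minimizer. No gaps; the argument matches the paper's in both structure and detail.
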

\begin{proof}
    The proof can be separated into three steps.
    \begin{enumerate}[(i)]
        \item \emph{Pick a minimizing sequence.}

            Since $F$ is proper, there exists an $M:=\inf_{x\in X} F(x)<\infty$ (although $M=-\infty$ is not excluded so far). Thus, by the definition of the infimum, there exists a sequence $\{y_n\}_{n\in\N}\subset \rg F\setminus\{\infty\}\subset \R$ with $y_n\to M$, i.e., there exists a sequence $\{x_n\}_{n\in\N}\subset X$ with
            \begin{equation*}
                F(x_n) \to M = \inf_{x\in X} F(x).
            \end{equation*}
            Such a sequence is called \emph{minimizing sequence}. Note that from the convergence of $\{F(x_n)\}_{n\in\N}$ we cannot conclude the convergence of $\{x_n\}_{n\in\N}$ (yet).

        \item \emph{Show that the minimizing sequence contains a weakly convergent subsequence.}

            Assume to the contrary that $\{x_n\}_{n\in\N}$ is unbounded, i.e., that $\norm{x_n}_X\to\infty$ for $n\to \infty$. The coercivity of $F$ then implies that $F(x_n)\to \infty$ as well, in contradiction to $F(x_n)\to M<\infty$ by definition of the minimizing sequence.
            Hence, the sequence is bounded, i.e., there is an $M>0$ with $\norm{x_n}_X\leq M$ for all $n\in\N$.
            In particular, $\{x_n\}_{n\in\N} \subset K_M(0)$. The \nameref{thm:ebsmul} \cref{thm:ebsmul} therefore implies the existence of a weakly converging subsequence $\{x_{n_k}\}_{k\in\N}$ with limit $\bar x\in X$. (This limit is a candidate for the minimizer.)

        \item \emph{Show that this limit is a minimizer.}

            From the definition of the minimizing sequence, we also have $F(x_{n_k})\to M$ for $k\to\infty$. Together with the weak lower semicontinuity of $F$ and the definition of the infimum we thus obtain
            \begin{equation*}
                \inf_{x\in X} F(x) \leq F(\bar x) \leq \liminf_{k\to\infty} F(x_{n_k}) = M = \inf_{x\in X} F(x)<\infty.
            \end{equation*}
            This implies that $\bar x\in \dom F$ and that $\inf_{x\in X} F(x)=F(\bar x)> -\infty$. Hence, the infimum is attained in $\bar x$ which is therefore the desired minimizer.
            \qedhere
    \end{enumerate}
\end{proof}
If $X$ is not reflexive but the dual of a separable Banach space, we can argue analogously using the \nameref{thm:banachal} \cref{thm:banachal}

Note how the topology on $X$ used in the proof is restricted in step (iii) and (iv): Step (iii) profits from a coarse topology (in which more sequences are convergent), while step (iv) profits from a fine topology (the fewer sequences are convergent, the easier it is to satisfy the $\liminf$ conditions). Since in the cases of interest to us no more than boundedness of a minimizing sequence can be expected, we cannot use a finer than the weak topology. We thus have to ask whether a sufficiently large class of (interesting) functionals are weakly lower semicontinuous.

A first example is the class of bounded linear functionals: For any $x^*\in X^*$, the functional
\begin{equation*}
    F:X\to\Rbar, \qquad x\mapsto \dual{x^*,x}_X,
\end{equation*}
is weakly continuous by definition of weak convergence and hence \emph{a fortiori} weakly lower semicontinuous.
Another advantage of (weak) lower semicontinuity is that it is preserved under certain operations.
\begin{lemma}\label{lem:variation:wlsc}
    Let $X$ and $Y$ be Banach spaces and $F:X\to\Rbar$ be weakly lower semicontinuous. Then, the following functionals are weakly lower semicontinuous as well:
    \begin{enumerate}[(i)]
        \item $\alpha F$ for all $\alpha \geq 0$;
        \item $F+G$ for $G:X\to\Rbar$ weakly lower semicontinuous;
        \item $\phi\circ F$ for $\phi:\Rbar\to\Rbar$ lower semicontinuous and increasing.
        \item $F\circ \Phi$ for $\Phi:Y\to X$ weakly continuous, i.e., $y_n\weakto y$ implies $\Phi(y_n)\weakto \Phi(y)$;
        \item $x\mapsto \sup_{i\in I} F_i(x)$ with $F_i:X\to\Rbar$ weakly lower semicontinuous for an arbitrary set $I$.
    \end{enumerate}
\end{lemma}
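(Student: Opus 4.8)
The plan is to verify each of the five items directly from the sequential definition of weak lower semicontinuity: fix an arbitrary sequence $x_n\weakto x$ (respectively $y_n\weakto y$ in (iv)) and estimate the value of the new functional at the limit by its $\liminf$ along the sequence. Items (i), (ii), (iv) and (v) are short, and the only thing demanding attention in them is the arithmetic of $\Rbar$.

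For (i) with $\alpha>0$, multiplication by a positive constant commutes with $\liminf$, so $\alpha F(x)\leq\alpha\liminf_n F(x_n)=\liminf_n\alpha F(x_n)$; the case $\alpha=0$ is trivial (with the convention $0\cdot(+\infty)=0$, so that $\alpha F\equiv 0$). For (ii) I add the two defining inequalities and use superadditivity, $\liminf_n a_n+\liminf_n b_n\leq\liminf_n(a_n+b_n)$; the only pitfall — the indeterminate form $\infty+(-\infty)$ — cannot arise, because weak lower semicontinuity of $F$ already forces $\liminf_n F(x_n)\geq F(x)>-\infty$ (and likewise for $G$). For (iv), weak continuity of $\Phi$ turns $y_n\weakto y$ into $\Phi(y_n)\weakto\Phi(y)$, whence $(F\circ\Phi)(y)=F(\Phi(y))\leq\liminf_n F(\Phi(y_n))=\liminf_n(F\circ\Phi)(y_n)$. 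For (v), from $F_i(x_n)\leq\sup_{j\in I}F_j(x_n)$ and weak lower semicontinuity of $F_i$ one gets $F_i(x)\leq\liminf_n F_i(x_n)\leq\liminf_n\sup_{j\in I}F_j(x_n)$ for every $i\in I$, and taking the supremum over $i$ on the left yields the claim.

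The one item that requires genuine care is (iii). Set $\ell:=\liminf_n F(x_n)$; since $F$ is weakly lower semicontinuous, $\ell\geq F(x)>-\infty$, so $\phi(\ell)$ is well defined, and monotonicity of $\phi$ gives $\phi(F(x))\leq\phi(\ell)$. It then remains to show $\phi(\ell)\leq\liminf_n\phi(F(x_n))$. For every real $s<\ell$ one has $F(x_n)>s$ for all large $n$, hence $\phi(F(x_n))\geq\phi(s)$ and therefore $\liminf_n\phi(F(x_n))\geq\phi(s)$; letting $s\uparrow\ell$ and invoking lower semicontinuity of $\phi$ (which together with monotonicity gives $\phi(\ell)\leq\lim_{s\uparrow\ell}\phi(s)$ when $\ell\in\R$, and $\phi(\ell)=\sup_{s\in\R}\phi(s)$ when $\ell=+\infty$) finishes the argument. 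A cleaner alternative is to note that, for $\phi$ increasing and lower semicontinuous, each sublevel set $\{x:\phi(F(x))\leq c\}$ equals $\setof{x}{F(x)\leq b_c}$ with $b_c:=\sup\setof{t\in\Rbar}{\phi(t)\leq c}$, and that the latter set is weakly sequentially closed because $F$ is weakly lower semicontinuous; rephrasing this in terms of sequences gives weak lower semicontinuity of $\phi\circ F$. Either way, the main obstacle is purely bookkeeping with suprema, infima and the value $+\infty$; no analytic idea beyond the definitions is needed.
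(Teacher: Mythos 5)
Your proof is correct and follows essentially the same route as the paper, which disposes of (i), (ii), (iv), (v) by the same one-line $\liminf$ manipulations and proves (iii) via the chain $\phi(F(x))\leq\phi(\liminf_n F(x_n))\leq\liminf_n\phi(F(x_n))$. The only difference is that you carefully justify the second inequality in (iii) (via $s\uparrow\ell$ and the left-continuity that monotonicity plus lower semicontinuity forces), a step the paper leaves implicit.
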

Note that (v) does \emph{not} hold for continuous functions.
\begin{proof}
    Statements (i) and (ii) follow directly from the properties of the limes inferior.

    For statement (iii), it first follows from the weak lower semicontinuity of $F$ and the monotonicity of $\phi$ that $x_n\weakto x$ implies
    \begin{equation*}
        \phi(F(x)) \leq \phi(\liminf_{n\to\infty} F(x_n)).
    \end{equation*}
    It remains to show that the right-hand side can be bounded by $\liminf_{n\to\infty} \phi(F(x_n))$. For that purpose, we consider the subsequence $\{\phi(F(x_{n_k})\}_{k\in\N}$ realizing the $\liminf$, i.e., for which $\liminf_{n\to\infty} \phi(F(x_n)) = \lim_{k\to\infty} \phi(F(x_{n_k}))$. By passing to a further subsequence (which we index by $k'$ for simplicity) we can also obtain that $\liminf_{k\to \infty} F(x_{n_k}) = \lim_{k'\to\infty} F(x_{n_{k'}})$. Since the $\liminf$ restricted to a subsequence can never be smaller than that of the full sequence, the monotonicity of $\phi$ together with its weak lower semicontinuity now implies that
    \begin{equation*}
        \phi(\liminf_{n\to\infty} F(x_n)) \leq \phi(\lim_{k'\to\infty} F(x_{n_{k'}}))
        \leq \liminf_{k'\to\infty} \phi(F(x_{n_{k'}}))
        = \liminf_{n\to\infty} \phi(F(x_{n})),
    \end{equation*}
    where we have used in the last step that a subsequence of a convergent sequence has the same limit (which coincides with the $\liminf$).

    Statement (iv) follows directly from the weak continuity of $\Phi$: $y_n\weakto y$ implies that $x_n := \Phi(y_n) \weakto \Phi(y) =: x$, and the lower semicontinuity of $F$ yields
    \begin{equation*}
        F(\Phi(y_n)) \leq \liminf_{n\to\infty} F(\Phi(y)).
    \end{equation*}

    Finally, let $\{x_n\}_{n\in\N}$ be a weakly converging sequence with limit $x\in X$. Then the weak lower semicontinuity of the $F_i$ together with the definition of the supremum implies that
    \begin{equation*}
        F_j(x) \leq \liminf_{n\to\infty} F_j(x_n) \leq \liminf_{n\to\infty} \sup_{i\in I} F_i(x_n) \qquad\text{for all }j\in I.
    \end{equation*}
    Taking the supremum over all $j\in I$ on both sides yields statement (v).
\end{proof}

\begin{cor}\label{cor:variation:norm}
    If $X$ is a Banach space, the norm $\norm{\cdot}_X$ is proper, coercive, and weakly lower semicontinuous.
\end{cor}
\begin{proof}
    Coercivity and $\dom \norm{\cdot}_X = X$ follow directly from the definition. Weak lower semicontinuity follows from \cref{lem:variation:wlsc}\,(v) and \cref{cor:functan:norm_dual} since
    \begin{equation*}
        \begin{split}
            \norm{x}_X = \sup_{\norm{x^*}_{X^*}\leq 1} |\dual{x^*,x}_X|.
            \qedhere
        \end{split}
    \end{equation*}
\end{proof}

Another frequently occurring functional is the \emph{indicator function}\footnote{not to be confused with the \emph{characteristic function} $\1_U$ with $\1_U(x) = 1$ for $x\in U$ and $0$ else} of a set $U\subset X$, defined as
\begin{equation*}
    \delta_U(x) = \begin{cases} 0 & x\in U,\\ \infty& x \in X\setminus U.\end{cases}
\end{equation*}
The purpose of this definition is of course to reduce the minimization of a functional $F:X\to\R$ over $U$ to the minimization of $\overline F:=F+\delta_U$ over $X$. The following result is therefore important for showing the existence of a minimizer.
\begin{lemma}
    Let $X$ be a Banach space and $U\subset X$. Then, $\delta_U$ is
    \begin{enumerate}[(i)]
        \item proper if $U$ is non-empty;
        \item weakly lower semicontinuous if $U$ is convex and closed;
        \item coercive if $U$ is bounded.
    \end{enumerate}
\end{lemma}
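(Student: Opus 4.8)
The plan is to verify each of the three claims directly from the definitions of $\delta_U$, using the characterizations of properness, weak lower semicontinuity, and coercivity established earlier in the chapter.

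For (i), properness means $\dom \delta_U \neq \emptyset$. Since $\delta_U(x) = 0 < \infty$ exactly when $x \in U$, we have $\dom \delta_U = U$, so $U \neq \emptyset$ gives the claim immediately.

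For (ii), I would observe that $\delta_U = \delta_U \circ \mathrm{Id}$ is not quite the right framing; instead the cleanest route is to note that $\delta_U$ takes only the two values $0$ and $\infty$, so weak lower semicontinuity reduces to a statement about weak closedness of $U$. Concretely, let $x_n \weakto x$ with $\{x_n\}_{n\in\N}$ arbitrary; I must show $\delta_U(x) \leq \liminf_{n\to\infty} \delta_U(x_n)$. If the right-hand side is $+\infty$ there is nothing to prove, so assume it is finite, i.e.\ equal to $0$; then $\delta_U(x_n) = 0$, hence $x_n \in U$, for infinitely many $n$. Passing to that subsequence (still weakly convergent to $x$) and invoking \cref{lem:convex_closed} — which says a convex set is closed iff weakly closed, so the convex closed set $U$ is weakly closed — we conclude $x \in U$, i.e.\ $\delta_U(x) = 0 \leq 0$. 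This is the only claim requiring a genuine tool from the chapter, and it is the main (mild) obstacle: one must remember to pass to a subsequence on which $\delta_U(x_n)$ actually realizes the value $0$, rather than arguing with the full sequence.

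For (iii), coercivity requires that $\norm{x_n}_X \to \infty$ implies $\delta_U(x_n) \to \infty$. If $U$ is bounded, say $U \subset K_r(0)$, then $\norm{x_n}_X \to \infty$ forces $x_n \notin U$ for all sufficiently large $n$, hence $\delta_U(x_n) = \infty$ for all such $n$, which certainly tends to $\infty$. No subtlety here. Overall the proposition is a routine unpacking of definitions; the single substantive ingredient is the equivalence of closedness and weak closedness for convex sets in part (ii).
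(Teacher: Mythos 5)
Your proof is correct and follows essentially the same route as the paper's: both arguments reduce part (ii) to the weak closedness of $U$ via \cref{lem:convex_closed}, differing only in whether the case distinction is organized around the value of $\liminf_n \delta_U(x_n)$ (yours) or around whether $x\in U$ (the paper's), which are logically equivalent. Parts (i) and (iii) match the paper's proof directly.
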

\begin{proof}
    Statement (i) is clear. For (ii), consider a weakly converging sequence $\{x_n\}_{n\in\N}\subset X$ with limit $x\in X$. If $x\in U$, then $\delta_U\geq 0$ immediately yields
    \begin{equation*}
        \delta_U(x) = 0 \leq \liminf_{n\to\infty} \delta_U(x_n).
    \end{equation*}
    Let now $x\notin U$. Since $U$ is convex and closed and hence by \cref{lem:convex_closed} also weakly closed, there must be an $N\in\N$ with $x_n\notin U$ for all $n\geq N$ (otherwise we could -- by passing to a subsequence if necessary -- construct a sequence with $x_{n}\weakto x\in U$, in contradiction to the assumption). Thus, $\delta_U(x_n) = \infty$ for all $n\geq N$, and therefore
    \begin{equation*}
        \delta_U(x) = \infty = \liminf_{n\to\infty} \delta_U(x_n).
    \end{equation*}

    For (iii), let $U$ be bounded, i.e., there exist an $M>0$ with $U\subset K_M(0)$.
    If $\norm{x_n}_X\to\infty$, then there exists an $N\in\N$ with $\norm{x_n}_X >M$ for all $n\geq N$, and thus $x_n\notin K_M(0)\supset U$ for all $n\geq N$. Hence, $\delta_U(x_n) \to \infty$ as well.
\end{proof}

\section{Differential calculus in Banach spaces}

To characterize minimizers of functionals on infinite-dimensional spaces using the Fermat principle, we transfer the classical derivative concepts to Banach spaces.

Let $X$ and $Y$ be Banach spaces, $F:X\to Y$ be a mapping, and $x,h\in X$ be given.
\begin{itemize}
    \item If the one-sided limit
        \begin{equation*}
            F'(x;h) := \lim_{t\to 0^+} \frac{F(x+th)-F(x)}{t}\in Y,
        \end{equation*}
        exists, it is called the \emph{directional derivative} of $F$ in $x$ in direction $h$.
    \item If $F'(x;h)$ exists for all $h\in X$ and
        \begin{equation*}
            DF(x):X\to Y,\qquad h\mapsto F'(x;h)
        \end{equation*}
        defines a bounded linear operator, we call $F$ \emph{Gâteaux differentiable} (in $x$) and $DF\in L(X,Y)$ its \emph{Gâteaux derivative}.
    \item If additionally
        \begin{equation*}
            \lim_{\norm{h}_X\to 0} \frac{\norm{F(x+h) - F(x) - DF(x)h}_Y}{\norm{h}_X} = 0,
        \end{equation*}
        then $F$ is called \emph{Fréchet differentiable} (in $x$) and $F'(x):=DF(x)\in L(X,Y)$ its \emph{Fréchet derivative}.
    \item If the mapping $x\mapsto F'(x)$ is (Lipschitz) continuous, we call $F$ \emph{(Lipschitz) continuously differentiable}.
\end{itemize}
The difference between Gâteaux and Fréchet differentiable lies in the approximation error of $F$ near $x$ by $F(x)+DF(x)h$: While it only has to be bounded in $\norm{h}_X$ -- i.e., linear in $\norm{h}_X$ -- for a Gâteaux differentiable function, it has to be superlinear in $\norm{h}_X$ if $F$ is Fréchet differentiable. (For a \emph{fixed} direction $h$, this of course also the case for Gâteaux differentiable functions; Fréchet differentiability thus additionally requires a uniformity in $h$.)

If $F$ is Gâteaux differentiable, the Gâteaux derivative can be computed via
\begin{equation*}
    DF(x) h = \left(\tfrac{d}{dt}F(x+th)\right)\Big|_{t=0}.
\end{equation*}
Bounded linear operators $F\in L(X,Y)$ are obviously Fréchet differentiable with derivative $F'(x) = F \in L(X,Y)$ for all $x\in X$.
Further derivatives can be obtained through the usual calculus, whose proof in Banach spaces is exactly as in $\R^n$. As an example, we prove a chain rule.
\begin{theorem}\label{thm:frechet_chain}
    Let $X$, $Y$, and $Z$ be Banach spaces, and let $F:X\to Y$ be Fréchet differentiable in $x\in X$ and $G:Y\to Z$ be Fréchet differentiable in $y:=F(x)\in Y$. Then, $G\circ F$ is Fréchet differentiable in $x$ and
    \begin{equation*}
        (G\circ F)'(x) = G'(F(x))\circ F'(x).
    \end{equation*}
\end{theorem}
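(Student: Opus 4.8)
The plan is to verify the Fréchet differentiability of $G\circ F$ directly from the definition by estimating the remainder
\[
    r(h) := (G\circ F)(x+h) - (G\circ F)(x) - \bigl(G'(F(x))\circ F'(x)\bigr)h
\]
and showing $\norm{r(h)}_Z/\norm{h}_X \to 0$ as $\norm{h}_X\to 0$. First I would introduce the increment $k(h) := F(x+h)-F(x)\in Y$, so that $(G\circ F)(x+h)-(G\circ F)(x) = G(y+k(h))-G(y)$ with $y=F(x)$. Applying the Fréchet differentiability of $G$ at $y$ gives $G(y+k) - G(y) = G'(y)k + \rho_G(k)$ where $\norm{\rho_G(k)}_Z = o(\norm{k}_Y)$, and applying the Fréchet differentiability of $F$ at $x$ gives $k(h) = F'(x)h + \rho_F(h)$ with $\norm{\rho_F(h)}_Y = o(\norm{h}_X)$. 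Substituting and using linearity of $G'(y)$, the remainder splits as $r(h) = G'(y)\rho_F(h) + \rho_G(k(h))$.

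Next I would estimate the two pieces separately. For the first, since $G'(y)\in L(Y,Z)$ is bounded, $\norm{G'(y)\rho_F(h)}_Z \leq \norm{G'(y)}_{L(Y,Z)}\norm{\rho_F(h)}_Y$, which is $o(\norm{h}_X)$ directly. For the second, I would first note that $\norm{k(h)}_Y \leq \norm{F'(x)}_{L(X,Y)}\norm{h}_X + \norm{\rho_F(h)}_Y \leq C\norm{h}_X$ for $\norm{h}_X$ small enough (say once $\norm{\rho_F(h)}_Y \leq \norm{h}_X$), so in particular $k(h)\to 0$ as $h\to 0$. Then given $\eps>0$, pick $\delta_1$ so that $\norm{\rho_G(k)}_Z \leq \eps\norm{k}_Y$ whenever $\norm{k}_Y\leq\delta_1$; for $\norm{h}_X$ small enough that $\norm{k(h)}_Y\leq\delta_1$ we get $\norm{\rho_G(k(h))}_Z \leq \eps\norm{k(h)}_Y \leq \eps C\norm{h}_X$. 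Combining, $\norm{r(h)}_Z/\norm{h}_X \to 0$, which is the claim; the linearity and boundedness of $G'(F(x))\circ F'(x)$ as an element of $L(X,Z)$ is immediate from composition of bounded linear operators.

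The only subtlety — and the one point worth being careful about — is the handling of the composed remainder $\rho_G(k(h))$: the estimate $\norm{\rho_G(k)}_Z = o(\norm{k}_Y)$ is a statement about $k\to 0$ in $Y$, and one must legitimately pass to it by controlling $\norm{k(h)}_Y$ linearly in $\norm{h}_X$ (which is exactly what Fréchet, as opposed to merely Gâteaux, differentiability of $F$ buys us) and by noting $k(h)\to 0$. A minor bookkeeping point is the degenerate case $h=0$ or $k(h)=0$, where the quotient defining $\rho_G$ is vacuous and $\rho_G(0)=0$, so the bound holds trivially. No other obstacle arises; everything else is the routine triangle-inequality argument familiar from the finite-dimensional chain rule.
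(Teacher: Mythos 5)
Your proof is correct and follows essentially the same route as the paper's: introduce the increment $g=F(x+h)-F(x)$, apply the differentiability of $G$ at $F(x)$ and of $F$ at $x$, and control $\norm{g}_Y$ linearly by $\norm{h}_X$ to absorb the remainder of $G$. Your explicit $\eps$--$\delta$ handling of the composed remainder $\rho_G(k(h))$ is in fact slightly more careful than the paper's closing sentence, which leaves that step implicit.
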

\begin{proof}
    For $h\in X$ with $x+h\in\dom F$ we have
    \begin{equation*}
        (G\circ F)(x+h ) - (G\circ F)(x) = G(F(x+h))-G(F(x)) = G(y+g)  - G(y)
    \end{equation*}
    with $g := F(x+h)-F(x)$. The Fréchet differentiability of $G$ thus implies that
    \begin{equation*}
        \norm{(G\circ F)(x+h ) - (G\circ F)(x) - G'(y)g }_Z =  r_1(\norm{g}_Y)
    \end{equation*}
    with $r_1(t)/t \to 0$ for $t\to 0$. The Fréchet differentiability of $F$ further implies
    \begin{equation*}
        \norm{g - F'(x)h}_Y = r_2(\norm{h}_X)
    \end{equation*}
    with $r_2(t)/t \to 0$ for $t\to 0$. In particular, the reverse triangle inequality yields
    \begin{equation}\label{eq:frechet_chain:est}
        \norm{g}_Y \leq \norm{F'(x)h}_Y + r_2(\norm{h}_X).
    \end{equation}
    Hence, with $c:=\norm{G'(F(x))}_{L(Y,Z)}$ we have
    \begin{equation*}
        \norm{(G\circ F)(x+h) - (G\circ F)(x) -  G'(F(x)) F'(x)h}_Z \leq  r_1(\norm{g}_Y) +  c\, r_2(\norm{h}_X).
    \end{equation*}
    If $\norm{h}_X\to 0$, we obtain from \eqref{eq:frechet_chain:est} and $F'(x)\in L(X,Y)$ that $\norm{g}_Y\to 0$ as well, and the claim follows.
\end{proof}
A similar rule for Gâteaux derivatives does not hold, however.

We will also need the following variant of the mean value theorem.
Let $[a,b]\subset \R$ be a bounded interval and $f:[a,b]\to X$ be continuous. Then the  \emph{Bochner integral} $\int_a^b f(t)\,dt\in X$ is well-defined and by construction satisfies
\begin{equation} \label{eq:bochner}
    \left\langle x^*,\int_a^b f(t)\,dt\right\rangle_X = \int_a^b\dual{x^*,f(t)}_X\,dt \qquad\text{for all }x^*\in X^*,
\end{equation}
as well as
\begin{equation}
    \label{eq:bochner_est}
    \left\|\int_a^b f(t)\,dt\right\|_X \leq \int_a^b \norm{f(t)}_X\,dt,
\end{equation}
see, e.g., \cite[Corollary \textsc{v}.1]{Yosida}.
\begin{theorem}\label{thm:frechet:mean}
    Let $F:U\to Y$ be Fréchet differentiable, and let $y\in U$ and $h\in Y$ be given with $y+th\in U$ for all $t\in[0,1]$. Then
    \begin{equation*}
        F(y+h) - F(y) =\int_0^1 F'(y+th)h\,dt.
    \end{equation*}
\end{theorem}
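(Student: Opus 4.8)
The plan is to reduce the vector-valued identity to a family of scalar identities by testing against arbitrary $y^*\in Y^*$, apply the classical (scalar) fundamental theorem of calculus, and then invoke the Hahn--Banach-based fact that a vector in a Banach space is determined by the values of all bounded linear functionals on it (\cref{thm:hb_extension}, or rather its consequence \cref{cor:functan:norm_dual}). The two auxiliary properties \eqref{eq:bochner} and \eqref{eq:bochner_est} of the Bochner integral are exactly what make this reduction legitimate: the first lets me pull $y^*$ through the integral, and the integrand $t\mapsto F'(y+th)h$ is continuous on $[0,1]$ (since $F$ is continuously differentiable and $t\mapsto y+th$ is affine, hence continuous, and composition with the continuous map $T\mapsto Th$ on $L(X,Y)$ stays continuous), so the Bochner integral on the right-hand side is well-defined.

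First I would fix $y^*\in Y^*$ and define the scalar function $g:[0,1]\to\R$ by $g(t) := \dual{y^*,F(y+th)}_Y$. Using the Fréchet (hence Gâteaux) differentiability of $F$ at each point $y+th\in U$ together with the chain rule and the linearity and continuity of $y^*$, I would show $g$ is differentiable on $[0,1]$ with $g'(t) = \dual{y^*, F'(y+th)h}_Y$; here the one-dimensional chain rule suffices because $t\mapsto y+th$ has derivative $h$, so $(F\circ(y+\cdot\,h))'(t) = F'(y+th)h$. Continuity of $g'$ follows as above. Then the classical fundamental theorem of calculus gives
\begin{equation}
    \dual{y^*,F(y+h)}_Y - \dual{y^*,F(y)}_Y = g(1)-g(0) = \int_0^1 \dual{y^*,F'(y+th)h}_Y\,dt.
\end{equation}

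Next I would rewrite both sides. The left side equals $\dual{y^*, F(y+h)-F(y)}_Y$ by linearity of $y^*$. The right side, by property \eqref{eq:bochner} of the Bochner integral applied to the continuous integrand $f(t):=F'(y+th)h$, equals $\dual{y^*,\int_0^1 F'(y+th)h\,dt}_Y$. Hence for every $y^*\in Y^*$,
\begin{equation}
    \dual{y^*,\; F(y+h)-F(y) - \int_0^1 F'(y+th)h\,dt}_Y = 0.
\end{equation}
By \cref{cor:functan:norm_dual} (a consequence of Hahn--Banach), a vector $v\in Y$ with $\dual{y^*,v}_Y=0$ for all $y^*\in Y^*$ must have $\norm{v}_Y = \sup_{\norm{y^*}\le1}|\dual{y^*,v}_Y| = 0$, i.e.\ $v=0$; applying this to $v = F(y+h)-F(y) - \int_0^1 F'(y+th)h\,dt$ gives the claim.

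The only subtle point — and the step I would be most careful about — is verifying the differentiability and, more importantly, the continuity of $g$ (so that the \emph{classical} fundamental theorem of calculus applies) and of the Bochner integrand (so that \eqref{eq:bochner}, \eqref{eq:bochner_est} are available). Both rest on the hypothesis that $F$ is Fréchet differentiable on the whole segment with, implicitly, $x\mapsto F'(x)$ continuous there; this is where the words ``Fréchet differentiable'' in the statement must be read as ``continuously Fréchet differentiable'' (otherwise the right-hand side need not even be defined as a Bochner integral). Everything else is bookkeeping: pulling the continuous linear functional $y^*$ in and out of derivatives and integrals, and the final Hahn--Banach separation-of-vectors argument.
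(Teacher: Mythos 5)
Your proof is correct and follows essentially the same route as the paper's: test against an arbitrary $y^*\in Y^*$, differentiate the scalar function $t\mapsto\dual{y^*,F(y+th)}_Y$ via the chain rule, apply the scalar fundamental theorem of calculus, pull $y^*$ through the Bochner integral using \eqref{eq:bochner}, and conclude with \cref{cor:functan:norm_dual}. Your observation that continuity of $x\mapsto F'(x)$ along the segment is implicitly needed for the Bochner integral (as defined here for continuous integrands) to make sense is a fair point that the paper's own proof glosses over.
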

\begin{proof}
    Consider for arbitrary $y^*\in Y^*$ the function
    \begin{equation*}
        f:[0,1]\to \R,\qquad t\mapsto\dual{y^*,F(y+th)}_Y.
    \end{equation*}
    From \cref{thm:frechet_chain} we obtain that $f$ (as a composition of mappings on Banach spaces) is differentiable with
    \begin{equation*}
        f'(t) = \dual{y^*,F'(y+th)h}_Y,
    \end{equation*}
    and the fundamental theorem of calculus in $\R$ yields that
    \begin{equation*}
        \dual{y^*,F(y+h)-F(y)}_Y = f(1)-f(0) = \int_0^1f'(t)\,dt = \left\langle y^*,\int_0^1 F'(y+th)h\,dt\right\rangle_Y,
    \end{equation*}
    where the last equality follows from \eqref{eq:bochner}.
    Since $y^*\in Y^*$ was arbitrary, the claim follows from this together with \cref{cor:functan:norm_dual}.
\end{proof}

We now turn to the characterization of minimizers of a differentiable functions $F:X\to\R$.\footnote{The \emph{indirect method} of the calculus of variations uses this to show existence of minimizers as well, e.g., as the solution of a partial differential equation.}
\begin{theorem}[Fermat principle]\label{thm:variation:fermat}
    Let $F:X\to\R$ be Gâteaux differentiable and $\bar x \in X$ be a minimizer of $F$. Then $DF(\bar x) = 0$, i.e.,
    \begin{equation*}
        \dual{DF(\bar x), h}_X = 0 \qquad\text{for all }h\in X.
    \end{equation*}
\end{theorem}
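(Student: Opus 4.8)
The plan is to exploit the one-dimensional reduction that the directional derivative provides. Fix an arbitrary direction $h \in X$ and consider the scalar function $\phi:\R \to \R$ defined by $\phi(t) := F(\bar x + t h)$. Since $\bar x$ is a global minimizer of $F$, we have $\phi(t) = F(\bar x + th) \geq F(\bar x) = \phi(0)$ for all $t \in \R$, so $t = 0$ is a global (hence local) minimizer of $\phi$ on $\R$.

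Next I would use the assumed Gâteaux differentiability of $F$ to relate the one-sided derivative of $\phi$ at $0$ to $DF(\bar x)h$. By definition of the directional derivative,
\begin{equation}
    \lim_{t \to 0^+} \frac{\phi(t) - \phi(0)}{t} = \lim_{t \to 0^+} \frac{F(\bar x + th) - F(\bar x)}{t} = F'(\bar x; h) = DF(\bar x) h,
\end{equation}
and this limit exists because $F$ is Gâteaux differentiable. Applying the same computation in the direction $-h$, and using that $DF(\bar x)$ is linear, gives
\begin{equation}
    \lim_{t \to 0^+} \frac{\phi(-t) - \phi(0)}{t} = F'(\bar x; -h) = DF(\bar x)(-h) = -DF(\bar x)h.
\end{equation}
Thus the right derivative of $\phi$ at $0$ equals $DF(\bar x)h$ and the left derivative equals $DF(\bar x)h$ as well (the two one-sided limits agree), so $\phi$ is differentiable at $0$ with $\phi'(0) = DF(\bar x)h$.

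Now I would invoke the classical Fermat principle in $\R$: since $t=0$ minimizes the differentiable function $\phi$, we must have $\phi'(0) = 0$, which gives $DF(\bar x)h = 0$. Since $h \in X$ was arbitrary, $DF(\bar x)h = 0$ for all $h \in X$, i.e., $DF(\bar x) = 0$ as an element of $L(X, \R)$. Alternatively, one can avoid explicitly quoting the real Fermat principle and argue directly from the sign: for $t > 0$ the difference quotient $(\phi(t) - \phi(0))/t$ is nonnegative (numerator $\geq 0$, denominator $> 0$), so passing to the limit gives $DF(\bar x)h \geq 0$; for $t < 0$ the quotient is nonpositive, giving $DF(\bar x)h \leq 0$; hence $DF(\bar x)h = 0$. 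There is no real obstacle here — the only subtlety worth stating carefully is that Gâteaux differentiability guarantees the relevant one-sided limits exist and that the linearity of $DF(\bar x)$ is what lets us convert the direction $-h$ into a sign flip, so the "$\geq 0$ and $\leq 0$" argument closes cleanly without needing two-sided differentiability of $F$ itself.
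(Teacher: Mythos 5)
Your proof is correct and follows essentially the same route as the paper: reduce to the scalar function $t\mapsto F(\bar x+th)$, observe that $t=0$ is a minimizer, and apply the classical one-dimensional Fermat principle (equivalently, the sign argument on the difference quotients). Your version is in fact slightly more careful than the paper's, since you explicitly use the linearity of $DF(\bar x)$ to show the left and right derivatives at $0$ agree, a point the paper glosses over.
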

\begin{proof}
    Let $h\in X$ be arbitrary. Since $\bar x$ is a local minimizer, the core--int \cref{lem:functan:coreint} implies that there exists an $\epsilon >0$ such that $F(\bar x) \leq F(\bar x+th)$ for all $t\in (0,\eps)$, i.e.,
    \begin{equation}
        0\leq \frac{F(\bar x+th) - F(\bar x)}{t} \to F'(\bar x; h) = \dual{DF(\bar x),h}_X \quad\text{for }t\to 0,
    \end{equation}
    where we have used the Gâteaux differentiability and hence directional differentiability of $F$. Since the right-hand side is linear in $h$, the same argument for $-h$ yields $\dual{DF(\bar x), h}_X\leq 0$ and therefore the claim.
\end{proof}
Of particular relevance in optimization is of course the special case $F:X\to\R$, where $DF(x)\in L(X;\R)=X^*$ (if the Gâteaux derivative exists).
Note that since the Gâteaux derivative of $F:X\to\R$ is an element of $X^*$, it cannot be added to elements in $X$ (as required for, e.g., a steepest descent method). However, in Hilbert spaces (and in particular in $\R^N$), we can use the Fréchet--Riesz \cref{thm:frechetriesz} to identify $DF(x)\in X^*$ with an element $\nabla F(x) \in X$, called  the \emph{gradient} of $F$ at $x$, in a canonical way via
\begin{equation*}
    \dual{DF(x),h}_X = \inner{\nabla F(x),h}_X \qquad\text{for all } h\in X.
\end{equation*}
We illustrate this with a simple example.
\begin{example}\label{ex:variation:gradient1}
    Let $F(x) = \frac12\norm{x}_X^2=\frac12\inner{x,x}_X$. Then we have for all $x,h\in X$ that
    \begin{equation*}
        F'(x;h) = \lim_{t\to 0} \frac{\frac12\inner{x+th,x+th}_X - \frac12\inner{x,x}_X}{t} = \inner{x,h}_X = \dual{DF(x),h}_X,
    \end{equation*}
    since the inner product is linear in $h$ for fixed $x$.
    Hence, the squared norm is Gâteaux differentiable at every $x\in X$ with derivative $DF(x) = h\mapsto \inner{x,h}_X\in X^*$; it is even Fréchet differentiable since
    \begin{equation*}
        \lim_{\norm{h}_X\to 0} \frac{\left|\frac12\norm{x+h}_X^2 - \frac12\norm{x}_X^2 - (x,h)_X\right|}{\norm{h}_X} = \lim_{\norm{h}_X\to 0}\frac12 \norm{h}_X = 0.
    \end{equation*}
    The gradient $\nabla F(x) \in X$ by definition is given by
    \begin{equation*}
        \inner{\nabla F(x),h}_X = \dual{DF(x),h}_X = \inner{x,h}_X \qquad\text{for all } h\in X,
    \end{equation*}
    i.e., $\nabla F(x) = x$.
\end{example}
The following example demonstrates how the gradient (in contrast to the derivative) depends on the inner product on $X$ -- which may be different from the inner product inducing the squared norm.
\begin{example}\label{ex:variation:gradient2}
    Let $M\in L(X; X)$ be self-adjoint and positive definite (and thus continuously invertible). Then $\inner{x,y}_Z:=\inner{Mx,y}_X$ also defines an inner product on the vector space $X$ and induces an (equivalent) norm $\norm{x}_Z := \inner{x,x}_Z^{1/2}$ on $X$. Hence $(X,\inner{\cdot,\cdot}_Z)$ is a Hilbert space as well, which we will denote by $Z$. Consider now the functional $\tilde F:Z\to\R$ with $\tilde F(x) := \frac12\norm{x}_X^2$ (which is well-defined since $\norm{\cdot}_X$ is also an equivalent norm on $Z$). Then, the derivative $D\tilde{F}(x)\in Z^*$ is still given by $\dual{D \tilde F(x),h}_Z = \inner{x,h}_X$ for all $h\in Z$ (or, equivalently, for all $h\in X$ since we defined $Z$ via the same vector space). However, $\nabla \tilde F(x)\in Z$ is now characterized by
    \begin{equation*}
        \inner{x,h}_X = \dual{D\tilde{F}(x),h}_Z = \inner{\nabla \tilde F(x),h}_{Z} = \inner{M\nabla \tilde F(x),h}_X \qquad\text{for all } h\in Z,
    \end{equation*}
    i.e., $\nabla \tilde F(x) = M^{-1} x\neq \nabla F(x)$.
\end{example}
(The situation is even more delicate if $M$ is only positive definite on a subspace, as in the case of $X=L^2(\Omega)$ and $Z=H^1(\Omega)$.)

\section{Superposition operators}

A special class of operators on function spaces arise from pointwise application of a real-valued function, e.g., $u(x)\mapsto\sin(u(x))$. We thus consider for $f:\Omega\times \R\to\R$ with $\Omega\subset\R^n$ open and bounded as well as $p,q\in[1,\infty]$ the corresponding \emph{superposition} or \emph{Nemytskii operator}
\begin{equation}\label{eq:superposition}
    F:L^p(\Omega)\to L^q(\Omega),\qquad [F(u)](x) = f(x,u(x)) \quad\text{for almost every }x\in \Omega.
\end{equation}
For this operator to be well-defined requires certain restrictions on $f$. We call $f$ a \emph{Carathéodory function}, if
\begin{enumerate}[(i)]
    \item for all $z\in\R$, the mapping $x\mapsto f(x,z)$ is measurable;
    \item for almost every $x\in\Omega$, the mapping $z\mapsto f(x,z)$ is continuous.
\end{enumerate}
We additionally require the following growth condition: For given $p,q\in[1,\infty)$ there exist $a\in L^q(\Omega)$ and $b\in L^\infty(\Omega)$ with
\begin{equation}\label{eq:superpos:growth}
    |f(x,z)| \leq a(x) +b(x)|z|^{p/q}.
\end{equation}
Under these conditions, $F$ is well-defined and even continuous.
\begin{theorem}\label{thm:superpos:continuous}
    If the Carathéodory function $f:\Omega\times \R\to\R$ satisfies the growth condition \eqref{eq:superpos:growth} for $p,q\in[1,\infty)$, then the superposition operator  $F:L^p(\Omega)\to L^q(\Omega)$ defined via \eqref{eq:superposition} is continuous.
\end{theorem}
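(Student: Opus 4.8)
The plan is to first establish that $F$ is well-defined and then to prove sequential continuity by a subsequence argument that reduces the claim to the dominated convergence theorem.

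First I would check that $F$ maps $L^p(\Omega)$ into $L^q(\Omega)$. For $u\in L^p(\Omega)$ the function $x\mapsto f(x,u(x))$ is measurable: one approximates $u$ pointwise a.e.\ by simple functions, uses measurability of $x\mapsto f(x,z)$ from Carathéodory property (i), and passes to the limit using continuity of $z\mapsto f(x,z)$ from (ii). The growth condition \eqref{eq:superpos:growth} together with $b\in L^\infty(\Omega)$ and the elementary inequality $(s+t)^q\le 2^{q-1}(s^q+t^q)$ (valid for $q\ge 1$) gives $|f(x,u(x))|^q\le 2^{q-1}(a(x)^q+\norm{b}_\infty^q|u(x)|^p)$ for a.e.\ $x$; the right-hand side is integrable since $a\in L^q(\Omega)$ and $u\in L^p(\Omega)$, so $F(u)\in L^q(\Omega)$.

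For continuity, let $u_n\to u$ in $L^p(\Omega)$; I must show $F(u_n)\to F(u)$ in $L^q(\Omega)$. Since convergence in a metric space is equivalent to every subsequence possessing a further subsequence that converges to the same limit, it suffices to show that for every subsequence $\{u_{n_k}\}$ there is a sub-subsequence along which $F$ converges to $F(u)$ in $L^q$. Fixing such a subsequence (which still converges to $u$ in $L^p$), I would invoke the standard fact from Lebesgue theory that $L^p$-convergence yields a further subsequence --- not relabeled --- with $u_{n_k}(x)\to u(x)$ for a.e.\ $x$ and a majorant $g\in L^p(\Omega)$ satisfying $|u_{n_k}(x)|\le g(x)$ a.e.\ for all $k$; concretely, one passes to a rapidly convergent subsequence with $\norm{u_{n_{k+1}}-u_{n_k}}_p\le 2^{-k}$ and takes $g=|u_{n_1}|+\sum_{k}|u_{n_{k+1}}-u_{n_k}|$, which lies in $L^p(\Omega)$ by the triangle inequality and monotone convergence.

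Along this sub-subsequence, continuity of $z\mapsto f(x,z)$ gives $f(x,u_{n_k}(x))\to f(x,u(x))$ for a.e.\ $x$, hence $|F(u_{n_k})(x)-F(u)(x)|^q\to 0$ a.e. The growth condition bounds $|f(x,u_{n_k}(x))|^q\le 2^{q-1}(a(x)^q+\norm{b}_\infty^q g(x)^p)=:h(x)\in L^1(\Omega)$ uniformly in $k$, so $|F(u_{n_k})(x)-F(u)(x)|^q\le 2^{q-1}(h(x)+|F(u)(x)|^q)$, an integrable majorant independent of $k$. The dominated convergence theorem then yields $\norm{F(u_{n_k})-F(u)}_q^q=\int_\Omega|F(u_{n_k})-F(u)|^q\to 0$, which completes the subsequence argument and hence the proof. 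The only nonroutine ingredient --- the \emph{main obstacle} --- is the extraction of an a.e.-convergent subsequence with a common $L^p$-majorant; everything else is routine bookkeeping with the inequality above and dominated convergence.
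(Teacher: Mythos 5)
Your proof is correct and follows essentially the same route as the paper's: measurability plus the growth condition for well-definedness, then extraction of an a.e.-convergent subsequence with a common $L^p$-majorant, dominated convergence in $L^q$, and a subsequence--subsequence argument to upgrade to the full sequence. The only cosmetic difference is that you bound $|f(x,u(x))|^q$ via the elementary inequality $(s+t)^q\le 2^{q-1}(s^q+t^q)$ and make the majorant construction explicit, where the paper uses the triangle inequality in $L^q$ and cites the majorant lemma from the literature.
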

\begin{proof}
    We sketch the essential steps; a complete proof can be found in, e.g., \cite[Theorems 3.1, 3.7]{Appell:1990}. First, one shows for given $u\in L^p(\Omega)$ the measurability of $F(u)$ using the Carathéodory properties. It then follows from \eqref{eq:superpos:growth} and the triangle inequality that
    \begin{equation*}
        \norm{F(u)}_{L^q} \leq \norm{a}_{L^q} + \norm{b}_{L^\infty}\norm{|u|^{p/q}}_{L^q}  = \norm{a}_{L^q} + \norm{b}_{L^\infty}\norm{u}^{p/q}_{L^p}<\infty,
    \end{equation*}
    i.e., $F(u)\in L^q(\Omega)$.

    To show continuity, we consider a sequence $\{u_n\}_{n\in\N}\subset L^p(\Omega)$ with $u_n\to u\in L^p(\Omega)$. Then there exists a subsequence, again denoted by $\{u_n\}_{n\in \N}$, that converges pointwise almost everywhere in $\Omega$, as well as a $v\in L^p(\Omega)$ with $|u_n(x)| \leq |v(x)| + |u_1(x)|=:g(x)$ for all $n\in \N$ and almost every $x\in\Omega$ (see, e.g., \cite[Lemma 3.22 as well as (3-14) in the proof of Theorem 3.17]{Alt:2016}).
    The continuity of $z\mapsto f(x,z)$ then implies $F(u_n)\to F(u)$ pointwise almost everywhere as well as
    \begin{equation*}
        |[F(u_n)](x)| \leq a(x) + b(x)|u_n(x)|^{p/q} \leq  a(x) + b(x)|g(x)|^{p/q}\quad\text{for almost every }x\in\Omega.
    \end{equation*}
    Since $g\in L^p(\Omega)$, the right-hand side defines a function in $L^q(\Omega)$, and we can apply Lebesgue's dominated convergence theorem to deduce that $F(u_n)\to F(u)$ in $L^q(\Omega)$.
    As this argument can be applied to any subsequence, the whole sequence must converge to $F(u)$, which yields the claimed continuity.
\end{proof}
In fact, the growth condition \eqref{eq:superpos:growth} is also necessary for continuity; see \cite[Theorem 3.2]{Appell:1990}.
In addition, it is straightforward to show that for $p=q=\infty$, the growth condition \eqref{eq:superpos:growth} (with $p/q := 0$ in this case) implies that $F$ is even locally Lipschitz continuous.

Similarly, one would like to show that differentiability of $f$ implies differentiability of the corresponding superposition operator $F$, ideally with pointwise derivative $[F'(u)h](x) = f'(u(x))h(x)$.
However, this does not hold in general; for example, the superposition operator defined by $f(x,z)=\sin(z)$ is \emph{not} differentiable in $u=0$ for  $1\leq p=q<\infty$. The reason is that for a Fréchet differentiable superposition operator $F:L^p(\Omega)\to L^q(\Omega)$ and a direction $h\in L^p(\Omega)$, the pointwise(!) product $F'(u)h$ has to be in $L^q(\Omega)$.
This leads to additional conditions on the superposition operator $F'$ defined by $f'$, which is known as \emph{two norm discrepancy}.
\begin{theorem}\label{thm:superpos:differentiable}
    Let $f:\Omega\times\R\to\R$ be a Carathéodory function that satisfies the growth condition \eqref{eq:superpos:growth} for $1\leq q<p<\infty$. If the partial derivative $f'_z$ is a Carathéodory function as well and satisfies \eqref{eq:superpos:growth} for $p'=p-q$, the superposition operator $F:L^p(\Omega)\to L^q(\Omega)$ is continuously Fréchet differentiable, and its derivative in $u\in L^p(\Omega)$ in direction $h\in L^p(\Omega)$ is given by
    \begin{equation*}
        [F'(u)h](x) = f_z'(x,u(x))h(x) \qquad\text{for almost every }x\in\Omega.
    \end{equation*}
\end{theorem}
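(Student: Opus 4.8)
The plan is to verify directly that the pointwise multiplication operator $A_u\colon h\mapsto G(u)\cdot h$, where $G(u)(x):=f_z'(x,u(x))$, is the Fréchet derivative of $F$ at $u$, and that $u\mapsto A_u$ is continuous. (That $F$ itself is a well-defined continuous map $L^p(\Omega)\to L^q(\Omega)$ is already \cref{thm:superpos:continuous}.) Everything reduces to the \emph{continuity} of the Nemytskii operator $G$ generated by $f_z'$: the growth hypothesis on $f_z'$ together with \cref{thm:superpos:continuous} guarantees that, with $\tfrac1r:=\tfrac1q-\tfrac1p>0$ — here the strict inequality $q<p$ is used crucially — $G$ maps $L^p(\Omega)$ continuously into $L^r(\Omega)$. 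Since $\tfrac1q=\tfrac1p+\tfrac1r$, Hölder's inequality in the form $\norm{fg}_{L^q}\le\norm{f}_{L^p}\norm{g}_{L^r}$ shows $A_u\in L(L^p(\Omega),L^q(\Omega))$ with $\norm{A_u}\le\norm{G(u)}_{L^r}$; this is exactly where the \enquote{two-norm discrepancy} forces the stronger integrability of $f_z'$.

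To prove Fréchet differentiability at $u$, fix $h\in L^p(\Omega)$. For almost every $x$ the map $z\mapsto f(x,z)$ is $C^1$ (its derivative $f_z'(x,\cdot)$ being continuous by the Carathéodory property of $f_z'$), so the fundamental theorem of calculus in $\R$ yields, pointwise almost everywhere,
\begin{equation}
  [F(u+h)-F(u)-A_u h](x)=h(x)\int_0^1\bigl(f_z'(x,u(x)+th(x))-f_z'(x,u(x))\bigr)\,dt,
\end{equation}
where joint measurability of $(x,t)\mapsto f_z'(x,u(x)+th(x))$ and Fubini's theorem are used just as in the proof of \cref{thm:superpos:continuous}. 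Estimating the integrand pointwise, then applying the above Hölder inequality followed by Minkowski's integral inequality, one obtains
\begin{equation}
  \frac{\norm{F(u+h)-F(u)-A_u h}_{L^q}}{\norm{h}_{L^p}}\le\int_0^1\norm{G(u+th)-G(u)}_{L^r}\,dt.
\end{equation}
The key observation is that if $\norm{h}_{L^p}\le\delta$ then $\norm{(u+th)-u}_{L^p}\le\delta$ for \emph{every} $t\in[0,1]$, so the continuity of $G\colon L^p(\Omega)\to L^r(\Omega)$ at $u$ makes the whole integrand small uniformly in $t$; hence the right-hand side tends to $0$ as $\norm{h}_{L^p}\to0$. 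This is precisely Fréchet differentiability of $F$ at $u$ with $F'(u)=A_u$, i.e., the asserted pointwise formula.

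Continuity of $u\mapsto F'(u)$ is then immediate: for $u_n\to u$ in $L^p(\Omega)$, the operator $F'(u_n)-F'(u)$ is multiplication by $G(u_n)-G(u)$, so the same Hölder bound gives $\norm{F'(u_n)-F'(u)}_{L(L^p,L^q)}\le\norm{G(u_n)-G(u)}_{L^r}\to0$ by continuity of $G$. The argument is short; the only genuinely delicate points are bookkeeping — ensuring $G(u)\cdot h\in L^q(\Omega)$, which is what forces the extra integrability of $f_z'$ (the \enquote{two-norm discrepancy}) together with $q<p$, and the measurability/Fubini justification of the integral remainder. Conceptually, the one idea doing all the work is the reduction of Fréchet differentiability of $F$ to mere continuity of the Nemytskii operator of its derivative, which \cref{thm:superpos:continuous} already provides.
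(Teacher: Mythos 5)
Your proof is correct and follows essentially the same route as the paper's: both reduce the problem to the continuity of the Nemytskii operator $G$ of $f_z'$ from $L^p(\Omega)$ to $L^r(\Omega)$ with $r=\tfrac{pq}{p-q}$, apply H\"older's inequality to get boundedness of the multiplication operator, and use the mean value theorem in integral form together with a Minkowski/Bochner integral estimate to bound the remainder by $\int_0^1\norm{G(u+th)-G(u)}_{L^r}\,dt\,\norm{h}_{L^p}$. Your explicit remark that $\norm{(u+th)-u}_{L^p}\le\norm{h}_{L^p}$ uniformly in $t$ is a slightly more careful justification of the final limit than the paper gives.
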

\begin{proof}
    \Cref{thm:superpos:continuous} yields that for $r := \frac{pq}{p-q}$ (i.e., $\frac{r}{p} = \frac{p'}{q}$), the superposition operator
    \begin{equation*}
        G:L^p(\Omega)\to L^r(\Omega),\qquad [G(u)](x)=f'_z(x,u(x))\quad\text{for almost every }x\in\Omega,
    \end{equation*}
    is well-defined and continuous.
    The Hölder inequality further implies that for any $u\in L^p(\Omega)$,
    \begin{equation}\label{eq:superpos:hoelder}
        \norm{G(u)h}_{L^q} \leq \norm{G(u)}_{L^r}\norm{h}_{L^p}\qquad\text{for all }h\in L^p(\Omega),
    \end{equation}
    i.e., $h\mapsto G(u)h$ defines a bounded linear operator $DF(u):L^p(\Omega)\to L^q(\Omega)$.

    Let now $h\in L^p(\Omega)$ be arbitrary. Since $z\mapsto f(x,z)$ is continuously differentiable by assumption, the classical mean value theorem together with \eqref{eq:bochner_est} and \eqref{eq:superpos:hoelder} implies that
    \begin{equation*}
        \begin{multlined}[c][0.9\displaywidth]
            \norm{F(u+h)-F(u)-DF(u)h}_{L^q}\\
            \begin{aligned}[t]
                &= \left(\int_\Omega |f(x,u(x)+h(x)) - f(x,u(x)) - f'_z(x,u(x))h(x)|^q\,dx\right)^{\frac1q}\\
                &= \left(\int_\Omega \left|\int_0^1 f'_z(x,u(x)+th(x))h(x)\,dt  - f'_z(x,u(x))h(x)\right|^q\,dx\right)^{\frac1q}\\
                &=
                \left\|\int_0^1 G(u+th)h\,dt - G(u)h\right\|_{L^q}\\
                & \leq \int_0^1 \norm{(G(u+th)-G(u))h}_{L^q}\,dt\\
                &\leq \int_0^1 \norm{G(u+th)-G(u)}_{L^r}\,dt\ \norm{h}_{L^p}.
            \end{aligned}
        \end{multlined}
    \end{equation*}
    Due to the continuity of $G:L^p(\Omega)\to L^r(\Omega)$, the integral tends to zero for $\norm{h}_{L^p}\to 0$, and hence $F$ is by definition Fréchet differentiable with derivative $F'(u) = DF(u)$ (whose continuity we have already shown).
\end{proof}
In fact, this result is sharp: except for the case $p=q=\infty$, no superposition operator is differentiable from $L^p(\Omega)$ to $L^p(\Omega)$ (unless it is affine-linear); see, e.g., \cite[Theorem 3.12]{Appell:1990}.

\part{Convex analysis}

\chapter{Convex functions}\label{chap:convex}

The classical derivative concepts from the previous chapter are not sufficient for our purposes, since many interesting functionals are not differentiable in this sense; also, they cannot handle functionals with values in $\Rbar$. We therefore need a derivative concept that is more general than Gâteaux and Fréchet derivatives and still allows a Fermat principle and a rich calculus.

We first consider a general class of functionals that admit such a generalized derivative.
A proper functional $F:X\to\Rbar$ is called \emph{convex} if
\begin{equation}\label{eq:convex:def}
    F(\lambda x + (1-\lambda)y)\leq \lambda F(x) + (1-\lambda)F(y)\quad\text{for all }x,y\in X \text{ and } \lambda\in [0,1]
\end{equation}
(where the function value $\infty$ is allowed on both sides).
If for $x\neq y$ and $\lambda \in (0,1)$ we even have
\begin{equation*}
    F(\lambda x + (1-\lambda)y)< \lambda F(x) + (1-\lambda)F(y),
\end{equation*}
we call $F$ \emph{strictly convex}.

An alternative characterization of the convexity of a functional $F:X\to\Rbar$ is based on its \emph{epigraph}
\begin{equation*}
    \epi F :=\setof{(x,t)\in X\times \R}{F(x)\leq t}.
\end{equation*}
\begin{lemma}\label{lem:convex:epi}
    Let $F:X\to\Rbar$. Then $\epi F$ is
    \begin{enumerate}[(i)]
        \item nonempty if and only if $F$ is proper;
        \item convex if and only if $F$ is convex;
        \item (weakly) closed if and only if $F$ is (weakly) lower semicontinuous.
    \end{enumerate}
\end{lemma}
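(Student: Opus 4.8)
The plan is to establish each of the three equivalences separately and directly from the definitions, working throughout in the Banach space $X\times\R$ with the product norm. For (i): if $F$ is proper, choose $x\in\dom F$, so that $F(x)\in\R$ and $(x,F(x))\in\epi F$; conversely, any $(x,t)\in\epi F$ satisfies $F(x)\le t<\infty$, hence $x\in\dom F$. For (ii) — where, in view of (i), I may assume $F$ proper so that the notion of convexity applies — the implication \enquote{$F$ convex $\Rightarrow\epi F$ convex} follows by taking $(x,s),(y,t)\in\epi F$ and $\lambda\in[0,1]$ and noting $F(\lambda x+(1-\lambda)y)\le\lambda F(x)+(1-\lambda)F(y)\le\lambda s+(1-\lambda)t$, so that the corresponding convex combination of the two points lies in $\epi F$. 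For the converse, given $x,y\in X$ and $\lambda\in[0,1]$: if $F(x)=\infty$ or $F(y)=\infty$ the inequality \eqref{eq:convex:def} is trivial, and otherwise $(x,F(x)),(y,F(y))\in\epi F$, so convexity of $\epi F$ places $(\lambda x+(1-\lambda)y,\lambda F(x)+(1-\lambda)F(y))$ in $\epi F$, which is precisely \eqref{eq:convex:def}.

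For (iii) I give the argument for \enquote{closed $\Leftrightarrow$ lower semicontinuous}; the weak version is word-for-word identical, using that a sequence converges weakly in $X\times\R$ precisely when its first component converges weakly in $X$ and its (scalar) second component converges. If $F$ is lower semicontinuous and $(x_n,t_n)\in\epi F$ with $(x_n,t_n)\to(x,t)$, then $x_n\to x$ and $t_n\to t$, so $F(x)\le\liminf_n F(x_n)\le\liminf_n t_n=t$ and $(x,t)\in\epi F$; hence $\epi F$ is closed. Conversely, assume $\epi F$ closed and fix $x_n\to x$; passing to a subsequence I may assume $F(x_n)\to\alpha:=\liminf_n F(x_n)\in[-\infty,\infty]$. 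If $\alpha=\infty$ there is nothing to show. If $\alpha\in\R$, then for each $\eps>0$ one has $F(x_n)\le\alpha+\eps$ for $n$ large, so $(x_n,\alpha+\eps)\in\epi F$ and $(x_n,\alpha+\eps)\to(x,\alpha+\eps)$; closedness gives $F(x)\le\alpha+\eps$, and letting $\eps\to0$ yields $F(x)\le\alpha=\liminf_n F(x_n)$. The remaining case $\alpha=-\infty$ does not occur once $\epi F$ is closed: running the same argument with $\alpha+\eps$ replaced by an arbitrary $M\in\R$ would force $F(x)\le M$ for every $M$, contradicting $F:X\to\Rbar$.

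The only place that calls for care is this last implication of (iii): one has to extract a subsequence realizing the $\liminf$, dispose of the two degenerate values $\pm\infty$ of that $\liminf$ (the value $-\infty$ being excluded precisely by the convention $F:X\to\Rbar$ together with closedness of the epigraph), and observe that a sequence with fixed second coordinate $(x_n,c)$ converges to $(x,c)$ in the product norm. All of this is routine; the remainder of the proof is bookkeeping with the arithmetic of $\Rbar$.
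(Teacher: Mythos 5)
Your proof is correct and follows essentially the same route as the paper's: (i) and (ii) are identical, and in (iii) you use the same key device of feeding points $(x_n,c)$ with a fixed second coordinate slightly above the $\liminf$ (or an arbitrary $M$ in the degenerate case) into the closedness of $\epi F$. The only cosmetic difference is that you run the reverse implication of (iii) directly, extracting a subsequence realizing the $\liminf$ and letting $\eps\to 0$, where the paper phrases the same argument as a proof by contradiction split into the cases $x\in\dom F$ and $x\notin\dom F$.
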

\begin{proof}
    Statement (i) follows directly from the definition: $F$ is proper if and only if there exists an $x\in X$ and a $t\in \R$ with $F(x)\leq t <\infty$, i.e., $(x,t)\in\epi F$.

    For (ii), let $F$ be convex and $(x,r),(y,s)\in \epi F$ be given. For any $\lambda\in[0,1]$, the definition \eqref{eq:convex:def} then implies that
    \begin{equation*}
        F(\lambda x + (1-\lambda)y)\leq \lambda F(x) + (1-\lambda)F(y) \leq \lambda r + (1-\lambda)s,
    \end{equation*}
    i.e., that
    \begin{equation*}
        \lambda(x,r) + (1-\lambda)(y,s) = (\lambda x + (1-\lambda)y,\lambda r + (1-\lambda)s) \in \epi F,
    \end{equation*}
    and hence $\epi F$ is convex.
    Let conversely $\epi F$ be convex and $x,y\in X$ be arbitrary, where we can assume that $F(x)<\infty$ and $F(y)<\infty$ (otherwise \eqref{eq:convex:def} is trivially satisfied). We  clearly have $(x,F(x)),(y,F(y))\in\epi F$. The convexity of $\epi F$ then implies for all  $\lambda\in[0,1]$ that
    \begin{equation*}
        (\lambda x + (1-\lambda)y,\lambda F(x) + (1-\lambda)F(y)) = \lambda(x,F(x)) + (1-\lambda)(y,F(y)) \in \epi F,
    \end{equation*}
    and hence by definition of $\epi F$ that \eqref{eq:convex:def} holds.

    Finally, we show (iii): Let first $F$ be lower semicontinuous and $\{(x_n,t_n)\}_{n\in\N}\subset \epi F$ be an arbitrary sequence with $(x_n,t_n)\to(x,t)\in X\times \R$. Then we have that
    \begin{equation*}
        F(x)\leq \liminf_{n\to\infty} F(x_n) \leq  \limsup_{n\to\infty} t_n = t,
    \end{equation*}
    i.e., $(x,t)\in \epi F$. Let conversely $\epi F$ be closed and assume that $F$ is not lower semicontinuous. Then there exists a sequence $\{x_n\}_{n\in\N}\subset X$ with $x_n\to x\in X$ and
    \begin{equation*}
        F(x) > \liminf_{n\to\infty} F(x_n) =: M \in [-\infty,\infty).
    \end{equation*}
    We now distinguish two cases.
    \begin{enumerate}[a)]
        \item $x\in \dom F$: In this case, we can select a subsequence, again denoted by  $\{x_n\}_{n\in\N}$, such that there exists an $\eps>0$ with $F(x_n) \leq F(x)-\eps$ and thus $(x_n,F(x)-\eps)\in \epi F$ for all $n\in \N$.
            From $x_n\to x$ and the closedness of $\epi F$, we deduce that $(x,F(x)-\eps)\in \epi F$ and hence $F(x) \leq F(x) - \eps$, contradicting $\eps>0$.

        \item $x\not\in \dom F$: In this case, we can argue similarly using $F(x_n) \leq M+\eps$ for $M>-\infty$ or $F(x_n) \leq \eps$ for $M=-\infty$ to obtain a contradiction with $F(x)=\infty$.
    \end{enumerate}
    The equivalence of weak lower semicontinuity and weak closedness follows in exactly the same way.
\end{proof}
Note that $(x,t)\in \epi F$ implies that $x\in \dom F$; hence the effective domain of a proper, convex, and lower semicontinuous functional is always nonempty, convex, and closed as well.
Also, together with \cref{lem:convex_closed} we immediately obtain
\begin{cor}\label{cor:convex:uhs}
    Let $F:X\to\Rbar$ be convex. Then, $F$ is weakly lower semicontinuous if and only $F$ is lower semicontinuous.
\end{cor}

Also useful for the study of a functional $F:X\to\Rbar$ are the corresponding \emph{sublevel sets}
\begin{equation*}
    F_\alpha := \setof{x\in X}{F(x)\leq \alpha},\qquad\alpha\in\R,
\end{equation*}
for which one shows as in \cref{lem:convex:epi} the following properties.
\clearpage
\begin{lemma}\label{lem:convex:sublevel}
    Let $F:X\to\Rbar$.
    \begin{enumerate}[(i)]
        \item If $F$ is convex, $F_\alpha$ is convex for all $\alpha\in\R$, but the converse does not hold.
        \item $F$ is (weakly) lower semicontinuous if and only if $F_\alpha$ is (weakly) closed  for all $\alpha\in\R$.
    \end{enumerate}
\end{lemma}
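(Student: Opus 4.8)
The plan is to follow the template of \cref{lem:convex:epi}, viewing the sublevel set $F_\alpha$ as the horizontal slice $\setof{x\in X}{(x,\alpha)\in\epi F}$ of the epigraph; statement (ii) in particular parallels \cref{lem:convex:epi}\,(iii) almost verbatim.

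For statement (i), the forward implication is a one-line computation: if $F$ is convex and $x,y\in F_\alpha$, then for any $\lambda\in[0,1]$,
\[
    F(\lambda x+(1-\lambda)y)\leq \lambda F(x)+(1-\lambda)F(y)\leq \lambda\alpha+(1-\lambda)\alpha=\alpha,
\]
so $\lambda x+(1-\lambda)y\in F_\alpha$ and $F_\alpha$ is convex. For the failure of the converse I would exhibit a quasiconvex but non-convex function, e.g.\ $F:\R\to\R$, $F(t)=\sqrt{|t|}$: here $F_\alpha=[-\alpha^2,\alpha^2]$ for $\alpha\geq 0$ and $F_\alpha=\emptyset$ for $\alpha<0$, all of which are convex, whereas $F\bigl(\tfrac12(-1)+\tfrac12\cdot 1\bigr)=0<1=\tfrac12 F(-1)+\tfrac12 F(1)$ shows that $F$ is not convex.

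For statement (ii), I would prove the two implications separately, reusing the closed-epigraph argument. For ``$\Rightarrow$'': if $F$ is lower semicontinuous and $\{x_n\}_{n\in\N}\subset F_\alpha$ with $x_n\to x$, then $F(x_n)\leq\alpha$ for all $n$ gives $F(x)\leq\liminf_{n\to\infty}F(x_n)\leq\alpha$, hence $x\in F_\alpha$. For ``$\Leftarrow$'': assume every $F_\alpha$ is closed but $F$ is not lower semicontinuous, so there is a sequence $x_n\to x$ with $M:=\liminf_{n\to\infty}F(x_n)<F(x)$, where $M\in[-\infty,\infty)$. Fix any real $\alpha$ with $M<\alpha<F(x)$ (possible since $M<\infty$, while $F(x)$ may equal $+\infty$); then a subsequence satisfies $F(x_{n_k})\leq\alpha$, so $\{x_{n_k}\}_{k\in\N}\subset F_\alpha$, and closedness of $F_\alpha$ together with $x_{n_k}\to x$ forces $x\in F_\alpha$, i.e.\ $F(x)\leq\alpha<F(x)$ -- a contradiction. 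The weak statements are obtained verbatim by replacing $\to$ with $\weakto$ everywhere; note that, in contrast to \cref{lem:convex_closed}, neither convexity nor a separation theorem enters here.

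I do not expect a real obstacle. The two points needing a little care are the choice of the threshold $\alpha$ in the reverse direction of (ii) when $M=-\infty$ or $F(x)=+\infty$, and selecting for (i) a function whose sublevel sets are all intervals yet which lies strictly below one of its chords.
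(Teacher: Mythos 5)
Your overall strategy is exactly what the paper intends: it gives no proof of this lemma, merely remarking that one argues as in \cref{lem:convex:epi}, and your treatment of (i)~(forward direction) and of (ii) in both directions does precisely that. The handling of the threshold $\alpha$ in the reverse direction of (ii), including the cases $M=-\infty$ and $F(x)=+\infty$, is correct, and the weak version does indeed follow verbatim.

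There is, however, a genuine error in your justification that the converse of (i) fails. The function $F(t)=\sqrt{|t|}$ is a valid counterexample (it is quasiconvex with convex sublevel sets $[-\alpha^2,\alpha^2]$ but not convex), yet the inequality you display does not certify non-convexity: showing
\begin{equation}
    F\bigl(\tfrac12(-1)+\tfrac12\cdot 1\bigr)=0\leq 1=\tfrac12 F(-1)+\tfrac12 F(1)
\end{equation}
is exactly the convexity inequality \emph{holding} at that pair of points, not being violated. To refute convexity you need a chord lying strictly \emph{below} the graph, which requires a different pair of points; for instance $x=0$, $y=1$, $\lambda=\tfrac12$ gives
\begin{equation}
    F\bigl(\tfrac12\cdot 0+\tfrac12\cdot 1\bigr)=\tfrac1{\sqrt2}>\tfrac12=\tfrac12 F(0)+\tfrac12 F(1),
\end{equation}
which uses the strict concavity of $t\mapsto\sqrt{t}$ on $[0,\infty)$. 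With that substitution the proof is complete.
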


Directly from the definition we obtain the convexity of
\begin{enumerate}[(i)]
    \item \emph{affine functionals} of the form $x\mapsto \dual{x^*,x}_X - \alpha$ for fixed $x^*\in X^*$ and $\alpha\in\R$;
    \item the norm $\norm{\cdot}_X$ in a normed vector space $X$;
    \item the indicator function $\delta_C$ for a convex set $C$.
\end{enumerate}
If $X$ is a Hilbert space, $F(x) = \norm{x}_X^2$ is even strictly convex: For $x,y\in X$ with $x\neq y$ and any $\lambda\in (0,1)$,
\begin{equation*}
    \begin{aligned}
        \norm{\lambda x+(1-\lambda )y}_X^2 &= \inner{\lambda x+(1-\lambda )y,\lambda x+(1-\lambda )y}_X \\
        &= \lambda ^2\inner{x,x}_X + 2\lambda (1-\lambda )\inner{x,y}_X+(1-\lambda )^2\inner{y,y}_X\\
        &=\lambda \Big(\lambda \inner{x,x}_X-(1-\lambda )\inner{x-y,y}_X+(1-\lambda )\inner{y,y}_X\Big)\\
        \MoveEqLeft[-1]
        +(1-\lambda )\Big(\lambda \inner{x,x}_X+\lambda \inner{x-y,y}_X+(1-\lambda )\inner{y,y}_X\Big)\\
        &=(\lambda +(1-\lambda ))\Big(\lambda \inner{x,x}_X+(1-\lambda )\inner{y,y}_X\Big)-\lambda (1-\lambda )\inner{x-y,x-y}_X\\
        &=\lambda \norm{x}_X^2 +(1-\lambda )\norm{y}_X^2-\lambda (1-\lambda )\norm{x-y}_X^2\\
        &<\lambda \norm{x}_X^2 +(1-\lambda )\norm{y}_X^2.
    \end{aligned}
\end{equation*}

Further examples can be constructed as in \cref{lem:variation:wlsc} through the following operations.
\begin{lemma}\label{lem:convex:func}
    Let $X$ and $Y$ be normed vector spaces and let $F:X\to\Rbar$ be convex. Then the following functionals are convex as well:
    \begin{enumerate}[(i)]
        \item $\alpha F$ for all $\alpha \geq 0$;
        \item $F+G$ for $G:X\to\Rbar$ convex (strictly if $F$ \emph{or} $G$ is strictly convex);
        \item $\phi\circ F$ for $\phi:\Rbar\to\Rbar$ convex and increasing;
        \item $F\circ A$ for $A:Y\to X$ linear;
        \item $x\mapsto \sup_{i\in I} F_i(x)$ with $F_i:X\to\Rbar$ convex for an arbitrary set $I$.
    \end{enumerate}
\end{lemma}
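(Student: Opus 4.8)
The plan is to verify each of the five claims directly from the defining inequality~\eqref{eq:convex:def}, in complete parallel to the proof of \cref{lem:variation:wlsc}. For each construction I fix two points in the relevant domain space and a scalar $\lambda\in[0,1]$, and I may assume that both points lie in the pertinent effective domain, since otherwise the right-hand side of the convexity inequality equals $\infty$ and nothing has to be shown. Properness of the constructed functional --- needed for it to be \emph{convex} in the sense of the definition --- is in each case either obvious or part of the (implicit) hypotheses, e.g.\ that $\dom F\cap\dom G\neq\emptyset$ in (ii); I would remark on this in passing and focus on the defining inequality.

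Four of the five are a one-line computation. For (i), multiply~\eqref{eq:convex:def} by $\alpha\geq 0$. For (ii), add the inequalities for $F$ and for $G$; and if, say, $F$ is strictly convex, then for $x\neq y$ and $\lambda\in(0,1)$ the $F$-inequality is strict and the $G$-inequality is non-strict, so their sum is strict. For (iv), linearity of $A$ turns a convex combination in the source space into the corresponding convex combination of the images in $X$, where convexity of $F$ applies directly. For (v), fix $j\in I$; then $F_j(\lambda x+(1-\lambda)y)\leq \lambda F_j(x)+(1-\lambda)F_j(y)\leq \lambda\sup_{i\in I}F_i(x)+(1-\lambda)\sup_{i\in I}F_i(y)$, and taking the supremum over $j$ on the left yields the convexity of $\sup_{i\in I}F_i$; alternatively, one may note that $\epi(\sup_{i\in I}F_i)=\bigcap_{i\in I}\epi F_i$ is convex as an intersection of convex sets and invoke \cref{lem:convex:epi}\,(ii).

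The only step using more than linearity of the inequality is (iii): from~\eqref{eq:convex:def} and the monotonicity of $\phi$ I get $\phi\bigl(F(\lambda x+(1-\lambda)y)\bigr)\leq \phi\bigl(\lambda F(x)+(1-\lambda)F(y)\bigr)$, and then the convexity of $\phi$, applied to the extended-real values $F(x)$ and $F(y)$, bounds the right-hand side by $\lambda\,\phi(F(x))+(1-\lambda)\,\phi(F(y))$; both hypotheses on $\phi$ are genuinely needed here. I do not anticipate any real obstacle --- the content is routine once the correct points are substituted into~\eqref{eq:convex:def} --- the only care required being the bookkeeping around properness and the $\Rbar$-arithmetic when $F(x)$ or $F(y)$ is $\infty$, in which case one should check that the asserted inequality still makes sense, its right-hand side then being $\infty$.
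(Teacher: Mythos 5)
Your proof is correct; the paper in fact states this lemma without proof (pointing only to the analogy with \cref{lem:variation:wlsc}), and your direct verification from the defining inequality \eqref{eq:convex:def} --- including the two-step estimate for (iii) using first monotonicity and then convexity of $\phi$, and the epigraph alternative $\epi(\sup_i F_i)=\bigcap_i\epi F_i$ for (v) --- is exactly the intended argument. The only point worth a passing remark is that in (ii) the \emph{strict} inequality can genuinely be asserted only where the right-hand side is finite, i.e.\ on $\dom F\cap\dom G$, which is the convention the text itself uses implicitly when it later treats $F+\tfrac12\norm{\cdot-z}_X^2$ as strictly convex.
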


\Cref{lem:convex:func}\,(v) in particular implies that the pointwise supremum of affine functionals is always convex. In fact, any convex functional can be written in this way. To show this, we define for a proper functional $F:X\to\Rbar$ the \emph{convex hull}
\begin{equation*}
    F^\Gamma(x) := \sup\setof{a(x)}{a \text{ affine with } a(\tilde x)\leq F(\tilde x)\text{ for all }\tilde x\in X}.
\end{equation*}
Note that $F^\Gamma:X\to [-\infty,\infty]$ without further assumptions of $F$.
\begin{lemma}\label{lem:convex:gamma}
    Let $F:X\to\Rbar$ be proper. Then $F$ is convex and lower semicontinuous if and only if $F=F^\Gamma$.
\end{lemma}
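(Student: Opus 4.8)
The plan is to prove the two implications separately, the converse being the substantive one. \emph{If $F=F^\Gamma$,} then $F^\Gamma$ is by definition a pointwise supremum of affine functionals $x\mapsto\dual{x^*,x}_X-\alpha$, each of which is convex and, being a bounded linear functional shifted by a constant, weakly continuous and hence weakly lower semicontinuous. Thus \cref{lem:convex:func}\,(v) shows $F^\Gamma$ is convex, and \cref{lem:variation:wlsc}\,(v) shows it is weakly lower semicontinuous and therefore (a fortiori) lower semicontinuous; as $F=F^\Gamma$, the same holds for $F$.

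\emph{If $F$ is convex and lower semicontinuous,} the inequality $F^\Gamma\le F$ is immediate, since every affine $a$ with $a\le F$ on $X$ enters the supremum defining $F^\Gamma(x)$. So the task is to show $F(x_0)\le F^\Gamma(x_0)$ for each $x_0\in X$, i.e.\ to produce, for each $r<F(x_0)$, an affine minorant $a$ of $F$ with $a(x_0)>r$. The idea is Hahn--Banach separation in the Banach space $X\times\R$, whose dual we identify with $X^*\times\R$ via $\dual{(x^*,s),(x,t)}=\dual{x^*,x}_X+st$. By \cref{lem:convex:epi}, $\epi F$ is nonempty (as $F$ is proper), convex and closed, and $(x_0,r)\notin\epi F$; applying \cref{thm:hb_separation}\,(ii) to $\epi F$ and the compact set $\{(x_0,r)\}$ yields a nonzero $(x^*,s)\in X^*\times\R$ and $\lambda\in\R$ strictly separating them. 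Letting $t\to+\infty$ along $(x,t)\in\epi F$ with $x\in\dom F$ fixed forces the ``vertical'' component $s$ to have a fixed sign, and, when $s\ne 0$, evaluating the separating inequality at $t=F(x)$ for $x\in\dom F$ (using $F\equiv+\infty$ off $\dom F$) exhibits an affine minorant $a$ of $F$ with $a(x_0)>r$.

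What remains is the degenerate case $s=0$. If $x_0\in\dom F$ this cannot occur: using $(x_0,F(x_0))\in\epi F$ on one side and $(x_0,r)$ on the other, $s=0$ would force $\dual{x^*,x_0}_X\le\lambda<\dual{x^*,x_0}_X$. Hence for every $x_0\in\dom F$ we obtain an affine minorant with $a(x_0)>r$, which settles that case and also yields, for any fixed $x_1\in\dom F$, at least one affine minorant $a_1$ of $F$. If $x_0\notin\dom F$ we must show $F^\Gamma(x_0)=+\infty$: for arbitrary $M\in\R$, separate $(x_0,M)$ from $\epi F$; if the vertical component is nonzero we conclude as before with an affine minorant of value $>M$ at $x_0$, and if it vanishes, the separating functional provides an affine $\ell$ with $\ell\le 0$ on $\dom F$ and $\ell(x_0)>0$, so that $a_1+\mu\ell$ is still a minorant of $F$ for every $\mu\ge 0$ (being $\le a_1\le F$ on $\dom F$, and $\le+\infty$ off it) while $(a_1+\mu\ell)(x_0)\to+\infty$ as $\mu\to+\infty$. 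In every case $F^\Gamma(x_0)\ge F(x_0)$, and with $F^\Gamma\le F$ this gives $F=F^\Gamma$.

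The main obstacle is exactly this degenerate case $s=0$, a ``vertical'' separating hyperplane that by itself says nothing about $F$; the remedy is the two-stage argument — first secure one honest affine minorant from a point of $\dom F$ (where $s$ cannot vanish), then tilt it by the vertical functional to reach arbitrarily large values outside $\dom F$. The remaining points — the identification $(X\times\R)^*\cong X^*\times\R$, reading off the sign of $s$, and the convention $F\equiv+\infty$ off $\dom F$ — are routine bookkeeping.
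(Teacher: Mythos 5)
Your proof is correct and follows essentially the same route as the paper: strict Hahn--Banach separation of the closed convex epigraph from a point $(x_0,r)$ with $r<F(x_0)$, reading off the sign of the vertical component $s$, and handling the degenerate case $s=0$ (which forces $x_0\notin\dom F$) by adding a large multiple of the vertical functional to an affine minorant secured at a point of $\dom F$ — exactly the paper's $a_\rho = a_0 + \rho(\dual{x^*,\cdot}_X-\alpha)$ construction. The only difference is cosmetic: you prove $F^\Gamma(x_0)\geq F(x_0)$ directly rather than deriving a contradiction from $F^\Gamma(x_0)<\lambda<F(x_0)$.
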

\begin{proof}
    Since affine functionals are convex and continuous, \cref{lem:convex:func}\,(v) and \cref{lem:variation:wlsc}\,(v) imply that $F=F^\Gamma$ is always convex and lower semicontinuous.

    Let now $F:X\to\Rbar$ be proper, convex, and lower semicontinuous.
    It is obvious from the definition of $F^\Gamma$ as a supremum that $F^\Gamma\leq F$ always holds pointwise. Assume
    that $F^\Gamma<F$. Then there exists an $x_0\in X$ and a $\lambda\in\R$ with
    \begin{equation*}
        F^\Gamma(x_0) < \lambda < F(x_0).
    \end{equation*}
    We now use the Hahn--Banach separation theorem to construct an affine functional $a$ with $a\leq F$ but $a(x_0)> \lambda>F^\Gamma(x_0)$, which would contradict the definition of $F^\Gamma$.
    Since $F$ is proper, convex, and lower semicontinuous, $\epi F$ is nonempty, convex, and closed by \cref{lem:convex:epi}. Furthermore, $\{(x_0,\lambda)\}$ is compact and, as $\lambda < F(x_0)$, disjoint with $\epi F$. \Cref{thm:hb_separation}\,(ii) hence yields a $z^*\in (X\times \R)^*$ and an $\alpha\in \R$ with
    \begin{equation*}
        \dual{z^*,(x,t)}_{X\times \R} \leq \alpha <  \dual{z^*,(x_0,\lambda)}_{X\times \R} \qquad\text{for all }(x,t)\in\epi F.
    \end{equation*}
    We now define an $x^*\in X^*$ via $\dual{x^*,x}_X = \dual{z^*,(x,0)}_{X\times \R}$ for all $x\in X$ and set $s:=\dual{z^*,(0,1)}_{X\times \R}\in \R$. Then, $\dual{z^*,(x,t)}_{X\times \R} =\dual{x^*,x}_X + st$ and hence
    \begin{equation}\label{eq:convex:func1}
        \dual{x^*,x}_X + st  \leq \alpha < \dual{x^*,x_0}_X +s\lambda  \qquad\text{for all } (x,t) \in\epi F.
    \end{equation}
    Now for $(x,t)\in \epi F$ we also have $(x,t')\in \epi F$ for all $t'>t$, and the first inequality in \eqref{eq:convex:func1} implies that for all sufficiently large $t'>0$,
    \begin{equation*}
        s \leq \frac{\alpha - \dual{x^*,x}_X}{t'} \to 0 \qquad\text{for }t'\to\infty.
    \end{equation*}
    Hence $s\leq 0$. We continue with a case distinction.
    \begin{enumerate}[(i)]
        \item $s<0$: We set
            \begin{equation*}
                a:X\to\R,\qquad x\mapsto  \frac{\alpha - \dual{x^*,x}_X}{s},
            \end{equation*}
            which is affine and continuous. Furthermore, using the \enquote{productive zero} (i.e., adding and subtracting the same term) in the first inequality in \eqref{eq:convex:func1} for $(x,F(x))\in \epi F$ implies (noting $s<0$!) that
            \begin{equation*}
                a(x) = \tfrac1s \left(\alpha - \dual{x^*,x}_X -sF(x)\right)+F(x) \leq F(x).
            \end{equation*}
            (For $x\notin \dom F$ this holds trivially.) But the second inequality in \eqref{eq:convex:func1} implies that
            \begin{equation*}
                a(x_0) = \tfrac1s\left(\alpha - \dual{x^*,x_0}_X\right) > \lambda.
            \end{equation*}
        \item $s=0$: Then $\dual{x^*,x}_X\leq \alpha<\dual{x^*,x_0}_X$ for all $x\in\dom F$, which can only hold for $x_0\notin \dom F$. But $F$ is proper, and hence we can find a $y_0\in \dom F$, for which we can construct as in case (i) by separating $\epi F$ and $(y_0,\mu)$ for sufficiently small $\mu$ a continuous affine functional $a_0:X\to\R$ with $a_0 \leq F$ pointwise. For $\rho>0$ we now set
            \begin{equation*}
                a_\rho:X\to\R, \qquad x\mapsto a_0(x) + \rho\left( \dual{x^*,x}_X-\alpha\right),
            \end{equation*}
            which is affine and continuous as well. Since $\dual{x^*,x}_X\leq \alpha$, we also have that $a_\rho(x) \leq a_0(x) \leq F(x)$ for all $x\in\dom F$ and any $\rho >0$. But due to $\dual{x^*,x_0}_X>\alpha$, we can choose $\rho>0$ with $a_\rho(x_0)>\lambda$.
    \end{enumerate}
    In both cases, the definition of $F^\Gamma$ as a supremum implies that $F^\Gamma(x_0)> \lambda$ as well, contradicting the assumption $F^\Gamma(x_0) <\lambda$.
\end{proof}

A particularly useful class of convex functionals in the calculus of variations arises from integral functionals with convex integrands defined through superposition operators.
\begin{lemma}\label{lem:lebesgue:lsc}
    Let $f:\R\to\Rbar$ be proper, convex, and lower semicontinuous. If $\Omega\subset\R^n$ is bounded and $1\leq p\leq \infty$, this also holds for
    \begin{equation*}
        F:L^p(\Omega)\to\Rbar,\qquad u\mapsto
        \begin{cases}
            \int_\Omega f(u(x))\,dx &\text{if }f\circ u \in L^1(\Omega),\\
            \infty &\text{else.}
        \end{cases}
    \end{equation*}
\end{lemma}
\begin{proof}
    First, \cref{lem:convex:gamma} implies that there exist $a,\alpha\in\R$ such that
    \begin{equation}\label{eq:lebesgue:lsc_bound}
        f(t) \geq at - \alpha \qquad\text{for all }t\in\R.
    \end{equation}
    Since $\Omega$ is bounded and hence $L^p(\Omega)\subset L^1(\Omega)$ for any $p\geq 1$, this implies that
    \begin{equation*}
        F(u) \geq \int_\Omega a u(x) - \alpha \,dx \in \R \qquad \text{for any }u\in L^p(\Omega).
    \end{equation*}
    In particular, $F(u) > -\infty$ for all $u \in L^p(\Omega)$.
    Since $f$ is proper, there is a $t_0\in \dom f$. Hence (using again that $\Omega$ is bounded) the constant function $u_0 \equiv t_0 \in \dom F$ satisfies $F(u_0) < \infty$. This shows that $F$ is proper.

    To show convexity, we take $u,v\in \dom F$ (since otherwise \eqref{eq:convex:def} is trivially satisfied) and $\lambda\in[0,1]$ arbitrary.
    The convexity of $f$ now implies that
    \begin{equation*}
        f(\lambda u(x) + (1-\lambda)v(x)) \leq \lambda f(u(x))+(1-\lambda )f(v(x))\quad\text{for almost every }x\in\Omega.
    \end{equation*}
    Since $u,v\in \dom F$ and $L^1(\Omega)$ is a vector space, $\lambda f(u(x)) + (1-\lambda)f(v(x)) \in L^1(\Omega)$ as well. Similarly, the left-hand side is bounded from below by $a(\lambda u(x) + (1-\lambda)v(x))-\alpha\in L^1(\Omega)$ by \eqref{eq:lebesgue:lsc_bound}. We can thus integrate the inequality over $\Omega$ to obtain the convexity of $F$.

    To show lower semicontinuity, we use \cref{lem:convex:epi}. Let $\{(u_n,t_n)\}_{n\in\N}\subset \epi F$ with $u_n\to u$ in $L^p(\Omega)$ and $t_n\to t$ in $\R$.
    Then there exists a subsequence $\{u_{n_k}\}_{k\in\N}$ with $u_{n_k}(x)\to u(x)$ almost everywhere. Hence, the lower semicontinuity of $f$ together with Fatou's Lemma implies that
    \begin{equation*}
        \begin{aligned}
            \int_\Omega f(u(x))-(au(x)-\alpha) \,dx
            &\leq \int_\Omega \liminf_{k\to\infty}(f(u_{n_k}(x))-(au_{n_k}(x)-\alpha))\,dx\\
            &\leq  \liminf_{k\to\infty}\int_\Omega f(u_{n_k}(x))-(au_{n_k}(x)-\alpha)\,dx\\
            &= \liminf_{k\to\infty}\int_\Omega f(u_{n_k}(x))\,dx-\int_\Omega au(x)-\alpha\,dx
        \end{aligned}
    \end{equation*}
    as the integrands are nonnegative due to \eqref{eq:lebesgue:lsc_bound}.
    Since $(u_{n_k},t_{n_k})\in \epi F$, this yields
    \begin{equation*}
        F(u) =  \int_\Omega f(u(x))\,dx \leq \liminf_{k\to\infty} \int_\Omega f(u_{n_k}(x))\,dx = \liminf_{k\to\infty} F(u_{n_k}) \leq \lim_{k\to\infty} t_{n_k} = t,
    \end{equation*}
    i.e., $(u,t)\in \epi F$. Hence $\epi F$ is closed, and the lower semicontinuity of $F$ follows from  \cref{lem:convex:epi}\,(iii).
\end{proof}

\bigskip

After all this preparation, we can quickly prove the main result on existence of solutions to convex minimization problems.
\begin{theorem}\label{thm:convex:existence}
    Let $X$ be a reflexive Banach space and let
    \begin{enumerate}[(i)]
        \item $U\subset X$ be nonempty, convex, and closed;
        \item $F:U\to\Rbar$ be proper, convex, and lower semicontinuous with $\dom F \cap U \neq \emptyset$;
        \item $U$ be bounded or $F$ be coercive.
    \end{enumerate}
    Then the problem
    \begin{equation*}
        \min_{x\in U} F(x)
    \end{equation*}
    admits a solution $\bar x\in U\cap \dom F$. If $F$ is strictly convex, the solution is unique.
\end{theorem}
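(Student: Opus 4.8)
The plan is to reduce the statement to the direct method, i.e.\ to \cref{thm:variation:existence}, by absorbing the constraint into the functional. Set $\bar F := F + \delta_U : X \to \Rbar$, so that $\bar F(x) = F(x)$ for $x \in U$ and $\bar F(x) = \infty$ otherwise; then $\bar x$ minimizes $F$ over $U$ if and only if $\bar x$ minimizes $\bar F$ over $X$, and any such $\bar x$ automatically lies in $U \cap \dom F$. It therefore suffices to check that $\bar F$ satisfies the hypotheses of \cref{thm:variation:existence}.

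First, $\bar F$ is proper: $\delta_U$ is proper since $U$ is nonempty, $F$ is proper by hypothesis, and $\dom \bar F = \dom F \cap U \neq \emptyset$ by assumption (ii). Second, $\bar F$ is convex as a sum of the convex functionals $F$ and $\delta_U$ (the latter convex because $U$ is convex), using \cref{lem:convex:func}\,(ii). Third --- and this is the only point that needs a little care --- $\bar F$ is lower semicontinuous: for $x_n \to x$, if $x \notin U$ then, since $U$ is closed, only finitely many $x_n$ can lie in $U$, so $\bar F(x_n) = \infty$ eventually and $\liminf_n \bar F(x_n) = \infty \geq \bar F(x)$; if $x \in U$, the indices with $x_n \notin U$ contribute $+\infty$ and are irrelevant to the $\liminf$, while for the remaining $x_n \in U$ the lower semicontinuity of $F$ gives $F(x) \leq \liminf_n \bar F(x_n)$. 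By \cref{cor:convex:uhs}, a convex lower semicontinuous functional is weakly lower semicontinuous, so $\bar F$ is weakly lower semicontinuous.

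It remains to check coercivity of $\bar F$. Let $\{x_n\} \subset X$ with $\norm{x_n}_X \to \infty$. If $U$ is bounded, then $x_n \notin U$ for all large $n$, so $\bar F(x_n) = \infty \to \infty$; if instead $F$ is coercive, then for the indices with $x_n \in U$ we have $\bar F(x_n) = F(x_n) \to \infty$, and for the rest $\bar F(x_n) = \infty$, so again $\bar F(x_n) \to \infty$. Thus $\bar F$ is proper, coercive, and weakly lower semicontinuous on the reflexive Banach space $X$, and \cref{thm:variation:existence} provides a minimizer $\bar x \in \dom \bar F = U \cap \dom F$ of $\bar F$, which is the claimed solution.

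For uniqueness under strict convexity, suppose $\bar x_1 \neq \bar x_2$ both minimize $F$ over $U$ with common minimal value $m$. Then $\tfrac12(\bar x_1 + \bar x_2) \in U$ by convexity of $U$, and strict convexity yields $F(\tfrac12 \bar x_1 + \tfrac12 \bar x_2) < \tfrac12 F(\bar x_1) + \tfrac12 F(\bar x_2) = m$, contradicting minimality of $m$; hence the minimizer is unique. The only genuinely delicate step in the whole argument is verifying lower semicontinuity of the extension $\bar F$ at boundary points of $U$, which is exactly where the closedness of $U$ is used.
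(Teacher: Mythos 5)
Your proposal is correct and follows essentially the same route as the paper: both absorb the constraint via $\bar F = F + \delta_U$, verify that $\bar F$ is proper, convex, coercive, and weakly lower semicontinuous, invoke \cref{thm:variation:existence}, and settle uniqueness by the same strict-convexity midpoint argument. The only (minor) difference is that you establish lower semicontinuity of $\bar F$ by a direct sequence argument and then pass to weak lower semicontinuity via \cref{cor:convex:uhs}, whereas the paper assembles the same conclusion from the weak lower semicontinuity of $\delta_U$ for closed convex $U$ together with closure under sums; both are equally valid.
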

\begin{proof}
    We consider the extended functional $\overline F = F + \delta_U:X\to\Rbar$.
    Assumption (i) together with \cref{lem:variation:wlsc} implies that $\delta_U$ is proper, convex, and weakly lower semicontinuous. From (ii) we obtain an $x_0\in U$ with $\overline F(x_0)<\infty$, and hence $\overline F$ is proper, convex, and weakly lower semicontinuous. Finally, $\overline F$ is coercive since for bounded $U$, we can use that $F>-\infty$, and for coercive $F$, we can use that $\delta_U\geq 0$. Hence we can apply \cref{thm:variation:existence} to obtain the existence of a minimizer $\bar x \in \dom \overline F =  U\cap \dom F$ of $\overline F$ with
    \begin{equation*}
        F(\bar x) = \overline F(\bar x) \leq \overline F(x) = F(x) \qquad\text{for all }x\in U,
    \end{equation*}
    i.e., $\bar x$ is the claimed solution.

    Let now $F$ be strictly convex, and let $\bar x$ and $\bar x'\in U$ be two different minimizers, i.e., $F(\bar x) = F(\bar x') = \min_{x\in U}F(x)$ and $\bar x\neq \bar x'$.
    Then by the convexity of $U$ we have for all $\lambda\in (0,1)$ that
    \begin{equation*}
        x_\lambda:=\lambda \bar x + (1-\lambda) \bar x' \in U,
    \end{equation*}
    while the strict convexity of $F$ implies that
    \begin{equation*}
        F(x_\lambda) < \lambda F(\bar x) + (1-\lambda) F(\bar x') = F(\bar x).
    \end{equation*}
    But this is a contradiction to $F(\bar x)\leq F(x)$ for all $x\in U$.
\end{proof}

Note that for a sum of two convex functionals to be coercive, it is in general not sufficient that only one of them is. Functionals for which this is the case -- such as the indicator function of a bounded set -- are called \emph{supercoercive}; another example which will be helpful later is the squared norm.
\begin{lemma}\label{lem:convex:supercoercive}
    Let $F:X\to\Rbar$ be proper, convex, and lower semicontinuous, and $x_0\in X$ be given. Then the functional
    \begin{equation*}
        J:X\to\Rbar,\qquad x\mapsto F(x) + \frac12\norm{x-x_0}_X^2
    \end{equation*}
    is coercive.
\end{lemma}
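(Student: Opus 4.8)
The plan is to bound $F$ below by a continuous affine functional and then observe that the quadratic term $\tfrac12\norm{x-x_0}_X^2$ overwhelms any such affine growth.

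First, I would note that $F$ admits a continuous affine minorant. Since $F$ is proper, convex, and lower semicontinuous, \cref{lem:convex:gamma} gives $F = F^\Gamma$, and as $F$ is proper the family of affine functionals bounded above by $F$ is nonempty (its construction via separating $\epi F$ from a point lying strictly below the graph is exactly the argument used in the proof of \cref{lem:convex:gamma}). Hence there are $x^*\in X^*$ and $\alpha\in\R$ with
\begin{equation}
    F(x) \geq \dual{x^*,x}_X - \alpha \qquad\text{for all }x\in X;
\end{equation}
in particular $F>-\infty$ everywhere, so $J$ is a well-defined $\Rbar$-valued functional.

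Then I would estimate $J$ purely in terms of $\norm{x}_X$: using \eqref{eq:functan:cs_banach} and the reverse triangle inequality, for all $x$ with $\norm{x}_X\geq\norm{x_0}_X$ one obtains
\begin{equation}
    J(x) \geq -\norm{x^*}_{X^*}\norm{x}_X - \alpha + \tfrac12\bigl(\norm{x}_X-\norm{x_0}_X\bigr)^2,
\end{equation}
and the right-hand side, being a real quadratic in $\norm{x}_X$ with positive leading coefficient, tends to $+\infty$ as $\norm{x}_X\to\infty$. Consequently, for any sequence $\{x_n\}_{n\in\N}$ with $\norm{x_n}_X\to\infty$ we have $J(x_n)\to\infty$, which is precisely the definition of coercivity.

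Everything past the first step is a routine one-variable estimate; the only real content is that first step, namely recognizing that lower semicontinuity together with convexity — via \cref{lem:convex:gamma}, and ultimately the Hahn--Banach separation theorem — is exactly what guarantees the affine, hence at worst linearly decreasing, lower bound on $F$. Without it, $F$ could decrease faster than quadratically along some directions and the sum would fail to be coercive even though $\tfrac12\norm{\cdot-x_0}_X^2$ is itself supercoercive.
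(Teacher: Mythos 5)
Your proof is correct and follows essentially the same route as the paper's: both obtain a continuous affine minorant of $F$ from \cref{lem:convex:gamma} and then observe that the quadratic term dominates the resulting at-worst-linear decrease. The only differences are cosmetic (you phrase the final step as a one-variable quadratic with positive leading coefficient, while the paper factors out $\norm{x}_X$), so there is nothing to add.
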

\begin{proof}
    Since $F$ is proper, convex, and lower semicontinuous, it follows from \cref{lem:convex:gamma} that $F$ is bounded from below by an affine functional, i.e., there exists an $x^*\in X^*$ and an $\alpha\in\R$ with $F(x)\geq \dual{x^*,x}_X -\alpha$ for all $x\in X$. Together with the reverse triangle inequality and \eqref{eq:functan:cs_banach}, we obtain that
    \begin{equation*}
        \begin{aligned}
            J(x) &\geq \dual{x^*,x}_X - \alpha + \tfrac12\left(\norm{x}_X-\norm{x_0}_X\right)^2\\
            &\geq -\norm{x^*}_{X^*}\norm{x}_X -\alpha + \tfrac12\norm{x}_X^2 -\norm{x}_X\norm{x_0}_X\\
            &= \norm{x}_X\left(\tfrac12\norm{x}_X -\norm{x^*}_{X^*} - \norm{x_0}_X\right) - \alpha.
        \end{aligned}
    \end{equation*}
    Since $x^*$ and $x_0$ are fixed, the term in parentheses is positive for $\norm{x}_X$ sufficiently large, and hence $J(x)\to \infty$ for $\norm{x}_X\to \infty$ as claimed.
\end{proof}

\bigskip

To close this chapter, we show the following remarkable result: \emph{Any (locally) bounded convex functional is (locally) continuous.} (An extended real-valued proper functional must necessarily be discontinuous at some point.)
Besides being of use in later chapters, this result illustrates the beauty of convex analysis: an algebraic but global property (convexity) connects two topological but local properties (neighborhood and continuity).
Here we consider of course the strong topology in a normed vector space.
\begin{lemma}\label{thm:convex:cont_bounded}
    Let $X$ be a normed vector space, $F:X\to\Rbar$ be convex, and $x\in X$. If there is a $\rho>$ such that $F$ is bounded from above on $O_\rho(x)$, then $F$ is locally Lipschitz continuous in $x$.
\end{lemma}
\begin{proof}
    By assumption, there exists an $M\in\R$ with $F(y)\leq M$ for all $y\in O_\rho(x)$.
    We first show that $F$ is locally bounded from below as well. Let $y\in O_\rho(x)$ be arbitrary. Since $\norm{x-y}_X<\rho$, we also have that $z:=2x-y = x-(y-x)\in O_\rho(x)$, and the convexity of $F$ implies that
    $F(x) = F\left(\tfrac12 y + \tfrac12 z\right) \leq \tfrac12 F(y)+\tfrac12 F(z)$
    and hence that
    \begin{equation*}
        -F(y) \leq F(z) - 2 F(x) \leq M - 2F(x) =:m,
    \end{equation*}
    i.e., $-m\leq F(y) \leq M$ for all $y\in O_\rho(x)$.

    We now show that this implies Lipschitz continuity on $O_{\frac\rho2}(x)$. Let $y_1,y_2\in O_{\frac\rho2}(x)$ with $y_1\neq y_2$ and set
    \begin{equation*}
        z:=y_1 + \frac\rho2 \frac{y_1-y_2}{\norm{y_1-y_2}_X} \in O_\rho(x),
    \end{equation*}
    which holds because $\norm{z-x}_X \leq \norm{y_1-x}_X +\frac\rho2 < \rho$. By construction, we thus have that
    \begin{equation*}
        y_1 = \lambda z + (1-\lambda) y_2 \quad\text{for}\quad
        \lambda:=\frac{\norm{y_1-y_2}_X}{\norm{y_1-y_2}_X + \tfrac\rho2} \in(0,1),
    \end{equation*}
    and the convexity of $F$ now implies that $F(y_1) \leq \lambda F(z) + (1-\lambda)F(y_2)$.
    Together with the definition of $\lambda$ as well as $F(z)\leq M$ and $-F(y_1)\leq m=M-2F(x)$, this yields the estimate
    \begin{equation*}
        \begin{aligned}
            F(y_1) -F(y_2) \leq \lambda(F(z)-F(y_2)) &\leq \lambda(2M-2F(x))\\
            &\leq \frac{2(M-F(x))}{\norm{y_1-y_2}_X + \frac\rho2} \norm{y_1-y_2}_X\\
            &\leq \frac{2(M-F(x))}{\rho/2} \norm{y_1-y_2}_X.
        \end{aligned}
    \end{equation*}
    Exchanging the roles of $y_1$ and $y_2$, we obtain that
    \begin{equation*}
        |F(y_1)-F(y_2)| \leq \frac{2(M-F(x))}{\rho/2} \norm{y_1-y_2}_X \quad\text{for all }y_1,y_2\in O_{\frac\rho2}(x)
    \end{equation*}
    and hence the local Lipschitz continuity with constant $L(x,\rho/2):= 4(M-F(x))/\rho$.
\end{proof}
It thus remains to show that convex functions are bounded from above. We start with the scalar case.
\begin{lemma}\label{cor:convex:cont_r}
    If $f:\R\to\Rbar$ is convex, then $f$ is locally bounded from above on $(\dom f)^o$.
\end{lemma}
\begin{proof}
    Let $x\in (\dom f)^o$, i.e., there exist $a,b\in\R$ with $x\in(a,b)\subset \dom f$; by possibly shrinking the interval we can even assume that $[a,b]\subset \dom f$. Let now $z\in(a,b)$. Since intervals are convex, there exists a $\lambda\in(0,1)$ with $z=\lambda a+(1-\lambda)b$. By convexity, we thus have
    \begin{equation*}
        f(z) \leq \lambda f(a) + (1-\lambda)f(b) \leq \max\{|f(a)|,|f(b)|\} < \infty.
    \end{equation*}
    Hence $f$ is locally bounded from above in $x$.
\end{proof}
With a bit more effort, one can show that the claim holds for $F:\R^n\to\Rbar$ with arbitrary $n\in\N$; see, e.g., \cite[Corollary 1.4.2]{Schirotzek:2007}.

The proof of the general case requires further assumptions on $X$ and $F$.
\begin{lemma}\label{lem:convex:bounded}
    Let $X$ be a Banach space. If $F:X\to\Rbar$ is convex and lower semicontinuous, then $F$ is locally bounded from above on $(\dom F)^o$.
\end{lemma}
\begin{proof}
    We first show the claim for the case $x=0\in(\dom F)^o$, which implies in particular that $M:=|F(0)|<\infty$. Consider now for arbitrary $h\in X$ the mapping
    \begin{equation*}
        f:\R \to\Rbar,\qquad t\mapsto F(th).
    \end{equation*}
    It is straightforward to verify that $f$ is convex and lower semicontinuous as well and satisfies $0\in(\dom f)^o$.
    By \cref{cor:convex:cont_r,thm:convex:cont_bounded}, $f$ is thus locally Lipschitz continuous in $0$; in particular, $|f(t)-f(0)|\leq L t \leq 1$ for sufficiently small $t>0$. The reverse triangle inequality therefore yields a $\delta>0$ with
    \begin{equation*}
        F(0+t h) \leq |F(0+th)| = |f(t)| \leq |f(0)|+1 = M+1 \qquad\text{for all } t\in [0,\delta].
    \end{equation*}
    Hence $0$ lies in the algebraic interior of the sublevel set $F_{M+1}$, which is convex and closed by \cref{lem:convex:sublevel}. The core--int \cref{lem:functan:coreint} thus yields that $0\in(F_{M+1})^o$, i.e., there exists a $\rho>0$ with $F(z)\leq M+1$ for all $z\in O_\rho(0)$.

    For the general case $x\in (\dom F)^o$, consider
    \begin{equation*}
        \tilde F:X\to\Rbar,\qquad y\mapsto F(y-x).
    \end{equation*}
    Again, it is straightforward to verify convexity and lower semicontinuity of  $\tilde F$ and that $0\in (\dom \tilde F)^o$.
    It follows from what we've shown before that $\tilde F$ is locally bounded from above on $O_\rho(0)$, which also implies that $F$ is locally bounded from above on $O_\rho(x)$.
\end{proof}
Together with \cref{thm:convex:cont_bounded}, we thus obtain the desired result.
\begin{theorem}\label{thm:convex:cont}
    Let $X$ be a Banach space. If $F:X\to\Rbar$ is convex and lower semicontinuous, then $F$ is locally Lipschitz continuous on $(\dom F)^o$.
\end{theorem}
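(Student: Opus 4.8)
The plan is to reduce the statement to \cref{thm:convex:cont_bounded}, which asserts that a convex function bounded from above on some ball is locally Lipschitz there. So it suffices to show that $F$ is bounded from above on an open ball around each point $x_0\in(\dom F)^o$. I would obtain such a bound in two stages: first at \emph{some} interior point, via a Baire category argument that exploits lower semicontinuity and completeness of $X$; then at the prescribed $x_0$, by transporting the bound along a line segment using convexity.

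For the first stage, fix $x_0\in(\dom F)^o$ and pick $r>0$ with $K_r(x_0)\subset\dom F$ (possible since $x_0\in(\dom F)^o$). Since $F$ is lower semicontinuous, its sublevel sets are closed (\cref{lem:convex:sublevel}), so the sets $A_n:=\setof{y\in K_r(x_0)}{F(y)\le n}$ are closed, and they cover $K_r(x_0)$ because $F$ is finite there. As a closed subset of the Banach space $X$, $K_r(x_0)$ is a complete metric space, so the Baire category theorem yields an index $N$ for which $A_N$ has nonempty interior relative to $K_r(x_0)$: there are $\tilde y$ and $\sigma>0$ with $O_\sigma(\tilde y)\cap K_r(x_0)\subset A_N$. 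Because the open ball $O_r(x_0)$ is dense in $K_r(x_0)$, I can choose $y_1\in O_\sigma(\tilde y)\cap O_r(x_0)$, and then some smaller ball $O_{\sigma_1}(y_1)$ lies inside $O_\sigma(\tilde y)\cap O_r(x_0)\subset A_N$; hence $F\le N$ on $O_{\sigma_1}(y_1)$.

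For the second stage, if $y_1=x_0$ nothing remains; otherwise set $v:=x_0-y_1$ and choose $t\in(0,1)$ small enough that $z_0:=x_0+tv\in\dom F$, which is possible since $x_0$ is interior to $\dom F$. Then $x_0=\tfrac{1}{1+t}z_0+\tfrac{t}{1+t}y_1$, and for any $y$ with $\norm{y-x_0}_X<\tfrac{t}{1+t}\sigma_1$ the point $y':=\tfrac{1+t}{t}y-\tfrac1t z_0$ satisfies $y=\tfrac{1}{1+t}z_0+\tfrac{t}{1+t}y'$ and $\norm{y'-y_1}_X<\sigma_1$, so $F(y')\le N$ and convexity gives $F(y)\le\tfrac{1}{1+t}F(z_0)+\tfrac{t}{1+t}N<\infty$. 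Thus $F$ is bounded from above on a ball around $x_0$, and \cref{thm:convex:cont_bounded} gives the local Lipschitz continuity of $F$ at $x_0$; since $x_0\in(\dom F)^o$ was arbitrary, this proves the theorem.

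I expect the Baire argument to be the crux: it is the only place where completeness of $X$ enters, and without lower semicontinuity the sublevel sets need not be closed, so the category argument would fail (and indeed the theorem can fail). The one technical subtlety is converting the relatively-open subset of $K_r(x_0)$ that Baire produces into an honest open ball of $X$ on which $F$ is bounded — handled by the density of $O_r(x_0)$ in $K_r(x_0)$ — while the convexity transport in the second stage is routine.
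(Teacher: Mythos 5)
Your proof is correct, but it reaches the key intermediate fact -- boundedness of $F$ from above on some ball around the given point -- by a genuinely different route than the paper. The paper first restricts $F$ to lines through the point and uses the scalar result (\cref{cor:convex:cont_r}) to show that the point lies in the \emph{algebraic} interior of the closed convex sublevel set $F_{M+1}$, and then invokes the core--int \cref{lem:functan:coreint} (a Baire-type result stated without proof) to upgrade this to a topological interior point, i.e., an upper bound on a ball centered at the point itself; the finishing step via \cref{thm:convex:cont_bounded} is the same as yours. You instead apply the Baire category theorem directly to the cover of a closed ball $K_r(x_0)\subset\dom F$ by the closed sets $A_n$ (closedness coming from lower semicontinuity via \cref{lem:convex:sublevel}), which yields an upper bound only near some uncontrolled nearby point $y_1$, and you then need the additional convexity-transport (dilation) step to move the bound to $x_0$ -- a step the paper's argument avoids because the core--int lemma delivers the bound at the right point immediately. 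What your version buys is self-containedness: it uses only the Baire category theorem, \cref{lem:convex:sublevel}, and \cref{thm:convex:cont_bounded}, bypassing both \cref{cor:convex:cont_r} and the unproved \cref{lem:functan:coreint} (whose proof is essentially the Baire argument you carry out by hand, applied to the closed convex set $F_{M+1}$); the paper's version is shorter at this spot precisely because it has outsourced the category argument to that lemma. Your handling of the technical subtleties -- the relative-interior-to-genuine-ball conversion via density of $O_r(x_0)$ in $K_r(x_0)$, and the choice $z_0=x_0+t(x_0-y_1)\in\dom F$ with the affine change of variables $y'=\tfrac{1+t}{t}y-\tfrac1t z_0$ -- checks out, and your closing remark correctly locates where completeness and lower semicontinuity are indispensable.
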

We shall have several more occasions to observe the unreasonably nice behavior of convex functions on the interior of their effective domain.

\chapter{Convex subdifferentials}

We now turn to the characterization of minimizers of convex functionals via a Fermat principle. A first candidate for the required notion of derivative is the directional derivative, since it exists (at least in the extended real-valued sense) for any convex function.
\begin{lemma}\label{lem:convex:direct}
    Let $F:X\to\Rbar$ be convex and let $x\in \dom F$ and $h\in X$ be given. Then:
    \begin{enumerate}[(i)]
        \item the function
            \begin{equation*}
                \phi:(0,\infty) \to \Rbar,\qquad t\mapsto \frac{F(x+th)-F(x)}{t},
            \end{equation*}
            is increasing;
        \item there exists a limit $F'(x;h)=\lim_{t\to 0^+}\phi(t)\in[-\infty,\infty]$, which satisfies
            \begin{equation*}
                F'(x;h) \leq F(x+h) - F(x);
            \end{equation*}
        \item if $x\in (\dom F)^o$, the limit $F'(x;h)$ is finite.
    \end{enumerate}
\end{lemma}
\begin{proof}
    \emph{(i):} Inserting the definition and sorting terms shows that for all $0<s<t$, the condition $\phi(s)\leq \phi(t)$ is equivalent to
    \begin{equation*}
        F(x+sh) \leq \frac{s}{t} F(x+th) + \left(1-\frac{s}{t} \right)F(x),
    \end{equation*}
    which follows from the convexity of $F$ since $x+sh = (1-\frac{s}{t})x+\frac{s}{t}(x+th)$.

    \emph{(ii):} The claim immediately follows from (i) since
    \begin{equation*}
        F'(x;h) = \lim_{t\to 0^+} \phi(t) = \inf_{t>0} \phi(t) \leq \phi(1) = F(x+h)-F(x).
    \end{equation*}

    \emph{(iii):} Since $(\dom F)^o$ is contained in the algebraic interior of $\dom F$, there exists an $\eps>0$ such that $x +t h\in \dom F$ for all $t\in(-\eps,\eps)$. Proceeding as in (i), we obtain that $\phi(s)\leq \phi(t)$ for all $s<t<0$ as well. From $x=\frac12(x+th)+\frac12(x-th)$ for $t>0$, we also obtain that
    \begin{equation*}
        \phi(-t) = \frac{F(x-th)-F(x)}{-t} \leq \frac{F(x+th)-F(x)}{t} = \phi(t)
    \end{equation*}
    and hence that $\phi$ is increasing on all $\R\setminus\{0\}$. As in (ii), the choice of $\eps$ now implies that
    \begin{equation*}
        \begin{split}
            -\infty < \phi(-\eps) \leq F'(x;h) \leq \phi(\eps) < \infty.
            \qedhere
        \end{split}
    \end{equation*}
\end{proof}

Unfortunately, this concept can't yet be what we are looking for, since the convex function $f:\R\to\R$, $f(t) = |t|$ has a minimum in $t=0$, but $f'(0;h) = |h|>0$ for $h\in \R\setminus\{0\}$. We thus don't have $f'(0;h) = 0$ for some $h\neq 0$ -- but we at least have  $0\leq f'(0;h)$ for all $h\in\R$. It is this condition that we now generalize to normed vector spaces. For this purpose, consider for convex $F:X\to\Rbar$ and any $x\in\dom F$ the set
\begin{equation}\label{eq:convex:subdiff_dir}
    \setof{x^*\in X^*}{\dual{x^*,h}_X \leq F'(x;h) \quad \text{for all }h\in X}.
\end{equation}
With the help of \cref{lem:convex:direct}, this set (which can be empty!) can also be expressed without directional derivatives.
\begin{lemma}\label{lem:convex:equiv}
    Let $F:X\to\Rbar$ be convex and $x\in \dom F$. For any $x^*\in X^*$, the following statements are equivalent:
    \begin{enumerate}[(i)]
        \item $\dual{x^*,h}_X \leq F'(x;h)$ \qquad for all $h\in X$;
        \item $\dual{x^*,h}_X \leq F(x+h) - F(x)$ \quad for all $h\in X$.
    \end{enumerate}
\end{lemma}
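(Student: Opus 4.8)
The plan is to prove both implications directly; the bridge in each direction is \cref{lem:convex:direct}, which relates the directional derivative $F'(x;h)$ to the finite difference $F(x+h)-F(x)$ and identifies $F'(x;h)$ with the infimum over $t>0$ of the difference quotient $\phi(t)=\tfrac{F(x+th)-F(x)}{t}$.

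For the implication (i)~$\Rightarrow$~(ii), fix $h\in X$. Hypothesis (i) gives $\dual{x^*,h}_X\leq F'(x;h)$, and \cref{lem:convex:direct}(ii) gives $F'(x;h)\leq F(x+h)-F(x)$; chaining the two inequalities yields (ii).

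For (ii)~$\Rightarrow$~(i), fix $h\in X$ and $t>0$. Since (ii) holds for \emph{every} vector, apply it with $th$ in place of $h$ to obtain $\dual{x^*,th}_X\leq F(x+th)-F(x)$. Dividing by $t>0$ and using the linearity of $x^*$ gives $\dual{x^*,h}_X\leq\phi(t)$. As $t>0$ was arbitrary, pass to the infimum over $t>0$; by \cref{lem:convex:direct}(i)--(ii) this infimum equals $\lim_{t\to 0^+}\phi(t)=F'(x;h)$, so $\dual{x^*,h}_X\leq F'(x;h)$, which is (i).

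The argument is essentially routine, so I do not expect a serious obstacle. The only point meriting care is that \cref{lem:convex:direct}(ii) only guarantees $F'(x;h)\in[-\infty,\infty]$: if $F'(x;h)=+\infty$ the inequality in (i) is trivial, while under hypothesis (ii) one has $\dual{x^*,h}_X\leq\phi(t)$ for all $t>0$ and hence $F'(x;h)=\inf_{t>0}\phi(t)\geq\dual{x^*,h}_X>-\infty$, so the value $-\infty$ never arises. Consequently no separate treatment of the extended-real arithmetic is needed.
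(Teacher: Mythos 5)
Your proof is correct and follows essentially the same route as the paper: chaining $\dual{x^*,h}_X\leq F'(x;h)\leq F(x+h)-F(x)$ via \cref{lem:convex:direct}\,(ii) for one direction, and applying (ii) to $th$, dividing by $t>0$, and passing to the limit (equivalently, the infimum over $t>0$) for the other. Your extra remark about ruling out the value $-\infty$ under hypothesis (ii) is a sensible bit of care that the paper leaves implicit.
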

\begin{proof}
    If (i) holds, we immediately obtain from \cref{lem:convex:direct}\,(ii) that
    \begin{equation*}
        \dual{x^*,h}_X \leq F'(x;h) \leq F(x+h) - F(x)\qquad \text{for all }h\in X.
    \end{equation*}
    Conversely, if (ii) holds for all $h\in X$, it also holds for $th$ for all $h\in X$ and $t>0$. Dividing by $t$ and passing to the limit then yields that
    \begin{equation*}
        \begin{split}
            \dual{x^*,h}_X \leq \lim_{t\to 0^+} \frac{F(x+th)-F(x)}{t} = F'(x;h).
            \qedhere
        \end{split}
    \end{equation*}
\end{proof}
If we introduce $\tilde x = x+h\in X$, the second statement leads to our desired derivative concept: For $F:X\to\Rbar$ and $x\in \dom F$, we define the \emph{(convex) subdifferential} as
\begin{equation}\label{eq:convex:def_subdiff}
    \partial F(x) :=  \setof{x^*\in X^*}{\dual{x^*,\tilde x - x}_X \leq F(\tilde x) - F(x)\quad\text{for all }\tilde x\in X}.
\end{equation}
(Note that $\tilde x\notin \dom F$ is allowed since in this case the inequality is trivially satisfied.) For $x\notin\dom F$, we set $\partial F(x) = \emptyset$.\footnote{We will later show that $\partial F(x)$ is nonempty and bounded for all $x\in(\dom F)^o$; see \cref{cor:convex:nonempty}.}
It follows directly from the definition that $\partial F(x)$ is convex and weakly-$*$ closed.
An element $\xi\in \partial F(x)$ is called a \emph{subderivative}.%
\footnote{Following the terminology for classical derivatives, we reserve the more common term \emph{subgradient} for its Riesz representation $z_{x^*} \in X$ when $X$ is a Hilbert space.}
\begin{theorem}[Fermat principle]\label{thm:convex:fermat}
    Let $F:X\to\Rbar$ and $\bar x \in \dom F$. Then the following statements are equivalent:
    \begin{enumerate}[(i)]
        \item $\displaystyle  0\in \partial F(\bar x)$;
        \item $\displaystyle F(\bar x) = \min_{x\in X} F(x)$.
    \end{enumerate}
\end{theorem}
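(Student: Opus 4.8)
The plan is to unfold both statements directly from the definition \eqref{eq:convex:def_subdiff} of the convex subdifferential and observe that each is equivalent to the single assertion that $\bar x$ is a global minimizer of $F$. In particular, no regularity of $F$ beyond $\bar x\in\dom F$ is needed (and none is assumed in the statement); this is precisely the appeal of the subdifferential as a surrogate derivative.

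First I would spell out what $0\in\partial F(\bar x)$ means. By \eqref{eq:convex:def_subdiff}, it says that $\bar x\in\dom F$ and $\dual{0,\tilde x-\bar x}_X\leq F(\tilde x)-F(\bar x)$ for all $\tilde x\in X$. Since $\dual{0,\tilde x-\bar x}_X=0$, this inequality reduces to $0\leq F(\tilde x)-F(\bar x)$, i.e.\ $F(\bar x)\leq F(\tilde x)$ for all $\tilde x\in X$. As $\bar x\in\dom F$ guarantees $F(\bar x)<\infty$, this is exactly the statement $F(\bar x)=\min_{x\in X}F(x)$, with the minimum attained at $\bar x$; this gives the implication (i) $\implies$ (ii).

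For the converse, if $F(\bar x)=\min_{x\in X}F(x)$ then $F(\tilde x)-F(\bar x)\geq 0=\dual{0,\tilde x-\bar x}_X$ for every $\tilde x\in X$, which is precisely the defining inequality for $0\in\partial F(\bar x)$; together with $\bar x\in\dom F$ this yields (ii) $\implies$ (i).

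There is essentially no obstacle in the argument itself: the content lies entirely in having chosen the right object in \eqref{eq:convex:def_subdiff}. The only point worth flagging in the write-up is that, unlike the classical Fermat principle of \cref{thm:variation:fermat}, this version is an \emph{exact} characterization (an \enquote{if and only if}) that requires neither differentiability nor even convexity of $F$; it will be the monotonicity and finiteness properties from \cref{lem:convex:direct} — which motivated the set \eqref{eq:convex:subdiff_dir} — that later ensure $\partial F(\bar x)$ is nonempty, and hence this criterion nonvacuous, on $(\dom F)^o$.
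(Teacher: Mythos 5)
Your proof is correct and follows exactly the paper's argument: unfolding the definition \eqref{eq:convex:def_subdiff} with $x^*=0$ reduces the subdifferential inclusion to the global minimality of $\bar x$, in both directions. The remark that convexity is not required also matches the paper's own footnote to this theorem.
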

\begin{proof}
    This is a direct consequence of the definitions: $0\in\partial F(\bar x)$ if and only if
    \begin{equation*}
        0= \dual{0,\tilde x-\bar x}_X \leq F(\tilde x) - F(\bar x) \qquad\text{for all } \tilde x\in X,
    \end{equation*}
    i.e., $F(\bar x)\leq F(\tilde x)$ for all $\tilde x\in X$.%
    \footnote{Note that convexity of $F$ is not required for \cref{thm:convex:fermat}! The condition $0\in \partial F(\bar x)$ therefore characterizes the global(!) minimizers of \emph{any} function $F$. However, nonconvex functionals can also have local minimizers, for which the subdifferential inclusion is not satisfied.
    In fact, (convex) subdifferentials of nonconvex functionals are usually empty. (And conversely, one can show that $\partial F(x)\neq \emptyset$ for all $x\in\dom F$ implies that $F$ is convex.) This leads to problems in particular for the proof of calculus rules, for which we will indeed have to assume convexity.}
\end{proof}
This matches the geometrical intuition: If $X=\R\cong X^*$, the affine function $f(\tilde x) := f(x) + \xi(\tilde x - x)$ with $\xi\in \partial f(x)$ describes a tangent at $(x,f(x))$ with slope $\xi$; die condition $\xi=0\in \partial f(\tilde x)$ thus means that $f$ has a horizontal tangent in $\bar x$.

\bigskip

We now look at some examples. First, the construction from the directional derivative indicates that the subdifferential is indeed a generalization of the Gâteaux derivative.
\begin{theorem}\label{thm:convex:gateaux}
    Let $F:X\to\Rbar$ be convex and Gâteaux differentiable in $x$. Then, $\partial F(x) = \{DF(x)\}$.
\end{theorem}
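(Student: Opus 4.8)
The plan is to prove the two inclusions $\{DF(x)\}\subseteq\partial F(x)$ and $\partial F(x)\subseteq\{DF(x)\}$ separately, extracting both from \cref{lem:convex:direct} and \cref{lem:convex:equiv}. First I would record two preliminary observations: Gâteaux differentiability at $x$ forces the difference quotients $t^{-1}(F(x+th)-F(x))$ to make sense for small $t>0$, so $F(x)\in\R$ and $x\in\dom F$ (hence $\partial F(x)$ is at least defined via \eqref{eq:convex:def_subdiff}); and, by the very definition of the Gâteaux derivative, $F'(x;h)=DF(x)h$ for every $h\in X$.

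For the inclusion $DF(x)\in\partial F(x)$, I would invoke \cref{lem:convex:direct}\,(ii), which gives $F'(x;h)\le F(x+h)-F(x)$ for all $h\in X$ (the right-hand side being $+\infty$ whenever $x+h\notin\dom F$). Replacing $F'(x;h)$ by $DF(x)h$ and setting $\tilde x:=x+h$, this is precisely the defining inequality in \eqref{eq:convex:def_subdiff}, so $DF(x)$ is a subderivative of $F$ at $x$.

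For the reverse inclusion, let $x^*\in\partial F(x)$ be arbitrary. By the equivalence of (i) and (ii) in \cref{lem:convex:equiv}, membership in $\partial F(x)$ is the same as $\dual{x^*,h}_X\le F'(x;h)=DF(x)h$ for all $h\in X$. Applying this inequality both to $h$ and to $-h$ and using the linearity of $DF(x)\in X^*$ gives $\dual{x^*,h}_X\le DF(x)h$ and $DF(x)h\le\dual{x^*,h}_X$ simultaneously, hence $\dual{x^*,h}_X=DF(x)h$ for all $h$, i.e., $x^*=DF(x)$. Combining the two parts yields $\partial F(x)=\{DF(x)\}$.

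I do not expect any serious obstacle here: the argument is essentially a two-line consequence of the monotonicity and inequality properties of the convex directional derivative. The only point requiring a little care is that the singleton conclusion needs both directions, and the uniqueness direction genuinely relies on testing the subgradient inequality at $h$ \emph{and} at $-h$ — which is legitimate precisely because $DF(x)$, unlike the directional derivative $h\mapsto F'(x;h)$ of a general convex function, is linear and hence odd.
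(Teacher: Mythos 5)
Your proof is correct and follows essentially the same route as the paper: the inclusion $DF(x)\in\partial F(x)$ via \cref{lem:convex:direct}\,(ii)/\cref{lem:convex:equiv}, and the reverse inclusion by reducing $x^*\in\partial F(x)$ to $\dual{x^*-DF(x),h}_X\le 0$ for all $h\in X$. The only cosmetic difference is the last step, where you test at $h$ and $-h$ and use linearity, while the paper takes the supremum over $\norm{h}_X\le 1$ to conclude $\norm{x^*-DF(x)}_{X^*}=0$; these are the same argument in different clothing.
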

\begin{proof}
    By definition of the Gâteaux derivative, we have that
    \begin{equation*}
        \dual{DF(x),h}_X = DF(x)h =  F'(x;h) \quad\text{for all } h\in X.
    \end{equation*}
    \Cref{lem:convex:equiv} with $\tilde x := x+h$ now immediately yields $DF(x)\in \partial F(x)$.

    Conversely, the definition of $\xi\in \partial F(x)$ with $h:=\tilde x -x\in X$ implies that
    \begin{equation*}
        \dual{\xi,h}_X \leq F'(x;h) =\dual{DF(x),h}_X.
    \end{equation*}
    Since $\tilde x\in X$ was arbitrary, this has to hold for all $h\in X$. Taking the supremum over all $h$ with $\norm{h}_X\leq 1$ now yields that $\norm{\xi - DF(x)}_{X^*} \leq 0$, i.e., $\xi = DF(x)$.
\end{proof}
Of course, we also want to compute subdifferentials of functionals that are not differentiable. The canonical example is the norm $\norm{\cdot}_X$ in a normed vector space, which even for $X=\R$ is not differentiable in $x=0$.
\begin{theorem}\label{thm:subdifferential:norm}
    For any $x\in X$,
    \begin{equation*}
        \partial(\norm{\cdot}_X)(x) =
        \begin{cases}
            \setof{x^*\in X^*}{\dual{x^*,x}_X = \norm{x}_X \text{ and } \norm{x^*}_{X^*} = 1} &\text{if } x\neq 0,\\
            B_{X^*} &\text{if } x = 0.
        \end{cases}
    \end{equation*}
\end{theorem}
\begin{proof}
    For $x=0$, we have $\xi\in\partial(\norm{\cdot}_X)(x)$ by definition if and only if
    \begin{equation*}
        \dual{\xi,\tilde x}_X \leq \norm{\tilde x}_X\qquad\text{for all }\tilde x\in X\setminus\{0\}
    \end{equation*}
    (since the inequality is trivial for $\tilde x=0$), which by definition of the operator norm holds if and only if $\norm{\xi}_{X^*} \leq 1$.

    Let now $x\neq 0$ and consider $\xi\in\partial(\norm{\cdot}_X)(x)$. Successively inserting $\tilde x = 0$ and $\tilde x = 2x$ in the definition \eqref{eq:convex:def_subdiff} yields
    \begin{equation*}
        \norm{x}_X \leq \dual{\xi,x}_X = \dual{\xi,2x -x } \leq \norm{2x}_X - \norm{x}_X = \norm{x}_X,
    \end{equation*}
    i.e., $\dual{\xi,x}_X=\norm{x}_X$. Similarly, we have for all $\tilde x \in X$ that
    \begin{equation*}
        \dual{\xi,\tilde x}_X = \dual{\xi,(\tilde x+x)- x}_X
        \leq \norm{\tilde x+x}_X - \norm{x}_X \leq \norm{\tilde x}_X,
    \end{equation*}
    As in the case $x=0$, this implies that $\norm{\xi}_{X^*}\leq 1$. For $\tilde x = x/\norm{x}_{X}$ we further have that
    \begin{equation*}
        \dual{\xi,\tilde x}_X = \norm{x}_X^{-1}\dual{\xi,x}_X = \norm{x}_X^{-1} \norm{x}_X = 1.
    \end{equation*}
    Hence, $\norm{\xi}_{X^*}=1$ is in fact attained.

    Conversely, let $x^*\in X^*$ with $\dual{x^*,x}_X =\norm{x}_X$ and $\norm{x^*}_{X^*} = 1$. Then we obtain for all $\tilde x \in X$ from \eqref{eq:functan:cs_banach} the relation
    \begin{equation*}
        \dual{x^*,\tilde x- x}_X = \dual{x^*,\tilde x}_X -  \dual{x^*, x}_X\leq \norm{\tilde x}_X - \norm{x}_X,
    \end{equation*}
    and hence $x^*\in\partial(\norm{\cdot}_X)(x)$ by definition.
\end{proof}
In particular, we obtain for $X=\R$ the subdifferential of the absolute value function as
\begin{equation}\label{eq:convex:subdiff_abs}
    \partial(|\cdot|)(t) = \sign(t) :=
    \begin{cases}
        \{1\} & \text{if }t>0,\\
        \{{-}1\} & \text{if }t<0,\\
        [-1,1] & \text{if }t=0.
    \end{cases}
\end{equation}
We can also give a more explicit characterization of the subdifferential of the indicator functional of a convex set $C\subset X$:
For any $x\in C=\dom\delta_C$, we have that
\begin{equation*}
    \begin{aligned}
        x^*\in\partial\delta_C(x) &\equivalent \dual{x^*,\tilde x - x}_X \leq \delta_C(\tilde x) &&\text{for all }\tilde x \in X\\
        &\equivalent \dual{x^*,\tilde x - x}_X \leq 0 &&\text{for all }\tilde x \in C,
    \end{aligned}
\end{equation*}
since the first inequality is trivially satisfied for all $\tilde x\notin C$.
The set $\partial\delta_C(x)$ is also called the \emph{normal cone} to $C$ at $x$. Depending on the set $C$, this can be made even more explicit. Let $X=\R$ and $C=[-1,1]$, and let $t\in C$. Then we have  $\xi \in \partial\delta_{[-1,1]}(t)$ if and only if $\xi(\tilde t -t)\leq 0$ for all $\tilde t \in [-1,1]$. We proceed by distinguishing three cases.
\begin{enumerate}[{Case }1:]
    \item  $t=1$. Then $\tilde t - t \in [-2,0]$, and hence the product is positive if and only if $\xi\geq 0$.
    \item  $t=-1$. Then $\tilde t - t \in [0,2]$, and hence the product is positive if and only if $\xi\leq 0$.
    \item  $t\in(-1,1)$. Then $\tilde t -t$ can be positive as well as negative, and hence only $\xi = 0$ is possible.
\end{enumerate}
We thus obtain that
\begin{equation*}
    \partial\delta_{[-1,1]}(t) = \begin{cases}
        [0,\infty) & \text{if }t=1,\\
        (-\infty,0] & \text{if }t=-1,\\
        \{0\} & \text{if }t\in (-1,1),\\
        \emptyset & \text{if }t \in \R\setminus[-1,1].
    \end{cases}
\end{equation*}
Readers familiar with (non)linear optimization will recognize these as the \emph{complementarity conditions} for Lagrange multipliers corresponding to the inequalities $-1\leq t \leq 1$.

The following result furnishes a crucial link between finite- and infinite-dimensional convex optimization. We again assume (as we will from now on) that $\Omega\subset \R^n$ is open and bounded.
\begin{theorem}\label{lem:lebesgue:subdiff}
    Let $f:\R\to\Rbar$ be proper, convex, and lower semicontinuous, and let $F:L^p(\Omega)\to\Rbar$ with $1\leq p <\infty$ be as in \cref{lem:lebesgue:lsc}. Then we have for all $u\in\dom F$ with $q:=\frac{p}{p-1}$ that
    \begin{equation*}
        \partial F(u)  = \setof{u^*\in L^q(\Omega)}{u^*(x) \in\partial f(u(x))\quad\text{for almost every } x\in\Omega}.
    \end{equation*}
\end{theorem}
\begin{proof}
    Let $u, \tilde u\in \dom F$, i.e., $f\circ u, f \circ \tilde u\in L^1(\Omega)$ (otherwise there is nothing to show), and let $u^*\in L^q(\Omega)$ be arbitrary.
    If $u^*\in L^q(\Omega)$ satisfies $u^*(x)\in \partial f(u(x))$ almost everywhere, we can insert $\tilde u(x)$ into the definition and integrate over all $x\in\Omega$ to obtain
    \begin{equation*}
        F(\tilde u)-F(u) =
        \int_\Omega f(\tilde u(x))-f(u(x))\,dx \geq
        \int_\Omega u^*(x)(\tilde u(x)-u(x))\,dx =
        \dual{u^*,\tilde u-u}_{L^p},
    \end{equation*}
    i.e., $u^*\in\partial F(u)$.

    Conversely, let $u^*\in\partial F(u)$. Then by definition it holds that
    \begin{equation*}
        \int_\Omega u^*(x)(\tilde u(x)-u(x))\,dx \leq \int_\Omega f(\tilde u(x))-f(u(x))\,dx \quad\text{for all }\tilde u\in L^p(\Omega).
    \end{equation*}
    Let now $t\in\R$ be arbitrary and let $A\subset \Omega$ be an arbitrary measurable set. Setting
    \begin{equation*}
        \tilde u(x) := \begin{cases} t & \text{if }x\in A,\\ u(x) & \text{if }x\notin A, \end{cases}
    \end{equation*}
    the above inequality implies due to $\tilde u\in L^p(\Omega)$ that
    \begin{equation*}
        \int_A u^*(x)(t-u(x))\,dx \leq \int_A f(t)-f(u(x))\,dx.
    \end{equation*}
    Since $A$ was arbitrary, it must hold that
    \begin{equation*}
        u^*(x)(t-u(x))\leq f(t)-f(u(x)) \qquad\text{for almost every }x\in\Omega.
    \end{equation*}
    Furthermore, since $t\in\R$ was arbitrary, we obtain that $u^*(x) \in\partial u(x)$ for almost every $x\in\Omega$.
\end{proof}
A similar proof shows that for $F:\R^N\to\Rbar$ with $F(x) = \sum_{i=1}^N f_i(x_i)$ and $f_i:\R\to\Rbar$ convex, we have for any $x\in \dom F$ that
\begin{equation*}
    \partial F(x) = \setof{x^*\in\R^N}{x^*_i\in \partial f_i(x_i),\quad 1\leq i \leq N}.
\end{equation*}
Together with the above examples, this yields componentwise expressions for the subdifferential of the norm $\norm{\cdot}_1$ as well as of the indicator functional of the unit ball with respect to the supremum norm in $\R^N$.

\bigskip

As for classical derivatives, one rarely obtains subdifferentials from the fundamental definition but rather by applying calculus rules. It stands to reason that these are more difficult to derive the weaker the derivative concept is (i.e., the more functionals are differentiable in that sense).
For convex subdifferentials, the following two rules still follow directly from the definition.
\begin{lemma}\label{lem:convex:subdiff_calc}
    Let $F:X\to\Rbar$ be convex and $x\in \dom F$. Then,
    \begin{enumerate}[(i)]
        \item $\partial(\lambda F)(x) = \lambda(\partial F(x)):= \setof{\lambda\xi}{\xi\in\partial F(x)}$ for $\lambda >0$;
        \item $\partial F(\cdot + x_0)(x) = \partial F(x+x_0)$ for $x_0\in X$ with $x+x_0\in\dom F$.
    \end{enumerate}
\end{lemma}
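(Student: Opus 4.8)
The plan is to derive both identities directly from the defining inequality \eqref{eq:convex:def_subdiff}, since neither statement requires a separation argument, the directional derivative, or any of the heavier machinery; both are pure bookkeeping once one observes that the two operations involved — scaling $F$ by a positive constant and translating its argument — are invertible.

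For (i), I would begin from the characterization $x^* \in \partial(\lambda F)(x)$, which by \eqref{eq:convex:def_subdiff} means $\dual{x^*,\tilde x - x}_X \leq \lambda F(\tilde x) - \lambda F(x)$ for all $\tilde x \in X$. Because $\lambda > 0$, dividing this inequality by $\lambda$ is an equivalence and turns it into $\dual{\lambda^{-1}x^*,\tilde x - x}_X \leq F(\tilde x) - F(x)$ for all $\tilde x \in X$, i.e.\ $\lambda^{-1}x^* \in \partial F(x)$. Since $y^* \mapsto \lambda y^*$ is a bijection of $X^*$, this chain of equivalences is exactly the asserted set identity $\partial(\lambda F)(x) = \lambda\,\partial F(x)$. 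One should note in passing that $\dom(\lambda F) = \dom F$, so the base point $x$ is admissible on both sides and neither side is vacuously empty.

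For (ii), I would write $G := F(\cdot + x_0)$, so that $G(y) = F(y+x_0)$ and $\dom G = \dom F - x_0$; the hypothesis $x + x_0 \in \dom F$ is then precisely $x \in \dom G$. By definition, $x^* \in \partial G(x)$ means $\dual{x^*,\tilde x - x}_X \leq F(\tilde x + x_0) - F(x+x_0)$ for all $\tilde x \in X$. The substitution $\tilde y := \tilde x + x_0$ is a bijection of $X$ and satisfies $\tilde x - x = \tilde y - (x+x_0)$, so the inequality is equivalent to $\dual{x^*,\tilde y - (x+x_0)}_X \leq F(\tilde y) - F(x+x_0)$ for all $\tilde y \in X$, which is exactly the condition $x^* \in \partial F(x+x_0)$.

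The main point to be careful about — and it is the only one — is that both manipulations are genuine \emph{equivalences} and not merely one-sided inclusions, which is why the invertibility of $y^*\mapsto\lambda y^*$ on $X^*$ and of $\tilde x\mapsto\tilde x+x_0$ on $X$ needs to be invoked explicitly; matching up the effective domains is the other small check that keeps the statements from being vacuous. As the footnote to \cref{thm:convex:fermat} already hints, convexity of $F$ is in fact not used anywhere in this argument.
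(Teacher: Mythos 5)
Your argument is correct and is exactly the direct-from-the-definition verification the paper intends (it states the lemma without proof, noting only that both rules \enquote{follow directly from the definition}). The two equivalences via the bijections $y^*\mapsto\lambda y^*$ and $\tilde x\mapsto\tilde x+x_0$, together with the domain bookkeeping and the observation that convexity is never used, are all in order.
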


The sum rule is already considerably more delicate.
\begin{theorem}[sum rule]\label{thm:convex:sum}
    Let $F,G:X\to\Rbar$ be convex and lower semicontinuous, and $x\in \dom F\cap \dom G$. Then,
    \begin{equation*}
        \partial F(x) + \partial G(x) \subset \partial (F+G)(x),
    \end{equation*}
    with equality if there exists an $x_0 \in (\dom F)^o\cap \dom G$.
\end{theorem}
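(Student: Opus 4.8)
The inclusion $\partial F(x)+\partial G(x)\subset\partial(F+G)(x)$ is the easy half and I would dispatch it first straight from \eqref{eq:convex:def_subdiff}: given $\xi\in\partial F(x)$ and $\eta\in\partial G(x)$, adding the two defining inequalities (tested at the same $\tilde x\in X$) yields $\dual{\xi+\eta,\tilde x-x}_X\le(F+G)(\tilde x)-(F+G)(x)$ for all $\tilde x$, i.e. $\xi+\eta\in\partial(F+G)(x)$. All the real work is in the reverse inclusion under the constraint qualification, so from now on fix $\zeta\in\partial(F+G)(x)$; the goal is to split it as $\zeta=\xi+\eta$ with $\xi\in\partial F(x)$ and $\eta\in\partial G(x)$.

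I would first normalize. Translating and adding constants changes neither the hypotheses nor the subdifferentials (\cref{lem:convex:subdiff_calc}\,(ii), and additivity of a constant is immediate from the definition), so assume $x=0$ and $F(0)=G(0)=0$, and set $h_0:=x_0\in(\dom F)^o\cap\dom G$. Replacing $F$ by the convex function $\varphi:=F-\dual{\zeta,\cdot}_X$ (again a definition-level shift of the subdifferential), the condition $\zeta\in\partial(F+G)(0)$ becomes simply $\varphi(y)+G(y)\ge0$ for all $y\in X$, with $\varphi(0)=G(0)=0$. The claim is then equivalent to the existence of some $x^*\in X^*$ with
\begin{equation*}
 -G(y)\;\le\;\dual{x^*,y}_X\;\le\;\varphi(y)\qquad\text{for all }y\in X,
\end{equation*}
since reading this back gives $x^*+\zeta\in\partial F(0)$ and $-x^*\in\partial G(0)$, hence $\zeta\in\partial F(0)+\partial G(0)$.

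To produce such a ``sandwiched'' functional I would separate, in the product space $X\times\R$, the epigraph $C_1:=\epi\varphi$ from the hypograph $C_2:=\setof{(y,t)\in X\times\R}{t\le -G(y)}$. Both are convex and nonempty, and the inequality $\varphi+G\ge0$ forces $C_1^o\cap C_2=\emptyset$ (a common point $(y,t)$ would give $\varphi(y)<t\le -G(y)$). Applying the Eidelheit \cref{lem:convex:eidelheit} produces a nonzero $(x^*,s)\in(X\times\R)^*\cong X^*\times\R$ and a $\lambda\in\R$ with $\dual{x^*,y}_X+st\le\lambda\le\dual{x^*,z}_X+sr$ for all $(y,t)\in C_1$ and $(z,r)\in C_2$. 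Sending $t\to+\infty$ forces $s\le0$; the key point is that $s=0$ is impossible, because then $\dual{x^*,\cdot}_X$ would be $\le\lambda$ on $\dom F$ and $\ge\lambda$ on $\dom G$, and evaluating at $0$ and at $h_0$ (which lies in both) would give $\lambda=\dual{x^*,h_0}_X=0$, whence $\dual{x^*,h}_X\le0$ on a ball $O_\delta(0)\subset\dom F$ and thus $x^*=0$, a contradiction. So $s<0$; rescaling to $s=-1$ and testing the separation at $t=\varphi(y)$, $r=-G(y)$, and then at $y=0$ (forcing $\lambda=0$) exhibits $\dual{x^*,\cdot}_X$ as exactly the functional needed above, and the proof concludes.

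The step I expect to be the main obstacle is the applicability of \cref{lem:convex:eidelheit}, which demands $C_1^o\ne\emptyset$, i.e. that $\varphi$ (equivalently $F$) be \emph{bounded above on some ball}; the only available source of such local boundedness is a neighborhood of $x_0$, and this is precisely where the hypothesis $x_0\in(\dom F)^o$ is essential (together with \cref{thm:convex:cont_bounded}/\cref{thm:convex:cont}, which turn finiteness near an interior point of the domain into local boundedness and continuity). The same interiority is what rules out the degenerate ``vertical'' separating hyperplane $s=0$ and thereby guarantees that the functional extracted from the separation is a genuine element of $X^*$ rather than collapsing to zero. Everything else is bookkeeping with the normalizations and the definition \eqref{eq:convex:def_subdiff}.
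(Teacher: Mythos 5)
Your proof is correct and takes essentially the same route as the paper's: after your normalization ($x=0$, $F(0)=G(0)=0$, $\zeta$ absorbed into $F$), the sets $\epi\varphi$ and $\setof{(y,t)}{t\le -G(y)}$ are exactly the paper's $C_1$ and $C_2$, and the Eidelheit separation with exclusion of the vertical hyperplane $s=0$ is the same argument, hinging in both cases on the (tacit) local boundedness of $F$ near $x_0$ that makes $(C_1)^o\neq\emptyset$. The only differences are cosmetic — you rule out $s=0$ by deriving $x^*=0$ and contradicting the nontriviality of $(x^*,s)$, where the paper instead strictly separates the interior point $(x_0,\alpha)$ — plus one slip of notation: the ball contained in $\dom F$ is $O_\delta(x_0)$, not $O_\delta(0)$, and the estimate $\dual{x^*,h}_X\le 0$ for $h\in O_\delta(0)$ follows only after translating by $x_0$ and using $\dual{x^*,x_0}_X=0$.
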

\begin{proof}
    The inclusion follows directly from adding the definitions of the two subdifferentials.
    Let therefore $x\in \dom F\cap \dom G$ and $\xi\in \partial(F+G)(x)$, i.e., satisfying
    \begin{equation}\label{eq:convex:sum:fpg}
        \dual{\xi,\tilde x - x}_X \leq(F(\tilde x) + G(\tilde x)) - (F(x) + G(x)) \quad\text{for all }\tilde x \in X.
    \end{equation}
    Our goal is now to use (as in the proof of \cref{lem:convex:gamma}) the characterization of convex functionals via their epigraph together with the Hahn--Banach separation theorem to construct a bounded linear functional $\zeta\in \partial G(x)\subset X^*$ with $\xi-\zeta\in \partial F(x)$, i.e.,
    \begin{equation*}
        \begin{aligned}
            F(\tilde x) - F(x) - \dual{\xi,\tilde x-x}_X &\geq \dual{\zeta,x- \tilde x}_X \quad\text{for all }\tilde x \in \dom F,\\
            G(x) - G(\tilde x) &\leq\dual{\zeta,x- \tilde x}_X \quad\text{for all }\tilde x \in \dom G.
        \end{aligned}
    \end{equation*}
    For that purpose, we define the sets
    \begin{align*}
        C_1 &:=  \setof{(\tilde x,t-(F(x) -\dual{\xi,x}_X))}{F(\tilde x) - \dual{\xi,\tilde x}_X\leq t},\\
        C_2 &:= \setof{(\tilde x,G(x)-t)}{G(\tilde x) \leq t},
    \end{align*}
    i.e.,
    \begin{equation*}
        C_1 =\epi (F-\xi) - (0,F(x) - \dual{\xi,x}_X),\qquad
        C_2 = -(\epi G - (0,G(x))).
    \end{equation*}
    To apply \cref{lem:convex:eidelheit} to these sets, we have to verify its conditions.
    \begin{enumerate}
        \item Since $x\in \dom F\cap \dom G$, both $C_1$ and $C_2$ are nonempty.
            Furthermore, since $F$ and $G$ are convex, it is straightforward (if tedious) to verify from the definition that $C_1$ and $C_2$ are convex.
        \item The critical point is of course the nonemptiness of $C_1^o$, for which we argue as follows. Since $x_0\in (\dom F)^o$, we know from \cref{lem:convex:bounded} that $F$ is bounded in an open neighborhood $U\subset (\dom F)^o$ of $x_0$. We can thus find an open interval $I\subset \R$ such that $U\times I \subset C_1$. Since $U\times I$ is open by the definition of the product topology on $X\times \R$, any $(x_0,\alpha)$ with $\alpha\in I$ is an interior point of $C_1$.
        \item It remains to show that $C_1^o\cap C_2=\emptyset$.
            Assume there exists a $(\tilde x,\alpha) \in C_1^o \cap C_2$. But then the definitions of these sets and of the product topology imply that
            \begin{equation*}
                F(\tilde x) - F(x) - \dual{\xi,\tilde x-x}_X < \alpha
                \leq G(x) - G(\tilde x),
            \end{equation*}
            contradicting \eqref{eq:convex:sum:fpg}. Hence $C_1^o$ and $C_2$ are disjoint.
    \end{enumerate}
    \Cref{lem:convex:eidelheit} therefore yields a pair $(x^*,s)\in (X\times \R)^*\setminus\{(0,0)\}$ and a $\lambda\in\R$ with
    \begin{align}
        \label{eq:convex:sum:hb1}
        \dual{x^*,\tilde x}_X + s(t-(F(x) -\dual{\xi,x}_X)) &\leq \lambda, \quad\tilde{x}\in \dom F, t\geq F(\tilde x) - \dual{\xi,\tilde x}_X,\\
        \label{eq:convex:sum:hb2}
        \dual{x^*,\tilde x}_X + s(G(x)-t) &\geq \lambda, \quad \tilde{x}\in \dom G, t\geq G(\tilde x).
    \end{align}

    We now show that $s<0$. If $s=0$, we can insert $\tilde x = x_0\in \dom F\cap \dom G$ to obtain the contradiction
    \begin{equation*}
        \dual{x^*,x_0}_X < \lambda \leq \dual{x^*,x_0}_X,
    \end{equation*}
    which follows since $(x_0,\alpha)$ for $\alpha$ large enough is an interior point of $C_1$ and hence can be \emph{strictly} separated from $C_2$ by \cref{thm:hb_separation}.
    If $s>0$, choosing $t>F(x) - \dual{\xi,x}_X$ makes the term in parentheses in \eqref{eq:convex:sum:hb1} strictly positive, and taking $t\to \infty$ with fixed $\tilde x$ leads to a contradiction to the boundedness by $\lambda$.

    Hence $s<0$, and \eqref{eq:convex:sum:hb1} with $t=F(\tilde x) - \dual{\xi,\tilde x}_X$ and \eqref{eq:convex:sum:hb2} with $t=G(\tilde x)$ imply that
    \begin{align*}
        F(\tilde x) - F(x) + \dual{\xi,\tilde x - x}_X &\geq s^{-1}(\lambda -\dual{x^*,\tilde x}_X),\quad\text{for all } \tilde x\in \dom F,\\
        G(x) - G(\tilde x) &\leq s^{-1}(\lambda -\dual{x^*,\tilde x}_X),\quad \text{for all }\tilde x\in \dom G.
    \end{align*}
    Taking $\tilde x = x\in \dom F\cap \dom G$ in both inequalities immediately yields that
    $\lambda = \dual{x^*,x}_X$. Hence, $\zeta = s^{-1}x^*$ is the desired functional with $(\xi - \zeta)\in \partial F(x)$ and $\zeta \in \partial G(x)$, i.e., $\xi \in \partial F(x) + \partial G(x)$.
\end{proof}
By induction, we obtain from this sum rules for an arbitrary (finite) number of functionals (where $x_0$ has to be an interior point of all but one effective domain). A chain rule for linear operators can be proved similarly.
\begin{theorem}[chain rule]\label{thm:convex:chain}
    Let $A\in L(X,Y)$, $F:Y\to\Rbar$ be convex and lower semicontinuous, and $x\in \dom (F\circ A)$. Then,
    \begin{equation*}
        \partial (F\circ A)(x) \supset A^*\partial F(Ax) := \setof{A^*y^*}{y^* \in \partial F(Ax)}
    \end{equation*}
    with equality if there exists an $x_0\in X$ with $Ax_0\in (\dom F)^o$.
\end{theorem}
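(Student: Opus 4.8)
Before the author's proof, here is how I would approach \cref{thm:convex:chain}.

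The inclusion is immediate from the definitions, so the substance is the equality. If $y^*\in\partial F(Ax)$, then $\dual{y^*,\tilde y-Ax}_Y\leq F(\tilde y)-F(Ax)$ for all $\tilde y\in Y$; choosing $\tilde y=A\tilde x$ and using $\dual{y^*,A(\tilde x-x)}_Y=\dual{A^*y^*,\tilde x-x}_X$ shows $A^*y^*\in\partial(F\circ A)(x)$. For the reverse inclusion under the constraint qualification, the plan is to reduce to the sum rule \cref{thm:convex:sum} on the product space $X\times Y$. To this end I would introduce
\begin{equation}
    F_1,F_2:X\times Y\to\Rbar,\qquad F_1(\tilde x,\tilde y):=\delta_{\graph A}(\tilde x,\tilde y),\qquad F_2(\tilde x,\tilde y):=F(\tilde y),
\end{equation}
both proper and convex (the first because $\graph A$ is a linear subspace, hence convex), with $(x,Ax)\in\dom F_1\cap\dom F_2$. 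Since $\graph A$ is a subspace, the normal cone $\partial F_1(x,Ax)$ equals its annihilator, which a short computation identifies with $\setof{(-A^*\eta^*,\eta^*)}{\eta^*\in Y^*}$; and since $F_2$ depends only on the second variable, $\partial F_2(x,Ax)=\{0\}\times\partial F(Ax)$. Moreover $(\dom F_2)^o=X\times(\dom F)^o$, so the hypothesis $Ax_0\in(\dom F)^o$ says precisely that $(x_0,Ax_0)\in(\dom F_2)^o\cap\dom F_1$, which is exactly the one-sided qualification needed to apply \cref{thm:convex:sum} with $F_2$ playing the role of $F$ and $F_1$ that of $G$. This yields
\begin{equation}
    \partial(F_1+F_2)(x,Ax)=\partial F_2(x,Ax)+\partial F_1(x,Ax)=\setof{(-A^*\eta^*,\,y^*+\eta^*)}{y^*\in\partial F(Ax),\ \eta^*\in Y^*}.
\end{equation}

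It then remains to connect $\partial(F\circ A)(x)$ with $\partial(F_1+F_2)(x,Ax)$. Working directly from the definition and distinguishing $(\tilde x,\tilde y)\in\graph A$ (where $F_1+F_2$ reduces to $F(A\tilde x)$) from $(\tilde x,\tilde y)\notin\graph A$ (where the defining inequality is trivial), one checks that $\xi\in\partial(F\circ A)(x)$ if and only if $(\xi,0)\in\partial(F_1+F_2)(x,Ax)$. Inserting this into the displayed description of $\partial(F_1+F_2)(x,Ax)$ forces $\eta^*=-y^*$ and $\xi=A^*y^*$ for some $y^*\in\partial F(Ax)$, i.e.\ $\xi\in A^*\partial F(Ax)$, which is the claimed equality. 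The only delicate part is bookkeeping: getting the two directions of the correspondence $\xi\leftrightarrow(\xi,0)$ right, verifying the annihilator computation for $\graph A$, and making sure the constraint qualification is fed into the sum rule on the correct side (the one whose effective domain has nonempty interior). An alternative, in keeping with the remark that the chain rule \enquote{can be proved similarly} to \cref{thm:convex:sum}, is to separate in $Y\times\R$ the set $\epi F-(Ax,F(Ax))$ from the convex set $\setof{(A(\tilde x-x),\dual{\xi,\tilde x-x}_X-s)}{\tilde x\in X,\ s\geq 0}$ via \cref{lem:convex:eidelheit}; there the main obstacle is exactly as in \cref{thm:convex:sum}, namely showing the second set has nonempty interior (using $Ax_0\in(\dom F)^o$) and is disjoint from the first (using $\xi\in\partial(F\circ A)(x)$), followed by the sign analysis of the last coordinate of the separating functional.
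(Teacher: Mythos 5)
Your proposal is correct and follows essentially the same route as the paper: the paper's proof also lifts to $X\times Y$ via $H(x,y)=F(y)+\delta_{\graph A}(x,y)$, proves the equivalence $\xi\in\partial(F\circ A)(x)\Leftrightarrow(\xi,0)\in\partial H(x,Ax)$, applies \cref{thm:convex:sum} using $(x_0,Ax_0)\in(\dom\tilde F)^o$ with $\tilde F(x,y)=F(y)$, and identifies $\partial\tilde F(x,Ax)=\{0\}\times\partial F(Ax)$ and $\partial\delta_{\graph A}(x,Ax)=\setof{(-A^*z^*,z^*)}{z^*\in Y^*}$ exactly as you do. The bookkeeping you flag (the two directions of the correspondence and feeding the constraint qualification into the correct summand) is carried out in the paper precisely as you outline it.
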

\begin{proof}
    The inclusion is again a direct consequence of the definition: If $\eta \in \partial F(Ax)\subset Y^*$, we in particular have for all $\tilde y = A\tilde x\in Y$ with $\tilde x\in X$ that
    \begin{equation*}
        F(A\tilde x) - F(Ax) \geq \dual{\eta,A\tilde x - Ax}_Y = \dual{A^*\eta,\tilde x-x}_X,
    \end{equation*}
    i.e., $\xi :=  A^*\eta\in \partial (F\circ A) \subset X^*$.

    Let now $x\in\dom (F\circ A)$ and $\xi\in \partial(F\circ A)(x)$, i.e.,
    \begin{equation*}
        F(Ax) + \dual{\xi,\tilde x-x}_X \leq F(A\tilde x) \quad\text{for all }\tilde x\in X.
    \end{equation*}
    We now construct a $y^*\in\partial F(Kx)$ with $x^* = K^*y^*$ by applying the sum rule to
    \begin{equation*}
        H:X\times Y\to\Rbar, \qquad  (x,y) \mapsto F(y) + \delta_{\graph A}(x,y).
    \end{equation*}
    (This technique of getting rid of the operator composition by working in the graph space is called \enquote{lifting}.)
    Since $A$ is linear, $\graph A$ and hence $\delta_{\graph A}$ are convex. Furthermore, $Ax\in\dom F$ by assumption and thus $(x,Ax) \in \dom H$.

    We begin by showing that $\xi\in\partial (F\circ A)(x)$ if and only if $(\xi,0)\in\partial H(x,Ax)$. First, let $(\xi, 0)\in \partial H(x,Ax)$. Then we have for all $\tilde x\in X,\tilde y\in Y$ that
    \begin{equation*}
        \dual{\xi,\tilde x- x}_X + \dual{0,\tilde y -Ax}_Y \leq F(\tilde y) - F(Ax) + \delta_{\graph A}(\tilde x,\tilde y) - \delta_{\graph A}(x,Ax).
    \end{equation*}
    In particular, this holds for all $\tilde y \in \rg(A)=\setof{A\tilde x}{\tilde x\in X}$. By $\delta_{\graph A}(\tilde x,A\tilde x) = 0$ we thus obtain that
    \begin{equation*}
        \dual{\xi,\tilde x- x}_X \leq F(A \tilde x) - F(Ax) \quad\text{for all }\tilde x\in X,
    \end{equation*}
    i.e., $\xi \in \partial(F\circ A)(x)$. Conversely, let $\xi\in \partial(F\circ A)(x)$. Since $\delta_{\graph A}(x,Ax) = 0$ and $\delta_{\graph A}(\tilde x,\tilde y)\geq 0$, it then follows for all $\tilde x \in X$ and $\tilde y \in Y$ that
    \begin{equation*}
        \begin{aligned}
            \dual{\xi,\tilde x- x}_X + \dual{0,\tilde y -Ax}_Y
            &= \dual{\xi,\tilde x- x}_X\\
            &\leq F(A \tilde x) - F(Ax) + \delta_{\graph A}(\tilde x,\tilde y)  - \delta_{\graph A}(x,Ax)\\
            &= F( \tilde y) - F(Ax) + \delta_{\graph A}(\tilde x,\tilde y) - \delta_{\graph A}(x,Ax),
        \end{aligned}
    \end{equation*}
    where we have used that the last equality holds trivially as $\infty=\infty$ for $\tilde y \neq A\tilde x$. Hence, $(\xi,0)\in \partial H(x,Ax)$.

    We now consider $\tilde F:X\times Y\to \Rbar$, $(x,y)\mapsto F(y)$, and  $(x_0,Ax_0)\in \graph A=\dom\delta_{\graph A}$.
    Since $Ax_0\in (\dom F)^o \subset Y$ by assumption, $(x_0,Ax_0)\in (\dom \tilde F)^o=X\times (\dom F)^o\subset X\times Y$ as well.
    We can thus apply \cref{thm:convex:sum} to obtain
    \begin{equation*}
        (\xi,0) \in \partial H(x,Ax) = \partial \tilde F(x,Ax) + \partial \delta_{\graph A}(x,Ax),
    \end{equation*}
    i.e., $(\xi,0)=(x^*,y^*)+(w^*,z^*)$ for some $(x^*,y^*)\in \partial \tilde F(x,Ax)$ and $(w^*,z^*)\in\partial \delta_{\graph A}(x,Ax)$.

    Finally, we \enquote{collapse} these subdifferentials back to the individual spaces to obtain the desired characterization.
    First, we have $(x^*,y^*)\in \partial \tilde F(x,Ax)$ if and only if
    \begin{equation*}
        \dual{x^*,\tilde x -x}_X + \dual{y^*,\tilde y-Ax}_Y \leq F(\tilde y)- F(Ax)\quad\text{for all }\tilde x\in X,\tilde y \in Y.
    \end{equation*}
    Fixing in turn $\tilde x = x$ and $\tilde y=Ax$ implies that $y^*\in\partial F(Ax)$  and $x^* =0$, respectively.
    Second, $(w^*,z^*)\in \partial \delta_{\graph A}(x,Ax)$ if and only if
    \begin{equation*}
        \dual{w^*,\tilde x -x}_X + \dual{z^*,\tilde y-Ax}_Y \leq 0 \quad\text{for all }(\tilde x,\tilde y)\in \graph A,
    \end{equation*}
    i.e., for all $\tilde x\in X$ and $\tilde y = A\tilde x$. Therefore,
    \begin{equation*}
        \dual{w^*+A^*z^*,\tilde x - x}_X \leq 0 \quad\text{for all }\tilde x\in X
    \end{equation*}
    and hence $w^*=-A^*z^*\in X^*$. Together we obtain
    \begin{equation*}
        (\xi,0) = (0,y^*) + (-A^*z^*,z^*),
    \end{equation*}
    which implies $y^* = -z^*$ and thus that $\xi = -A^*z^* = A^*y^*$ with $y^*\in\partial F(Ax)$ as claimed.
\end{proof}

The Fermat principle together with the sum rule yields the following characterization of minimizers of convex functionals under convex constraints.
\begin{cor}
    Let $U\subset X$ be nonempty, convex, and closed, and let $F:X\to\Rbar$ be proper, convex, and lower semicontinuous. If there exists an $x_0 \in U^o \cap \dom F$, then $\bar x\in U$ solves
    \begin{equation*}
        \min_{x\in U} F(x)
    \end{equation*}
    if and only if there exists a $\xi\in X^*$ with
    \begin{equation}\label{eq:convex:kkt}
        \left\{\begin{aligned}
                &\xi\in \partial F(\bar x),\\
                &\dual{\xi,\tilde x -x} \geq 0 \quad\text{for all } \tilde x \in U.
        \end{aligned}\right.
    \end{equation}
\end{cor}
\begin{proof}
    Due to the assumptions on $F$ and $U$, we can apply \cref{thm:convex:fermat} to $J:=F+\delta_U$. Furthermore, since $x_0 \in U^o = (\dom \delta_U)^o$, we can also apply \cref{thm:convex:sum}. Hence $F$ has a minimum in $\bar x$ if and only if
    \begin{equation*}
        0\in \partial J(\bar x) = \partial F(\bar x) + \partial \delta_U(\bar x).
    \end{equation*}
    Together with the characterization of subdifferentials of indicator functionals as normal cones, this yields \eqref{eq:convex:kkt}.
\end{proof}
If $F:X\to\R$ is Gâteaux differentiable (and hence finite-valued), \eqref{eq:convex:kkt} coincide with the classical \emph{Karush--Kuhn--Tucker conditions}; the existence of an interior point $x_0\in U^o$ is an analogue of the \emph{Slater condition} needed to show existence of the Lagrange multiplier $\xi$ for the inequality constraints.

\chapter{Fenchel duality}

A particularly useful calculus rule connects the convex subdifferential with the so-called Fenchel--Legendre transform.
Let $X$ be a normed vector space and $F:X\to\Rbar$ be proper but not necessarily convex. We then define the \emph{Fenchel conjugate} of $F$ as
\begin{equation*}
    F^*:X^*\to \Rbar,\qquad F^*(x^*) = \sup_{x\in X}\, \dual{x^*,x}_X - F(x).
\end{equation*}
(Since $\dom F\neq \emptyset$ is excluded, we have that $F^*(x^*)>-\infty$ for all $x^*\in X^*$, and hence the definition is meaningful.) \Cref{lem:convex:func}\,(v) and \cref{lem:variation:wlsc}\,(v) immediately imply that $F^*$ is always convex and lower semicontinuous (as long as $F$ is indeed proper). If $F$ is bounded from below by an affine functional (which is always the case if $F$ is proper, convex, and lower semicontinuous by \cref{lem:convex:gamma}), then $F^*$ is proper as well.
Finally, the definition directly yields the \emph{Fenchel--Young inequality}
\begin{equation}\label{eq:convex:fenchel-young}
    \dual{x^*,x}_X \leq F(x) + F^*(x^*)\qquad\text{for all }x\in X, x^*\in X^*.
\end{equation}

Intuitively, $F^*(x^*)$ is the (negative of the) affine part of the tangent to $F$ (in the point $x$ in which the supremum is attained) with slope $x^*$. Similarly, we define the Fenchel conjugate of $F:X^*\to\Rbar$ (i.e., if $F$ is defined on some dual space) as
\begin{equation*}
    F^*:X\to \Rbar,\qquad F^*(x) = \sup_{x^*\in X^*} \dual{x^*,x}_X - F(x^*).
\end{equation*}
The point of this convention is that even in nonreflexive spaces, the \emph{biconjugate} $F^{**}:=(F^*)^*$ is again defined on $X$ (rather than $X^{**}\supset X$). Intuitively, $F^{**}$ is the convex hull of $F$, which by \cref{lem:convex:gamma} coincides with $F$ itself if $F$ is convex.
\begin{theorem}[Fenchel--Moreau--Rockafellar]\label{thm:convex:moreau}
    Let $F:X\to\Rbar$ be proper. Then,
    \begin{enumerate}[(i)]
        \item $F^{**}\leq F$;
        \item $F^{**} = F^{\Gamma}$;
        \item $F^{**} = F$ if and only if $F$ is convex and lower semicontinuous.
    \end{enumerate}
\end{theorem}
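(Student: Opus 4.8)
The plan is to establish the three parts in order, obtaining (i) directly from the Fenchel--Young inequality, (ii) by matching the affine minorants of $F$ with the values of its conjugate $F^*$, and (iii) as an immediate corollary of (ii) together with \cref{lem:convex:gamma}. For (i), I would observe that for every $x\in X$ and every $x^*\in X^*$ with $F^*(x^*)<\infty$ the Fenchel--Young inequality \eqref{eq:convex:fenchel-young} rearranges to $\dual{x^*,x}_X - F^*(x^*)\leq F(x)$, while for the remaining $x^*$ the left-hand side equals $-\infty$; taking the supremum over all $x^*\in X^*$ then gives $F^{**}(x)=\sup_{x^*\in X^*}\bigl(\dual{x^*,x}_X-F^*(x^*)\bigr)\leq F(x)$.

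For (ii), I would prove the two inequalities separately. An affine functional $a(x)=\dual{x^*,x}_X-\alpha$ satisfies $a\leq F$ on all of $X$ if and only if $\alpha\geq\dual{x^*,x}_X-F(x)$ for every $x$, that is, $\alpha\geq F^*(x^*)$; for any such minorant one then has $a(x)\leq\dual{x^*,x}_X-F^*(x^*)\leq F^{**}(x)$, and passing to the supremum over all affine minorants of $F$ yields $F^\Gamma\leq F^{**}$. Conversely, for each $x^*$ with $F^*(x^*)<\infty$ the map $x\mapsto\dual{x^*,x}_X-F^*(x^*)$ is itself an affine functional lying below $F$ by Fenchel--Young, hence below $F^\Gamma$; taking the supremum over these $x^*$ (the others contributing only $-\infty$ to $F^{**}$) gives $F^{**}\leq F^\Gamma$. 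In the degenerate case that $F$ has no affine minorant at all, both sides equal $-\infty$ by the convention for the supremum of the empty set, so the identity still holds.

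Part (iii) is then immediate: by (ii) one always has $F^{**}=F^\Gamma$, so $F^{**}=F$ is equivalent to $F=F^\Gamma$, which by \cref{lem:convex:gamma} — applicable since $F$ is proper by hypothesis — holds precisely when $F$ is convex and lower semicontinuous. I do not expect a serious obstacle: the one place needing a little care is the extended-real-valued bookkeeping in (ii), namely keeping track of exactly which $x^*$ contribute to the supremum defining $F^{**}$, but the genuine content — that a proper convex lower semicontinuous function is the supremum of its affine minorants — has already been delivered by \cref{lem:convex:gamma}, and everything here merely re-reads that fact through the conjugate.
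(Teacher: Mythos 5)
Your proposal is correct. Parts (i) and (iii) coincide with the paper's argument: (i) is the supremum over $x^*$ in the Fenchel--Young inequality \eqref{eq:convex:fenchel-young}, and (iii) combines (ii) with \cref{lem:convex:gamma}. In (ii) you take a slightly different route: the paper first applies \cref{lem:convex:gamma} to $F^{**}$ (which is proper, convex, and lower semicontinuous) to write $F^{**}=(F^{**})^\Gamma$ and then shows that the affine minorants of $F^{**}$ and of $F$ are the same family, whereas you prove the two inequalities $F^\Gamma\leq F^{**}$ and $F^{**}\leq F^\Gamma$ directly from the definitions, using only the observation that $a(x)=\dual{x^*,x}_X-\alpha$ lies below $F$ exactly when $\alpha\geq F^*(x^*)$, together with the fact that each $x\mapsto\dual{x^*,x}_X-F^*(x^*)$ with $F^*(x^*)<\infty$ is itself such a minorant. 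The computational core is the same, but your version does not need to invoke \cref{lem:convex:gamma} in (ii) at all, and hence does not need $F^{**}$ to be proper there; this is a small gain, since in the degenerate situation where $F$ admits no affine minorant one has $F^*\equiv\infty$ and $F^{**}\equiv-\infty$, a case the paper's appeal to \cref{lem:convex:gamma} (via ``proper by (i)'') glosses over and which you dispose of explicitly by the empty-supremum convention. Your extended-real bookkeeping for which $x^*$ contribute to the supremum defining $F^{**}$ is also handled correctly.
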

\begin{proof}
    For (i), we take the supremum over all $x^*\in X^*$ in the Fenchel--Young inequality \eqref{eq:convex:fenchel-young} and obtain that
    \begin{equation*}
        F(x) \geq \sup_{x^*\in X^*} \dual{x^*,x}_X -F^*(x^*) = F^{**}(x).
    \end{equation*}

    For (ii), we first note that $F^{**}$ is convex and lower semicontinuous by definition as a Fenchel conjugate as well as proper by (i). Hence, \cref{lem:convex:gamma} yields that
    \begin{equation*}
        F^{**}(x) = (F^{**})^\Gamma(x) = \sup\setof{a(x)}{a:X\to\R\text{ affine with } a\leq F^{**}}.
    \end{equation*}
    We now show that we can replace $F^{**}$ with $F$ on the right-hand side. For this, let $a(x)=\dual{x^*,x}_X-\alpha$ with arbitrary $x^*\in X^*$ and $\alpha\in\R$.
    If $a\leq F^{**}$, then (i) implies that $a\leq F$. Conversely, if $a\leq F$, we have that  $\dual{x^*,x}_X-F(x)\leq \alpha$ for all $x\in X$, and taking the supremum over all $x\in X$ yields that $\alpha\geq F^{*}(x^*)$. By definition of $F^{**}$, we thus obtain that
    \begin{equation*}
        a(x) = \dual{x^*,x}_X - \alpha \leq \dual{x^*,x}_X - F^*(x^*) \leq F^{**}(x) \quad\text{for all }x\in X,
    \end{equation*}
    i.e., $a\leq F^{**}$.

    Statement (iii) now directly follows from (ii) and \cref{lem:convex:gamma}.
\end{proof}

We again consider some relevant examples.
\begin{example}\label{ex:convex:fenchel}~
    \begin{enumerate}[(i)]
        \item Let $X$ be a Hilbert space and $F(x) = \frac12\norm{x}_{X}^2$.
            Using the \nameref{thm:frechetriesz} \cref{thm:frechetriesz}, we identify $X$ with its dual $X^*$ and can express the duality pairing using the inner product. Since $F$ is Fréchet differentiable with gradient $\nabla F(x) = x$, the solution $\bar x\in X$ to
            \begin{equation*}
                \sup_{x\in X} \inner{x^*,x}_X - \tfrac12\inner{x,x}_X
            \end{equation*}
            for given $x^*\in X$ has to satisfy the Fermat principle, i.e., $\bar x=x^*$. Inserting this into the definition and simplifying yields the Fenchel conjugate
            \begin{equation*}
                F^*:X\to\R,\qquad F^*(x^*) = \tfrac12\norm{x^*}_{X}^2.
            \end{equation*}

        \item Let $B_X$ be the unit ball in the normed vector space $X$ and take $F=\delta_{B_X}$. Then we have for any $x^*\in X^*$ that
            \begin{equation*}
                (\delta_{B_X})^*(x^*) = \sup_{x\in X} \, \dual{x^*,x}_X - \delta_{B_X}(x) = \sup_{\norm{x}_{X}\leq 1}  \dual{x^*,x}_X = \norm{x^*}_{X^*}.
            \end{equation*}
            Similarly, one shows using the definition of the Fenchel conjugate in dual spaces and \cref{cor:functan:norm_dual} that $(\delta_{B_{X^*}})^*(x) = \norm{x}_X$.

        \item Let $X$ be a normed vector space and take $F(x) = \norm{x}_X$. We now distinguish two cases for a given $x^*\in X^*$.
            \begin{enumerate}[{Case} 1:]
                \item $\norm{x^*}_{X^*} \leq 1$. Then it follows from \eqref{eq:functan:cs_banach} that $\dual{x^*,x}_X- \norm{x}_X\leq 0$ for all $x\in X$. Furthermore, $\dual{x^*,0} = 0 = \norm{0}_X$, which implies that
                    \begin{equation*}
                        F^*(x^*) = \sup_{x\in X}  \dual{x^*,x}_X -\norm{x}_X = 0.
                    \end{equation*}
                \item $\norm{x^*}_{X^*} >1$. Then by definition of the dual norm, there exists an $x_0\in X$ with $\dual{x^*,x_0}_X > \norm{x_0}_X$. Hence, taking $t\to\infty$ in
                    \begin{equation*}
                        0<t(\dual{x^*,x_0}_X - \norm{x_0}_X) = \dual{x^*,tx_0}_X - \norm{tx_0}_X \leq F^*(x^*)
                    \end{equation*}
                    yields $F^*(x^*) = \infty$.
            \end{enumerate}
            Together we obtain that $F^* = \delta_{B_{X^*}}$.
            As above, a similar argument shows that $(\norm{\cdot}_{X^*})^* = \delta_{B_X}$.
    \end{enumerate}
\end{example}

As for convex subdifferentials, Fenchel conjugates of integral functionals can be computed pointwise.
\begin{theorem}\label{thm:lebesgue:fenchel}
    Let $f:\R\to\Rbar$ be measurable, proper and lower semicontinuous, and let $F:L^p(\Omega)\to\Rbar$ with $1\leq p <\infty$ be defined as in \cref{lem:lebesgue:lsc}. Then we have for $q=\frac{p}{p-1}$ that
    \begin{equation*}
        F^*:L^q(\Omega) \to\Rbar, \qquad F^*(u^*) =
        \int_\Omega f^*(u^*(x))\,dx .
    \end{equation*}
\end{theorem}
\begin{proof}
    We argue similarly as in the proof of \cref{lem:lebesgue:subdiff}, with some changes that are needed since measurability of $f\circ u$ does not immediately imply that of $f^*\circ u^*$. Let $u^*\in L^q(\Omega)$ be arbitrary and consider for all $x\in \Omega$ the functions
    \begin{align*}
        \phi(x) &:= \sup_{t\in\R} tu^*(x) - f(t) = f^*(u^*(x)),
        \shortintertext{as well as for $n\in \N$}
        \phi_n(x) &:=\sup_{|t|\leq n} tu^*(x) - f(t) \leq f^*(u^*(x)).
    \end{align*}
    By a measurable selection theorem (\cite[Theorem VIII.1.2]{Ekeland:1999a}), the pointwise supremum in the definition of $\phi_n$ is attained at some $t^*_x$ for almost every $x\in \Omega$ and defines a measurable mapping $x\mapsto u_n(x):=t^*_x$ with $\norm{u_n}_{L^\infty}\leq n$. This also implies that $\phi_n = u_n \cdot u^*-f\circ u_n$ is measurable.
    Furthermore, by assumption there exists a $t_0\in \dom f$, and hence
    $u_0 := t_0u^*(x) - f(t_0)$ is measurable and satisfies $u_0 \leq \phi_n(x)$ for all $n\geq |t_0|$.
    Finally, by construction, $\phi_n(x)$ is monotonically increasing and converges to $\phi(x)$ for all $x\in \Omega$. The sequence $\{\phi_n-u_0\}_{n\in\N}$ of functions is thus measurable and nonnegative, and the monotone convergence theorem yields that
    \begin{equation*}
        \int_\Omega \phi(x) - u_0(x) \,dx = \int_\Omega \sup_{n\in\N} \phi_n(x)-u_0(x)\,dx =
        \sup_{n\in\N} \int_\Omega \phi_n(x)-u_0(x)\,dx.
    \end{equation*}
    Hence the pointwise limit $\phi=f^*\circ u^*$ is measurable as well.

    The measurable selection theorem also yields that
    \begin{equation*}
        \begin{aligned}[t]
            \int_\Omega f^*(u^*(x))\,dx &= \sup_{n\in\N} \int_\Omega \sup_{|t|\leq n}\left\{ tu^*(x)-f(t)\right\}\,dx \\
            &= \sup_{n\in\N} \int_\Omega u^*(x)u_n(x)-f(u_n(x))\,dx \\
            &\leq \sup_{u\in L^p(\Omega)} \int_\Omega u^*(x)u(x)-f(u(x))\,dx = F^*(u^*),
        \end{aligned}
    \end{equation*}
    since $u_n \in L^\infty(\Omega)\subset L^p(\Omega)$ for all $n\in\N$.

    For the converse inequality, we can now proceed as in the proof of \cref{lem:lebesgue:subdiff}. For any $u\in L^p(\Omega)$ and $u^*\in L^q(\Omega)$, we have by the Fenchel--Young inequality \eqref{eq:convex:fenchel-young} applied to $f$ and $f^*$ that
    \begin{equation*}
        f(u(x)) + f^*(u^*(x)) \geq u^*(x)u(x)\quad \text{for almost every }x\in \Omega.
    \end{equation*}
    Since both sides are measurable, this implies that
    \begin{equation*}
        \int_\Omega f^*(u^*(x))\,dx \geq \int_\Omega u^*(x)u(x) - f(u(x))\,dx,
    \end{equation*}
    and taking the supremum over all $u\in L^p(\Omega)$ yields the claim.
\end{proof}

Fenchel conjugates satisfy a number of useful calculus rules, which follow directly from the properties of the supremum.
\begin{lemma}\label{lem:convex:fenchel_calc}
    Let $F:X\to\Rbar$ be proper. Then,
    \begin{enumerate}[(i)]
        \item $(\alpha F)^* = \alpha F^*\circ (\alpha^{-1} \Id)$ for any $\alpha >0$;
        \item $(F(\cdot + x_0) + \dual{x_0^*,\cdot}_X)^* = F^*(\cdot - x_0^*) - \dual{\cdot - x_0^*,x_0}_X$ for all $x_0\in X$, $x_0^*\in X^*$;
        \item $(F\circ A)^* = F^* \circ A^{-*}$ for continuously invertible $A\in L(Y,X)$ and $A^{-*}:=(A^{-1})^*$.
    \end{enumerate}
\end{lemma}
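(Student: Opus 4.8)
The plan is to derive all three identities directly from the definition $F^*(x^*) = \sup_{x\in X} \dual{x^*,x}_X - F(x)$, in each case by a change of variables in the variable over which the supremum is taken. Since convexity plays no role here (the conjugate of any proper functional is well-defined, by the remark preceding the Fenchel--Young inequality), no separation arguments are needed and each case reduces to a short computation; one only has to check that the substitution involved is a bijection, so that the supremum is left unchanged.

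For (i), I would pull the positive constant out of the supremum: writing $\dual{x^*,x}_X - \alpha F(x) = \alpha\bigl(\dual{\alpha^{-1}x^*,x}_X - F(x)\bigr)$ and taking $\sup_{x\in X}$ gives $(\alpha F)^*(x^*) = \alpha F^*(\alpha^{-1}x^*)$ for all $x^*\in X^*$, which is the claim (the excluded value $\alpha=0$ is the only obstruction to this manipulation, and it is ruled out). For (ii), setting $G := F(\cdot+x_0)+\dual{x_0^*,\cdot}_X$ I would substitute $y = x+x_0$, a bijection of $X$ onto itself, which turns $\dual{x^*,x}_X - F(x+x_0) - \dual{x_0^*,x}_X$ into $\dual{x^*-x_0^*,y}_X - F(y) - \dual{x^*-x_0^*,x_0}_X$; taking $\sup_{y\in X}$ then yields the stated value $F^*(x^*-x_0^*) - \dual{x^*-x_0^*,x_0}_X$. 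For (iii), noting that continuous invertibility of $A\in L(Y,X)$ makes $x = Ay$ a bijection $Y\to X$ with inverse $A^{-1}\in L(X,Y)$, I would substitute it into $(F\circ A)^*(y^*) = \sup_{y\in Y}\dual{y^*,y}_Y - F(Ay)$ to obtain $\sup_{x\in X}\dual{y^*,A^{-1}x}_Y - F(x)$, and then invoke the defining identity of the adjoint, $\dual{y^*,A^{-1}x}_Y = \dual{(A^{-1})^*y^*,x}_X = \dual{A^{-*}y^*,x}_X$, to recognise this supremum as $F^*(A^{-*}y^*) = (F^*\circ A^{-*})(y^*)$.

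There is no genuine obstacle here; the only points deserving a moment's attention are bookkeeping. One must track the spaces on which the various conjugates live -- in particular $(F\circ A)^*$ is a functional on $Y^*$, so the substitution must genuinely run over all of $Y$ -- and get the direction of the adjoint right in (iii). It is precisely and only there that continuous invertibility of $A$ is used: it guarantees the change of variables is onto, so that no part of the supremum is lost (for a non-surjective $A$ one would obtain only an inequality). One should also stay within the admissible arithmetic on $\Rbar$ -- sums and multiplication by positive scalars -- which all three manipulations respect since $\alpha>0$ and $F^*$ is proper wherever it is evaluated.
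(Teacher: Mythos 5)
Your proof is correct and follows essentially the same route as the paper: each identity is obtained directly from the definition of the conjugate by pulling out the factor $\alpha$ in (i) and by the bijective changes of variables $\tilde x = x+x_0$ in (ii) and $x = Ay$ in (iii), with the adjoint identity handling the duality pairing in (iii). Your remarks on where surjectivity of the substitution and the direction of the adjoint enter match the paper's (brief) justifications.
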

\begin{proof}
    \emph{(i):} For any $\alpha >0$, we have that
    \begin{equation*}
        (\alpha F)^*(x^*) =  \sup_{x\in X} \left(\alpha\dual{\alpha^{-1}x^*,x}_X - \alpha F(x)\right)
        = \alpha \sup_{x\in X} \left(\dual{\alpha^{-1}x^*,x}_X - F(x)\right)
        = \alpha F^* (\alpha^{-1} x^*).
    \end{equation*}

    \emph{(ii):} Since $\setof{x+x_0}{x\in X}=X$, we have that
    \begin{equation*}
        \begin{aligned}
            (F(\cdot + x_0) + \dual{x_0^*,\cdot}_X)^*(x^*) &= \sup_{x\in X}\  \dual{x^*,x}_X - F(x^* +x_0) - \dual{x_0^*,x_0}_X\\
            &= \sup_{x\in X} \big( \dual{x^*-x_0^*,x+x_0}_X - F(x +x_0)\big) - \dual{x^*-x_0^*,x_0}_X\\
            &= \sup_{\tilde x=x+x_0,x\in X}\, \big( \dual{x^*-x^*_0,\tilde x}_X - F(\tilde x)\big) - \dual{x^*-x_0^*,x_0}_X\\
            &= F^*(x^*-x_0^*) - \dual{x^*-x_0^*,x_0}_X.
        \end{aligned}
    \end{equation*}

    \emph{(iii):} Since $X=\rg A$, we have that
    \begin{equation*}
        \begin{split}
            \begin{aligned}[b]
                (F\circ A)^*(y^*) &= \sup_{y\in Y}\ \dual{y^*,A^{-1}Ay}_Y - F(Ay)\\
                & = \sup_{x=Ay,y\in Y} \dual{A^{-*}y^*,x}_X - F(x) = F^*(A^{-*}y^*).
            \end{aligned}
            \qedhere
        \end{split}
    \end{equation*}
\end{proof}

\bigskip

There are some obvious similarities between the definitions of the Fenchel conjugate and of the subdifferential, which yield the following very useful property.
\begin{lemma}[Fenchel--Young]\label{lem:convex:fenchel-young}
    Let $F:X\to\Rbar$ be proper, convex, and lower semicontinuous. Then the following statements are equivalent for any $x\in X$ and $x^*\in X^*$:
    \begin{enumerate}[(i)]
        \item $\dual{x^*,x}_X = F(x) + F^*(x^*)$;
        \item $x^*\in\partial F(x)$;
        \item $x\in\partial F^*(x^*)$.
    \end{enumerate}
\end{lemma}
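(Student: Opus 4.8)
The plan is to prove the cycle by first establishing (i)$\Leftrightarrow$(ii) directly from the definitions of the subdifferential \eqref{eq:convex:def_subdiff} and the Fenchel conjugate — this part uses only that $F$ is proper — and then deducing (i)$\Leftrightarrow$(iii) by exploiting the symmetry of statement (i) in $F$ and $F^*$, which is precisely where the hypotheses of convexity and lower semicontinuity enter.

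For (ii)$\Rightarrow$(i), I would unfold $x^*\in\partial F(x)$ as $\dual{x^*,\tilde x - x}_X\leq F(\tilde x)-F(x)$ for all $\tilde x\in X$, rearrange to $\dual{x^*,x}_X - F(x)\geq \dual{x^*,\tilde x}_X - F(\tilde x)$, and take the supremum over $\tilde x\in X$ to get $\dual{x^*,x}_X - F(x)\geq F^*(x^*)$; together with the Fenchel--Young inequality \eqref{eq:convex:fenchel-young} this forces equality, i.e.\ (i). (Note that $x^*\in\partial F(x)$ forces $x\in\dom F$, so $F(x)<\infty$ and the rearrangement is legitimate.) For (i)$\Rightarrow$(ii), I would instead start from the trivial bound $F^*(x^*)\geq \dual{x^*,\tilde x}_X - F(\tilde x)$ valid for every $\tilde x$, substitute the equality from (i) for $F^*(x^*)$, and rearrange back to $\dual{x^*,\tilde x-x}_X\leq F(\tilde x)-F(x)$ for all $\tilde x$, which is exactly $x^*\in\partial F(x)$.

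For the equivalence with (iii) I would use \cref{thm:convex:moreau}\,(iii), by which the hypotheses give $F^{**}=F$, so (i) may be rewritten as $\dual{x^*,x}_X = F^*(x^*) + F^{**}(x)$ — the Fenchel--Young equality for the pair $F^*$, $x^*$, $x$. Then I repeat the argument of the previous paragraph with $F^*$ in the role of $F$: reading $x\in X$ as $Jx\in X^{**}$ via the canonical injection so that $\dual{Jx,y^*}_{X^*}=\dual{y^*,x}_X$, the condition $x\in\partial F^*(x^*)$ unfolds to $\dual{\tilde x^*-x^*,x}_X\leq F^*(\tilde x^*)-F^*(x^*)$ for all $\tilde x^*\in X^*$; rearranging and taking the supremum over $\tilde x^*\in X^*$ yields $\dual{x^*,x}_X - F^*(x^*)\geq \sup_{\tilde x^*\in X^*}\bigl(\dual{\tilde x^*,x}_X - F^*(\tilde x^*)\bigr) = (F^*)_*(x) = F^{**}(x) = F(x)$, and Fenchel--Young again closes the gap to (i); the reverse implication is the same computation run backwards (using $F(x)=F^{**}(x)=\sup_{\tilde x^*}(\dual{\tilde x^*,x}_X-F^*(\tilde x^*))$).

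There is no real obstacle here — the whole argument is rearrangement of inequalities plus one appeal to the biconjugate theorem — but the step I would flag is exactly that last one: a priori $\partial F^*(x^*)\subset X^{**}$, and the identity $\sup_{\tilde x^*\in X^*}\bigl(\dual{\tilde x^*,x}_X-F^*(\tilde x^*)\bigr)=F(x)$ is the assertion $F^{**}=F$, which is where properness, convexity, and lower semicontinuity are genuinely needed. I would therefore keep the write-up terse and only spell out this point carefully.
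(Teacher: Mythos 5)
Your proposal is correct and follows essentially the same route as the paper: (i)$\Leftrightarrow$(ii) by rearranging the defining inequalities of $\partial F$ and of $F^*$ as a supremum together with the Fenchel--Young inequality, and (i)$\Leftrightarrow$(iii) by invoking $F^{**}=F$ from \cref{thm:convex:moreau} and repeating the argument with $F^*$ in place of $F$. The point you flag about $\partial F^*(x^*)\subset X^{**}$ and the canonical injection is exactly the subtlety the paper addresses in the remark following the lemma, so your emphasis is well placed.
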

\begin{proof}
    If (i) holds, the definition of $F^*$ as a supremum immediately implies that
    \begin{equation}\label{eq:convex:fy1}
        \dual{x^*,x}_X - F(x) = F^*(x^*) \geq  \dual{x^*,\tilde x}_X - F(\tilde x) \qquad\text{for all }\tilde x\in X,
    \end{equation}
    which again by definition is equivalent to (ii). Conversely, taking the supremum over all $\tilde x\in X$ in \eqref{eq:convex:fy1} yields
    \begin{equation*}
        \dual{x^*,x}_X \geq F(x) + F^*(x^*),
    \end{equation*}
    which together with the Fenchel--Young inequality \eqref{eq:convex:fenchel-young} leads to (i).

    Similarly, (i) in combination with \cref{thm:convex:moreau} implies that
    \begin{equation*}
        \dual{x^*,x}_X - F^*(x^*) = F(x) = F^{**}(x) \geq \dual{\tilde x^*,x}-F^*(\tilde x^*) \qquad\text{for all }\tilde x^*\in X^*,
    \end{equation*}
    yielding as above the equivalence of (i) and (iii).
\end{proof}
\begin{remark}\label{rem:convex:f-nonconvex}
    If $F$ is not convex, the above proof shows that we still have the equivalence (i)\,$\Leftrightarrow$\,(ii). Furthermore since always $F^{**}\leq F$ by \cref{thm:convex:moreau}\,(i), it still holds that (ii)\,$\implies$\,(iii). However, we can only conclude from (iii) that (ii) holds for $F^{**}\neq F$ in place of $F$. Applying \cref{lem:convex:fenchel-young} to nonconvex functionals therefore inevitably introduces a \emph{convexification} (by replacing the nonconvex $F$ with its convex envelope $F^{**}$).
\end{remark}
\begin{remark}
    If $X$ is not reflexive, $x\in\partial F^*(x^*)\subset X^{**}$ in (iii) has to be understood via the canonical injection, i.e., as
    \begin{equation*}
        \dual{J(x),\tilde x^*-x^*}_{X^*} = \dual{\tilde x^* -x^*,x}_X \leq F^*(\tilde x^*) - F^*(x^*) \quad\text{for all $\tilde x^* \in X$.}
    \end{equation*}
    Using (iii) to conclude equality in (i) or, equivalently, the subdifferential inclusion (ii) therefore requires the additional condition that $x\in X\subset X^{**}$. Conversely, if (i) or (ii) hold, (iii) also guarantees that the subgradient $x\in \partial F^*(x^*)\cap X$, which is a stronger fact.
    (Similar statements apply to $F:X^*\to\Rbar$ and $F^*:X\to\Rbar$.)
\end{remark}

\Cref{lem:convex:fenchel-young} plays the role of a \enquote{convex inverse function theorem} and can be used to, e.g., replace the subdifferential of a (complicated) norm with that of a (simpler) conjugate indicator functional (or vice versa).
For example, given a problem of the form
\begin{equation}\label{eq:convex:primal}
    \inf_{x\in X} F(x) + G(Ax)
\end{equation}
for $F:X\to\Rbar$ and $G:Y\to\Rbar$ proper, convex, and lower semicontinuous, and $A\in L(X,Y)$, we can use \cref{thm:convex:moreau} to replace $G$ with the definition of $G^{**}$ and obtain
\begin{equation*}
    \inf_{x\in X}\sup_{Y^*\in Y^*} F(x) + \dual{y^*,Ax}_Y - G^*(y^*).
\end{equation*}
If(!) we were now able to exchange $\inf$ and $\sup$, we could write (with $\inf F = -\sup (-F)$)
\begin{equation*}
    \begin{aligned}
        \inf_{x\in X}\sup_{y^*\in y^*} F(x) + \dual{y^*,Ax}_Y - G^*(y^*)
        &=\sup_{y^*\in Y^*} \inf_{x\in X} F(x) + \dual{y^*,Ax}_Y - G^*(y^*) \\
        &=\sup_{y^*\in Y^*} -\left(\sup_{x\in X} -F(x) + \dual{-A^*y^*,x}_X\right) - G^*(y^*).
    \end{aligned}
\end{equation*}
By definition of $F^*$, we thus obtain the \emph{dual problem}
\begin{equation}\label{eq:convex:dual}
    \sup_{y^*\in Y^*} -F^*(-A^*y^*) - G^*(y^*).
\end{equation}
As a side effect, we have shifted the operator $A$ from $G$ to $F^*$ without having to invert it.

The following theorem uses in an elegant way the Fermat principle, the sum and chain rules, and the Fenchel--Young equality to derive sufficient conditions for the exchangeability.
\begin{theorem}[Fenchel--Rockafellar]\label{thm:convex:fenchel}
    Let $X$ and $Y$ be normed vector spaces, $F:X\to\Rbar$ and $G:Y\to \Rbar$ be proper, convex, and lower semicontinuous, and $A\in L(X,Y)$. Assume furthermore that
    \begin{enumerate}[(i)]
        \item the \emph{primal problem} \eqref{eq:convex:primal} admits a solution $\bar x \in X$;
        \item there exists an $x_0\in \dom F \cap \dom (G\circ A)$ with $Ax_0\in (\dom G)^o$ .
    \end{enumerate}
    Then, the dual problem \eqref{eq:convex:dual} admits a solution $\bar {y}^*\in Y^*$ and
    \begin{equation}\label{eq:convex:fenchel:equal}
        \min_{x\in X} F(x) + G(Ax) = \max_{y^*\in Y^*} -F^*(-A^*y^*) - G^*(y^*).
    \end{equation}
    Furthermore, $\bar x$ and $\bar{y}^*$ are solutions to \eqref{eq:convex:primal} and \eqref{eq:convex:dual}, respectively, if and only if
    \begin{equation}\label{eq:convex:extremal}
        \left\{
            \begin{aligned}
                -A^*\bar{y}^* &\in \partial F(\bar x),\\
                \bar{y}^* &\in \partial G(A\bar x).
            \end{aligned}
        \right.
    \end{equation}
\end{theorem}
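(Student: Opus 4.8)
The plan is to push the Fermat principle through the sum and chain rules for convex subdifferentials and then extract both the duality equality and the extremality conditions from the Fenchel--Young (in)equality. The preliminary ingredient is \emph{weak duality}: for arbitrary $x\in X$ and $y^*\in Y^*$ I apply \eqref{eq:convex:fenchel-young} to $F$ at $x$ with the functional $-A^*y^*$ and to $G$ at $Ax$ with the functional $y^*$, add the two inequalities, and cancel using the adjoint identity $\dual{A^*y^*,x}_X=\dual{y^*,Ax}_Y$, which gives $F(x)+G(Ax)\geq -F^*(-A^*y^*)-G^*(y^*)$; taking the infimum over $x$ and the supremum over $y^*$ shows the primal infimum in \eqref{eq:convex:primal} is never below the dual supremum in \eqref{eq:convex:dual}. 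Since $F$ and $G$ are proper, convex, and lower semicontinuous, \cref{lem:convex:gamma} minorizes each by an affine functional, so $F(\bar x)+G(A\bar x)$ is finite (in particular $\bar x\in\dom F\cap\dom(G\circ A)$, because $\bar x$ minimizes $J:=F+G\circ A$ and $J(x_0)<\infty$ by (ii)).

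Next I would \emph{manufacture a dual solution} out of $\bar x$. Since $\bar x$ minimizes $J$, the Fermat principle (\cref{thm:convex:fermat}) gives $0\in\partial J(\bar x)$. The point $x_0$ from hypothesis (ii) lies in $\dom F$, and, $A$ being continuous, the set $A^{-1}((\dom G)^o)$ is open, is contained in $\dom(G\circ A)$, and contains $x_0$, so $x_0\in(\dom(G\circ A))^o\cap\dom F$; hence the sum rule (\cref{thm:convex:sum}, applied to the pair $G\circ A$ and $F$) yields $\partial J(\bar x)=\partial(G\circ A)(\bar x)+\partial F(\bar x)$. Because $Ax_0\in(\dom G)^o$ is precisely the hypothesis of the chain rule (\cref{thm:convex:chain}), I further obtain $\partial(G\circ A)(\bar x)=A^*\partial G(A\bar x)$. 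Combining these, $0\in A^*\partial G(A\bar x)+\partial F(\bar x)$, so there exists $\bar y^*\in\partial G(A\bar x)$ with $-A^*\bar y^*\in\partial F(\bar x)$, which is exactly \eqref{eq:convex:extremal}.

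Then I would \emph{convert extremality into value equality and close the equivalence}. By \cref{lem:convex:fenchel-young}, $\bar y^*\in\partial G(A\bar x)$ is equivalent to $\dual{\bar y^*,A\bar x}_Y=G(A\bar x)+G^*(\bar y^*)$, and $-A^*\bar y^*\in\partial F(\bar x)$ to $\dual{-A^*\bar y^*,\bar x}_X=F(\bar x)+F^*(-A^*\bar y^*)$; adding these and cancelling the pairings via the adjoint identity gives $F(\bar x)+G(A\bar x)=-F^*(-A^*\bar y^*)-G^*(\bar y^*)$, so with weak duality $\bar y^*$ attains the dual supremum and \eqref{eq:convex:fenchel:equal} holds. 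For the equivalence, the implication ``$\Leftarrow$'' just repeats the same Fenchel--Young-equality computation to see that $\bar x$ and $\bar y^*$ have equal objective values, and weak duality sandwiches both between the primal infimum and the dual supremum, forcing optimality; the implication ``$\Rightarrow$'' uses the strong duality just proved to write $F(\bar x)+F^*(-A^*\bar y^*)+G(A\bar x)+G^*(\bar y^*)=0$, notes that termwise \eqref{eq:convex:fenchel-young} bounds this sum below by $\dual{-A^*\bar y^*,\bar x}_X+\dual{\bar y^*,A\bar x}_Y=0$, hence forces equality in \emph{each} Fenchel--Young inequality, and reads \eqref{eq:convex:extremal} back off via \cref{lem:convex:fenchel-young}.

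The step I expect to be the main obstacle is lining up the single qualification condition (ii) with the two distinct constraint qualifications needed downstream: the chain rule wants $Ax_0\in(\dom G)^o$, while the sum rule wants an interior point of $\dom(G\circ A)$, and the latter is supplied only by continuity of $A$ (so that $A^{-1}((\dom G)^o)$ is open). Everything after that is careful bookkeeping with the Fenchel--Young relations and the cancellation $\dual{A^*y^*,x}_X=\dual{y^*,Ax}_Y$.
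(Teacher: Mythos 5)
Your proposal is correct and follows essentially the same route as the paper's proof: Fermat principle plus the sum and chain rules to extract $\bar y^*$ satisfying \eqref{eq:convex:extremal}, the Fenchel--Young equality from \cref{lem:convex:fenchel-young} to obtain equality of the objective values, weak duality to conclude dual optimality and \eqref{eq:convex:fenchel:equal}, and the converse by forcing equality in each Fenchel--Young inequality. The only (harmless) variations are that you establish weak duality by adding the two Fenchel--Young inequalities rather than via the paper's Lagrangian $\inf$--$\sup$ argument, and that you spell out explicitly why $x_0\in(\dom(G\circ A))^o$ (openness of $A^{-1}((\dom G)^o)$), a point the paper leaves implicit.
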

\begin{proof}
    \Cref{thm:convex:fermat} states that $\bar x\in X$ is a solution to \eqref{eq:convex:primal} if and only if $0\in \partial (F + G\circ A)(\bar x)$. By assumption (ii), \cref{thm:convex:sum,thm:convex:chain} are applicable, and we thus obtain that
    \begin{equation*}
        0\in  \partial (F + G\circ A)(\bar x) = \partial F(\bar x) + A^*\partial G(A\bar x),
    \end{equation*}
    which implies that there exists a $\bar{y}^*\in \partial G(A\bar x)$ with $-A^*\bar{y}^* \in \partial F(\bar x)$, i.e., satisfying \eqref{eq:convex:extremal}.

    The relations \eqref{eq:convex:extremal} together with \cref{lem:convex:fenchel-young} further imply equality in the Fenchel--Young inequalities for $F$ and $G$, i.e.,
    \begin{equation}\label{eq:convex:fenchel0}
        \left\{
            \begin{aligned}
                \dual{-A^*\bar y^*,\bar x}_X &=F(\bar x) + F^*(-A^*\bar y^*),\\
                \dual{\bar y^*,A\bar x}_Y &=G(A\bar x) + G^*(\bar y^*).
            \end{aligned}
        \right.
    \end{equation}
    Adding both equations now yields
    \begin{equation}\label{eq:convex:fenchel1}
        F(\bar x) + G(A\bar x) =  -F^*(-A^*\bar y^*) - G^*(\bar y^*).
    \end{equation}

    It remains to show that $\bar y^*$ is a solution to \eqref{eq:convex:dual}.
    For this purpose, we introduce the \emph{Lagrangian}
    \begin{equation*}
        L:X\times Y^*\to\Rbar,\qquad L(x,y^*) =  F(x) + \dual{y^*,Ax}_Y - G^*(y^*).
    \end{equation*}
    For all  $\tilde x\in X$ and $\tilde y^*\in Y^*$, we always have that
    \begin{equation*}
        \sup_{y^*\in Y^*} L(\tilde x,y^*) \geq L(\tilde x,\tilde y^*) \geq \inf_{x\in X}  L(x,\tilde y^*),
    \end{equation*}
    and hence (taking the infimum over all $\tilde x$ in the first and the supremum over all $\tilde y^*$ in the second inequality) that
    \begin{equation*}
        \inf_{x\in X} \sup_{y^*\in Y^*} L(x,y^*) \geq  \sup_{y^*\in Y^*} \inf_{x\in X} L(x,y^*).
    \end{equation*}
    We thus obtain that
    \begin{equation}\label{eq:convex:fenchel2}
        \begin{aligned}[t]
            F(\bar x) + G(A\bar x) &=   \inf_{x\in X}\sup_{Y^*\in Y^*} F(x) + \dual{y^*,Ax}_Y - G^*(y^*)\\
            &\geq  \sup_{Y^*\in Y^*} \inf_{x\in X} F(x) + \dual{y^*,Ax}_Y - G^*(y^*)\\
            & = \sup_{y^*\in Y^*} -F^*(-A^*y^*) - G^*(y^*).
        \end{aligned}
    \end{equation}
    Combining this with \eqref{eq:convex:fenchel1} yields that
    \begin{equation*}
        -F^*(-A^*\bar{y}^*) - G^*(\bar{y}^*) =  F(\bar x) + G(A\bar x) \geq \sup_{y^*\in Y^*} -F^*(-A^*y^*) - G^*(y^*),
    \end{equation*}
    i.e., $\bar{y}^*$ is a solution to \eqref{eq:convex:dual}, and hence \eqref{eq:convex:fenchel:equal} follows from \eqref{eq:convex:fenchel1}.

    Finally, if $\bar x\in X$ and $\bar y^*\in Y^*$ are solutions to \eqref{eq:convex:primal} and \eqref{eq:convex:dual}, respectively, then \eqref{eq:convex:fenchel:equal} implies \eqref{eq:convex:fenchel1}. Together with the productive zero, this implies that
    \begin{equation*}
        0 = \left[ F(\bar x) + F^*(-A^*\bar y^*) -\dual{-A^*\bar y^*,\bar x}_X \right]+
        \left[G(A\bar x) + G^*(\bar y^*) - \dual{\bar y^*,A\bar x}_Y\right].
    \end{equation*}
    Since both brackets have to be nonnegative due to the Fenchel--Young inequality, they each have to be zero. We therefore deduce that \eqref{eq:convex:fenchel0} holds, and hence \cref{lem:convex:fenchel-young} implies~\eqref{eq:convex:extremal}.
\end{proof}
The relations \eqref{eq:convex:extremal} are referred to as \emph{Fenchel extremality conditions}; we can use \cref{lem:convex:fenchel-young} to generate further, equivalent, optimality conditions by inverting one or the other subdifferential inclusion.
We will later exploit this to derive implementable algorithms for solving optimization problems of the form \eqref{eq:convex:primal}.

\chapter{Monotone operators and proximal points}

Any minimizer $\bar x\in X$ of the convex functional $F:X\to \Rbar$ satisfies by \cref{thm:convex:fermat} the Fermat principle $0\in\partial F(\bar x)$.
To obtain from this useful information about (and, later, implementable algorithms for the computation of) $\bar x$, we thus have to study the mapping $x\mapsto \partial F(x)$.
To avoid mechnical difficulties -- and since we will use the following results mainly for numerical algorithms, i.e., for $X=\R^N$ -- we restrict the discussion in this and the next chapter to Hilbert spaces. This allows identifying $X^*$ with $X$; in particular, we will from now on identify the set $\partial F(x)\subset X^*$ of subderivatives with the corresponding set in $X$ of Riesz representations (\emph{subgradients}).

\section{Monotone operators}

For two normed vector spaces $X$ and $Y$ we consider a \emph{set-valued mapping} $A:X\to\calP(Y)$, also denoted by $A:X \setto Y$, and define
\begin{itemize}
    \item its \emph{domain of definition} $\dom A = \setof{x\in X}{Ax\neq \emptyset}$;
    \item its \emph{range} $\rg A = \bigcup_{x\in X} Ax$;
    \item its \emph{graph} $\graph A = \setof{(x,y)\in X\times Y}{y\in Ax}$;
    \item its \emph{inverse} $A^{-1}:Y\setto X$ via $A^{-1}(y) = \setof{x\in X}{y\in Ax}$ for all $y\in Y$.
\end{itemize}
(Note that $A^{-1}(y) = \emptyset$ is allowed by the definition; hence for set-valued mappings, the inverse always exists.) We say that set-valued mapping $A$ is \emph{surjective} if $\rg A=Y$.
For $A,B:X\setto Y$, $C:Y\setto Z$, and $\lambda\in\R$ we further define
\begin{itemize}
    \item $\lambda A:X\setto Y$ via $(\lambda A)(x) = \setof{\lambda y}{y\in Ax}$;
    \item $A+B:X\setto Y$ via $(A+B)(x) = \setof{y+z}{y\in Ax, z\in Bx}$;
    \item $C\circ A:X\setto Z$ via $(C\circ A)(x) = \setof{z}{\text{there is }y\in Ax \text{ with } z\in Cy}$.
\end{itemize}

Let from now on $X$ be a Hilbert space. A set-valued mapping $A:X\setto X$ is called \emph{monotone} if
\begin{equation}\label{eq:monoton:def}
    \inner{x^*_1-x^*_2,x_1-x_2}_X \geq 0 \quad\text{for all }(x_1,x_1^*),(x_2,x_2^*) \in \graph A.
\end{equation}

\begin{example}
    \begin{enumerate}[(i)]
        \item It follows directly from the definition that the identity mapping $\Id:X\setto X$, $x\mapsto\{x\}$, is monotone.
        \item Similarly, if $A,B:X\setto X$ are monotone and $\lambda\geq 0$, then $\lambda A$ and $A+B$ are monotone as well.
        \item If $F:X\to\Rbar$ is proper, then $\partial F:X\setto X$, $x\mapsto \partial F(x)$, is monotone: For any $x_1,x_2\in X$ with $x^*_1 \in \partial F(x_1)$ and $x^*_2\in\partial F(x_2)$, we have by definition that
            \begin{align*}
                &\inner{x_1^*,\tilde x - x_1}_X \leq F(\tilde x)-F(x_1)\qquad\text{for all } \tilde x\in X,\\
                &\inner{x_2^*,\tilde x - x_2}_X \leq F(\tilde x)-F(x_2)\qquad\text{for all } \tilde x\in X.
            \end{align*}
            Adding the first inequality for $\tilde x=x_2$ and the second for $\tilde x=x_1$ and rearranging the result yields \eqref{eq:monoton:def}.
    \end{enumerate}
\end{example}

In fact, we will need the following, stronger, property, which guarantees that $A$ is closed: A monotone operator $A: X\setto X$ is called \emph{maximally monotone} if for any $x\in X$ and $x^*\in X$ the condition
\begin{equation}\label{eq:monoton:max_char}
    \inner{x^*-\tilde x^*,x-\tilde x}_X \geq 0 \qquad \text{for all }(\tilde x , \tilde x^*) \in \graph A
\end{equation}
implies that $x^*\in A x$. (In other words, \eqref{eq:monoton:max_char} holds if \emph{and only if} $(x,x^*)\in\graph A$.)
For fixed $x\in X$ and $x^*\in X$, the condition claims that if $A$ is monotone, so is the extension
\begin{equation*}
    \tilde A:X\setto X,\qquad
    \tilde x \mapsto
    \begin{cases}
        A x \cup \{x^*\}&  \text{if } \tilde x =  x,\\
        A \tilde x & \text{if }\tilde x\neq  x.
    \end{cases}
\end{equation*}
For $A$ to be maximally monotone means that this is not a true extension, i.e., $\tilde A=A$.
\begin{example}
    The operator
    \begin{equation*}
        A:\R\setto\R,\qquad t \mapsto
        \begin{cases} \{1\} & \text{if }t> 0,\\
            \{0\} & \text{if } t= 0,\\
        \{-1\} &\text{if } t< 0,\end{cases}
    \end{equation*}
    is monotone but not maximally monotone, since $A$ is a proper subset of the monotone operator defined by $\tilde At = \sign(t)=\partial (|\cdot|)(t)$.
\end{example}

Several useful properties follow directly from the definition.
\begin{lemma}
    If $A:X\setto X$ is maximally monotone, then so is $\lambda A$ for all $\lambda >0$.
\end{lemma}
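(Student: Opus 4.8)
The claim has two parts: first, that $\lambda A$ is monotone, and second, that it is maximally monotone. The first part has already been noted in the text (scalar multiples of monotone operators by nonnegative reals remain monotone), so the plan is to focus on maximality. The key observation is that $\graph(\lambda A)$ is obtained from $\graph A$ by the linear bijection $(x,x^*)\mapsto(x,\lambda x^*)$, and the defining inequality for maximal monotonicity is \emph{homogeneous} in the second argument in a way that interacts well with this scaling.

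**Key steps.**
First I would recall that, since $\lambda>0$, we have $x^*\in(\lambda A)x$ if and only if $\lambda^{-1}x^*\in Ax$, i.e. $(x,x^*)\in\graph(\lambda A)\iff(x,\lambda^{-1}x^*)\in\graph A$. Next, suppose $x\in X$ and $x^*\in X$ satisfy the maximality test condition for $\lambda A$, namely
\begin{equation}
    \inner{x^*-\tilde x^*,x-\tilde x}_X \geq 0 \qquad\text{for all }(\tilde x,\tilde x^*)\in\graph(\lambda A).
\end{equation}
Every such $(\tilde x,\tilde x^*)$ is of the form $(\tilde x,\lambda\tilde z^*)$ with $(\tilde x,\tilde z^*)\in\graph A$, so the condition becomes $\inner{x^*-\lambda\tilde z^*,x-\tilde x}_X\geq 0$ for all $(\tilde x,\tilde z^*)\in\graph A$. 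Dividing by $\lambda>0$ (which preserves the inequality) gives $\inner{\lambda^{-1}x^*-\tilde z^*,x-\tilde x}_X\geq 0$ for all $(\tilde x,\tilde z^*)\in\graph A$. This is exactly the maximality test condition for $A$ at the pair $(x,\lambda^{-1}x^*)$, so maximal monotonicity of $A$ yields $\lambda^{-1}x^*\in Ax$, hence $x^*\in(\lambda A)x$. That is precisely what maximal monotonicity of $\lambda A$ requires.

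**Main obstacle.**
There is no serious obstacle here: the only thing to be careful about is that $\lambda>0$ strictly (division by $\lambda$ and the sign-preservation of the inequality both fail at $\lambda=0$, and indeed $0\cdot A$, the zero operator, is not maximally monotone unless $X=\{0\}$), and that the text's definition of $\lambda A$ via $(\lambda A)(x)=\setof{\lambda y}{y\in Ax}$ makes the graph identification $(x,x^*)\in\graph(\lambda A)\iff(x,\lambda^{-1}x^*)\in\graph A$ literally correct. The proof is essentially a one-line change of variables in the defining inequality; I would present it in two short paragraphs, the first disposing of monotonicity by citing the earlier remark, the second carrying out the scaling argument for maximality.
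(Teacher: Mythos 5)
Your proof is correct and follows essentially the same route as the paper: rewrite the maximality test inequality for $\lambda A$ by dividing out the positive factor $\lambda$, use the graph correspondence $\tilde x^*\in(\lambda A)\tilde x\iff\lambda^{-1}\tilde x^*\in A\tilde x$, and invoke maximal monotonicity of $A$ at the pair $(x,\lambda^{-1}x^*)$. Your remark that the argument genuinely needs $\lambda>0$ is a nice touch but not a departure from the paper's argument.
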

\begin{proof}
    Let $x,x^*\in X$ and assume that
    \begin{equation*}
        0 \leq \inner{x^*-\tilde x^*,x-\tilde x}_X = \lambda\inner{\lambda^{-1}x^*-\lambda^{-1}\tilde x^*,x-\tilde x}_X \quad \text{for all }(\tilde x,  \tilde x^*) \in \graph \lambda A.
    \end{equation*}
    Since $\tilde x^*\in \lambda Ax$ if and only if $\lambda^{-1}\tilde x^* \in Ax$ and $A$ is maximally monotone, this implies that $\lambda^{-1}\bar x^* \in A\bar x$, i.e., $\bar x^* \in (\lambda A)\bar x$. Hence, $\lambda A$ is maximally monotone.
\end{proof}
\begin{lemma}\label{cor:monoton:closed}
    Let $A:X\setto X$ be maximally monotone. Then $A$ is weakly--strongly closed, i.e., $x_n\weakto x$ and $Ax_n \ni x^*_n \to x^*$ imply that $x^*\in Ax$.
\end{lemma}
\begin{proof}
    For arbitrary $\tilde x\in X$ and $\tilde x^*\in A\tilde x$, the monotonicity of $A$ implies that
    \begin{equation*}
        0\leq \inner{x^*_n - \tilde x^*,x_n - \tilde x}_X \to \inner{x^* - \tilde x^*,x - \tilde x}_X
    \end{equation*}
    since the duality pairing and hence the inner product of weakly and strongly converging sequences is convergent. Since $A$ is maximally monotone, we obtain that $x^*\in A x$.
\end{proof}

Of central importance to the theory of monotone operators is \emph{Minty's theorem}, which states that a monotone operator $A$ is maximally monotone if and only if $\Id+A$ is surjective.
As a preparation, we first prove an important partial result.
\begin{lemma}\label{lem:monoton:subdiff_surj}
    Let $F:X\to\Rbar$ be proper, convex and lower semicontinuous. Then $\Id+\partial F$ is surjective.
\end{lemma}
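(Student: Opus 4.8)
The statement to prove is that $\Id + \partial F$ is surjective when $F : X \to \Rbar$ is proper, convex, and lower semicontinuous on a Hilbert space $X$. Unpacking the definitions: given $z \in X$, we must find $x \in X$ with $z \in (\Id + \partial F)(x)$, i.e., $z - x \in \partial F(x)$, which (using \Cref{lem:convex:fenchel-young} or directly the definition) is equivalent to
\begin{equation}
    \inner{z - x, \tilde x - x}_X \leq F(\tilde x) - F(x) \qquad \text{for all } \tilde x \in X.
\end{equation}
My plan is to identify such an $x$ variationally: it should be the unique minimizer of the functional
\begin{equation}
    J : X \to \Rbar, \qquad J(\tilde x) = F(\tilde x) + \tfrac12 \norm{\tilde x - z}_X^2.
\end{equation}
First I would invoke \Cref{lem:convex:supercoercive} (with $x_0 = z$) to conclude that $J$ is coercive, and note that $J$ is proper (since $\dom J = \dom F \neq \emptyset$), convex (sum of a convex functional and the strictly convex squared norm, hence strictly convex by \Cref{lem:convex:func}\,(ii)), and lower semicontinuous (sum of two lsc functionals). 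Since Hilbert spaces are reflexive, \Cref{thm:convex:existence} with $U = X$ gives a unique minimizer $\bar x \in \dom F$.

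The second step is to turn the optimality of $\bar x$ into the desired subdifferential inclusion. By the Fermat principle \Cref{thm:convex:fermat}, $0 \in \partial J(\bar x) = \partial\left(F + \tfrac12\norm{\cdot - z}_X^2\right)(\bar x)$. The function $g(\tilde x) := \tfrac12\norm{\tilde x - z}_X^2$ is Fréchet differentiable everywhere with derivative (gradient) $\nabla g(\tilde x) = \tilde x - z$, as computed in the excerpt's discussion of the squared norm; hence by \Cref{thm:convex:gateaux}, $\partial g(\bar x) = \{\bar x - z\}$. Since $g$ is finite-valued and continuous on all of $X$, the sum rule \Cref{thm:convex:sum} applies without any qualification issue (take $x_0$ to be any point of $\dom F$, which is automatically in $(\dom g)^o = X$), giving
\begin{equation}
    0 \in \partial J(\bar x) = \partial F(\bar x) + \{\bar x - z\},
\end{equation}
that is, $z - \bar x \in \partial F(\bar x)$, equivalently $z \in (\Id + \partial F)(\bar x)$. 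Since $z$ was arbitrary, $\Id + \partial F$ is surjective.

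I do not expect a serious obstacle here: every ingredient — coercivity of the Moreau-type regularization, existence of a minimizer via the direct method, the Fermat principle, the subdifferential of a differentiable convex function, and the (unqualified) sum rule when one summand is everywhere finite and continuous — is already established in the excerpt. The one place to be mildly careful is to make sure the sum rule is invoked with its hypothesis genuinely satisfied: one needs an interior point of (at least) one effective domain lying in the other, and since $\dom g = X$ has $(\dom g)^o = X \ni x_0$ for any $x_0 \in \dom F$, this is immediate. (Strict convexity of $J$ also gives uniqueness of $\bar x$, which is not needed for surjectivity but is worth noting, as it foreshadows that $(\Id + \partial F)^{-1}$ is single-valued — the proximal point map.)
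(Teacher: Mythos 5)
Your proof is correct and follows essentially the same route as the paper: minimize $J(\tilde x)=F(\tilde x)+\tfrac12\norm{\tilde x-z}_X^2$ using coercivity from \cref{lem:convex:supercoercive} and existence from \cref{thm:convex:existence}, then combine the Fermat principle, the sum rule, and the Gâteaux differentiability of the squared norm to get $z-\bar x\in\partial F(\bar x)$. Your explicit check of the sum-rule qualification (taking $x_0\in\dom F\subset(\dom\tfrac12\norm{\cdot-z}_X^2)^o=X$) is exactly the point the paper leaves implicit.
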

\begin{proof}
    We consider for given $z \in X$ the functional
    \begin{equation*}
        J:X\to\Rbar, \qquad x\mapsto \frac12\norm{x -z}_X^2 + F(x),
    \end{equation*}
    which is proper, (strictly) convex and lower semicontinuous by the assumptions on $F$. Furthermore, $J$ is coercive by \cref{lem:convex:supercoercive}.
    \Cref{thm:convex:existence} thus yields a (unique) $\bar x\in X$ with $J(\bar x) = \min_{x\in X} J(x)$, which by \cref{thm:convex:fermat,thm:convex:sum,thm:convex:gateaux} satisfies that
    \begin{equation*}
        0\in \partial J(\bar x)  = \{\bar x - z\} + \partial F(\bar x),
    \end{equation*}
    i.e., $z\in \{\bar x\} + \partial F(\bar x)=(\Id+\partial F)(\bar x)$. Hence $\rg(\Id + \partial F)=X$ as claimed.
\end{proof}

We now turn to the general case.
\begin{theorem}[Minty]\label{thm:monoton:max_surj}
    Let $A:X\setto X$ be monotone with $\graph A \neq 0$. Then $A$ is maximally monotone if and only if\/ $\Id+A$ is surjective.
\end{theorem}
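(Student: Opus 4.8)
The plan is to prove the two implications separately. The easier direction is that surjectivity of $\Id+A$ implies maximal monotonicity. Here I would take $x,x^*\in X$ satisfying the monotonicity test inequality $\inner{x^*-\tilde x^*,x-\tilde x}_X\geq 0$ for all $(\tilde x,\tilde x^*)\in\graph A$, and I must show $(x,x^*)\in\graph A$. Since $\Id+A$ is surjective, there is some $(y,y^*)\in\graph A$ with $y+y^*=x+x^*$. Applying the test inequality with $\tilde x=y$, $\tilde x^*=y^*$ gives $\inner{x^*-y^*,x-y}_X\geq 0$; but $x^*-y^*=-(x-y)$ by the choice of $y$, so $-\norm{x-y}_X^2\geq 0$, forcing $y=x$ and then $y^*=x^*$. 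Hence $(x,x^*)=(y,y^*)\in\graph A$, which is exactly maximal monotonicity.

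For the converse, assume $A$ is maximally monotone; I must show $\Id+A$ is surjective, i.e., for arbitrary $z\in X$ there is $\bar x$ with $z\in(\Id+A)(\bar x)$. The model to imitate is \cref{lem:monoton:subdiff_surj}, but $A$ is no longer a subdifferential, so there is no functional $J$ to minimize. Instead I would argue by a \enquote{fill the gap} / maximality argument. Fix $z$ and consider the operator $B:=\Id+A-z$ (or equivalently work directly with $\Id+A$); it suffices to find $\bar x$ with $0\in B(\bar x)$. The idea is: if no such $\bar x$ exists, one can enlarge $\graph A$ by adjoining a suitable point while preserving monotonicity, contradicting maximality. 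Concretely, suppose $z\notin\rg(\Id+A)$. I would like to find a point $(\hat x,\hat x^*)$ with $\hat x+\hat x^*=z$ (so $z\in(\Id+A\cup\{(\hat x,\hat x^*)\})$ would hold) such that $A\cup\{(\hat x,\hat x^*)\}$ is still monotone, i.e., $\inner{\hat x^*-\tilde x^*,\hat x-\tilde x}_X\geq 0$ for all $(\tilde x,\tilde x^*)\in\graph A$; substituting $\hat x^*=z-\hat x$ this reads $\inner{z-\hat x-\tilde x^*,\hat x-\tilde x}_X\geq 0$ for all $(\tilde x,\tilde x^*)\in\graph A$. Maximality of $A$ would then force $(\hat x,\hat x^*)\in\graph A$, giving $z\in\rg(\Id+A)$, the desired contradiction. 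So everything reduces to producing such a point $\hat x$.

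The main obstacle — and the technical heart of the proof — is constructing this point $\hat x$. The natural route is a \emph{reduction to the subdifferential case via Yosida-type regularization or a Galerkin/finite-dimensional approximation}. One clean approach: for each finite-dimensional subspace $V\subset X$ with associated orthogonal projection $P_V$, the operator $A_V:=P_V A|_V$ is monotone on $V$, and using a topological fixed-point or degree argument (or, in $\R^N$, a direct application of Brouwer's theorem to the map $x\mapsto x$ minus a resolvent-type iteration) one solves $z\in (\Id+A)(x_V)$ within $V$ up to the projection; the solutions $x_V$ are bounded (test the inclusion against $x_V-\tilde x$ for a fixed reference point in $\graph A$ to get an a priori bound), hence have a weak cluster point $\bar x$, and weak-to-strong outer semicontinuity from \cref{cor:monoton:closed} together with a standard monotonicity (Minty-trick) argument passes to the limit to give $z\in(\Id+A)(\bar x)$. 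Since the excerpt restricts attention to Hilbert spaces and the intended applications are in $\R^N$, I expect the author's proof to carry out exactly this finite-dimensional approximation, with the Brouwer fixed-point step and the a priori bound being the two places requiring genuine care; the weak-limit passage is then routine given the outer semicontinuity lemma already established.
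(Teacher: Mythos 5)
Your first direction (surjectivity of $\Id+A$ implies maximal monotonicity) is exactly the paper's argument and is complete. The converse, however, is only a plan, and the plan has a genuine gap at its core. The reduction you describe -- find $(\hat x,\hat x^*)$ with $\hat x+\hat x^*=z$ that is monotonically related to $\graph A$ and then invoke maximality -- is the right endgame (no contradiction needed, by the way: maximality directly gives $\hat x^*\in A\hat x$ and hence $z\in(\Id+A)(\hat x)$), but everything hinges on actually producing $\hat x$, and neither production mechanism you sketch works as stated. A Yosida-type regularization is circular here: the Yosida approximation is defined through the resolvent $(\Id+\gamma A)^{-1}$, and the fact that this resolvent is defined on all of $X$ is precisely the surjectivity you are trying to prove. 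The Galerkin scheme fails for set-valued operators: for a finite-dimensional subspace $V$, the operator $P_V A|_V$ has domain $\dom A\cap V$, which may be empty or a single point, and even when nonempty it need not be maximally monotone on $V$; consequently the finite-dimensional problems are not solvable by the Browder--Minty/Brouwer argument, which requires a single-valued, everywhere-defined, continuous monotone map -- indeed their solvability is again an instance of Minty's theorem (now in $\R^N$), which you have not established either. So the technical heart of the hard direction is missing, and the a priori bound and weak-limit passage, while fine in principle, have nothing to be applied to.

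The paper's actual proof is quite different from what you anticipated and stays entirely within the convex machinery already developed: it introduces the Fitzpatrick functional $F_A(x,y)=\sup_{(z,w)\in\graph A}\left(\inner{z,y}_X+\inner{x,w}_X-\inner{z,w}_X\right)$, observes that $F_A$ is proper, convex, and lower semicontinuous (it is a Fenchel conjugate) and satisfies $F_A(x,y)\geq\inner{x,y}_X$ with equality exactly on $\graph A$, and then minimizes $F_A+\tfrac12\norm{\cdot}_{X\times X}^2$ via \cref{thm:convex:existence}, \cref{lem:convex:supercoercive}, the Fermat principle, and the sum rule, mimicking \cref{lem:monoton:subdiff_surj}; the minimizer yields $0\in(\Id+A)(\bar y)$, and a translation $A\mapsto A-z$ handles general $z$. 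If you prefer to rescue your extension-point strategy without this variational detour, the classical non-variational route is the Debrunner--Flor ball-intersection argument: each condition $\inner{(z-\tilde x^*)-\hat x,\hat x-\tilde x}_X\geq 0$ describes a closed ball in $X$, finite intersections are shown nonempty via Brouwer's fixed-point theorem (this is where the genuine work enters), and weak compactness of bounded closed convex sets gives a common point $\hat x$ -- but that argument must be supplied in full; it is not a routine consequence of the lemmas already available in the text.
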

\begin{proof}
    First, assume that $\Id+A$ is surjective, and consider $x\in X$ and $x^*\in X$ with
    \begin{equation}\label{eq:monoton:max_surj1}
        \inner{x^*-\tilde x^*,x-\tilde x}_X \geq 0 \qquad \text{for all }(\tilde x,  \tilde x^*) \in \graph A.
    \end{equation}
    The assumption now implies that for $x  + x^*\in X$, there exist a $z \in X$ and a $z^* \in Az$ with
    \begin{equation}\label{eq:monoton:max_surj2}
        x+ x^* = z + z^* \in (\Id + A)z.
    \end{equation}
    Inserting $(\tilde x,\tilde x^*)=(z,z^*)$ into \eqref{eq:monoton:max_surj1}
    then yields that
    \begin{equation*}
        0\leq \inner{x^* - z^*,x-z}_X = \inner{z-x,x-z}_X = -\norm{x-z}_X^2\leq 0,
    \end{equation*}
    i.e., $x=z$. From \eqref{eq:monoton:max_surj2} we further obtain $x^* = z^*\in Az = Ax$, and hence $A$ is maximally monotone.

    The proof of the converse implication is significantly more involved.
    The special case $A=\partial F$ for a convex functional $F$ was already shown in \cref{lem:monoton:subdiff_surj}; for the general case, we proceed similarly by constructing a functional $F_A$ that plays the same role for $A$ as $F$ does for $\partial F$.
    Specifically, we define for a maximally monotone operator $A:X\setto X$ with $\graph A \neq \emptyset$ the \emph{Fitzpatrick functional}
    \begin{equation}\label{eq:monoton:fitzpatrick1}
        F_A:X\times X\to [-\infty,\infty],\qquad (x,y)\mapsto \sup_{(z,w)\in \graph A} \left(\inner{z,y}_X + \inner{x,w}_X - \inner{z,w}_X\right),
    \end{equation}
    which can be written equivalently as
    \begin{equation}\label{eq:monoton:fitzpatrick2}
        F_A(x,y) = \inner{x,y}_X - \inf_{(z,w)\in\graph A} \inner{x-z,y-w}_X.
    \end{equation}
    Each characterization implies useful properties.
    \begin{enumerate}[(i)]
        \item By maximal monotonicity of $A$, we have by definition that $(x-z,y-w)_X\geq 0$ for all $(z,w)\in\graph A$ if and only if $(x,y)\in\graph A$; in particular, $(x-z,y-w)_X< 0$ for all $(x,y)\notin\graph A$. Hence, \eqref{eq:monoton:fitzpatrick2} implies that $F_A(x,y)\geq \inner{x,y}_X$ with equality for $(x,y)\in\graph A$ (since in this case the infimum is attained in $(z,w)=(x,y)$). Since $\graph A\neq \emptyset$, this shows that $F_A$ is proper.

        \item On the other hand, the definition \eqref{eq:monoton:fitzpatrick1} yields that
            \begin{equation*}
                F_A = (G_A)^* \qquad\text{for}\qquad G_A(w,z) = \inner{w,z}_X + \delta_{\graph A^{-1}}(w,z)
            \end{equation*}
            (since $(z,w)\in\graph A$ if and only if $(w,z)\in\graph A^{-1}$).
            Since we have required that $\graph A\neq \emptyset$, the Fitzpatrick functional $F_A$ is the Fenchel conjugate of a proper functional and therefore convex and lower semicontinuous.
    \end{enumerate}

    As a first step, we now show that $0\in\rg (\Id+A)$. We set $Z:=X\times X$ as well as $\xi:=(x,y)\in Z$ and consider the functional
    \begin{equation*}
        J_A:Z\to\Rbar,\qquad \xi \mapsto F_A(\xi) + \frac12\norm{\xi}_Z^2.
    \end{equation*}
    We first note that property (i) implies for all $\xi\in Z$ that
    \begin{equation}\label{eq:monoton:fitzpatrick3}
        \begin{aligned}[t]
            J_A(\xi) = F_A(\xi) + \frac12\norm{\xi}_Z^2 &= F_A(x,y) + \frac12\norm{x}_X^2 +\frac12 \norm{y}_X^2 \\
            &\geq \inner{x,y}_X +\frac12 \norm{x}_X^2 +\frac12 \norm{y}_X^2 = \frac12\norm{x+y}_X^2\\
            &\geq 0.
        \end{aligned}
    \end{equation}
    Furthermore, $J_A$ is proper, (strictly) convex, lower semicontinuous, and (by \cref{lem:convex:supercoercive}) coercive.
    \Cref{thm:convex:existence} thus yields a (unique) $\bar \xi:=(\bar x,\bar y)\in Z$ with $J_A(\bar \xi) = \min_{\xi\in Z} J_A(\xi)$, which by \cref{thm:convex:fermat,thm:convex:sum,thm:convex:gateaux} satisfies that
    \begin{equation*}
        0\in \partial J_A(\bar \xi)  = \{\bar \xi\} + \partial F_A(\bar \xi),
    \end{equation*}
    i.e., $-\bar \xi\in \partial F_A(\bar \xi)$.
    By definition of the subdifferential, we thus have for all $\xi\in Z$ that
    \begin{equation*}
        \begin{aligned}
            F_A(\xi) \geq F_A(\bar \xi) + \inner{-\bar \xi,\xi-\bar \xi}_Z
            &= J_A(\bar \xi) + \frac12\norm{{-\bar \xi}}_Z^2 + \inner{-\bar \xi,\xi}_Z\\
            &\geq \frac12\norm{{-\bar \xi}}_Z^2 + \inner{-\bar \xi,\xi}_Z,
        \end{aligned}
    \end{equation*}
    where the last step follows from \eqref{eq:monoton:fitzpatrick3}. For the sake of presentation, we will replace $\bar \xi \mapsto -\bar \xi$ from now on; property (i) then implies for all $(x,y)\in\graph A$ that
    \begin{equation}\label{eq:monoton:fitzpatrick4}
        \begin{aligned}[t]
            \inner{x,y}_X =F_A(x,y)&\geq \frac12\norm{\bar x}_X^2 + \inner{\bar x,x}_X + \frac12\norm{\bar y}_X^2 + \inner{\bar y,y}_X\\
            &\geq -\inner{\bar x,\bar y}_X + \inner{\bar x,x}_X + \inner{\bar y,y}_X,
        \end{aligned}
    \end{equation}
    and hence $\inner{y-\bar x,x-\bar y}_X\geq 0$. The maximal monotonicity of  $A$ thus yields that $\bar x \in A \bar y$, i.e., $(\bar y,\bar x)\in\graph A$. Inserting this into the first inequality of \eqref{eq:monoton:fitzpatrick4} then implies that
    \begin{equation*}
        \inner{\bar y,\bar x}_X \geq \frac12\norm{\bar x}_X^2 + \inner{\bar x,\bar y}_X + \frac12\norm{\bar y}_X^2 + \inner{\bar y,\bar x}_X
        = \frac12\norm{\bar x+\bar y}_X^2 + \inner{\bar y,\bar x}_X \geq \inner{\bar y,\bar x}_X
    \end{equation*}
    and hence $\norm{\bar x+\bar y}_X=0$, i.e., $0=\bar y + \bar x\in (\Id + A)(\bar y)$.

    Finally, let $z\in X$ be arbitrary and set $B:X\setto X$, $x\mapsto \{-z\}+Ax$.
    Using the definition, it is straightforward to verify that $B$ is maximally monotone with $\graph B \neq \emptyset$ as well. As we have just shown, there now exists a $\bar y\in X$ with $0\in (\Id + B)(\bar y) = \{\bar y\} + \{-z\} + A\bar y$, i.e., $z\in (\Id + A)(\bar y)$. Hence $\Id+A$ is surjective.
\end{proof}
Together with \cref{lem:monoton:subdiff_surj} (which in particular implies $\graph \partial F\neq \emptyset$ for proper, convex, and lower semicontinuous $F$), this yields the maximal monotonicity of convex subdifferentials.
\begin{cor}\label{cor:monoton:subdiff}
    Let $F:X\to\Rbar$ be proper, convex, and lower semicontinuous. Then $\partial F:X\setto X$ is maximally monotone.
\end{cor}

\section{Resolvents and proximal points}

We know from \cref{lem:monoton:subdiff_surj} that $\Id+\partial F$ is surjective for any proper, convex, and lower semicontinuous functional $F$; the proof even shows that each preimage is unique.
Hence $(\Id+\partial F)^{-1}$ is single-valued even if $\partial F$ is not.
We can thus hope to use this object instead of a subdifferential -- or, more generally, a maximally monotone operator -- for algorithms.

We thus define for a maximally monotone operator $A:X\setto X$ with $\graph A \neq \emptyset$ the \emph{resolvent}
\begin{equation*}
    \calR_A : X\setto X,\qquad \calR_A(x) = (\Id + A)^{-1}x,
\end{equation*}
as well as for a proper, convex, and lower semicontinuous functional $F:X\to\Rbar$ the \emph{proximal point mapping}
\begin{equation}
    \label{eq:proximal:proximal}
    \prox_F:X\to X,\qquad \prox_F(x) = \argmin_{z\in X} \frac12\norm{z-x}_X^2 + F(z).
\end{equation}
Since $w\in\calR_{\partial F}(x)$ are the necessary and sufficient conditions for the \emph{proximal point} $w$ to be a minimizer of the strictly convex functional in \eqref{eq:proximal:proximal}, we have that
\begin{equation}
    \label{eq:proximal:resolvent}
    \prox_F = (\Id +\partial F)^{-1} = \calR_{\partial F}.
\end{equation}
It remains to show that the resolvent of arbitrary maximally monotone operators is single-valued on $X$ as well and we can thus write $\calR_A:X\to X$.
\begin{lemma}\label{lem:proximal:resolvent}
    Let $A:X \setto X$ be maximally monotone with $\graph A\neq \emptyset$. Then $\calR_A:X\to X$.
\end{lemma}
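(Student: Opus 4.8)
The plan is to combine two ingredients that are already available: \nameref{thm:monoton:max_surj} for the domain, and plain monotonicity for uniqueness. Single-valuedness of $\calR_A$ means two things: that $\calR_A(x)\neq\emptyset$ for every $x\in X$, and that this set contains at most one point.

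First I would establish $\dom\calR_A = X$. \Cref{thm:monoton:max_surj} states that a monotone operator $A$ is maximally monotone if and only if $\Id+A$ is surjective. Since $A$ is assumed maximally monotone, $\rg(\Id+A)=X$, so for every $x\in X$ there exists at least one $w\in X$ with $x\in(\Id+A)(w)$, i.e., $w\in\calR_A(x)$; hence $\calR_A(x)\neq\emptyset$.

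Next I would show that $\calR_A(x)$ is a singleton. Suppose $w_1,w_2\in\calR_A(x)$; by definition of the resolvent this means $x-w_i\in Aw_i$ for $i=1,2$, i.e., $(w_1,x-w_1),(w_2,x-w_2)\in\graph A$. Applying the monotonicity inequality \eqref{eq:monoton:def} to this pair yields
\begin{equation}
    0 \leq \inner{(x-w_1)-(x-w_2),\, w_1-w_2}_X = \inner{w_2-w_1,\,w_1-w_2}_X = -\norm{w_1-w_2}_X^2,
\end{equation}
so $\norm{w_1-w_2}_X=0$, i.e., $w_1=w_2$. Combining the two steps, $\calR_A$ is a well-defined map from all of $X$ into $X$.

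There is essentially no obstacle here: the only substantial input is Minty's theorem, which is already proved in the excerpt, and the uniqueness part is a one-line computation that in fact uses only monotonicity of $A$ (maximality is needed solely to guarantee that the resolvent is defined everywhere).
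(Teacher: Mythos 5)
Your proof is correct and follows essentially the same route as the paper: surjectivity of $\Id+A$ via Minty's theorem gives $\dom\calR_A=X$, and monotonicity gives single-valuedness. The only cosmetic difference is that the paper derives the more general inequality $\norm{x^*-z^*}_X^2\leq\inner{x-z,x^*-z^*}_X$ for two arbitrary inputs (which it reuses later for firm nonexpansivity) and obtains your uniqueness statement as the special case $x=z$.
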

\begin{proof}
    Since $A$ is maximally monotone with $\graph A \neq \emptyset$, $\Id+A$ is surjective by \cref{thm:monoton:max_surj}, which implies that $\calR_A(x)\neq \emptyset$ for all $x\in X$, i.e., $\dom \calR_A=X$. Let now $x,z\in X$ with $x^* \in\calR_A(x)$ and $z^*\in \calR_A(z)$, i.e., $x\in \{x^*\} + Ax^*$ and $z\in \{z^*\}+Az^*$. For $x-x^*\in Ax^*$ and $z-z^*\in Az^*$, the monotonicity of $A$ implies that
    \begin{equation}\label{eq:proximal:nonexpansive}
        \norm{x^*-z^*}_X^2 \leq \inner{x-z,x^*-z^*}_X.
    \end{equation}
    Hence $x=z$ implies $x^*=z^*$, i.e., $\calR_A$ is single-valued.
\end{proof}
The inequality \eqref{eq:proximal:nonexpansive} together with the Cauchy--Schwarz inequality shows that resolvents are Lipschitz continuous with constant $L=1$; such mappings are called \emph{nonexpansive}. Since \eqref{eq:proximal:nonexpansive} is in fact a stronger property, a mapping $T:X\to X$ is called \emph{firmly nonexpansive} if it satisfies this inequality, i.e., if
\begin{equation*}
    \norm{Tx-Tz}_X^2 \leq \inner{Tx-Tz,x-z}_X \qquad\text{for all }x,z\in X.
\end{equation*}
Firm nonexpansivity implies another useful inequality.
\begin{lemma}\label{lem:proximal:nonexpansive}
    Let $A:X\setto X$ be maximally monotone with $\graph A \neq \emptyset$. Then $\calR_A:X\to X$ is firmly nonexpansive and satisfies that
    \begin{equation*}
        \norm{\calR_A x-\calR_A z}_X^2 +  \norm{(\Id-\calR_A)x-(\Id-\calR_A)z}_X^2 \leq \norm{x-z}_X^2\quad\text{for all }x,z\in X.
    \end{equation*}
\end{lemma}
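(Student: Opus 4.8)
The plan is to reuse the monotonicity estimate already obtained in the proof of \cref{lem:proximal:resolvent}. First I would fix $x,z\in X$ and set $u:=\calR_A x$ and $v:=\calR_A z$, which are well-defined single points of $X$ by \cref{lem:proximal:resolvent} (itself a consequence of Minty's \cref{thm:monoton:max_surj}). The defining relations $x\in u+Au$ and $z\in v+Av$ mean $x-u\in Au$ and $z-v\in Av$, and applying monotonicity of $A$ to these two pairs gives $\inner{(x-u)-(z-v),\,u-v}_X\geq 0$, which rearranges to $\inner{x-z,\,u-v}_X\geq\norm{u-v}_X^2$. This is exactly the definition of firm nonexpansivity of $\calR_A$ (and, via Cauchy--Schwarz, also recovers nonexpansivity).

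For the sharper identity-type estimate I would introduce $a:=x-u=(\Id-\calR_A)x$ and $b:=z-v=(\Id-\calR_A)z$, so that $x-z=(u-v)+(a-b)$. Expanding the squared norm then yields
\begin{equation}
    \norm{x-z}_X^2 = \norm{u-v}_X^2 + 2\inner{u-v,\,a-b}_X + \norm{a-b}_X^2 .
\end{equation}
Since $a\in Au$ and $b\in Av$, monotonicity of $A$ gives $\inner{a-b,\,u-v}_X\geq 0$, so the middle term is nonnegative and may be dropped, leaving
\begin{equation}
    \norm{\calR_A x-\calR_A z}_X^2 + \norm{(\Id-\calR_A)x-(\Id-\calR_A)z}_X^2 \leq \norm{x-z}_X^2 ,
\end{equation}
which is the claim.

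There is essentially no obstacle: the whole argument reduces to the single monotonicity inequality together with one binomial expansion of a norm. The only subtlety is ensuring that $u,v$ (hence $a,b$) are unambiguously defined, i.e.\ that $\calR_A$ is genuinely single-valued with full domain $X$ — but this is precisely what \cref{lem:proximal:resolvent} provides, so it may be invoked directly.
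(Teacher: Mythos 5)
Your proof is correct and follows essentially the same route as the paper: both arguments rest on the single monotonicity estimate $\inner{x-z,\calR_Ax-\calR_Az}_X\geq\norm{\calR_Ax-\calR_Az}_X^2$ (established in the proof of \cref{lem:proximal:resolvent}) combined with one binomial expansion of a squared norm. The only cosmetic difference is that you expand $\norm{x-z}_X^2$ and drop the cross term directly via monotonicity, whereas the paper expands $\norm{(\Id-\calR_A)x-(\Id-\calR_A)z}_X^2$ and invokes firm nonexpansivity — these are the same computation rearranged.
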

\begin{proof}
    Firm nonexpansivity of $\calR_A$ was already shown in \eqref{eq:proximal:nonexpansive}, which further implies that
    \begin{align*}
        \norm{(\Id-\calR_A)x-(\Id-\calR_A)z}_X^2 &= \norm{x-z}_X^2 - 2\inner{x-z,\calR_Ax-\calR_A z}_X + \norm{\calR_A x-\calR_Az}^2_X\\
        &\leq  \norm{x-z}_X^2 - \norm{\calR_A x-\calR_Az}^2_X.
        \qedhere
    \end{align*}
\end{proof}
\begin{cor}\label{lem:proximal:lipschitz}
    Let $F:X\to\Rbar$ be proper, convex, and lower semicontinuous. Then $\prox_F:X\to X$ is Lipschitz continuous with constant $L=1$.
\end{cor}

\bigskip

The following useful result allows characterizing minimizers of convex functionals as proximal points.
\begin{lemma}\label{lem:proximal:subdiff}
    Let $F:X\to \Rbar$ be proper, convex, and lower semicontinuous, and $x,x^*\in X$. Then for any $\gamma>0$,
    \begin{equation*}
        x^*\in \partial  F(x) \quad\equivalent\quad x = \prox_{\gamma F}(x + \gamma x^*).
    \end{equation*}
\end{lemma}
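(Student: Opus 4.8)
The plan is to reduce the claimed equivalence to a chain of definitional identities, using the identification $\prox_{\gamma F} = \calR_{\partial(\gamma F)}$ from \eqref{eq:proximal:resolvent} together with the scaling rule for subdifferentials in \cref{lem:convex:subdiff_calc}\,(i).

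First I would observe that for $\gamma>0$ the functional $\gamma F$ is again proper, convex, and lower semicontinuous, so \eqref{eq:proximal:resolvent} applies and gives
\begin{equation}
    \prox_{\gamma F} = (\Id + \partial(\gamma F))^{-1}.
\end{equation}
By \cref{lem:convex:subdiff_calc}\,(i), $\partial(\gamma F)(x) = \gamma\,\partial F(x)$ for all $x$, hence $(\Id + \partial(\gamma F))(x) = \{x\} + \gamma\,\partial F(x)$. Moreover, by \cref{lem:proximal:resolvent} (applied to the maximally monotone operator $\partial(\gamma F)$, whose maximal monotonicity is \cref{cor:monoton:subdiff}), the map $\prox_{\gamma F}$ is single-valued on all of $X$; therefore the equation $x = \prox_{\gamma F}(x+\gamma x^*)$ is equivalent to the inclusion $x+\gamma x^* \in (\Id + \partial(\gamma F))(x)$.

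Putting these together, $x = \prox_{\gamma F}(x+\gamma x^*)$ holds if and only if $x + \gamma x^* \in \{x\} + \gamma\,\partial F(x)$, i.e. if and only if $\gamma x^* \in \gamma\,\partial F(x)$, which since $\gamma>0$ is equivalent to $x^* \in \partial F(x)$. This is exactly the asserted equivalence.

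There is essentially no hard step here: the argument is purely a matter of unwinding \eqref{eq:proximal:resolvent}, the scaling rule, and single-valuedness of the resolvent. The only point deserving care is to invoke single-valuedness of $\prox_{\gamma F}$ (so that the \emph{equality} $x=\prox_{\gamma F}(x+\gamma x^*)$, rather than mere membership in $\calR_{\partial(\gamma F)}(x+\gamma x^*)$, is equivalent to the defining inclusion), and to note that the hypothesis $\gamma>0$ is what lets us cancel $\gamma$ on both sides of $\gamma x^*\in\gamma\,\partial F(x)$.
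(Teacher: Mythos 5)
Your argument is correct and is essentially the paper's own proof: both unwind $\prox_{\gamma F}=(\Id+\gamma\partial F)^{-1}$ via the scaling rule $\partial(\gamma F)=\gamma\partial F$ and translate the resolvent equation back into the subdifferential inclusion by adding/cancelling $\gamma x^*$. Your explicit appeal to single-valuedness of the resolvent (and your citation of \cref{lem:convex:subdiff_calc}\,(i), which is the correct item for the scaling rule) merely makes precise steps the paper leaves implicit.
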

\begin{proof}
    Multiplying both sides of the subdifferential inclusion by $\gamma>0$ and adding $x$ yields that
    \begin{equation*}
        \begin{aligned}[b]
            x^* \in \partial F(x) &\Leftrightarrow x + \gamma x^* \in (\Id + \gamma\partial F)(x)\\
            &\Leftrightarrow x \in (\Id + \gamma\partial F)^{-1}(x+\gamma x^*)\\
            &\Leftrightarrow x = \prox_{\gamma F}(x+\gamma x^*),
        \end{aligned}
    \end{equation*}
    where in the last step we have used that $\gamma\partial F=\partial(\gamma F)$ by \cref{lem:convex:subdiff_calc}\,(i) and hence that $\prox_{\gamma F} = \calR_{\gamma\partial F}$.
\end{proof}
\begin{cor}\label{thm:proximal:fermat}
    Let $F:X\to \Rbar$ be proper, convex and lower semicontinuous, and $\gamma>0$ be arbitrary. Then $\bar x\in \dom F$ is a minimizer of $F$ if and only if
    \begin{equation*}
        \bar x = \prox_{\gamma F}(\bar x).
    \end{equation*}
\end{cor}
\begin{proof}
    Simply apply \cref{lem:proximal:subdiff} to the Fermat principle $0\in\partial F(\bar x)$.
\end{proof}
This simple result should not be underestimated: It allows replacing (explicit) set inclusions by (implicit) Lipschitz continuous equations in optimality conditions, thus opening the door to fixed point iterations or Newton-type methods.

We can also derive a generalization of the orthogonal decomposition of vector spaces.
\begin{theorem}[Moreau decomposition]\label{thm:proximal:moreau}
    Let $F:X\to \Rbar$ be proper, convex, and lower semicontinuous. Then we have for all $x\in X$ that
    \begin{equation*}
        x = \prox_F(x) + \prox_{F^*}(x).
    \end{equation*}
\end{theorem}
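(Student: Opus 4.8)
The plan is to reduce everything to the Fenchel--Young equivalence, \cref{lem:convex:fenchel-young}, which says that for proper, convex, lsc $F$ and any $x,x^*\in X$, the three conditions $\dual{x^*,x}_X = F(x)+F^*(x^*)$, $x^*\in\partial F(x)$, and $x\in\partial F^*(x^*)$ are mutually equivalent. Fix $x\in X$ and set $u := \prox_F(x)$; the goal is to show $x - u = \prox_{F^*}(x)$, i.e., that $v := x - u$ is the (unique) minimizer of $z\mapsto \tfrac12\norm{z-x}_X^2 + F^*(z)$. By \cref{thm:proximal:fermat}-type reasoning (more precisely \cref{lem:proximal:subdiff} with $\gamma=1$), it suffices to verify the subdifferential inclusion characterizing the proximal point of $F^*$.

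First I would unwind what $u=\prox_F(x)$ gives via \eqref{eq:proximal:resolvent}: $u = (\Id+\partial F)^{-1}x$ means $x \in u + \partial F(u)$, i.e., $x - u \in \partial F(u)$. Writing $v = x-u$, this is $v\in\partial F(u)$. Now apply \cref{lem:convex:fenchel-young}: $v\in\partial F(u)$ is equivalent to $u\in\partial F^*(v)$. But $u = x - v$, so we have $x - v \in \partial F^*(v)$, i.e., $x \in v + \partial F^*(v) = (\Id+\partial F^*)(v)$. By \cref{lem:proximal:subdiff} (equivalently, the definition of the resolvent $\calR_{\partial F^*}=\prox_{F^*}$ together with \cref{lem:monoton:subdiff_surj} guaranteeing single-valuedness — note $F^*$ is itself proper, convex, lsc since $F$ is proper, convex, lsc, so $\prox_{F^*}$ is well-defined), this says exactly $v = \prox_{F^*}(x)$. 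Hence $x = u + v = \prox_F(x) + \prox_{F^*}(x)$, as claimed.

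The only slightly delicate point is a bookkeeping one: \cref{lem:convex:fenchel-young} as stated requires $F$ proper, convex, and lsc, and the equivalence (ii)$\Leftrightarrow$(iii) passes through $F^{**}=F$ (\cref{thm:convex:moreau}), which is available precisely under those hypotheses — so the argument uses the full strength of the assumption on $F$, and no extra regularity of $F^*$ beyond what is automatic. There is no real obstacle here; the ``hard part'' is merely to state the chain of equivalences cleanly and to note that $\prox_{F^*}$ makes sense, which follows because $F^*$ inherits properness (from $F$ being minorized by an affine function, \cref{lem:convex:gamma}), convexity, and lower semicontinuity directly from the definition of the Fenchel conjugate. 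In fact the computation is completely reversible, so the same chain shows uniqueness is inherited correctly and that the decomposition is the only one with the summands being the respective proximal points.
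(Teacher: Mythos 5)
Your argument is correct and follows essentially the same route as the paper's proof: characterize $u=\prox_F(x)$ via $x-u\in\partial F(u)$ (\cref{lem:proximal:subdiff} with $\gamma=1$), flip the inclusion with the Fenchel--Young equivalence \cref{lem:convex:fenchel-young} to get $u\in\partial F^*(x-u)$, and translate back to $x-u=\prox_{F^*}(x)$. The remarks about $F^*$ being proper, convex, and lower semicontinuous are a fine (if implicit in the paper) bookkeeping addition; nothing further is needed.
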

\begin{proof}
    Setting $w = \prox_{F}(x)$, \cref{lem:proximal:subdiff,lem:convex:fenchel-young} imply that
    \begin{equation*}
        \begin{split}
            \begin{aligned}[b]
                w = \prox_F(x) = \prox_F(w+(x-w)) &\Leftrightarrow x-w \in \partial F(w)\\
                &\Leftrightarrow w \in \partial F^*(x-w)\\
                &\Leftrightarrow x-w = \prox_{F^*}((x-w)+w)
                =\prox_{F^*}(x).
            \end{aligned}
            \qedhere
        \end{split}
    \end{equation*}
\end{proof}

The following calculus rules will prove useful.
\begin{lemma}\label{lem:proximal:calculus}
    Let $F:X\to\Rbar$ be proper, convex, and lower semicontinuous. Then,
    \begin{enumerate}[(i)]
        \item for $\lambda \neq 0$ and $z\in X$ we have with $H(x) := F(\lambda x + z)$ that
            \begin{equation*}
                \prox_H(x) = \lambda^{-1} (\prox_{\lambda^2 F}(\lambda x + z) -z);
            \end{equation*}
        \item for $\gamma>0$ we have that\vspace*{-2ex}
            \begin{equation*}
                \prox_{\gamma F^*}(x) = x - \gamma\, \prox_{\gamma^{-1} F}(\gamma^{-1}x);
            \end{equation*}
        \item for proper, convex, lower semicontinuous $G:Y\to\Rbar$ and $\gamma>0$ we have with $H(x,y) := F(x)+G(y)$ that
            \begin{equation*}
                \prox_{\gamma H}(x,y) = \begin{pmatrix}\prox_{\gamma F}(x)\\\prox_{\gamma G}(y)
                \end{pmatrix}.
            \end{equation*}
    \end{enumerate}
\end{lemma}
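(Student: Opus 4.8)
The plan is to verify each of the three identities directly from the definition \eqref{eq:proximal:proximal} of $\prox$, in every case reducing the defining minimization to one already in standard form, either by a change of variables or, in (iii), by separating the minimization. In all three cases the functional inside the proximal map is again proper, convex, and lower semicontinuous — for $H$ in (i) because $w\mapsto\lambda w+z$ is affine and continuous; for $\gamma F^*$ in (ii) because $F^*$ is convex and lower semicontinuous as a conjugate and proper since $F$ is bounded below by an affine functional (\cref{lem:convex:gamma}); for $H$ in (iii) as a sum of such functionals — so by \cref{lem:convex:supercoercive} and \cref{thm:convex:existence} each proximal point exists and is unique and the formulas make sense.

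For (i): starting from $\prox_H(x)=\argmin_{w\in X}\tfrac12\norm{w-x}_X^2+F(\lambda w+z)$, I would substitute $u=\lambda w+z$, which is a bijection of $X$ since $\lambda\neq 0$. Then $w=\lambda^{-1}(u-z)$ and $\norm{w-x}_X^2=\lambda^{-2}\norm{u-(\lambda x+z)}_X^2$, so multiplying the objective by $\lambda^2>0$ turns the problem into $\argmin_{u\in X}\tfrac12\norm{u-(\lambda x+z)}_X^2+\lambda^2F(u)=\prox_{\lambda^2F}(\lambda x+z)$; transforming back via $w=\lambda^{-1}(u-z)$ gives $\prox_H(x)=\lambda^{-1}(\prox_{\lambda^2F}(\lambda x+z)-z)$.

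For (ii): the idea is to combine the Moreau decomposition \cref{thm:proximal:moreau}, applied to $\gamma F^*$, with (i). That theorem gives $x=\prox_{\gamma F^*}(x)+\prox_{(\gamma F^*)^*}(x)$, and by \cref{lem:convex:fenchel_calc}\,(i) together with $F^{**}=F$ (\cref{thm:convex:moreau}\,(iii), using that $F$ is proper, convex, and lower semicontinuous) one has $(\gamma F^*)^*(x)=\gamma F^{**}(\gamma^{-1}x)=\gamma F(\gamma^{-1}x)$. Applying (i) with the function $\gamma F$ in place of $F$, $\lambda=\gamma^{-1}$, and $z=0$ then yields $\prox_{(\gamma F^*)^*}(x)=\gamma\,\prox_{\gamma^{-2}(\gamma F)}(\gamma^{-1}x)=\gamma\,\prox_{\gamma^{-1}F}(\gamma^{-1}x)$, and rearranging the Moreau identity gives $\prox_{\gamma F^*}(x)=x-\gamma\,\prox_{\gamma^{-1}F}(\gamma^{-1}x)$.

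For (iii): I would view $X\times Y$ as a Hilbert space with $\norm{(x,y)}^2=\norm{x}_X^2+\norm{y}_Y^2$, so that $\prox_{\gamma H}(x,y)$ is the minimizer over $(u,v)\in X\times Y$ of $\tfrac12\norm{u-x}_X^2+\gamma F(u)+\tfrac12\norm{v-y}_Y^2+\gamma G(v)$; since this objective splits into a part depending only on $u$ and a part depending only on $v$, the minimization decouples and the unique minimizer is $(\prox_{\gamma F}(x),\prox_{\gamma G}(y))$. There is no real obstacle in any of this; the only point that needs attention is the bookkeeping of scalars in (ii), where one must carefully match the $\lambda^2F$ produced by (i) against $\gamma^{-2}(\gamma F)=\gamma^{-1}F$ and track the rescaled argument $\gamma^{-1}x$ through the Moreau identity — which is also why I would dispatch the properness/convexity/lower semicontinuity checks at the very start, so that every $\prox$ appearing is known to be well defined and single-valued.
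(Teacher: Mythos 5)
Your proposal is correct and follows essentially the same route as the paper: the change of variables $u=\lambda w+z$ for (i), separation of the product-space objective for (iii), and for (ii) the Moreau decomposition combined with \cref{lem:convex:fenchel_calc}\,(i), part (i) with $\lambda=\gamma^{-1}$, $z=0$, and $F^{**}=F$. The only cosmetic difference is that you decompose $\prox_{\gamma F^*}$ directly and compute $(\gamma F^*)^*$, whereas the paper derives the identity for $\prox_{\gamma F}$ first and then substitutes $F^*$ for $F$; the ingredients and bookkeeping are identical.
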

\begin{proof}
    \emph{(i):} By definition,
    \begin{equation*}
        \prox_H(x) = \argmin_{w\in X} \frac12\norm{w-x}^2_X + F(\lambda w + z) =: \bar w.
    \end{equation*}
    Now note that since $X$ is a vector space,
    \begin{equation*}
        \min_{w\in X} \frac12\norm{w-x}^2_X + F(\lambda w + z) = \min_{v\in X} \frac12\norm{\lambda^{-1}(v-z) -x}^2_X + F(v),
    \end{equation*}
    and the respective minimizers $\bar w$ and $\bar v$ are related by $\bar v=\lambda \bar w+z$. The claim then follows from
    \begin{equation*}
        \begin{aligned}
            \bar v &= \argmin_{v\in X} \frac12\norm{\lambda^{-1}(v-z) -x}^2_X + F(v)\\
            &= \argmin_{v\in X} \frac1{2\lambda^2}\norm{v -(\lambda x+z)}^2_X + F(v)\\
            &= \argmin_{v\in X} \frac1{2}\norm{v -(\lambda x+z)}^2_X + \lambda^2F(v)\\
            & = \prox_{\lambda^2 F}(\lambda x+z).
        \end{aligned}
    \end{equation*}
    Hence, $\bar w := \lambda^{-1}(\bar v - z)$ is the desired minimizer.

    \emph{(ii):} \Cref{thm:proximal:moreau}, \cref{lem:convex:fenchel_calc}\,(i), and (i) for $\lambda=\gamma^{-1}$ and $z=0$ together imply that
    \begin{equation*}
        \begin{aligned}
            \prox_{\gamma F}(x) &= x- \prox_{(\gamma F)^*}(x)\\
            &= x - \prox_{\gamma F^*\circ (\gamma^{-1}\Id)}(x)\\
            &= x - \gamma \,\prox_{\gamma( \gamma^{-2} F^*)}(\gamma^{-1}x).
        \end{aligned}
    \end{equation*}
    Applying this to $F^*$ and using that $F^{**} = F$ now yields the claim.

    \emph{(iii):} By definition of the norm on the product space $X\times Y$, we have that
    \begin{equation*}
        \begin{aligned}
            \prox_{\gamma H}(x,y) &= \argmin_{(u,v)\in X\times Y}  \frac12 \norm{(u,v) - (x,y)}^2_{X\times Y} + \gamma H(u,v)\\
            &= \argmin_{u\in X, v\in Y}  \left(\frac12 \norm{u - x}^2_{X} + \gamma F(u) \right)
            +\left(\frac12 \norm{v - y}^2_{Y} + \gamma G(v)\right).
        \end{aligned}
    \end{equation*}
    Since there are no mixed terms in $u$ and $v$, the two terms in parentheses can be minimized separately. Hence, $\prox_{\gamma H}(x,y) = (\bar u, \bar v)$ for
    \begin{equation*}
        \begin{split}
            \begin{aligned}[b]
                \bar u =  \argmin_{u\in X}  \frac12 \norm{u - x}^2_{X} + \gamma F(u)  = \prox_{\gamma F(x)},\\
                \bar v =  \argmin_{v\in Y}  \frac12 \norm{v - y}^2_{Y} + \gamma G(v)  = \prox_{\gamma G(x)}.
            \end{aligned}
            \qedhere
        \end{split}
    \end{equation*}
\end{proof}

Computing proximal points is difficult in general since evaluating $\prox_F$ by its definition entails minimizing $F$. In some cases, however, it is possible to give an explicit formula for $\prox_F$.
\begin{example}\label{ex:proximal:reell}
    We first consider scalar functions $f:\R\to\Rbar$.
    \begin{enumerate}[(i)]
        \item $f(t) = \frac12|t|^2$. Since $f$ is differentiable, we can set the derivative of $\frac12(s-t)^2+\frac\gamma2s^2$ to zero and solve for $s$ to obtain $\prox_{\gamma f}(t) = (1+\gamma)^{-1}t$.

        \item $f(t) = |t|$. By \eqref{eq:convex:subdiff_abs} we have that $\partial f(t) = \sign(t)$; hence
            $s :=\prox_{\gamma f}(t)=(\Id + \gamma\sign)^{-1}(t)$
            if and only if $t\in \{s\}+\gamma\sign(s)$. Let $t$ be given and assume this holds for some $\bar s$. We now proceed by case distinction.
            \begin{enumerate}[{Case }1:]
                \item $\bar s>0$. This implies that $t = \bar s+\gamma$, i.e., $\bar s = t-\gamma$, and hence that $t>\gamma$.
                \item $\bar s<0$. This implies that $t = \bar s-\gamma$, i.e., $\bar s = t+\gamma$, and hence that $t<-\gamma$.
                \item $\bar s=0$. This implies that $t\in \gamma[-1,1]= [-\gamma,\gamma]$.
            \end{enumerate}
            Since this yields a complete and disjoint case distinction for $t$, we can conclude that
            \begin{equation*}
                \prox_{\gamma f}(t) = \begin{cases} t-\gamma & \text{if }t>\gamma,\\
                    0 &\text{if } t\in [-\gamma,\gamma],\\
                    t+\gamma &\text{if } t < -\gamma.
                \end{cases}
            \end{equation*}
            This mapping is also known as the \emph{soft-shrinkage} or \emph{soft-thresholding} operator.

        \item $f(t) = \delta_{[-1,1]}(t)$. By \cref{ex:convex:fenchel}\,(iii) we have that $f^*(t) = |t|$. Hence \cref{lem:proximal:calculus}\,(ii) yields that
            \begin{equation*}
                \begin{aligned}
                    \prox_{\gamma f}(t) &= t-\gamma\,\prox_{\gamma^{-1} f^*}(\gamma^{-1} t) \\
                    &=\begin{cases}
                        t - \gamma(\gamma^{-1} t - \gamma^{-1})  &\text{if } \gamma^{-1}t > \gamma^{-1},\\
                        t - 0  &\text{if } \gamma^{-1}t\in [-\gamma^{-1},\gamma^{-1}],\\
                        t - \gamma(\gamma^{-1} t + \gamma^{-1})  &\text{if } \gamma^{-1}t <- \gamma^{-1}
                    \end{cases}\\
                    &=\begin{cases}
                        \phantom{-}1&\text{if } t>1, \\
                        \phantom{-}t   &\text{if } t\in [-1,1],\\
                        -1&\text{if } t<-1.
                    \end{cases}
                \end{aligned}
            \end{equation*}
            For every $\gamma>0$, the proximal point of $t$ is thus its projection onto $[-1,1]$.
    \end{enumerate}
\end{example}
\begin{example}\label{ex:proximal:rn}
    We can generalize \cref{ex:proximal:reell} to $X=\R^N$ (endowed with the Euclidean inner product) by applying \cref{lem:proximal:calculus}\,(iii) $N$ times. We thus obtain componentwise
    \begin{enumerate}[(i)]
        \item for $F(x)=\frac12\norm{x}_2^2=\sum_{i=1}^N \tfrac12 x_i^2$ that
            \begin{equation*}
                [\prox_{\gamma F}(x)]_i = \left(\frac1{1+\gamma}\right) x_i,\quad 1\leq i\leq N;
            \end{equation*}
        \item for $F(x) = \norm{x}_1=\sum_{i=1}^N|x_i|$ that
            \begin{equation*}
                [\prox_{\gamma F}(x)]_i = (|x_i|-\gamma)^+\sign(x_i),\quad 1\leq i\leq N;
            \end{equation*}
        \item for $F(x) = \delta_{B_\infty}(x)=\sum_{i=1}^N \delta_{[-1,1]}(x_i)$ that
            \begin{equation*}
                [\prox_{\gamma F}(x)]_i = x_i-(x_i-1)^+-(x_i+1)^-
                = \frac{x_i}{\max\{1,|x_i|\}},\qquad 1\leq i\leq N.
            \end{equation*}
    \end{enumerate}
    Here we have used the convenient notation $(t)^+ := \max\{t,0\}$ and $(t)^-:=\min\{t,0\}$.
\end{example}
Many more examples can be found in \cite[\S\,6.5]{Boyd:2014}.

Since the subdifferential of convex integral functionals can be evaluated pointwise by \cref{lem:lebesgue:subdiff}, the same holds for the definition \eqref{eq:proximal:resolvent} of the proximal point mapping.
\begin{cor}\label{lem:lebesgue:proximal}
    Let $f:\R\to\Rbar$ be proper, convex, and lower semicontinuous, and $F:L^2(\Omega)\to\Rbar$ be defined as in \cref{lem:lebesgue:lsc}. Then we have for all $\gamma>0$ and $u\in L^2(\Omega)$ that
    \begin{equation*}
        [\prox_{\gamma F}(u)](x) = \prox_{\gamma f}(u(x))\qquad\text{for almost every }x\in \Omega.
    \end{equation*}
\end{cor}
\begin{example}\label{eq:proximal:hilbert}
    Let $X$ be a Hilbert space. Similarly to \cref{ex:proximal:reell} one can show
    \begin{enumerate}[(i)]
        \item for $F=\frac12\norm{\cdot}_X^2 = \frac12\inner{\cdot,\cdot}_X$, that
            \begin{equation*}
                \prox_{\gamma F}(x) = \left(\frac{1}{1+\gamma}\right)x;
            \end{equation*}
        \item for $F=\norm{\cdot}_X$, using a case distinction as in \cref{thm:subdifferential:norm}, that
            \begin{equation*}
                \prox_{\gamma F}(x) = \left(1-\frac\gamma{\norm{x}_X}\right)^+ x;
            \end{equation*}
        \item for $F=\delta_C$ with $C\subset X$ nonempty, convex, and closed, that by definition
            \begin{equation*}
                \prox_{\gamma F}(x) = \proj_C(x) := \argmin_{z\in C} \norm{z-x}_X
            \end{equation*}
            the \emph{metric projection} of $x$ onto $C$; the proximal point mapping thus generalizes the concept projection onto convex sets. Explicit or at least constructive formulas for the projection onto different classes of sets can be found in \cite[Chapter 4.1]{Cegielski:2012}.
    \end{enumerate}
\end{example}

\section{Moreau--Yosida regularization}

Before we turn to algorithms for the minimization of convex functionals, we will look at another way to reformulate optimality conditions using proximal point mappings. Although these are no longer equivalent reformulations, they will serve as a link to the Newton-type methods introduced in \cref{chap:semismooth}.

Let $A:X\setto X$ be a maximally monotone operator with $\graph A\neq \emptyset$ and $\gamma >0$.
Then we define the \emph{Yosida approximation} of $A$ as
\begin{equation*}
    A_\gamma := \frac1\gamma\left(\Id - \calR_{\gamma A}\right).
\end{equation*}
In particular, the Yosida approximation of the subdifferential of a proper, convex, and lower semicontinuous functional $F:X\to\Rbar$ is given by
\begin{equation*}
    (\partial F)_\gamma := \frac1\gamma\left(\Id - \prox_{\gamma F}\right),
\end{equation*}
which by \cref{lem:proximal:lipschitz} is always Lipschitz continuous with constant $L=\gamma^{-1}$.

An alternative point of view is the following. For a proper, convex, and lower semicontinuous functional $F:X\to\Rbar$ and $\gamma>0$, we define the \emph{Moreau envelope}\footnote{not to be confused with the \emph{convex} envelope $F^\Gamma$!} as
\begin{equation*}
    F_\gamma :X\to\R,\qquad x\mapsto \inf_{z\in X} \frac{1}{2\gamma}\norm{z-x}_X^2 + F(z).
\end{equation*}
Comparing this with the definition \eqref{eq:proximal:proximal} of the proximal point mapping of $F$, we see that
\begin{equation}\label{eq:proximal:moreau}
    F_\gamma(x) = \frac{1}{2\gamma}\norm{\prox_{\gamma F}(x)-x}_X^2 + F(\prox_{\gamma F}(x)).
\end{equation}
(Note that multiplying a functional by $\gamma>0$ does not change its minimizers.)
Hence $F_\gamma$ is indeed well-defined on $X$ and single-valued. Furthermore, we can deduce from \eqref{eq:proximal:moreau} that $F_\gamma$ is convex as well.
\begin{lemma}\label{lem:moreau:convex}
    Let $F:X\to\Rbar$ be proper, convex, and lower semicontinuous, and $\gamma>0$. Then $F_\gamma$ is convex.
\end{lemma}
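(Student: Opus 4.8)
The plan is to realize the Moreau envelope $F_\gamma$ as the value function of a convex minimization problem in two variables, and then invoke the standard fact that partial infimization of a jointly convex function is convex. Concretely, I would introduce the function
\begin{equation}
    H:X\times X\to\Rbar,\qquad H(x,z) = \frac{1}{2\gamma}\norm{z-x}_X^2 + F(z),
\end{equation}
so that $F_\gamma(x) = \inf_{z\in X} H(x,z)$. The key claim is that $H$ is convex on the product space $X\times X$. Granting this, convexity of $F_\gamma$ follows from the following elementary lemma, which I would state and prove inline: if $H:X\times X\to\Rbar$ is convex and $g(x):=\inf_{z} H(x,z) > -\infty$ for all $x$, then $g$ is convex. (Here $g>-\infty$ is guaranteed because, by \cref{lem:convex:gamma}, $F$ is bounded below by an affine functional, so $H(x,\cdot)$ is bounded below for each fixed $x$; alternatively one notes $F_\gamma$ is finite-valued by the observation preceding the lemma.)

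For the convexity of $H$: the map $(x,z)\mapsto z-x$ is linear from $X\times X$ to $X$, and $w\mapsto \frac{1}{2\gamma}\norm{w}_X^2$ is convex (it is a nonnegative multiple of the squared norm, convex by the computation given in the excerpt, or by \cref{lem:convex:func}\,(i) applied to the displayed strict-convexity identity); hence $(x,z)\mapsto \frac{1}{2\gamma}\norm{z-x}_X^2$ is convex as the composition of a convex function with a linear map, by \cref{lem:convex:func}\,(iv). Likewise $(x,z)\mapsto F(z)$ is $F$ composed with the linear projection $(x,z)\mapsto z$, hence convex by \cref{lem:convex:func}\,(iv). Their sum $H$ is then convex by \cref{lem:convex:func}\,(ii).

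For the inline lemma: fix $x_1,x_2\in X$ and $\lambda\in[0,1]$, and let $\eps>0$. Pick $z_1,z_2$ with $H(x_i,z_i)\le g(x_i)+\eps$. Then $\lambda z_1+(1-\lambda)z_2$ is a feasible point for the infimum defining $g(\lambda x_1+(1-\lambda)x_2)$, so by convexity of $H$,
\begin{equation}
    g(\lambda x_1 + (1-\lambda)x_2) \le H(\lambda x_1+(1-\lambda)x_2,\lambda z_1+(1-\lambda)z_2) \le \lambda H(x_1,z_1) + (1-\lambda)H(x_2,z_2) \le \lambda g(x_1) + (1-\lambda)g(x_2) + \eps,
\end{equation}
and letting $\eps\to 0$ gives the convexity inequality for $g$. (One must also note the function values $\infty$ cause no trouble: if $g(x_1)=\infty$ or $g(x_2)=\infty$ the inequality is trivial, and in fact here $F_\gamma$ is real-valued so this case does not even occur.) Substituting back $g=F_\gamma$ and $H$ as above completes the argument.

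I do not expect a genuine obstacle here; the only point requiring a little care is the handling of the $\eps$-approximate minimizers (since the infimum need not be attained a priori, though in fact it is, being a strictly convex coercive problem — but invoking that would be circular-ish overkill) and confirming $F_\gamma>-\infty$ so that the partial-infimization lemma applies cleanly. Everything else is a direct appeal to \cref{lem:convex:func}.
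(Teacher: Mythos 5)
Your proposal is correct and follows essentially the same route as the paper: the paper likewise writes $F_\gamma(x)=\inf_{z\in X}H(x,z)$ for the jointly convex $H(x,z)=F(z)+\frac1{2\gamma}\norm{z-x}_X^2$ and concludes by taking (approximate) minimizers and passing to the limit, exactly as in your $\eps$-argument. The only cosmetic differences are that the paper verifies joint convexity of $H$ by a direct computation rather than via \cref{lem:convex:func}, and uses minimizing sequences in place of $\eps$-approximate minimizers.
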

\begin{proof}
    We first show that for any convex $G:X\to\Rbar$, the mapping
    \begin{equation*}
        H:X\times X \to \Rbar,\qquad (x,z)\mapsto F(z)+G(z-x)
    \end{equation*}
    is convex as well. Indeed, for any $(x_1,z_1),(x_2,z_2)\in X\times X$ and $\lambda\in[0,1]$, the convexity of $F$ and $G$ implies that
    \begin{equation*}
        \begin{aligned}
            H(\lambda(x_1,z_1)+(1-\lambda)(x_2,z_2)) &= F\left(\lambda z_1 + (1-\lambda )z_2\right)+G\left(\lambda(z_1-x_1) + (1-\lambda)(z_2-x_2)\right)\\
            &\leq \lambda\left(F(z_1)+G(z_1-x_1)\right)+(1-\lambda)\left(F(z_2)+G(z_2-x_2)\right)\\
            &= \lambda H(x_1,z_1) + (1-\lambda)H(x_2,z_2).
        \end{aligned}
    \end{equation*}
    Let now $x_1,x_2\in X$ and $\lambda\in[0,1]$. Since $F_\gamma(x) = \inf_{z\in X} H(x,z)$ for $G(y):=\frac1{2\gamma}\norm{y}_X^2$, there exist two minimizing sequences $\{z^1_n\}_{n\in\N},\{z^2_n\}_{n\in\N}\subset X$ with
    \begin{equation*}
        H(x_1,z^1_n)\to F_\gamma(x_1),\qquad H(x_2,z^2_n)\to F_\gamma(x_2).
    \end{equation*}
    From the properties of the infimum together with the convexity of $H$, we thus obtain for all $n\in\N$ that
    \begin{equation*}
        \begin{aligned}
            F_\gamma(\lambda x_1 + (1-\lambda)x_2) & \leq H(\lambda (x_1,z^1_n) + (1-\lambda) (x_2,z^2_n) ) \\
            &\leq \lambda H(x_1,z^1_n) + (1-\lambda)H(x_2,z^2_n),
        \end{aligned}
    \end{equation*}
    and passing to the limit $n\to\infty$ yields the desired convexity.
\end{proof}

The next theorem links the two concepts and hence justifies the term \emph{Moreau--Yosida regularization}.
\begin{theorem}
    Let $F:X\to\Rbar$ be proper, convex, and lower semicontinuous, and $\gamma>0$.
    Then $F_\gamma$ is Fréchet differentiable with
    \begin{equation*}
        \nabla (F_\gamma) = (\partial F)_\gamma.
    \end{equation*}
\end{theorem}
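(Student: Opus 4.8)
The plan is to show first that the infimum defining $F_\gamma(x)$ is attained uniquely — this is exactly the content of \cref{lem:convex:supercoercive} together with \cref{thm:convex:existence}: the functional $z\mapsto \frac1{2\gamma}\norm{z-x}_X^2 + F(z)$ is proper, strictly convex, lower semicontinuous, and coercive, so it has a unique minimizer, which by \eqref{eq:proximal:resolvent} (after rescaling) is precisely $p:=\prox_{\gamma F}(x)$. Thus $F_\gamma$ is well-defined and finite-valued on all of $X$, and \eqref{eq:proximal:moreau} holds. Convexity of $F_\gamma$ is \cref{lem:moreau:convex}. The substance of the theorem is therefore the differentiability claim and the identification of the derivative.

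Next I would compute a candidate for the derivative and verify it is in fact the Gâteaux derivative. Fix $x$ and write $p=\prox_{\gamma F}(x)$; by \cref{lem:proximal:subdiff} (or directly from the Fermat principle applied to the defining minimization), we have $\gamma^{-1}(x-p)\in\partial F(p)$. Using this subgradient inequality I would estimate $F_\gamma(y)-F_\gamma(x)$ from both sides. For the upper bound: since $p$ is merely \emph{a} competitor in the infimum defining $F_\gamma(y)$,
\begin{equation}
    F_\gamma(y) \leq \frac1{2\gamma}\norm{p-y}_X^2 + F(p) = F_\gamma(x) + \frac1{2\gamma}\left(\norm{p-y}_X^2 - \norm{p-x}_X^2\right),
\end{equation}
and expanding the quadratic gives $F_\gamma(y)-F_\gamma(x) \leq \gamma^{-1}\inner{x-p,y-x}_X + \frac1{2\gamma}\norm{y-x}_X^2$. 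For the lower bound, let $q=\prox_{\gamma F}(y)$ and use that $x$ is a competitor for $F_\gamma(x)$ while $\gamma^{-1}(x-p)\in\partial F(p)$ lets one relate $F(q)$ to $F(p)$: one obtains $F_\gamma(y)-F_\gamma(x)\geq \gamma^{-1}\inner{x-p,y-x}_X - \frac1{2\gamma}\norm{y-x}_X^2$ after a symmetric computation (using firm nonexpansivity of $\prox_{\gamma F}$ from \cref{lem:proximal:nonexpansive} to control the error terms, or simply the convexity inequality for $F_\gamma$ itself together with the upper bound at $x$ seen from $y$). Combining, $\left|F_\gamma(y)-F_\gamma(x) - \gamma^{-1}\inner{x-p,y-x}_X\right| \leq \frac1{2\gamma}\norm{y-x}_X^2$, which is $o(\norm{y-x}_X)$; hence $F_\gamma$ is Fréchet differentiable at $x$ with $\nabla F_\gamma(x) = \gamma^{-1}(x - \prox_{\gamma F}(x)) = (\partial F)_\gamma(x)$ by definition of the Yosida approximation.

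The main obstacle is getting the lower bound clean. The naive approach — evaluate the infimum in $F_\gamma(x)$ at the minimizer $q$ of the problem for $F_\gamma(y)$ — gives $F_\gamma(x)\leq \frac1{2\gamma}\norm{q-x}_X^2+F(q)$, and subtracting the exact value $F_\gamma(y)=\frac1{2\gamma}\norm{q-y}_X^2+F(q)$ yields $F_\gamma(y)-F_\gamma(x)\geq \frac1{2\gamma}(\norm{q-y}_X^2-\norm{q-x}_X^2) = \gamma^{-1}\inner{x-q,y-x}_X+\frac1{2\gamma}\norm{y-x}_X^2$, which has the \emph{wrong} base point $q$ instead of $p$. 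To fix the base point one controls $\norm{q-p}_X\leq\norm{y-x}_X$ via Lipschitz continuity of $\prox_{\gamma F}$ (\cref{lem:proximal:lipschitz}), so that $\gamma^{-1}\inner{x-q,y-x}_X = \gamma^{-1}\inner{x-p,y-x}_X + \gamma^{-1}\inner{p-q,y-x}_X$ and the last term is $O(\norm{y-x}_X^2)$, absorbable into the remainder. With that observation the two bounds match to second order and the differentiability and the formula for the gradient follow.
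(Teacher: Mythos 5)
Your argument is correct, and it reaches the conclusion by a somewhat different route than the paper. The paper first proves the subgradient-type inequality \eqref{eq:proximal:my_frechet1}, $\tfrac1\gamma\inner{y^*-x^*,x-x^*}_X\le F(y^*)-F(x^*)$, by a limiting argument along the segment between the two proximal points (this is really just $\tfrac1\gamma(x-x^*)\in\partial F(x^*)$ tested at $y^*$, cf.\ \cref{lem:proximal:subdiff}), combines it with \eqref{eq:proximal:moreau} to get the two one-sided bounds $\tfrac1\gamma\inner{y-x,x-x^*}_X\le F_\gamma(y)-F_\gamma(x)\le\tfrac1\gamma\inner{y-x,y-y^*}_X$, and then needs \emph{firm} nonexpansivity (\cref{lem:proximal:nonexpansive}) to show that the gap between the two linear terms is $O(\norm{y-x}_X^2)$. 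You instead obtain both bounds purely by inserting competitors into the defining infimum: the prox point $p$ at $x$ as a competitor for $F_\gamma(y)$ gives the upper bound with the quadratic error already explicit, and the prox point $q$ at $y$ as a competitor for $F_\gamma(x)$ gives a lower bound with the ``wrong'' base point, which you repair using only the plain $1$-Lipschitz continuity of $\prox_{\gamma F}$ (\cref{lem:proximal:lipschitz}); no convexity inequality for $F$ at the prox point and no firm nonexpansivity is needed, since the quadratic remainders come for free from the competitor insertions. Your version is thus slightly more elementary and even gives the marginally better two-sided constant $\tfrac1{2\gamma}$ in place of the paper's $\tfrac1\gamma$; what the paper's route buys in exchange is the one-sided information $F_\gamma(y)-F_\gamma(x)-\inner{\nabla F_\gamma(x),y-x}_X\ge0$ (the gradient inequality, consistent with the convexity of $F_\gamma$ from \cref{lem:moreau:convex}), which your two-sided estimate does not record but which is not needed for the statement.
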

\begin{proof}
    Let $x,y\in X$ be arbitrary and set $x^* = \prox_{\gamma F}(x)$ and $y^* = \prox_{\gamma F}(y)$. We first show that
    \begin{equation}\label{eq:proximal:my_frechet1}
        \frac1\gamma \inner{y^*-x^*,x-x^*}_X  \leq F(y^*) - F(x^*).
    \end{equation}
    (Note that for proper $F$, the definition of proximal points as minimizers necessarily implies that $x^*,y^*\in\dom F$.)
    To this purpose, consider for $t\in (0,1)$ the point $x^*_t := ty^* + (1-t)x^*$.
    Using the minimizing property of the proximal point $x^*$ together with the convexity of $F$ and completing the square, we obtain that
    \begin{equation*}
        \begin{aligned}
            F(x^*) &\leq F(x^*_t) + \frac1{2\gamma}\norm{x^*_t - x}_X^2 - \frac1{2\gamma}\norm{x^* - x}_X^2\\
            &\leq t F(y^*) + (1-t)F(x^*) - \frac{t}{\gamma} \inner{x-x^*,y^*-x^*}_X +  \frac{t^2}{2\gamma}\norm{x^*-y^*}_X^2.
        \end{aligned}
    \end{equation*}
    Rearranging the terms, dividing by $t>0$ and passing to the limit $t\to 0$ then yields \eqref{eq:proximal:my_frechet1}.
    Combining this with \eqref{eq:proximal:moreau} implies that
    \begin{equation*}
        \begin{aligned}
            F_\gamma(y) - F_\gamma(x) &= F(y^*) - F(x^*) + \frac1{2\gamma}\left(\norm{y-y^*}_X^2 - \norm{x-x^*}_X^2\right)\\
            &\geq \frac1{2\gamma}\left(2\inner{y^*-x^*,x-x^*}_X + \norm{y-y^*}_X^2 - \norm{x-x^*}_X^2\right)\\
            &= \frac1{2\gamma}\left(2\inner{y-x,x-x^*}_X + \norm{y-y^*-x+x^*}_X^2\right)\\
            &\geq \frac1\gamma \inner{y-x,x-x^*}_X.
        \end{aligned}
    \end{equation*}
    By exchanging the roles of $x^*$ and $y^*$ in \eqref{eq:proximal:my_frechet1}, we obtain that
    \begin{equation*}
        F_\gamma(y) - F_\gamma(x) \leq \frac1\gamma \inner{y-x,y-y^*}_X.
    \end{equation*}
    Together, these two inequalities yield that
    \begin{equation*}
        \begin{aligned}
            0 &\leq F_\gamma(y) - F_\gamma(x) - \frac1\gamma \inner{y-x,x-x^*}_X\\
            &\leq \frac1\gamma \inner{y-x,(y-y^*)-(x-x^*)}_X\\
            &\leq \frac1\gamma\left(\norm{y-x}_X^2 - \norm{y^*-x^*}_X^2\right)\\
            &\leq \frac1\gamma \norm{y-x}_X^2,
        \end{aligned}
    \end{equation*}
    where the next-to-last inequality follows from the firm nonexpansivity of proximal point mappings (\cref{lem:proximal:nonexpansive}).

    If we now set $y=x+h$ for arbitrary $h\in X$, we obtain that
    \begin{equation*}
        0\leq \frac{F_\gamma(x+h) - F_\gamma(x) - \inner{\gamma^{-1}(x-x^*),h}_X}{\norm{h}_X} \leq \frac 1\gamma \norm{h}_X \to 0 \qquad\text{for } h\to 0,
    \end{equation*}
    i.e., $F_\gamma$ is Fréchet differentiable with gradient $\frac1\gamma(x-x^*)=(\partial F)_\gamma$.
\end{proof}
Since $F_\gamma$ is convex by \cref{lem:moreau:convex}, this result together with \cref{thm:convex:gateaux} yields the catchy relation $\partial(F_\gamma) = (\partial F)_\gamma$.

\begin{example}\label{ex:moreau}
    We consider again $X=\R^N$.
    \begin{enumerate}[(i)]
        \item For $F(x)=\|x\|_1$, we have from \cref{ex:proximal:rn}\,(ii) that the proximal point mapping is given by the component-wise soft-shrinkage operator. Inserting this into the definition yields that
            \begin{equation*}
                \left[(\partial \|\cdot\|_1)_\gamma(x)\right]_i =
                \begin{cases}
                    \frac1\gamma(x_i-(x_i-\gamma)) = 1 & \text{if }x_i>\gamma,\\
                    \frac1\gamma x_i & \text{if }x_i\in[-\gamma,\gamma],\\
                    \frac1\gamma(x_i-(x_i+\gamma)) = -1& \text{if }x_i<-\gamma.
                \end{cases}
            \end{equation*}
            Comparing this to the corresponding subdifferential \eqref{eq:convex:subdiff_abs}, we see that the set-valued case in the point $x_i=0$ has been replaced by a linear function on a small interval.

            Similarly, inserting the definition of the proximal point into \eqref{eq:proximal:moreau} shows that
            \begin{equation*}
                F_\gamma(x) = \sum_{i=1}^N f_\gamma(x_i)
                \ \text{ for }\
                f_\gamma(t) :=
                \begin{cases}
                    \frac1{2\gamma}|t-(t-\gamma)|^2 + |t-\gamma| = t - \frac\gamma2 & \text{if }t>\gamma,\\
                    \frac1{2\gamma}|t|^2 & \text{if }t \in [-\gamma,\gamma],\\
                    \frac1{2\gamma}|t-(t+\gamma)|^2 + |t+\gamma| = -t + \frac\gamma2 &\text{if } t<-\gamma.
                \end{cases}
            \end{equation*}
            For small values, the absolute value is thus replaced by a quadratic function (which removes the nondifferentiability at $0$). This modification is well-known under the name \emph{Huber norm}.

        \item For $F(x)=\delta_{B_\infty}(x)$, we have from \cref{ex:proximal:rn}\,(iii) that the proximal mapping is given by the component-wise projection onto $[-1,1]$ and hence that
            \begin{equation*}
                \left[(\partial \delta_{B_\infty})_\gamma(x)\right]_i = \frac1\gamma\Big(x_i-\big(x_i - (x_i-1)^+ - (x_i+1)^-\big)\Big) = \frac1\gamma (x_i-1)^+ + \frac1\gamma (x_i+1)^-.
            \end{equation*}
            Similarly, inserting this and using that $\prox_{\gamma F}(x)\in B_\infty$ and $\inner{(x+1)^+,(x-1)^-}_X=0$ yields that
            \begin{equation*}
                (\delta_{B_\infty})_\gamma(x) = \frac{1}{2\gamma}\norm{(x-1)^+}_2^2 +  \frac{1}{2\gamma}\norm{(x+1)^-}_2^2,
            \end{equation*}
            which corresponds to the classical penalty functional for the inequality constraints $x-1\leq 0$ and $x+1\geq 0$ in nonlinear optimization.

    \end{enumerate}
\end{example}

A further connection exists between the Moreau envelope  and the Fenchel conjugate.
\begin{theorem}
    Let $F:X\to\Rbar$ be proper, convex, and lower semicontinuous. Then we have for all $\gamma>0$ that
    \begin{equation*}
        (F_\gamma)^* = F^* + \frac{\gamma}{2} \norm{\cdot}_X^2.
    \end{equation*}
\end{theorem}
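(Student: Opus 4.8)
The plan is to compute the Fenchel conjugate of the Moreau envelope directly from its definition as an infimal convolution. The key observation is that $F_\gamma$ is, by its very definition, the infimal convolution of $F$ with the quadratic $\frac{1}{2\gamma}\norm{\cdot}_X^2$, i.e.,
\begin{equation}
    F_\gamma(x) = \inf_{z\in X} \left( F(z) + \tfrac{1}{2\gamma}\norm{z-x}_X^2 \right) = \inf_{x = z + w} \left( F(z) + \tfrac{1}{2\gamma}\norm{w}_X^2 \right).
\end{equation}
The conjugate of an infimal convolution is the sum of the conjugates — this is a standard fact that follows from a direct manipulation of suprema, which I would carry out inline rather than invoke as a named result (the excerpt does not state it).

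Concretely, I would write
\begin{equation}
    (F_\gamma)^*(x^*) = \sup_{x\in X} \left( \dual{x^*,x}_X - \inf_{z\in X} \left( F(z) + \tfrac{1}{2\gamma}\norm{z-x}_X^2 \right) \right) = \sup_{x,z\in X} \left( \dual{x^*,x}_X - F(z) - \tfrac{1}{2\gamma}\norm{z-x}_X^2 \right),
\end{equation}
where pulling the infimum out turns the subtracted infimum into a supremum. Then I substitute $w := x - z$, so $x = z + w$ and the two variables $z, w$ range independently over $X$:
\begin{equation}
    (F_\gamma)^*(x^*) = \sup_{z,w\in X} \left( \dual{x^*,z}_X + \dual{x^*,w}_X - F(z) - \tfrac{1}{2\gamma}\norm{w}_X^2 \right) = \left( \sup_{z\in X} \dual{x^*,z}_X - F(z) \right) + \left( \sup_{w\in X} \dual{x^*,w}_X - \tfrac{1}{2\gamma}\norm{w}_X^2 \right).
\end{equation}
The first bracket is exactly $F^*(x^*)$ by definition. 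For the second bracket, I would invoke \cref{ex:convex:fenchel}\,(i) together with the scaling rule \cref{lem:convex:fenchel_calc}\,(i): writing $G = \frac{1}{2\gamma}\norm{\cdot}_X^2 = \gamma^{-1}\cdot\frac12\norm{\cdot}_X^2$, we get $G^*(x^*) = \gamma^{-1}\cdot\frac12\norm{\gamma x^*}_X^2 = \frac{\gamma}{2}\norm{x^*}_X^2$. (In Hilbert space I identify $X^*$ with $X$ throughout, as the chapter does.) Combining gives $(F_\gamma)^*(x^*) = F^*(x^*) + \frac{\gamma}{2}\norm{x^*}_X^2$, which is the claim.

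I do not anticipate a serious obstacle here; the only point requiring a touch of care is the interchange of the supremum over $x$ with the infimum over $z$ — this is legitimate because $\sup_x\,(\,\cdot - \inf_z(\cdot)\,) = \sup_x \sup_z (\,\cdot\,)$ is an elementary identity, not a minimax exchange, so no convexity or compactness hypothesis is needed. A secondary bookkeeping point is ensuring the quadratic term's conjugate is applied in the right space ($X$ vs. $X^*$), but under the standing Hilbert-space identification this is transparent. The fact that $F_\gamma$ is everywhere finite and convex (\cref{lem:moreau:convex}) guarantees $(F_\gamma)^*$ is well-defined and proper, so no degenerate cases arise.
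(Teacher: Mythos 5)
Your proposal is correct and follows essentially the same route as the paper: both compute $(F_\gamma)^*$ directly from the definition, turn the subtracted infimum into an inner supremum, exploit that for fixed $z$ the inner variable still ranges over all of $X$ (your substitution $w=x-z$ makes this explicit), and then identify the quadratic term's conjugate via \cref{ex:convex:fenchel}\,(i) and \cref{lem:convex:fenchel_calc}\,(i). No gaps.
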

\begin{proof}
    We obtain directly from the definition of the Fenchel conjugate in Hilbert spaces and of the Moreau envelope that
    \begin{equation*}
        \begin{aligned}
            (F_\gamma)^*(x^*) &= \sup_{x\in X} \left(\inner{x^*,x}_X - \inf_{z\in X} \left(\tfrac{1}{2\gamma}\norm{x-z}_X^2 + F(z)\right)\right)\\
            &= \sup_{x\in X} \left(\inner{x^*,x}_X + \sup_{z\in X} \left(-\tfrac{1}{2\gamma}\norm{x-z}_X^2 - F(z)\right)\right)\\
            &= \sup_{z\in X} \left(\inner{x^*,z}_X - F(z)+\sup_{x\in X} \left(\inner{x^*,x-z}_X - \tfrac{1}{2\gamma}\norm{x-z}_X^2\right)\right)\\
            &= F^*(x^*)  + \left(\tfrac{1}{2\gamma}\norm{\cdot}_X^2\right)^*(x^*),
        \end{aligned}
    \end{equation*}
    since for any given $z\in X$, the inner supremum is always taken over the full space $X$.
    The claim now follows from \cref{ex:convex:fenchel}\,(i) and \cref{lem:convex:fenchel_calc}\,(i).
\end{proof}

\bigskip

We briefly sketch the relevance for nonsmooth optimization. For a convex functional $F:X\to\Rbar$, every minimizer $\bar x\in X$ satisfies the Fermat principle $0\in \partial F(\bar x)$, which we can write equivalently as $\bar x\in \partial F^*(0)$. If we now replace $\partial F^*$ with its Yosida approximation $(\partial F^*)_\gamma$, we obtain the regularized optimality condition
\begin{equation*}
    x_\gamma = (\partial F^*)_\gamma (0) = -\frac1\gamma \prox_{\gamma F^*}(0).
\end{equation*}
This is now an \emph{explicit} and even Lipschitz continuous relation (which, among other things, can be used to derive stability properties for $x_\gamma$ under perturbations).
Although $x_\gamma$ is no longer a minimizer of $F$, the convexity of $F_\gamma$ implies that $x_\gamma \in (\partial F^*)_\gamma (0) = \partial(F^*_\gamma)(0)$ is equivalent to
\begin{equation*}
    0\in \partial (F^*_\gamma)^*(x_\gamma) = \partial\left(F^{**} + \tfrac{\gamma}2\norm{\cdot}_X^2\right)(x_\gamma) = \partial\left(F + \tfrac{\gamma}2\norm{\cdot}_X^2\right)(x_\gamma),
\end{equation*}
i.e., $x_\gamma$ is the (unique due to the strict convexity of the squared norm) minimizer of the functional $F+ \tfrac{\gamma}2\norm{\cdot}_X^2$.
Hence, the regularization of $\partial F^*$ has not made the original problem smooth but merely (more) strongly convex.
The equivalence can also be used to show (similarly to the proof of \cref{thm:variation:existence}) that $x_\gamma \weakto \bar x$ for $\gamma\to 0$.
In practice, this straightforward approach fails due to the difficulty of computing $F^*$ and $\prox_{F^*}$ and is therefore usually combined with one of the splitting techniques introduced in the next chapter.

\chapter{Proximal point and splitting methods}\label{chap:proximal}

We now turn to algorithms for computing minimizers of functionals $J:X\to\Rbar$ of the form
\begin{equation*}
    J(x) := F(x) + G(x)
\end{equation*}
for $F,G:X\to\Rbar$ convex but not necessarily differentiable.
One of the main difficulties compared to the differentiable setting is that the naive equivalent to steepest descent, the iteration
\begin{equation*}
    x^{k+1} \in x^k - \tau_k \partial J(x^k),
\end{equation*}
does not work since even in finite dimensions, arbitrary subgradients need not be descent directions -- this can only be guaranteed for the subgradient of minimal norm; see, e.g., \cite[Example 7.1, Lemma 2.77]{Ruszczynski:2006a}. Furthermore, the minimal norm subgradient of $J$ cannot be computed easily from those of $F$ and $G$.
We thus follow a different approach and look for a root of the set-valued mapping $x\mapsto \partial J(x)\subset X^*\cong X$.

\section{Proximal point method}

We have seen in \cref{thm:proximal:fermat} that a root $\bar x$ of $\partial J:X\setto X$ can be characterized as a fixed point of $\prox_{\gamma J}$ for any $\gamma>0$. This suggests a fixed-point iteration: Choose $x^0\in X$ and set for an appropriate sequence $\{\gamma_k\}_{k\in \N}$
\begin{equation}\label{eq:proximal:ppa}
    x^{k+1} = \prox_{\gamma_k J}(x^k).
\end{equation}
To show convergence of this iteration, we have to show as usual that the fixed-point mapping is contracting in a suitable sense. As we will see, firm nonexpansivity will be sufficient, which by \cref{lem:proximal:lipschitz} is always the case for resolvents of maximally monotone operators (and hence in particular for proximal mappings of convex functionals).
For later use, we treat the general version of \eqref{eq:proximal:ppa} for arbitrary maximally monotone operators.
\begin{theorem}\label{thm:proximal:fixedpoint}
    Let $A:X\setto X$ be maximally monotone with root $x^*\in X$, and let $\{\gamma_k\}_{k\in\N}\subset(0,\infty)$ with $\sum_{k=0}^\infty \gamma_k^2 = \infty$. If $\{x^k\}_{k\in\N}\subset X$ is given by the iteration
    \begin{equation*}
        x^{k+1} = \calR_{\gamma_k A} x^k,
    \end{equation*}
    then $x^k\weakto \bar x$ with $0\in A\bar x$.
\end{theorem}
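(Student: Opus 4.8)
The standard approach is a Fejér-monotonicity argument combined with a demiclosedness (outer semicontinuity) argument to identify the weak limit. First I would fix a root $x^*$ of $A$, so $0\in Ax^*$, which by definition of the resolvent means $x^* = \calR_{\gamma A}x^*$ for every $\gamma>0$; in particular $x^*$ is a fixed point of each map $\calR_{\gamma_k A}$. Then, using \cref{lem:proximal:nonexpansive} applied to the maximally monotone operator $\gamma_k A$ (whose resolvent is $\calR_{\gamma_k A}$), I would write, with $x^{k+1}=\calR_{\gamma_k A}x^k$,
\begin{equation}
    \norm{x^{k+1}-x^*}_X^2 + \norm{(\Id-\calR_{\gamma_k A})x^k - (\Id-\calR_{\gamma_k A})x^*}_X^2 \leq \norm{x^k-x^*}_X^2.
\end{equation}
Since $(\Id-\calR_{\gamma_k A})x^* = 0$, this gives $\norm{x^{k+1}-x^*}_X^2 + \norm{x^k - x^{k+1}}_X^2 \leq \norm{x^k-x^*}_X^2$. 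Hence $\{\norm{x^k-x^*}_X\}_k$ is nonincreasing, so it converges; in particular $\{x^k\}$ is bounded, and summing the telescoping inequality yields $\sum_k \norm{x^k-x^{k+1}}_X^2 < \infty$, so $x^k - x^{k+1}\to 0$.

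**Identifying the limit.** By the Eberlein--Šmulyan theorem (\cref{thm:ebsmul}, using that $X$ is a Hilbert space and hence reflexive), the bounded sequence $\{x^k\}$ has a weakly convergent subsequence $x^{k_j}\weakto \bar x$. I would like to show $0\in A\bar x$. Set $w^k := \tfrac{1}{\gamma_k}(x^k - x^{k+1})$; since $x^{k+1} = (\Id+\gamma_k A)^{-1}x^k$, we have $x^k \in x^{k+1} + \gamma_k A x^{k+1}$, i.e. $w^k \in A x^{k+1}$, so $(x^{k+1}, w^k)\in\graph A$. Here the hypothesis $\sum \gamma_k^2 = \infty$ enters: it prevents the step sizes from being summably small. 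Actually, to conclude $w^{k_j}\to 0$ I need $\gamma_k$ bounded below, or at least a subsequence argument — let me instead use monotonicity directly rather than forcing $w^k\to 0$. For any $(\tilde x,\tilde x^*)\in\graph A$, monotonicity gives
\begin{equation}
    \inner{w^{k}-\tilde x^*,\, x^{k+1}-\tilde x}_X \geq 0.
\end{equation}
The term $\inner{w^k, x^{k+1}-\tilde x}_X = \tfrac1{\gamma_k}\inner{x^k-x^{k+1}, x^{k+1}-\tilde x}_X$ is the delicate one; if $\gamma_k \geq \gamma_{\min}>0$ it tends to $0$ along a subsequence since $\|x^k-x^{k+1}\|\to 0$ and $x^{k+1}-\tilde x$ is bounded, leaving $\inner{-\tilde x^*, \bar x - \tilde x}_X\geq 0$ for all $(\tilde x,\tilde x^*)\in\graph A$, whence $0\in A\bar x$ by maximal monotonicity.

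**The main obstacle.** The genuinely tricky point is that the hypothesis is only $\sum\gamma_k^2=\infty$, which does \emph{not} bound $\gamma_k$ below, so the naive "$\gamma_k^{-1}\|x^k-x^{k+1}\|\to 0$" can fail. The right fix is to avoid dividing by $\gamma_k$: rewrite the monotonicity inequality for the scaled operator $\gamma_k A$, namely $\inner{(x^k-x^{k+1}) - \gamma_k\tilde x^*,\, x^{k+1}-\tilde x}_X\geq 0$, so $\inner{x^k - x^{k+1}, x^{k+1}-\tilde x}_X \geq \gamma_k\inner{\tilde x^*, x^{k+1}-\tilde x}_X$. Summing over $k$ and using $\sum\|x^k-x^{k+1}\|^2<\infty$ together with Cauchy--Schwarz to control the left side by something finite, while $\sum\gamma_k = \infty$ would be needed — but we only have $\sum\gamma_k^2=\infty$. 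This is exactly where one must be careful: the cleanest route is to show that the full sequence converges weakly by combining Fejér monotonicity (every weak subsequential limit being a root) with Opial's lemma, after establishing that \emph{every} weak cluster point is a root. For the latter, I would use that $x^{k+1} + \gamma_k Ax^{k+1}\ni x^k$ and test against $(\tilde x,\tilde x^*)$ to get $\inner{x^k-x^{k+1},x^{k+1}-\tilde x}_X\geq \gamma_k\inner{\tilde x^*,x^{k+1}-\tilde x}_X$; pick a subsequence with $\gamma_{k_j}$ either bounded below or with $\gamma_{k_j}\to\infty$. In the first case argue as above; in the second, divide by $\gamma_{k_j}$ and use $\|x^{k}-x^{k+1}\|^2/\gamma_k$ — and here one shows $\sum \|x^k-x^{k+1}\|^2/\gamma_k$ behaves well using the refined estimate. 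The honest summary: the Fejér part and boundedness are routine from \cref{lem:proximal:nonexpansive}; the real work is the limit identification under the weak step-size condition $\sum\gamma_k^2=\infty$, and I expect the proof to hinge on a careful subsequence split on the size of $\gamma_k$ together with Opial's lemma to upgrade subsequential to full weak convergence.
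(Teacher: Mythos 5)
Your Fejér-monotonicity step, the boundedness of $\{x^k\}$, and the summability $\sum_k\norm{x^k-x^{k+1}}_X^2<\infty$ are all correct and agree with the paper. The gap is in the identification of weak cluster points as roots, and your proposed repair does not close it: the dichotomy \enquote{pass to a subsequence along which $\gamma_{k_j}$ is bounded below or $\gamma_{k_j}\to\infty$} is not exhaustive, since $\sum_k\gamma_k^2=\infty$ is perfectly compatible with $\gamma_k\to 0$ (e.g., $\gamma_k=k^{-1/2}$). In that regime no subsequence of step sizes is bounded below and none tends to infinity; the tested inequality $\inner{x^k-x^{k+1},x^{k+1}-\tilde x}_X\geq\gamma_k\inner{\tilde x^*,x^{k+1}-\tilde x}_X$ degenerates to $0\geq 0$ in the limit, and your summation idea would require $\sum_k\gamma_k=\infty$, which is not available. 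So as written the limit identification genuinely fails for admissible step-size sequences.

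The missing idea -- the one the paper uses -- is to control the residuals $w^k:=\gamma_k^{-1}(x^k-x^{k+1})\in Ax^{k+1}$ directly by showing that their norms are \emph{monotone}: applying the monotonicity of $A$ to the two graph points $(x^{k+1},w^k)$ and $(x^{k+2},w^{k+1})$ and using $x^{k+1}-x^{k+2}=\gamma_{k+1}w^{k+1}$ gives $0\leq\inner{w^k-w^{k+1},w^{k+1}}_X\leq\norm{w^{k+1}}_X\bigl(\norm{w^k}_X-\norm{w^{k+1}}_X\bigr)$, so $\{\norm{w^k}_X\}_{k\in\N}$ is nonincreasing and hence convergent. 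Your telescoped Fejér inequality is exactly $\sum_k\gamma_k^2\norm{w^k}_X^2\leq\norm{x^0-x^*}_X^2<\infty$, and since $\sum_k\gamma_k^2=\infty$ this forces $\liminf_k\norm{w^k}_X=0$; combined with the monotone convergence of $\norm{w^k}_X$ this yields $w^k\to 0$ strongly for the \emph{full} sequence, with no lower bound on $\gamma_k$ and no case split. Then $Ax^{k_l+1}\ni w^{k_l}\to 0$ while $x^{k_l+1}\weakto\bar x$ along the chosen subsequence, and the weak-to-strong outer semicontinuity of maximally monotone operators (\cref{cor:monoton:closed}) gives $0\in A\bar x$. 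The final upgrade from subsequential to full weak convergence, which you delegate to Opial's lemma, is proved inline in the paper from Fejér monotonicity (comparing $\inner{x^k,\bar x-\hat x}_X$ along two subsequences); that part of your sketch is fine.
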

\begin{proof}
    The iteration $x^{k+1} = \calR_{\gamma_k A}x^k = (\Id + \gamma_k A)^{-1}x^k$ implies that
    \begin{equation*}
        w^k := \gamma_k^{-1}(x^k - x^{k+1}) \in Ax^{k+1}
    \end{equation*}
    and hence that $x^{k+1}-x^{k+2} = \gamma_{k+1}w^{k+1}$. (The vector $w^k$ will play the role of a residual in the generalized equation $0\in A x$.)
    By monotonicity of $A$, we have for $\gamma_{k+1}>0$ that
    \begin{equation*}
        \begin{aligned}
            0 & \leq \gamma_{k+1}^{-1}\inner{w^{k}-w^{k+1},x^{k+1}-x^{k+2}}_X\\
            & = \inner{w^k-w^{k+1},w^{k+1}}_X\\
            & = \inner{w^k,w^{k+1}}_X-\norm{w^{k+1}}_X^2\\
            & \leq \norm{w^{k+1}}_X \left(\norm{w^k}_X - \norm{w^{k+1}}_X\right).
        \end{aligned}
    \end{equation*}
    Hence, the nonnegative sequence $\{\norm{w^k}_X\}_{k\in\N}\subset \R$ is decreasing and hence convergent (as long as $w^{k+1}\neq 0$, but otherwise from $w^{k+1}\in Ax^{k+2}$ we immediately obtain that $x^{k+2}$ is the desired root.)

    Let now $x^*\in X$ be a root of $A$, i.e., $0\in Ax^*$, which exists by assumption. As in the proof of \cref{thm:proximal:fermat}, this inclusion is equivalent to $x^* = \calR_{\gamma A}x^*$ for all $\gamma>0$. From \cref{lem:proximal:nonexpansive} together with $(\Id -\calR_{\gamma_k A})x^k = x^k-x^{k+1} = \gamma_k w^k$, we now obtain that
    \begin{equation}
        \label{eq:proximal:fixed_fejer}
        \begin{aligned}[t]
            \norm{x^{k+1}-x^*}_X^2 &= \norm{\calR_{\gamma_k A}x^{k}-\calR_{\gamma_kA}x^*}_X^2 \\
            &\leq \norm{x^k - x^*}_X^2 - \norm{(\Id-\calR_{\gamma_k A})x^k-(\Id-\calR_{\gamma_k A})x^*}_X^2\\
            &= \norm{x^k - x^*}_X^2 - \gamma_k^2\norm{w^k}_X^2.
        \end{aligned}
    \end{equation}
    Hence, $\{\norm{x^k-x^*}_X\}_{k\in \N}$ is decreasing for \emph{any} root $x^*$ (such sequences are called \emph{Féjer monotone}) and thus bounded.
    This implies that $\{x^k\}_{k\in\N}\subset X$ is bounded as well and thus contains a weakly convergent subsequence $x^{k_l}\weakto \bar x$.

    Furthermore, recursive application of \eqref{eq:proximal:fixed_fejer} yields that
    \begin{equation*}
        0\leq \norm{x^{k+1}-x^*}_X^2 \leq \norm{x^0-x^*}_X^2 - \sum_{j=0}^k \gamma_j^2\norm{w^j}_X^2.
    \end{equation*}
    The (increasing) sequence of partial sums on the right-hand side is therefore bounded and hence $\sum_{k=0}^\infty \gamma_k^2 \norm{w^k}_X^2$ is finite. Since the sequence $\{\gamma_k^2\}_{k\in\N}$ is not summable by assumption, this requires that $\liminf_{k\to\infty}\norm{w^k}_X^2=0$.
    This together with the convergence of $\{\norm{w^k}_X\}_{k\in\N}$ implies that $w^k \to 0$. In particular, we have that $Ax^{k_l+1}\ni w^{k_l}\to 0$ for $x^{k_l+1}\weakto \bar x$, and the closedness of maximally monotone operators (\cref{cor:monoton:closed}) yields that $0\in A\bar x$. Hence, every weak accumulation point of $\{x^k\}_{k\in\N}$ is a root of $A$.

    We finally show convergence of the full sequence $\{x^k\}_{k\in\N}$.\footnote{The following argument in a more general setting is known as \emph{Opial's Lemma}.} Let $\bar x$ and $\hat x$ be weak accumulation points and therefore roots of $A$.
    The Féjer monotonicity of $\{x^k\}_{k\in\N}$ then implies that both $\{\norm{x^k-\bar x}_X\}_{k\in\N}$ and $\{\norm{x^k-\hat x}_X\}_{k\in\N}$ are decreasing and bounded from below and therefore convergent.
    This implies that
    \begin{equation*}
        \inner{x^k,\bar x - \hat x}_X = \frac12\left(\norm{x^k-\hat x}_X^2 - \norm{x^k-\bar x}_X^2 + \norm{\bar x}_X^2 - \norm{\hat x}_X^2\right)\to c\in \R.
    \end{equation*}
    Since $\bar x$ is a weak accumulation point, there exists a subsequence  $\{x^{k_n}\}_{n\in \N}$ with $x^{k_n}\weakto \bar x$; similarly, there exists a subsequence $\{x^{k_m}\}_{m\in \N}$ with $x^{k_m}\weakto \hat x$. Hence,
    \begin{equation*}
        \inner{\bar x,\bar x - \hat x}_X =  \lim_{n\to \infty} \inner{x^{k_n},\bar x - \hat x}_X = c =  \lim_{m\to \infty} \inner{x^{k_m},\bar x - \hat x}_X=\inner{\hat x,\bar x - \hat x}_X,
    \end{equation*}
    and therefore
    \begin{equation*}
        0 = \inner{\bar x - \hat x,\bar x - \hat x}_X=\norm{\bar x -\hat x}_X^2,
    \end{equation*}
    i.e., $\bar x =\hat x$. Every convergent subsequence thus has the same limit, which by a subsequence--subsequence argument must therefore be the limit of the full sequence $\{x^k\}_{k\in\N}$.
\end{proof}

\section{Explicit splitting}

As we have repeatedly noted, the proximal point method is not feasible for most functionals of the form  $J(x) = F(x)+G(x)$, since the evaluation of $\prox_J$ is not significantly easier than solving the original minimization problem -- even if $\prox_F$ and $\prox_G$ have a closed-form expression (i.e., are \emph{prox-simple}).
We thus proceed differently: instead of applying the proximal point reformulation directly to $0\in\partial J(\bar x)$, we first apply the sum rule and obtain a $\bar p\in X$ with
\begin{equation}\label{eq:splitting:optsys}
    \left\{\begin{aligned}
            \bar p &\in \partial F(\bar x),\\
            -\bar p &\in \partial G(\bar x).
    \end{aligned}\right.
\end{equation}
We can now replace one or both of these subdifferential inclusions by a proximal point reformulation that only involves $F$ or $G$.

Explicit splitting methods apply \cref{lem:proximal:subdiff} only to the second inclusion in \eqref{eq:splitting:optsys} to obtain
\begin{equation*}
    \left\{\begin{aligned}
            \bar p &\in \partial F(\bar x),\\
            \bar x &= \prox_{\gamma G}(\bar x -\gamma \bar p).
    \end{aligned}\right.
\end{equation*}
The corresponding fixed-point iteration then consists in
\begin{enumerate}
    \item choosing $p^k \in \partial F(x^k)$ (with minimal norm);
    \item setting $x^{k+1} =  \prox_{\gamma_k G}(x^k -\gamma_k p^k)$.
\end{enumerate}
Again, computing a subgradient with minimal norm can be complicated in general. It is, however, easy if $F$ is additionally differentiable since in this case $\partial F(x) = \{\nabla F(x)\}$ is a singleton. This leads to the \emph{proximal gradient method} or \emph{forward-backward splitting method}
\begin{equation}\label{eq:splitting:fb}
    x^{k+1} = \prox_{\gamma_k G}(x^k - \gamma_k \nabla F(x^k)).
\end{equation}
(The special case $G=\delta_C$ -- i.e., $\prox_{\gamma G}(x) = \proj_C(x)$ -- is also known as the \emph{projected gradient method}).

Showing convergence of the proximal gradient method as for the proximal point method requires assuming Lipschitz continuity of the gradient (since we are not using a proximal point mapping for $F$ which is always firmly nonexpansive and hence Lipschitz continuous). The following lemma may be familiar from nonlinear optimization.
\begin{lemma}\label{lem:splitting:lipschitz}
    Let $F:X\to\R$ be Gâteaux differentiable with Lipschitz continuous gradient. Then,
    \begin{equation*}
        F(y) \leq F(x) + \inner{\nabla F(x), x-y}_X + \frac{L}2 \norm{x-y}_X^2 \quad\text{for all }x,y\in X.
    \end{equation*}
\end{lemma}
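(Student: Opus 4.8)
The plan is to reduce the estimate to the one\nobreakdash-dimensional fundamental theorem of calculus along the segment joining $x$ and $y$. Fix $x,y\in X$, set $h:=y-x$, and consider
\[
    \phi:[0,1]\to\R,\qquad \phi(t):=F(x+th).
\]
Since $F$ is Gâteaux differentiable, $\phi$ is differentiable on $[0,1]$: the right difference quotient of $\phi$ at $t$ converges to $F'(x+th;h)=DF(x+th)h$, and the left difference quotient, after the substitution $s=-u$, converges to $-F'(x+th;-h)=DF(x+th)h$ by linearity of $DF(x+th)$; hence $\phi'(t)=\inner{\nabla F(x+th),h}_X$. Moreover $\phi'$ is continuous, because $t\mapsto\nabla F(x+th)$ is (being even Lipschitz) and the inner product is continuous, so $\phi\in C^1([0,1])$. (Alternatively, one may note that Lipschitz continuity of the Gâteaux gradient forces $F$ to be continuously Fréchet differentiable and invoke \cref{thm:frechet:mean} directly.) The fundamental theorem of calculus in $\R$ then gives the key identity
\[
    F(y)-F(x)=\phi(1)-\phi(0)=\int_0^1 \inner{\nabla F(x+th),h}_X\,dt.
\]

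Next I would subtract the linear term and estimate the remainder. From the identity above,
\[
    F(y)-F(x)-\inner{\nabla F(x),h}_X=\int_0^1\inner{\nabla F(x+th)-\nabla F(x),\,h}_X\,dt,
\]
and the Cauchy--Schwarz inequality together with the $L$\nobreakdash-Lipschitz continuity of $\nabla F$ bounds the integrand by $\norm{\nabla F(x+th)-\nabla F(x)}_X\,\norm{h}_X\le L\,\norm{th}_X\,\norm{h}_X=Lt\,\norm{h}_X^2$. Integrating over $t\in[0,1]$ yields $\int_0^1 Lt\,\norm{h}_X^2\,dt=\tfrac{L}{2}\norm{h}_X^2$, hence
\[
    F(y)\le F(x)+\inner{\nabla F(x),y-x}_X+\frac{L}{2}\norm{y-x}_X^2,
\]
which is the asserted inequality (the linear term being $-\inner{\nabla F(x),x-y}_X$).

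I do not expect a genuine obstacle here: this is the classical \enquote{descent lemma}, and it drops out of the mean value theorem, Cauchy--Schwarz, and the Lipschitz bound. The only point requiring a little care is the very first step, namely producing the integral representation of $F(y)-F(x)$ from mere Gâteaux differentiability of $F$; this is handled either by the two\nobreakdash-sided difference\nobreakdash-quotient argument for $\phi'$ sketched above, or, more cheaply, by first observing that a Lipschitz (hence continuous) Gâteaux gradient makes $F$ Fréchet differentiable and then quoting \cref{thm:frechet:mean} verbatim (using the Bochner estimate \eqref{eq:bochner_est} if one prefers not to test against functionals $y^*\in X^*$).
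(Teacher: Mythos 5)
Your proposal is correct and follows essentially the same route as the paper's proof: write $F(y)-F(x)$ as the integral of $t\mapsto\inner{\nabla F(x+t(y-x)),y-x}_X$, insert the productive zero $\nabla F(x)$, and bound the remainder via Cauchy--Schwarz and the Lipschitz estimate $Lt\norm{y-x}_X^2$, integrating to get the factor $\tfrac{L}{2}$. Your extra care in justifying $\phi'(t)=\inner{\nabla F(x+th),h}_X$ from mere Gâteaux differentiability (and your reading of the linear term as $\inner{\nabla F(x),y-x}_X$, which is what the paper's own proof actually establishes) only makes the argument more complete than the one in the text.
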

\begin{proof}
    The Gâteaux differentiability of $F$ implies that
    \begin{equation*}
        \frac{d}{dt}F(x+t(y-x)) = \inner{\nabla F(x+t(y-x)),y-x}_X \quad\text{for all }x,y\in X,
    \end{equation*}
    and integration over all $t\in [0,1]$ yields that
    \begin{equation*}
        \int_0^1 \inner{\nabla F(x+t(y-x)),y-x}_X\,dt = F(y)-F(x).
    \end{equation*}
    From this, we obtain together with the productive zero, the Cauchy--Schwarz inequality, and the Lipschitz continuity of the gradient that
    \begin{equation*}
        \begin{split}\begin{aligned}[b]
            F(y) &= F(x) + \inner{\nabla F(x),y-x}_X + \int_0^1 \inner{\nabla F(x+t(y-x))-\nabla F(x),y-x}_X\,dt\\
            & \leq F(x) +  \inner{\nabla F(x),y-x}_X +\int_0^1 \norm{\nabla F(x+t(y-x))-\nabla F(x)}_X\norm{x-y}_X\,dt\\
            &\leq  F(x) + \inner{\nabla F(x),y-x}_X + \int_0^1 L t\norm{x-y}_X^2\,dt\\
            & =  F(x) + \inner{\nabla F(x),y-x}_X + \frac{L}{2}\norm{x-y}_X^2.
        \end{aligned}
        \qedhere
    \end{split}
\end{equation*}
\end{proof}

We can now show convergence of the proximal gradient method for sufficiently small step sizes.
\begin{theorem}\label{thm:splitting:imp_conv}
    Let $F:X\to\R$ and $G:X\to\Rbar$ be proper, convex, and lower semicontinuous. Furthermore, let $F$ be Gâteaux differentiable with Lipschitz continuous gradient. If $0<\gamma_{\min}\leq \gamma_k\leq L^{-1}$, the sequence generated by \eqref{eq:splitting:fb} converges weakly to a minimizer $\bar x\in X$ of $J$.
\end{theorem}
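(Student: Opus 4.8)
The plan is to run the classical forward--backward (proximal gradient) convergence argument in the Hilbert space setting, structured exactly like the proof of \Cref{thm:proximal:fixedpoint}: derive a Féjer-monotonicity estimate with respect to \emph{every} minimizer, deduce boundedness of the iterates and $J(x^k)\to\min J$, and then extract a unique weak limit by an Opial-type subsequence argument. Throughout I write $L$ for the Lipschitz constant of $\nabla F$ and $J=F+G$, and I let $\bar x$ denote a minimizer of $J$, whose existence is presupposed by the statement. As a first step I would rewrite the iteration: by \cref{lem:proximal:subdiff} (or directly from the defining minimization property of $\prox_{\gamma_k G}$), the update $x^{k+1}=\prox_{\gamma_k G}(x^k-\gamma_k\nabla F(x^k))$ is equivalent to
\[
    p^{k+1}:=-\nabla F(x^k)-\tfrac{1}{\gamma_k}(x^{k+1}-x^k)\in\partial G(x^{k+1}).
\]

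The heart of the proof is a single estimate valid for \emph{arbitrary} $z\in X$, obtained by adding three inequalities: the descent estimate of \cref{lem:splitting:lipschitz} applied with $x=x^k$, $y=x^{k+1}$; the subgradient inequality for the convex function $F$ at $x^k$, namely $F(x^k)\le F(z)+\inner{\nabla F(x^k),x^k-z}_X$; and the subgradient inequality for $G$ at $x^{k+1}$ with the subgradient $p^{k+1}$ from Step~1. Upon summing, the three terms involving $\nabla F(x^k)$ collapse (using the explicit form of $p^{k+1}$) to $-\tfrac{1}{\gamma_k}\inner{x^{k+1}-x^k,x^{k+1}-z}_X$, giving
\[
    J(x^{k+1})\le J(z)+\tfrac{L}{2}\norm{x^{k+1}-x^k}_X^2-\tfrac{1}{\gamma_k}\inner{x^{k+1}-x^k,x^{k+1}-z}_X.
\]
Inserting the polarization identity $\inner{a-b,a-c}_X=\tfrac12\bigl(\norm{a-b}_X^2+\norm{a-c}_X^2-\norm{b-c}_X^2\bigr)$ with $a=x^{k+1}$, $b=x^k$, $c=z$ and discarding the $\norm{x^{k+1}-x^k}_X^2$-term (its coefficient $\tfrac12(L-\gamma_k^{-1})$ is nonpositive because $\gamma_k\le L^{-1}$), one arrives at the key inequality
\[
    J(x^{k+1})+\tfrac{1}{2\gamma_k}\norm{x^{k+1}-z}_X^2\le J(z)+\tfrac{1}{2\gamma_k}\norm{x^k-z}_X^2\qquad\text{for all }z\in X.
\]

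Taking $z=\bar x$ and using $J(x^{k+1})\ge J(\bar x)$ yields $\norm{x^{k+1}-\bar x}_X\le\norm{x^k-\bar x}_X$, so $\{x^k\}$ is Féjer monotone with respect to the set of minimizers and, in particular, bounded. Multiplying the $z=\bar x$ estimate by $2\gamma_k\ge 2\gamma_{\min}$ and telescoping over $k$ gives $2\gamma_{\min}\sum_{k\ge 1}\bigl(J(x^k)-J(\bar x)\bigr)\le\norm{x^0-\bar x}_X^2<\infty$, hence $J(x^k)\to J(\bar x)=\min_{x\in X}J(x)$. Since $X$ is a Hilbert space, boundedness of $\{x^k\}$ together with \cref{thm:ebsmul} gives weakly convergent subsequences; if $x^{k_l}\weakto\hat x$, then $J=F+G$ is proper, convex, and lower semicontinuous, hence weakly lower semicontinuous by \cref{cor:convex:uhs}, so $J(\hat x)\le\liminf_{l\to\infty}J(x^{k_l})=\min_X J$, i.e.\ $\hat x$ is a minimizer. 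Finally, uniqueness of the weak accumulation point follows verbatim as in the last part of the proof of \cref{thm:proximal:fixedpoint}: for two weak accumulation points $\hat x,\tilde x$ (both minimizers) the sequences $\{\norm{x^k-\hat x}_X\}$ and $\{\norm{x^k-\tilde x}_X\}$ converge by Féjer monotonicity, so $\inner{x^k,\hat x-\tilde x}_X\to c$ for some $c\in\R$; evaluating along subsequences converging weakly to $\hat x$ and to $\tilde x$ gives $\inner{\hat x,\hat x-\tilde x}_X=c=\inner{\tilde x,\hat x-\tilde x}_X$, whence $\norm{\hat x-\tilde x}_X^2=0$. Thus the whole sequence converges weakly to a single limit, which is a minimizer of $J$.

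The step I expect to be the main obstacle is the fundamental estimate in the second paragraph: getting the signs right so that the three gradient contributions cancel exactly, and recognizing that the hypothesis $\gamma_k\le L^{-1}$ is precisely what is needed to drop the $\norm{x^{k+1}-x^k}_X^2$-term; once this inequality is in hand, the remainder is the Féjer/Opial machinery already used for \cref{thm:proximal:fixedpoint}. A minor caveat to keep in mind is that the estimate of \cref{lem:splitting:lipschitz} should be read with $\inner{\nabla F(x),y-x}_X$ (as its own proof shows) rather than with $\inner{\nabla F(x),x-y}_X$.
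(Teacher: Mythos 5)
Your proof is correct and follows essentially the same route as the paper: your $p^{k+1}\in\partial G(x^{k+1})$ is the paper's \eqref{eq:splitting:conv1}, your key inequality is exactly the paper's \eqref{eq:splitting:conv_f2} after completing the square as in \eqref{eq:splitting:conv_x1}, and the Féjer-monotonicity, weak lower semicontinuity, and Opial-type uniqueness arguments are identical to those used for \cref{thm:proximal:fixedpoint}; the only (harmless) streamlining is that you deduce $J(x^k)\to\min_X J$ from summability of the nonnegative gaps rather than from the descent property plus averaging, so you forgo the $\mathcal{O}(1/k)$ rate the paper obtains as a by-product. Your closing caveat is also right: the statement of \cref{lem:splitting:lipschitz} should read $\inner{\nabla F(x),y-x}_X$, as its proof shows and as the paper's own application of it implicitly assumes.
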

\begin{proof}
    We argue similarly as in the proof of \cref{thm:proximal:fixedpoint}, replacing the monotonicity of the generalized residuals $w^k\in Ax^{k+1}$ with those of the functional values $J(x^k)$.
    For this purpose, we define the operator
    \begin{equation*}
        T_\gamma:X\to X,\qquad x\mapsto \gamma^{-1} (x-\prox_{\gamma G}(x-\gamma \nabla F(x))),
    \end{equation*}
    which allows reformulating the iteration \eqref{eq:splitting:fb} as
    \begin{equation*}
        x^{k+1} = \prox_{\gamma_k G}(x^k-\gamma_k \nabla F(x^k)) = x^k - \gamma_k T_{\gamma_k}(x^k).
    \end{equation*}
    Applying \cref{lem:proximal:subdiff} to the second equality then implies that
    \begin{equation}\label{eq:splitting:conv1}
        T_{\gamma_k} (x^k) - \nabla F(x^k) \in \partial G(x^k-\gamma_k T_{\gamma_k}(x^k)).
    \end{equation}

    \Cref{lem:splitting:lipschitz} with $x=x^k$, $y=x^{k+1} =  x^k - \gamma_k T_{\gamma_k}(x^k)$, and $\gamma_k\leq L^{-1}$ further implies that
    \begin{equation}\label{eq:splitting:conv_f1}
        \begin{aligned}[t]
            F(x^k - \gamma_k T_{\gamma_k}(x^k)) &\leq F(x^k) - \gamma_k\inner{\nabla F(x^k),T_{\gamma_k}(x^k)}_X + \frac{\gamma_k^2 L}{2} \norm{T_{\gamma_k}(x^k)}_X^2\\
            &\leq F(x^k) - \gamma_k\inner{\nabla F(x^k),T_{\gamma_k}(x^k)}_X + \frac{\gamma_k}{2} \norm{T_{\gamma_k}(x^k)}_X^2.
        \end{aligned}
    \end{equation}
    Hence, using  \eqref{eq:splitting:conv1} and $\nabla F(x) \in \partial F(x)$, we obtain for all $z\in X$ that
    \begin{equation}\label{eq:splitting:conv_f2}
        \begin{aligned}[t]
            J(x^{k+1})  &= F(x^k - \gamma_k T_{\gamma_k}(x^k)) + G(x^k - \gamma_k T_{\gamma_k}(x^k))\\
            &\leq  F(x^k) - \gamma_k\inner{\nabla F(x^k),T_{\gamma_k}(x^k)}_X + \frac{\gamma_k}{2} \norm{T_{\gamma_k}(x^k)}_X^2\\
            \MoveEqLeft[-1] + G(z) + \inner{T_{\gamma_k}(x^k) -\nabla F(x^k),x^k-\gamma_k T_{\gamma_k}(x^k)-z}_X\\
            &\leq  F(z) +\inner{\nabla F(x^k),x^k-z}_X - \gamma_k\inner{\nabla F(x^k),T_{\gamma_k}(x^k)}_X + \frac{\gamma_k}{2} \norm{T_{\gamma_k}(x^k)}_X^2\\
            \MoveEqLeft[-1] + G(z) + \inner{T_{\gamma_k}(x^k) -\nabla F(x^k),x^k-z-\gamma_k T_{\gamma_k}(x^k)}_X\\
            &= J(z) + \inner{T_{\gamma_k}(x^k),x^k-z}_X - \frac{\gamma_k}{2} \norm{T_{\gamma_k}(x^k)}_X^2.
        \end{aligned}
    \end{equation}

    For $z=x^k$ this implies that
    \begin{equation*}
        J(x^{k+1}) \leq J(x^k) - \frac{\gamma_k}{2} \norm{T_{\gamma_k}(x^k)}_X^2,
    \end{equation*}
    i.e., $\{J(x^k)\}_{k\in\N}$ is decreasing. (The proximal gradient method is thus a \emph{descent method}.)
    Furthermore, by inserting $z=x^*$ with $J(x^*)=\min_{x\in X} J(x)$ in \eqref{eq:splitting:conv_f2} and completing the square, we deduce that
    \begin{equation}\label{eq:splitting:conv_x1}
        \begin{aligned}[t]
            0 \leq  J(x^{k+1}) - J(x^*) &\leq \inner{T_{\gamma_k}(x^k),x^k-x^*}_X - \frac{\gamma_k}{2}\norm{T_{\gamma_k}(x^k)}_X^2\\
            &= \frac{1}{2\gamma_k}\left(\norm{x^k-x^*}_X^2 - \norm{x^k - x^* - \gamma_k T_{\gamma_k}(x^k)}_X^2\right)\\
            &= \frac{1}{2\gamma_k}\left(\norm{x^k-x^*}_X^2 - \norm{x^{k+1} - x^*}_X^2\right).
        \end{aligned}
    \end{equation}
    In particular, $\{\norm{x^{k}-x^*}_X\}_{k\in\N}$ is decreasing, and hence  $\{x^k\}_{k\in\N}$ is Féjer monotone and therefore bounded. We can thus extract a weakly convergent subsequence $\{x^{k_l}\}_{l\in\N}$ with $x^{k_l}\weakto \bar x$.

    We now sum \eqref{eq:splitting:conv_x1} over $k=1,\dots,n$ for arbitrary $n\in\N$ and obtain that
    \begin{equation*}
        \begin{aligned}
            \sum_{k=1}^n ( J(x^{k}) - J(x^*) ) &\leq \frac1{2\gamma_{\min}} \sum_{k=1}^n \left(\norm{x^{k-1}-x^*}_X^2 - \norm{x^{k} - x^*}_X^2\right)\\
            &= \frac1{2\gamma_{\min}} \left(\norm{x^{0}-x^*}_X^2 - \norm{x^{n} - x^*}_X^2\right)\\
            &\leq \frac1{2\gamma_{\min}} \norm{x^{0}-x^*}_X^2.
        \end{aligned}
    \end{equation*}
    Since $\{J(x^k)\}_{k\in\N}$ is decreasing, this implies that
    \begin{equation}\label{eq:splitting:conv_f_o}
        J(x^n) - J(x^*) \leq \frac1n  \sum_{k=1}^n ( J(x^{k}) - J(x^*) )  \leq  \frac1{2n\gamma_{\min}} \norm{x^{0}-x^*}_X^2
    \end{equation}
    and hence $J(x^n) \to J(x^*)$ for $n\to \infty$. The lower semicontinuity of $F$ and $G$ now yields that
    \begin{equation*}
        J(\bar x) \leq \liminf_{l\to\infty} J(x^{k_l}) = J(x^*).
    \end{equation*}
    As in the proof of \cref{thm:proximal:fixedpoint}, we can use the Féjer monotonicity of $\{x^{k}\}_{k\in\N}$ to show that $x^k\weakto \bar x$ for the full sequence.
\end{proof}

In particular, we obtain from \eqref{eq:splitting:conv_f_o} that $J(x^k) = J(x^*)+ \mathcal{O}(k^{-1})$. Ensuring $J(x^k) \leq J(x^*) + \eps$ thus requires $\mathcal{O}(\eps^{-1})$ iterations.
By introducing a clever extrapolation, this can be reduced to $\mathcal{O}(\eps^{-1/2})$ which is provably optimal; see \cite{Nesterov}, \cite[Theorem 2.1.7]{Nesterov:2004}. (However, the sequence of iterates is then no longer monotonically decreasing.) The corresponding iteration is given by
\begin{equation*}
    \left\{\begin{aligned}
            x^{k+1} &= \prox_{\gamma_k G}(\bar x^k - \gamma_k \nabla F(\bar x^k)),\\
            \bar x^{k+1} &= x^{k+1} + \frac{1-\tau_k}{\tau_{k+1}}\left(x^{k} - x^{k+1}\right),
    \end{aligned}\right.
\end{equation*}
for the (hardly intuitive) choice\footnote{This choice satisfies the quadratic recursion $\tau_{k+1}^2-\tau_{k+1}=\tau_k$, which cancels the $\mathcal{O}(k^{-1})$ terms in a key estimate.}
\begin{equation*}
    \tau_0 = 1,\qquad \tau_k = \frac{1+\sqrt{1+4\tau_{k-1}^2}}{2} \ (\to \infty),
\end{equation*}
see \cite[\S{}\,4]{BeckTeboulle}.

One drawback of the explicit splitting is needing to know the Lipschitz constant $L$ of $\nabla F$ in order to choose admissible step sizes $\gamma_k$. Looking at the proof of \cref{thm:splitting:imp_conv}, we can see that this is only used to obtain the estimate \eqref{eq:splitting:conv_f1}.
Hence, if the Lipschitz constant is unknown, we can try to satisfy \eqref{eq:splitting:conv_f1} by a line search in each iteration: Start with $\gamma^0>0$ and reduce $\gamma_k$ (e.g., by halving) until
\begin{equation*}
    F(x^k - \gamma_k T_{\gamma_k}(x^k)) \leq F(x^k) - \gamma_k\inner{\nabla F(x^k),T_{\gamma_k}(x^k)}_X + \frac{\gamma_k}{2} \norm{T_{\gamma_k}(x^k)}_X^2
\end{equation*}
(which will be the case for $\gamma_k<L^{-1}$ at the latest).
Of course, there's no free lunch: each step of the line search requires evaluating both $F$ and $\prox_{\gamma_k G}$ (although the latter can be avoided by exchanging gradient and proximal steps, i.e., \emph{backward--forward splitting}).

\section{Implicit splitting}

Even with a line search, the restriction on the step sizes $\gamma_k$ in explicit splitting remains unsatisfactory. Such restrictions are not needed in implicit splitting methods (compare the properties of explicit vs.~implicit Euler methods for differential equations). Here, the proximal point formulation is applied to both subdifferential inclusions in \eqref{eq:splitting:optsys}, which yields the optimality system
\begin{equation*}
    \left\{\begin{aligned}
            \bar x &= \prox_{\gamma F}(\bar x + \gamma \bar p),\\
            \bar x &= \prox_{\gamma G}(\bar x - \gamma \bar p).
    \end{aligned}\right.
\end{equation*}
To eliminate $\bar p$ from these equations, we set $\bar z  := \bar x+\gamma \bar p$ and $\bar w := \bar x - \gamma \bar p$. This implies that $\bar z + \bar w = 2\bar x$, i.e.,
\begin{equation*}
    \bar w  = 2\bar x - \bar z.
\end{equation*}
It remains to derive a recursion for $\bar z$, which we obtain from the tautology
\begin{equation*}
    \bar z = \bar z + (\bar x - \bar x).
\end{equation*}
Replacing two (suitable) occurences of $\bar x$ by a new $\bar y$ in these four equations and then applying a fixed-point iteration yields the \emph{Douglas--Rachford method}
\begin{equation}\label{eq:splitting:dr}
    \left\{\begin{aligned}
            x^{k+1} &= \prox_{\gamma F}(z^k),\\
            y^{k+1} &= \prox_{\gamma G}(2x^{k+1} - z^k),\\
            z^{k+1} &= z^k + y^{k+1} - x^{k+1}.
    \end{aligned}\right.
\end{equation}

This iteration can be written as a proximal point iteration by introducing suitable block operators, which with some effort (in showing that these operators are maximally monotone) allows deducing the convergence from \cref{thm:proximal:fixedpoint}; see, e.g., \cite{Eckstein:1992}. Here we will instead consider a variant which has proved extremely successful, in particular in mathematical imaging, inverse problems, and optimal control.

\section{Primal-dual splitting}\label{sec:proximal:pd}

Methods of this class were specifically developed to solve problems of the form
\begin{equation*}
    \min_{x\in X} F(x) + G(Ax)
\end{equation*}
for $F:X\to\Rbar$ and $G:Y\to\Rbar$ proper, convex, and lower semicontinuous, and $A\in L(X,Y)$. Applying \cref{thm:convex:fenchel,lem:convex:fenchel-young} to such a problem yields the Fenchel extremality conditions
\begin{equation}\label{eq:splitting:fenchel}
    \left\{    \begin{aligned}
            -A^*\bar{y} &\in \partial F(\bar x),\\
            \bar{y} &\in \partial G(A\bar x) ,
    \end{aligned}\right.
    \quad\equivalent \quad
    \left\{    \begin{aligned}
            -A^*\bar{y} &\in \partial F(\bar x),\\
            A\bar x &\in \partial G^*(\bar y),
    \end{aligned}\right.
\end{equation}
which can be reformulated using \cref{lem:proximal:subdiff} as
\begin{equation*}
    \left\{\begin{aligned}
            \bar x &= \prox_{\tau F}(\bar x - \tau A^*\bar y),\\
            \bar y &= \prox_{\sigma G^*}(\bar y + \sigma A\bar x),
    \end{aligned}        \right.
\end{equation*}
for arbitrary $\sigma,\tau>0$. This suggests the fixed-point iteration
\begin{equation}\label{eq:splitting:pd0}
    \left\{\begin{aligned}
            x^{k+1} &= \prox_{\tau F}(x^k - \tau A^*y^k),\\
            y^{k+1} &= \prox_{\sigma G^*}(y^{k} + \sigma A x^{k+1}),
    \end{aligned}\right.
\end{equation}
(where we have left the step sizes constant for simplicity). We now try to show convergence by interpreting it as a proximal point method. To that end, we rewrite \eqref{eq:splitting:pd0} in fully explicit form to have $(x^{k+1},y^{k+1})$ and $(x^{k},y^{k})$ on different sides. For the first equation, we use $\prox_{\tau F} = (\Id +\tau\partial F)^{-1}$ to obtain that
\begin{equation*}
    \begin{aligned}
        x^{k+1}  = \prox_{\tau F}(x^k - \tau A^*y^k) &\equivalent x^k - \tau A^* y^k \in  \{x^{k+1}\} + \tau \partial F(x^{k+1}) \\
        &\equivalent \tau^{-1} x^k -  A^* y^k \in \{\tau^{-1} x^{k+1}\} + \partial F(x^{k+1}).
    \end{aligned}
\end{equation*}
Similarly, for the second equation we have
\begin{equation*}
    y^{k+1}  = \prox_{\sigma G^*}(y^{k} + \sigma Ax^{k+1}) \equivalent
    \sigma^{-1}y^{k} \in \{\sigma^{-1} y^{k+1}-Ax^{k+1}\} + \partial G^*(y^{k+1}).
\end{equation*}
Setting $Z = X\times Y$, $z=(x,y)$, as well as
\begin{equation*}
    M = \begin{pmatrix} \tau^{-1}\Id &-A^* \\ 0 & \sigma^{-1}\Id \end{pmatrix},\qquad
    T = \begin{pmatrix} \partial F & A^* \\ -A & \partial G^* \end{pmatrix},
\end{equation*}
we see that \eqref{eq:splitting:pd0} is equivalent to
\begin{equation*}
    Mz^k \in (M+T) z^{k+1} \qquad \equivalent \qquad z^{k+1} \in (M+T)^{-1}Mz^k.
\end{equation*}
If $M$ were invertible, we could use that $M=(M^{-1})^{-1}$ to obtain that $(M+T)^{-1}Mz^{k}=(\Id + M^{-1} T)^{-1}z^k$; the iteration would indeed amount to a proximal point method for the operator $M^{-1}T$ (which hopefully is maximally monotone).

Unfortunately, we cannot show the desired invertibility in this form. We therefore replace $M$ by a self-adjoint operator for which we can show positive definiteness; i.e., we consider
\begin{equation*}
    M = \begin{pmatrix} \tau^{-1}\Id &-A^* \\ -A & \sigma^{-1}\Id \end{pmatrix},
\end{equation*}
so that the second step in the iteration becomes
\begin{equation*}
    \sigma^{-1}y^{k} - Ax^{k} \in \{\sigma^{-1} y^{k+1}-2Ax^{k+1}\} + \partial G^*(y^{k+1})
    \equivalent
    y^{k+1}  = \prox_{\sigma G^*}(y^{k} + \sigma A(2x^{k+1}-x^k)).
\end{equation*}
This yields the \emph{primal-dual extragradient method}\footnote{This method was introduced in \cite{Pock_PD_2010}, which is why it is frequently referred to as the \emph{Chambolle--Pock method}. The relation to proximal point methods was first pointed out in \cite{He:2012}.}
\begin{equation}\label{eq:splitting:pd}
    \left\{\begin{aligned}
            x^{k+1} &= \prox_{\tau F}(x^k - \tau A^*y^k),\\
            \bar x^{k+1} &= 2x^{k+1}-x^k,\\
            y^{k+1} &= \prox_{\sigma G^*}(y^{k} + \sigma A\bar x^{k+1}).
    \end{aligned}        \right.
\end{equation}

We now show that -- under suitable conditions on $\sigma$ and $\tau$ -- the operator $M$ is self-adjoint and positive definite with respect to the inner product
\begin{equation*}
    \inner{z_1,z_2}_Z = \inner{x_1,x_2}_X  + \inner{y_1,y_2}_Y\quad\text{for all }z_1=(x_1,y_1)\in Z, z_2=(x_2,y_2)\in Z.
\end{equation*}
\begin{lemma}\label{lem:splitting:pd_spd}
    The operator $M:Z\to Z$ is bounded and self-adjoint. If $\sigma\tau\norm{A}_{L(X,Y)}^2 < 1$, then $M$ is uniformly positive definite.
\end{lemma}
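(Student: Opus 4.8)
The plan is to verify the three asserted properties of $M$ in turn. \emph{Boundedness} is immediate: $M$ is assembled from the bounded operators $\tau^{-1}\Id$, $\sigma^{-1}\Id$, $A$, and $A^*$, so it maps $Z=X\times Y$ boundedly into itself; one can quote $\norm{M}_{L(Z,Z)}\leq \max\{\tau^{-1},\sigma^{-1}\}+\norm{A}_{L(X,Y)}$ if a bound is wanted, but merely noting that a finite linear combination of bounded operators is bounded suffices. \emph{Self-adjointness} with respect to the product inner product $\inner{\cdot,\cdot}_Z$ is a direct computation: for $z_i=(x_i,y_i)$ one writes out
\begin{equation}
    \inner{Mz_1,z_2}_Z = \inner{\tau^{-1}x_1 - A^*y_1,x_2}_X + \inner{-Ax_1 + \sigma^{-1}y_1,y_2}_Y,
\end{equation}
uses $\inner{A^*y_1,x_2}_X=\inner{y_1,Ax_2}_Y$ and $\inner{Ax_1,y_2}_Y=\inner{x_1,A^*y_2}_X$, and observes that the result is symmetric in the indices $1$ and $2$; hence $\inner{Mz_1,z_2}_Z=\inner{z_1,Mz_2}_Z$, i.e.\ $M=M^\star$.

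For \emph{positive definiteness} under the hypothesis $\sigma\tau\norm{A}_{L(X,Y)}^2<1$, I would compute the quadratic form directly. For $z=(x,y)$,
\begin{equation}
    \inner{Mz,z}_Z = \tfrac1\tau\norm{x}_X^2 - 2\inner{A^*y,x}_X + \tfrac1\sigma\norm{y}_Y^2,
\end{equation}
using self-adjointness to combine the two cross terms into $-2\inner{A^*y,x}_X = -2\inner{y,Ax}_Y$. Now estimate the cross term by Cauchy--Schwarz and the operator norm bound $\norm{Ax}_Y\leq\norm{A}_{L(X,Y)}\norm{x}_X$, together with Young's inequality $2ab\leq \theta a^2 + \theta^{-1}b^2$ applied with a parameter $\theta$ chosen so that the residual quadratic form in $(\norm{x}_X,\norm{y}_Y)$ stays strictly positive. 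Concretely, write $c:=\norm{A}_{L(X,Y)}$ and $\alpha:=\sigma\tau c^2<1$; then
\begin{equation}
    2\inner{y,Ax}_Y \leq 2c\norm{x}_X\norm{y}_Y \leq \sqrt{\alpha}\left(\tfrac1\tau\norm{x}_X^2 + \tfrac1\sigma\norm{y}_Y^2\right),
\end{equation}
since $2c\norm{x}_X\norm{y}_Y = 2\sqrt{\alpha}\cdot\tfrac1{\sqrt{\sigma\tau}}\norm{x}_X\norm{y}_Y \leq \sqrt\alpha\bigl(\tfrac1\tau\norm{x}_X^2+\tfrac1\sigma\norm{y}_Y^2\bigr)$ by the weighted AM--GM inequality. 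Substituting back gives
\begin{equation}
    \inner{Mz,z}_Z \geq (1-\sqrt{\alpha})\left(\tfrac1\tau\norm{x}_X^2 + \tfrac1\sigma\norm{y}_Y^2\right) \geq (1-\sqrt\alpha)\min\{\tfrac1\tau,\tfrac1\sigma\}\norm{z}_Z^2,
\end{equation}
which is strictly positive for $z\neq 0$ because $\alpha<1$. This establishes that $M$ is positive definite (indeed coercive, with the displayed modulus), which is the form one will want downstream to conclude invertibility of $M$ and to use $\inner{\cdot,\cdot}_M := \inner{M\cdot,\cdot}_Z$ as an equivalent inner product on $Z$.

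I do not anticipate a genuine obstacle here — every step is a short, standard estimate. The only point requiring a little care is the bookkeeping in the Young-type estimate: one must split the cross term so that the leftover coefficients of $\norm{x}_X^2$ and $\norm{y}_Y^2$ are \emph{both} controlled by the same factor $1-\sqrt\alpha$, which is exactly why distributing the weight as $\tfrac1\tau$ on the $x$-term and $\tfrac1\sigma$ on the $y$-term (rather than an arbitrary split) is the natural choice. A secondary subtlety worth flagging in the write-up is that boundedness plus positive definiteness plus self-adjointness does not by itself give invertibility in an infinite-dimensional Hilbert space without a coercivity (i.e.\ strict positivity bounded below) estimate; but the displayed inequality supplies precisely that, so the Lax--Milgram theorem (or simply the open mapping theorem applied to the bounded bijection) yields $M^{-1}\in L(Z,Z)$ when that is invoked later.
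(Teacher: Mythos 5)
Your proposal is correct and follows essentially the same route as the paper: expand the quadratic form, bound the cross term via Cauchy--Schwarz and the weighted AM--GM inequality $2\norm{x}_X\norm{y}_Y/\sqrt{\sigma\tau}\leq \tau^{-1}\norm{x}_X^2+\sigma^{-1}\norm{y}_Y^2$, and absorb it using $\sqrt{\sigma\tau}\norm{A}_{L(X,Y)}<1$ to get the coercivity constant $(1-\sqrt{\sigma\tau}\,\norm{A}_{L(X,Y)})\min\{\tau^{-1},\sigma^{-1}\}$. Your closing remark that this lower bound (rather than mere positivity) is what yields $M^{-1}\in L(Z,Z)$ later is accurate and matches how the paper uses the lemma in the subsequent corollary.
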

\begin{proof}
    The definition of $M$ directly implies boundedness (since $A\in L(X,Y)$ is bounded) and self-adjointness. Let now $z=(x,y)\in Z\setminus\{0\}$ be given. Then,
    \begin{equation*}
        \begin{aligned}
            \inner{Mz,z}_Z &= \inner{\tau^{-1}x - A^*y,x}_X + \inner{\sigma^{-1}y-Ax,y}_Y\\
            &= \tau^{-1}\norm{x}_X^2 - 2 \inner{x,A^*y}_X + \sigma^{-1}\norm{y}_Y^2\\
            &\geq \tau^{-1}\norm{x}_X^2 - 2 \norm{A}_{L(X,Y)}\norm{x}_X\norm{y}_Y + \sigma^{-1}\norm{y}_Y^2\\
            &\geq \tau^{-1}\norm{x}_X^2 - \norm{A}_{L(X,Y)}\sqrt{\sigma\tau}(\tau^{-1}\norm{x}_X^2+\sigma^{-1}\norm{y}_Y^2)+ \sigma^{-1}\norm{y}_Y^2\\
            &= (1-\norm{A}_{L(X,Y)}\sqrt{\sigma\tau})(\sqrt{\tau}^{-1}\norm{x}_X^2 + \sqrt{\sigma}^{-1}\norm{y}_Y^2) \\
            &\geq C(\norm{x}_X^2 + \norm{y}_Y^2)
        \end{aligned}
    \end{equation*}
    for $C:=(1-\norm{A}_{L(X,Y)}\sqrt{\sigma\tau})\min\{\tau^{-1},\sigma^{-1}\}>0$.
    Hence, $\inner{Mz,z}_Z>C\norm{z}^2_Z$ for all $z\neq 0$, and therefore $M$ is positive definite.
\end{proof}
Under these conditions, the operator $M$ induces an inner product $\inner{z_1,z_2}_M := \inner{Mz_1,z_2}_Z$ and, through it, a norm $\norm{z}_M^2 = \inner{z,z}_M$ that satisfies
\begin{equation}
    \label{eq:splitting:pd_M_eq}
    c_1 \norm{z}_Z  \leq \norm{z}_M \leq c_2 \norm{z}_Z \qquad\text{for all }z\in Z,
\end{equation}
where $c_1 = \sqrt{C}>0$ from the proof of \cref{lem:splitting:pd_spd} and $c_2:=\norm{M}_{L(Z,Z)}>0$.
From this, we can deduce continuous invertibility of $M$ by a standard functional-analytic argument (a special case of the \emph{Lax--Milgram Theorem}).
\begin{cor}\label{cor:splitting:precond-inv}
    If $\sigma\tau\norm{A}_{L(X,Y)}^2 < 1$, then $M$ is continuously invertible, i.e., $M^{-1}\in L(Y,X)$.
\end{cor}
\begin{proof}
    Let $z\in Z$ be given. Then \eqref{eq:splitting:pd_M_eq} implies that the mapping $v\mapsto \inner{z,v}_Z$ is a bounded  (with respect to $\norm{\cdot}_M$) linear functional. The \nameref{thm:frechetriesz} \cref{thm:frechetriesz} applied to the Hilbert space $(Z,\inner{\cdot,\cdot}_M)$ thus yields a unique preimage $z^*\in Z$ with
    \begin{equation*}
        \inner{Mz^*,v}_Z = \inner{z^*,v}_M=\inner{z,v}_Z \qquad\text{for all } v\in Z.
    \end{equation*}
    Furthermore, the Riesz mapping $M^{-1}:z\mapsto z^*$ is linear.
    Hence,
    \begin{equation*}
        c_1^2 \norm{z^*}_Z^2 \leq  \norm{z^*}_M^2 = \inner{Mz^*,z^*}_Z = \inner{z,z^*}_Z \leq \norm{z}_{Z}\norm{z^*}_Z,
    \end{equation*}
    and dividing by $c_1^{2}\norm{z^*}_Z$ yields the claimed boundedness of $M^{-1}$.
\end{proof}
Hence $M^{-1}T$ is well-defined, i.e., $\graph M^{-1}T\neq \emptyset$. We now show maximal monotonicity with respect to the inner product $\inner{\cdot,\cdot}_M$.
\begin{lemma}\label{lem:splitting:pd_maxmon}
    If $\sigma\tau\norm{A}_{L(X,Y)}^2 < 1$, then $M^{-1}T$ is maximally monotone on $(Z,\inner{\cdot,\cdot}_M)$.
\end{lemma}
\begin{proof}
    We first show the monotonicity of $M^{-1}T$. Let $z\in Z$ and $z^*\in M^{-1}Tz$, i.e., $Mz^*\in Tz$.
    By definition of $T$, we can thus find for any $z=(x,y)$ a $\xi\in \partial F(x)$ and an $\eta\in \partial G^*(y)$ with $Mz^* = (\xi+A^*y,\eta-Ax)$. Similarly, for given $\bar z =(\bar x,\bar y)\in Z$ and $\bar z^*\in M^{-1} T\bar z$ we can write $M\bar z^* = (\bar \xi +A^*\bar y,\bar \eta - A\bar x)$ for a $\bar\xi\in\partial  F(\bar x)$ and an $\bar\eta \in \partial G^*(\bar y)$. Hence
    \begin{equation*}
        \begin{aligned}
            \inner{\bar z^*-z^*,\bar z-z}_M=   \inner{M\bar z^*-Mz^*,\bar z - z}_Z &=
            \inner{(\bar \xi + A^*\bar y) -(\xi + A^* y),\bar x-x}_X \\
            \MoveEqLeft[-1] + \inner{(\bar \eta - A\bar x) -(\eta - Ax),\bar y-y}_Y\\
            &=\inner{\bar \xi-\xi,\bar x-x}_X +\inner{A^*(\bar y -y),\bar x-x}_X \\
            \MoveEqLeft[-1]-\inner{A(\bar x -x),\bar y -y}_Y + \inner{\bar \eta-\eta,\bar y-y}_Y\\
            &=\inner{\bar \xi-\xi,\bar x-x}_X + \inner{\bar \eta-\eta,\bar y-y}_Y \geq 0
        \end{aligned}
    \end{equation*}
    by the monotonicity of subdifferentials.

    To show maximal monotonicity, let $\bar z^*,\bar z\in Z$ with
    \begin{equation}\label{eq:splitting:maxmonoton1}
        \inner{M\bar z^*-Mz^*,\bar z - z}_Z =\inner{\bar z^*-z^*,\bar z - z}_M \geq 0 \quad\text{for all } (z,z^*)\in\graph M^{-1}T,
    \end{equation}
    i.e., for all $z\in Z$ and $Mz^*\in Tz$.
    As above, we can write $Mz^* = (\xi+A^*y,\eta-Ax)$ for some $\xi \in \partial F(x)$ and $\eta\in \partial G^*(y)$. We now set $\bar \xi := \bar x^* - A^*\bar y$ and $\bar \eta  := \bar y^* + A\bar x$ for $M\bar z^* = (\bar x^*,\bar y^*)$ and $\bar z=(\bar x,\bar y)$. Then $M\bar z^*=(\bar\xi+A^*\bar y,\bar \eta - A\bar x)$, and \eqref{eq:splitting:maxmonoton1} implies for all $(x,y)\in Z$ that
    \begin{equation*}
        \begin{aligned}
            0 &\leq \inner{(\bar \xi + A^*\bar y) -(\xi + A^* y),\bar x-x}_X +  \inner{(\bar \eta - A\bar x) -(\eta - Ax),\bar y-y}_Y\\
            &=\inner{\bar \xi-\xi,\bar x-x}_X + \inner{\bar \eta-\eta,\bar y-y}_Y.
        \end{aligned}
    \end{equation*}
    In particular, this holds for pairs $(x,y)$ of the form $(x,\bar y)$ for arbitrary $x\in X$ or $(\bar x,y)$ for arbitrary $y\in Y$, which shows that each inner product on the right-hand side is nonnegative.
    The maximal monotonicity of subdifferentials now implies that $\bar \xi\in\partial F(\bar x)$ and $\bar \eta\in\partial G^*(\bar y)$.
    Hence
    \begin{equation*}
        M\bar z^* =  (\bar\xi + A^*\bar y,\bar \eta - A \bar x)\in T\bar z,
    \end{equation*}
    i.e., $\bar z^*\in M^{-1}T\bar z$. We conclude that $M^{-1}T$ is maximally monotone as claimed.
\end{proof}

In sum, we have shown that the primal-dual extragradient method \eqref{eq:splitting:pd} is equivalent to the proximal point method $z^{k+1} = \calR_{M^{-1}T}z^k$ for the maximally monotone operator $M^{-1}T$, and hence its convergence follows from \cref{thm:proximal:fixedpoint} together with the invertibility of $M$.
\begin{theorem}\label{thm:splitting:pd_conv}
    Let $F:X\to\Rbar$, $G:Y\to\Rbar$, and $A\in L(X,Y)$ satisfy the assumptions of \cref{thm:convex:fenchel}. If $\sigma\tau \norm{A}_{L(X,Y)}^2 <1$, then the sequence $\{(x^k,y^k)\}_{k\in\N}$ generated by \eqref{eq:splitting:pd} converges weakly to some $(\bar x,\bar y)\in X\times Y$ satisfying \eqref{eq:splitting:fenchel}.
\end{theorem}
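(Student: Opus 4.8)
The plan is to recognize \eqref{eq:splitting:pd} as an instance of the abstract proximal point iteration \cref{thm:proximal:fixedpoint} for the operator $M^{-1}T$, carried out in the Hilbert space $Z$ endowed with the inner product $\inner{\cdot,\cdot}_M$. First I would invoke the algebraic identity already established before the statement: since $M$ is continuously invertible, $(M+T)^{-1}M = (\Id+M^{-1}T)^{-1} = \calR_{M^{-1}T}$, a graph-theoretic manipulation valid for any invertible $M$, so that \eqref{eq:splitting:pd} reads $z^{k+1} = \calR_{M^{-1}T}z^k$; this is exactly the iteration of \cref{thm:proximal:fixedpoint} with the admissible constant step sizes $\gamma_k \equiv 1$, for which $\sum_k \gamma_k^2 = \infty$.

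Next I would check the hypotheses of \cref{thm:proximal:fixedpoint} in the $M$-geometry. The space $(Z,\inner{\cdot,\cdot}_M)$ is genuinely a Hilbert space: $\norm{\cdot}_M$ is equivalent to $\norm{\cdot}_Z$ by \eqref{eq:splitting:pd_M_eq}, and $(Z,\norm{\cdot}_Z)$ is complete as a product of Hilbert spaces, so completeness is inherited. The operator $M^{-1}T$ is maximally monotone \emph{with respect to} $\inner{\cdot,\cdot}_M$: for $w_i \in M^{-1}Tz_i$ one has $\inner{w_1-w_2,z_1-z_2}_M = \inner{Mw_1-Mw_2,z_1-z_2}_Z$, which is precisely the quantity shown to be nonnegative — resp.\ shown to force $Mw \in Tz$ — in the lemma establishing maximal monotonicity of $M^{-1}T$; read in the $M$-geometry, that lemma is exactly the assertion that $M^{-1}T$ is maximally monotone on $(Z,\inner{\cdot,\cdot}_M)$, where \cref{cor:splitting:pd_equiv} is what licenses passing between the two inner products.

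It remains to produce a root. Since $F$, $G$, and $A$ satisfy the assumptions of \cref{thm:convex:fenchel}, that theorem yields a primal solution $\tilde x \in X$ and a dual solution $\tilde y \in Y$ satisfying the extremality relations \eqref{eq:convex:extremal}, which are \eqref{eq:splitting:fenchel}; by the definition of $T$ this means $0 \in T(\tilde x,\tilde y)$, and since $M$ is invertible, $0 \in M^{-1}T(\tilde x,\tilde y)$. Thus $M^{-1}T$ has a root and \cref{thm:proximal:fixedpoint} applies, giving a pair $\bar z = (\bar x,\bar y)$ with $z^k \weakto \bar z$ in $(Z,\inner{\cdot,\cdot}_M)$ and $0 \in M^{-1}T\bar z$; the latter is equivalent to $0 \in T\bar z$, i.e.\ to \eqref{eq:splitting:fenchel} holding for $(\bar x,\bar y)$.

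Finally I would translate the convergence back. Since $\norm{\cdot}_M$ and $\norm{\cdot}_Z$ are equivalent, they induce the same topology on $Z$, hence the same continuous dual, hence the same weak topology; therefore $z^k \weakto \bar z$ in $(Z,\inner{\cdot,\cdot}_Z)$ as well, which unravels to $x^k \weakto \bar x$ in $X$ and $y^k \weakto \bar y$ in $Y$ — the assertion of the theorem. The only genuinely delicate point is the bookkeeping of which inner product is in force: both the monotonicity and the resolvent must be understood in the $M$-geometry for \cref{thm:proximal:fixedpoint} to apply verbatim, and the weak limit must then be transported back to the original norm; once \cref{cor:splitting:pd_equiv} is available to move freely between $\inner{\cdot,\cdot}_M$ and $\inner{\cdot,\cdot}_Z$, there is no further obstacle.
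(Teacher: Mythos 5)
Your proposal is correct and follows essentially the paper's own route: rewrite \eqref{eq:splitting:pd} as the proximal point iteration $z^{k+1}=\calR_{M^{-1}T}z^k$ with constant step size, obtain a root of $M^{-1}T$ from the Fenchel extremality conditions \eqref{eq:convex:extremal} supplied by \cref{thm:convex:fenchel}, and conclude with \cref{thm:proximal:fixedpoint}. The only (harmless) difference is bookkeeping: the paper's lemma establishes maximal monotonicity of $M^{-1}T$ with respect to the original inner product $\inner{\cdot,\cdot}_Z$ (using \cref{cor:splitting:pd_equiv}), so that \cref{thm:proximal:fixedpoint} applies without changing the geometry, whereas you read the same computations in $(Z,\inner{\cdot,\cdot}_M)$ and then transport the weak limit back via norm equivalence --- both are valid.
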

\begin{proof}
    First, \cref{thm:convex:fenchel} yields the existence of a $\bar z:=(\bar x,\bar y)$ satisfying the Fenchel extremality relations \eqref{eq:splitting:fenchel}. By definition of $T$, this is equivalent to $0\in T\bar z$, which by the invertibility from $M$ due to \cref{cor:splitting:precond-inv} holds if and only if $0\in M^{-1}T\bar z$. Hence there exists a root of $M^{-1}T$. By \cref{lem:splitting:pd_maxmon}, $M^{-1}T$ is maximally monotone (with respect to $\inner{\cdot,\cdot}_M$) and hence we can apply \cref{thm:proximal:fixedpoint} to obtain that
    \begin{equation*}
        \inner{z^k,Mw}_Z = \inner{z_k,w}_M \to \inner{\bar z,w}_M = \inner{\bar z,Mw}_Z \quad\text{for all } w\in Z,
    \end{equation*}
    where $\bar z\in Z$ satisfies $0\in M^{-1}T\bar z$ and hence $0\in T\bar z$. Since $M$ is invertible and hence in particular surjective, this implies that $\inner{z_k,\tilde w}_Z\to \inner{\bar z,\tilde w}_Z$ for all $\tilde w:=Mw\in Z$, which is the claimed weak convergence.
\end{proof}
Note that although the iteration is implicit in $F$ and $G$, it is still explicit in $A$; it is therefore not surprising that step size restrictions based on $A$ remain.\footnote{Using a proximal point mapping for $G\circ A$ would lead to a fully implicit method but involve the inverse $A^{-1}$ in the corresponding proximal point mapping. It is precisely the point of the primal-dual extragradient method to avoid having to invert $A$, which is often prohibitively expensive if not impossible (e.g., if $A$ does not have closed range as in many inverse problems).}

Finally, we remark that by setting $A=\Id$, $\tau = \gamma$, $\sigma = \gamma^{-1}$ and $z^k = x^k - \gamma y^k$ in \eqref{eq:splitting:pd} and applying \cref{lem:proximal:calculus}\,(ii), we recover the Douglas--Rachford method \eqref{eq:splitting:dr}; however, since in this case $\sigma\tau\norm{A}_{L(X,Y)}^2=1$, we cannot obtain its convergence from \cref{thm:splitting:pd_conv}.

\section{Strong convergence and rates}

The central idea of the convergence proofs we have seen so far is to use the iteration step together with the monotonicity of the set-valued mapping to obtain the boundedness of the error $\norm{x^k-\bar x}_X$ and thus the weak convergence (at first of a subsequence) of the iterates. For \emph{strong} convergence, however, we require additional properties that yield a more direct relation between the iteration step and the error, which will also allow deriving \emph{convergence rates}.

One possibility is the following: A set-valued mapping $H:X\setto X$ is called \emph{strongly monotone} if there exists a $\gamma>0$ such that
\begin{equation}\label{eq:splitting:strong-monotone}
    \inner{x_1^*-x_2^*,x_1-x_2}_X \geq \gamma \norm{x_1-x_2}_X^2\quad\text{for all }(x_1,x_1^*),(x_2,x_2^*)\in\graph H.
\end{equation}
For example, $H=\partial F$ for $F(x)=\frac12\norm{x}_X^2$ is clearly strongly monotone with $\gamma=1$; more generally, $\partial F$ is strongly monotone if $F-\frac\gamma2\norm{\cdot}_X^2$ is convex.

To illustrate the general approach, we show strong convergence for the proximal point method.
\begin{theorem}
    Let $H:X\setto X$ be strongly monotone with $\gamma>0$ and let $\bar x\in X$ satisfy $0\in H(\bar x)$. Furthermore, let $\{x^k\}_{k\in\N}$ be generated via
    \begin{equation*}
        x^{k+1}=\calR_{\tau_k H}(x^k)
    \end{equation*}
    for some $x^0\in X$ and $\{\tau_k\}_{k\in\N}\subset (0,\infty)$.
    \begin{enumerate}[(i)]
        \item If $\tau_k\equiv \tau$ is constant, then $x^k\to \bar x$ linearly, i.e., $\lim_{k\to\infty} \frac{\norm{x^{k+1}-\bar x}_X}{\norm{x^{k}-\bar x}_X}=\mu <1$.
        \item If $\tau_k\to\infty$, then $x^k\to \bar x$ superlinearly, i.e., $\lim_{k\to\infty} \frac{\norm{x^{k+1}-\bar x}_X}{\norm{x^{k}-\bar x}_X}=0$.
    \end{enumerate}
\end{theorem}
\begin{proof}
    By definition of the resolvent, the iteration step is equivalent to
    \begin{equation*}
        -\frac1{\tau_k}(x^{k+1}-x^k) \in H(x^{k+1}).
    \end{equation*}
    Together with $0\in H(\bar x)$, it thus follows from \eqref{eq:splitting:strong-monotone} that
    \begin{equation*}
        -\inner{x^{k+1}-x^k,x^{k+1}-\bar x}_X \geq \tau_k\gamma \norm{x^{k+1}-\bar x}_X^2.
    \end{equation*}
    We now apply to the left-hand side the (easily verified) \emph{three-point identity}
    \begin{equation*}
        \inner{x-y,x-z}_X = \frac12\norm{x-y}_X^2 - \frac12\norm{y-z}_X^2 + \frac12\norm{x-z}_X^2\quad\text{for all }x,y,z\in X
    \end{equation*}
    for $x=x^{k+1}$, $y=x^k$, and $z=\bar x$. After rearranging, we obtain
    \begin{equation*}
        \frac{1+2\tau_k\gamma}{2}\norm{x^{k+1}-\bar x}_X^2 + \frac12\norm{x^{k+1}-x^k}_X^2 \leq \frac12\norm{x^k-\bar x}_X^2.
    \end{equation*}
    In particular, it follows that
    \begin{equation*}
        \frac{\norm{x^{k+1}-\bar x}_X^2}{\norm{x^k-\bar x}_X^2} \leq \frac{1}{1+2\tau_k\gamma}.
    \end{equation*}
    We now make the case distinction:
    \begin{enumerate}[(i)]
        \item if $\tau_k\equiv \tau$, then $\mu:=(1+2\tau \gamma)^{-1/2} < 1$ and hence $x^k\to \bar x$ linearly;
        \item if $\tau_k\to \infty$, then $(1+2\tau_k \gamma)^{-1/2} \to 0$  and hence $x^k\to \bar x$ superlinearly.
            \qedhere
    \end{enumerate}
\end{proof}

Similarly, one can show with a bit more effort that explicit splitting for strongly convex $G$ converges linearly (but not superlinearly, since $\tau_k\leq L^{-1}$ has to remain bounded).
For the primal-dual extragradient method, this is possible as well (with significantly more effort) if the definition of strong monotonicity and the three-point identity are adapted to norms and inner products weighted with a \enquote{testing operator} that depends on the step sizes and the desired convergence rate; see \cite{tuomov-proxtest}.

\part{Lipschitz analysis}

\chapter{Clarke subdifferentials}\label{chap:clarke}

We now turn to a concept of generalized derivatives that covers, among others, both Fréchet derivatives and convex subdifferentials. Again, we start with the general class of functionals that admit such a derivative; these are the locally Lipschitz continuous functionals. Recall that $F:X\to\R$ is locally Lipschitz continuous in $x\in X$ if there exist a $\delta >0$ and an $L>0$ (which in the following will always denote the local Lipschitz constant of $F$) such that
\begin{equation*}
    |F(x_1)-F(x_2)|\leq L \norm{x_1-x_2}_X \qquad\text{for all }x_1,x_2\in O_\delta (x).
\end{equation*}
We will refer to the $O_\delta(x)$ from the definition as the \emph{Lipschitz neighborhood} of $x$.
Note that in contrast to convexity, this is a purely local condition; on the other hand, we have to require that $F$ is (locally) finite-valued.\footnote{For $F:X\to\Rbar$, this is always the case in the interior of the effective domain. It is also possible to extend the generalized derivative introduced below to points on the boundary of the effective domain where $F$ is finite using an equivalent, more geometrical, construction involving generalized normal cones to epigraphs; see \cite[Definition 2.4.10]{Clarke:1990a}.}

\section{Definition and basic properties}\label{sec:clarke:definition}

We proceed as for the convex subdifferential and first define for $F:X\to\R$ the
\emph{generalized directional derivative} in $x\in X$ in direction $h\in X$ as
\begin{equation*}
    F^\circ(x;h) := \limsup_{\substack{y\to x\\t\to 0^+}} \frac{F(y+th)-F(y)}{t}.
\end{equation*}
Note the difference to the classical directional derivative: We no longer require the existence of a limit but merely of accumulation points.
We will need the following properties.
\begin{lemma}\label{lem:clarke:dir}
    Let $F:X\to\R$ be locally Lipschitz continuous in $x\in X$. Then the mapping $h\mapsto F^\circ (x;h)$ is
    \begin{enumerate}[(i)]
        \item Lipschitz continuous with constant $L$ and satisfies $|F^\circ(x;h)|\leq L\norm{h}_X<\infty$;
        \item \emph{subadditive}, i.e., $F^\circ(x;h+g)\leq F^\circ(x;h)+F^\circ(x;g)$ for all $h,g\in X$;
        \item \emph{positively homogeneous}, i.e., $F^\circ(x;\alpha h) = (\alpha F)^\circ(x;h)$ for all $\alpha \geq 0$ and $h\in X$;
        \item \emph{reflective}, i.e., $F^\circ(x;-h) = (-F)^\circ(x;h)$ for all $h\in X$.
    \end{enumerate}
\end{lemma}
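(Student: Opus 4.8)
\textit{Proof sketch.} The plan is to derive all four properties directly from the definition \eqref{eq:clarke:dir}, using only the local Lipschitz estimate near $x$ together with elementary manipulations of difference quotients and $\limsup$. Throughout, $L$ denotes the local Lipschitz constant of $F$ on the Lipschitz neighborhood $O_\delta(x)$, and I restrict attention to pairs $(y,t)$ with $\norm{y-x}_X$ small and $t>0$ small enough that all the points $y+th$, $y+tg$, $y+t(h+g)$ appearing below lie in $O_\delta(x)$; this is legitimate since the $\limsup$ as $(y,t)\to(x,0^+)$ depends only on arbitrarily small such neighborhoods.

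For (i), finiteness and the bound follow by applying the Lipschitz estimate to the numerator: for admissible $(y,t)$ we have $|F(y+th)-F(y)|\le L\norm{th}_X = Lt\norm{h}_X$, so every difference quotient defining $F^\circ(x;h)$ has absolute value at most $L\norm{h}_X$, and hence so does the $\limsup$. For Lipschitz continuity in $h$, I would telescope
\[
\frac{F(y+th_1)-F(y)}{t}=\frac{F(y+th_2)-F(y)}{t}+\frac{F(y+th_1)-F(y+th_2)}{t},
\]
bound the last term by $L\norm{h_1-h_2}_X$, take $\limsup$ over $(y,t)\to(x,0^+)$ to get $F^\circ(x;h_1)\le F^\circ(x;h_2)+L\norm{h_1-h_2}_X$, and then exchange the roles of $h_1$ and $h_2$.

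For (ii), I insert the intermediate base point $y+tg$:
\[
\frac{F(y+t(h+g))-F(y)}{t}=\frac{F((y+tg)+th)-F(y+tg)}{t}+\frac{F(y+tg)-F(y)}{t}.
\]
Since the two $\limsup$s on the right are finite by (i), the $\limsup$ of the sum is at most their sum; as $y+tg\to x$ whenever $(y,t)\to(x,0^+)$, the first term is bounded by $F^\circ(x;h)$ (its difference quotients are exactly of the form in \eqref{eq:clarke:dir} with base point tending to $x$) and the second equals $F^\circ(x;g)$, giving subadditivity. For (iii), the substitution $s=\alpha t$ (a bijection of $(0,\infty)$ since $\alpha>0$, taking $t\to0^+$ to $s\to0^+$) gives $F^\circ(x;\alpha h)=\alpha\,\limsup_{y\to x,\,s\to0^+}\tfrac{F(y+sh)-F(y)}{s}=\alpha F^\circ(x;h)$, and pulling the constant out of the numerator in \eqref{eq:clarke:dir} shows $(\alpha F)^\circ(x;h)=\alpha F^\circ(x;h)$ as well, so both sides agree. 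For (iv), the substitution $z=y+th$ turns the difference quotient for $(-F)^\circ(x;h)$ into
\[
\frac{-F(y+th)+F(y)}{t}=\frac{F(z+t(-h))-F(z)}{t},
\]
and since $\norm{z-y}_X=t\norm{h}_X\to0$, the admissible sets of pairs $(z,t)$ and $(y,t)$ coincide in the limit, so $(-F)^\circ(x;h)=F^\circ(x;-h)$.

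The only point needing genuine (if routine) care is making the change-of-base-point arguments in (ii) and (iv) rigorous at the level of $\limsup$: one must check that replacing $y$ by $y+tg$ (resp. $y+th$) neither enlarges nor shrinks the relevant neighborhood filter, which follows from $\norm{(y+tg)-y}_X\le t\norm{g}_X\to0$ and an $\varepsilon/2$-adjustment of radii. Everything else is a direct computation from the definition.
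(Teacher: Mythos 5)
Your proposal is correct and follows essentially the same route as the paper's proof: the Lipschitz estimate on difference quotients for (i), the intermediate base point $y+tg$ (productive zero) for (ii), and the substitutions $s=\alpha t$ and $z=y+th$ for (iii) and (iv), with the only cosmetic difference that you bound $|F^\circ(x;h)|$ directly rather than deducing it from Lipschitz continuity and $F^\circ(x;0)=0$. No gaps.
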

\begin{proof}
    \emph{(i):} Let $h,g\in X$ be arbitrary. The local Lipschitz continuity of $F$ implies that
    \begin{equation*}
        F(y+th)-F(y) \leq F(y+tg) - F(y) + t L\norm{h-g}_X
    \end{equation*}
    for all $y$ sufficiently close to $x$ and $t$ sufficiently small. Dividing by $t>0$ and taking the $\limsup$ then yields that
    \begin{equation*}
        F^\circ(x;h) \leq F^\circ(x;g) + L\norm{h-g}_X.
    \end{equation*}
    Exchanging the roles of $h$ and $g$ shows the Lipschitz continuity of $F^\circ(x;\cdot)$, which also yields the claimed boundedness since $F^\circ(x;g) = 0$ for $g=0$ from the definition.

    \emph{(ii):} The definition of the $\limsup$ and the productive zero immediately yield
    \begin{equation*}
        \begin{aligned}
            F^\circ(x;h+g) &= \limsup_{\substack{y\to x\\t\to 0^+}} \frac{F(y+th+tg)-F(y)}{t} \\
            &\leq
            \limsup_{\substack{y\to x\\t\to 0^+}} \frac{F(y+th+tg)-F(y+tg)}{t}+ \limsup_{\substack{y\to x\\t\to 0^+}} \frac{F(y+tg)-F(y)}{t}\\
            &= F^\circ(x;h)+ F^\circ(x;g),
        \end{aligned}
    \end{equation*}
    since $y\to x$ and $t\to 0$ implies that $y+tg\to x$ as well.

    \emph{(iii):} The claim is clear for $\alpha=0$. For $\alpha>0$, we obain again from the definition that
    \begin{equation*}
        \begin{aligned}
            F^\circ(x;\alpha h) &= \limsup_{\substack{y\to x\\t\to 0^+}} \frac{F(y-t(\alpha h))-F(y)}{t}\\
            &= \limsup_{\substack{y\to x\\\alpha t\to 0^+}} \alpha \frac{F(y+(\alpha t)h)-F(y)}{\alpha t} =  (\alpha F)^\circ(x;h).
        \end{aligned}
    \end{equation*}

    \emph{(iv):} Similarly, we have that
    \begin{equation*}
        \begin{aligned}
            F^\circ(x;-h) &= \limsup_{\substack{y\to x\\t\to 0^+}} \frac{F(y-th)-F(y)}{t}\\
            &= \limsup_{\substack{w\to x\\t\to 0^+}} \frac{-F(w+th)-(-F(w))}{t} =  (-F)^\circ(x;h),
        \end{aligned}
    \end{equation*}
    since $y\to x$ and $t\to 0$ implies that $w:=y-th \to x$ as well.
\end{proof}
In particular, \cref{lem:clarke:dir}\,(i--iii) implies that the mapping  $h\mapsto F^\circ(x;h)$ is proper, convex, and lower semicontinuous.

We now define for a locally Lipschitz continuous functional $F:X\to \R$ the \emph{Clarke subdifferential} in $x\in X$ as
\begin{equation}\label{eq:clarke:def}
    \partial_C F(x) := \setof{x^*\in X^*}{\dual{x^*,h}_X \leq F^\circ (x;h)\quad\text{for all }h\in X}.
\end{equation}
The definition together with \cref{lem:clarke:dir}\,(i) directly implies the following properties.
\begin{cor}\label{lem:clarke:properties}
    Let $F:X\to\R$ be locally Lipschitz continuous and $x\in X$. Then $\partial_C F(x)$ is convex, weakly-$*$ closed, and bounded. Specifically, if $F$ is Lipschitz near $x$ with constant $L$, then $\partial _C F(x)\subset K_L(0)$.
\end{cor}
Furthermore, we have the following useful closedness property.
\begin{lemma}\label{lem:clarke:closed}
    Let $F:X\to\R$ be locally Lipschitz continuous in $x\in X$. If $\{x_n\}_{n\in\N}\subset X$ is a sequence with $x_n\to x$ and if $x_n^*\in \partial_C F(x_n)$ for all $n\in\N$ with $x_n^*\weakto^* x^*$ in $X^*$, then $x^*\in \partial_C F(x)$.
\end{lemma}
\begin{proof}
    Let $h\in X$ be arbitrary.
    By assumption, we then have that $\dual{x_n^*,h}_X \leq F^\circ(x_n;h)$ for all $n\in \N$.
    The weak-$*$ convergence of $\{x_n^*\}_{n\in\N}$ then implies that
    \begin{equation*}
        \dual{x^*,h}_X = \lim_{n\to\infty} \dual{x_n^*,h}_X \leq \limsup_{n\to\infty} F^\circ(x_n;h).
    \end{equation*}
    Hence we are finished if we can show that $\limsup_{n\to\infty} F^\circ(x_n;h)\leq F^\circ(x;h)$ (since then $x^*\in \partial_C F(x)$ by definition).

    For this, we use that by definition of $F^\circ(x_n;h)$, there exist sequences $\{y_{n,m}\}_{m\in\N}$ and $\{t_{n,m}\}_{m\in\N}$ with $y_{n,m}\to x_n$ and $t_{n,m}\to 0$ for $m\to \infty$ realizing the $\limsup$ for each $x_n$. Hence, for all $n\in \N$ we can find a $y_n:=y_{n,m(n)}$ and a $t_n:=t_{n,m(n)}$ such that $\norm{y_n-x_n}_X+t_n < n^{-1}$ (and hence in particular $y_n\to x$ and $t_n\to 0$) as well as
    \begin{equation*}
        F^\circ(x_n;h) - \tfrac1n \leq \frac{F(y_n+t_nh)-F(y_n)}{t_n}
    \end{equation*}
    for $n$ sufficiently large. Taking the $\limsup$ for $n\to\infty$ on both sides yields the desired inequality.
\end{proof}

Again, the construction immediately yields a Fermat principle.\footnote{Similarly to \cref{thm:convex:fermat}, we do not need to require Lipschitz continuity of $F$ -- the Fermat principle for the Clarke subdifferential characterizes (among others) \emph{any} local minimizer. However, if we want to use this principle to verify that a given $\bar x\in X$ is indeed a (candidate for) a minimizer, we need a suitable characterization of the subdifferential -- and this is only possible for (certain) locally Lipschitz continuous functionals.}
\begin{theorem}\label{thm:clarke:fermat}
    If $F:X\to\R$ has a local minimum in $\bar x$, then $0\in \partial_C F(\bar x)$.
\end{theorem}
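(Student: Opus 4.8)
The plan is to argue directly from the definitions, following the blueprint of the convex Fermat principle (\cref{thm:convex:fermat}). Since $F$ has a local minimum in $\bar x$, there is a neighborhood $O_\delta(\bar x)$ on which $F(\bar x) \leq F(y)$ for all $y \in O_\delta(\bar x)$. The key observation is that the generalized directional derivative $F^\circ(\bar x; h)$ is nonnegative for every direction $h \in X$; once this is established, the definition \eqref{eq:clarke:def} of the Clarke subdifferential immediately gives $0 \in \partial_C F(\bar x)$, since the zero functional satisfies $\dual{0,h}_X = 0 \leq F^\circ(\bar x;h)$ for all $h$.

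So the heart of the proof is to show $F^\circ(\bar x;h) \geq 0$. Fix $h \in X$. Recall that
\begin{equation}
    F^\circ(\bar x;h) = \limsup_{\substack{y\to \bar x\\t\to 0^+}} \frac{F(y+th)-F(y)}{t}.
\end{equation}
The natural move is to restrict attention to the particular path $y = \bar x$, i.e.\ to compare $F^\circ(\bar x;h)$ with the classical one-sided difference quotient at $\bar x$: for $t > 0$ small enough that both $\bar x$ and $\bar x + th$ lie in $O_\delta(\bar x)$, the local minimality of $\bar x$ gives $F(\bar x + th) - F(\bar x) \geq 0$, hence $\frac{F(\bar x+th)-F(\bar x)}{t} \geq 0$. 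Since the $\limsup$ over the joint limit $y \to \bar x$, $t \to 0^+$ dominates the $\limsup$ along the restricted sequence $y \equiv \bar x$, $t \to 0^+$ (which is itself $\geq 0$), we conclude $F^\circ(\bar x;h) \geq 0$.

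The one genuine subtlety — which I expect to be the main (minor) obstacle — is the claim that the full $\limsup$ dominates the restricted one. This is not entirely automatic from the bare notation, because restricting $y \equiv \bar x$ is a measure-zero slice of the joint limit. The clean way to handle it: for any $\eps > 0$, there is $t_\eps > 0$ such that $\bar x + th \in O_\delta(\bar x)$ for all $t \in (0, t_\eps)$, and for each such $t$ the pair $(y,t) = (\bar x, t)$ is an admissible point in the net defining the $\limsup$ with $\frac{F(\bar x+th)-F(\bar x)}{t} \geq 0$. The local Lipschitz continuity of $F$ guarantees the difference quotients are uniformly bounded (by $L\norm{h}_X$, cf.\ \cref{lem:clarke:dir}\,(i)), so the $\limsup$ is a finite real number and any admissible net of points approaching $(\bar x, 0^+)$ contributes to it; in particular the supremum over all such nets is at least the value along $y \equiv \bar x$, namely $\geq 0$. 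Thus $F^\circ(\bar x;h) \geq 0 = \dual{0,h}_X$ for every $h \in X$, and therefore $0 \in \partial_C F(\bar x)$ by \eqref{eq:clarke:def}. No convexity of $F$ is used anywhere, which is exactly why this characterizes arbitrary local minimizers.
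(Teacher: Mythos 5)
Your proposal is correct and follows essentially the same route as the paper's proof: both establish $F^\circ(\bar x;h)\geq 0$ by bounding the generalized directional derivative from below by the ordinary one-sided difference quotient at $\bar x$ (nonnegative by local minimality) and then invoke the definition \eqref{eq:clarke:def}. The domination step you flag as the subtlety is exactly the chain of inequalities the paper writes out, so there is no gap.
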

\begin{proof}
    If $\bar x\in X$ is a local minimizer of $F$, then $F(\bar x) \leq F(\bar x + th)$ for all $h\in X$ and $t>0$ sufficiently small (since the topological interior is always included in the algebraic interior). But this implies that
    \begin{equation*}
        \dual{0,h}_X = 0  \leq \liminf_{t\to 0^+} \frac{F(\bar x+th)-F(\bar x)}{t}\leq \limsup_{t\to 0^+} \frac{F(\bar x+th)-F(\bar x)}{t}\leq F^\circ(x;h)
    \end{equation*}
    and hence $0\in \partial_C F(\bar x)$ by definition.
\end{proof}
Note that $F$ is not assumed to be convex, and hence the condition is in general not sufficient (consider, e.g., $f(t) = -|t|$).

Next, we show that the Clarke subdifferential is indeed a generalization of the derivative concepts we've studied so far.
\begin{theorem}\label{thm:clarke:frechet}
    Let $F:X\to\R$ be continuously Fréchet differentiable in a neighborhood $U$ of $x\in X$. Then, $\partial_C F(x) = \{F'(x)\}$.
\end{theorem}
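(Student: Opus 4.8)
The plan is to prove both inclusions. For the easy inclusion $F'(x) \in \partial_C F(x)$, I would first compute the generalized directional derivative. Since $F$ is continuously Fréchet differentiable in a neighborhood $U$ of $x$, the mean value theorem (\cref{thm:frechet:mean}) gives, for $y$ near $x$ and $t>0$ small, that $F(y+th) - F(y) = \int_0^1 F'(y+sth)h\,ds$. Dividing by $t$ and taking $\limsup$ as $y\to x$ and $t\to 0^+$, the continuity of $y\mapsto F'(y)$ forces $F'(y+sth)h \to F'(x)h$ uniformly in $s\in[0,1]$, so in fact the limit exists and $F^\circ(x;h) = \dual{F'(x),h}_X$ for every $h\in X$. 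In particular $\dual{F'(x),h}_X \le F^\circ(x;h)$ holds (with equality), so $F'(x)\in\partial_C F(x)$ by the definition \eqref{eq:clarke:def}.

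For the reverse inclusion, let $\xi\in\partial_C F(x)$, i.e.\ $\dual{\xi,h}_X \le F^\circ(x;h)$ for all $h\in X$. Using the identity $F^\circ(x;h) = \dual{F'(x),h}_X$ just established, this reads $\dual{\xi - F'(x),h}_X \le 0$ for all $h\in X$. Replacing $h$ by $-h$ gives the reverse inequality, hence $\dual{\xi - F'(x),h}_X = 0$ for all $h\in X$; taking the supremum over $\norm{h}_X\le 1$ yields $\norm{\xi - F'(x)}_{X^*} = 0$, so $\xi = F'(x)$. Therefore $\partial_C F(x) = \{F'(x)\}$.

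The only slightly delicate point — and the "main obstacle," such as it is — is justifying that $\limsup_{y\to x, t\to 0^+}\frac{F(y+th)-F(y)}{t} = \dual{F'(x),h}_X$ rather than something larger: this is exactly where the hypothesis of \emph{continuous} (as opposed to merely Fréchet) differentiability is used. With only pointwise Fréchet differentiability at $x$ one controls $\frac{F(x+th)-F(x)}{t}$ but not $\frac{F(y+th)-F(y)}{t}$ for $y$ varying. The mean value representation converts the difference quotient into an average of $F'$ evaluated near $x$, and then norm-continuity of $F'$ on $U$ makes the integrand converge uniformly, closing the gap. Everything else is a routine unwinding of definitions together with the duality-pairing characterization of the norm on $X^*$ from \cref{cor:functan:norm_dual}.
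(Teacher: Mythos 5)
Your proposal is correct and follows essentially the same route as the paper: both compute $F^\circ(x;h)=\dual{F'(x),h}_X$ via the mean value theorem (\cref{thm:frechet:mean}) and the continuity of $F'$, and then conclude $\partial_C F(x)=\{F'(x)\}$ from the definition. The only element the paper adds is an explicit preliminary check that continuous differentiability implies local Lipschitz continuity of $F$ near $x$ (so that $F^\circ$ and $\partial_C F$ are defined in the locally Lipschitz framework), a fact your mean value estimate contains implicitly and which is worth stating.
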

\begin{proof}
    First we note that the assumption implies local Lipschitz continuity of $F$: Since $F':X\to X^*$ is continuous in $U$, there exists a $\delta>0$ with $\norm{F'(z)-F'(x)}_{X^*} \leq 1$ and hence $\norm{F'(z)}_{X^*} \leq 1 +\norm{F'(x)}_{X^*}$ for all $z\in K_\delta(x)\subset U$.
    For any $x_1,x_2\in K_\delta(x)$ we also have $x_2 + t(x_1-x_2)\in K_\delta(x)$ for all $t\in [0,1]$ (since balls in normed vector spaces are convex), and hence \cref{thm:frechet:mean} implies that
    \begin{equation*}
        \begin{aligned}
            |F(x_1)-F(x_2)| &\leq \int_0^1 \norm{F'(x_2 + t(x_1-x_2))}_{X^*}t\norm{x_1-x_2}_X \,dt  \\
            &\leq \frac{1 +\norm{F'(x)}_{X^*}}2 \norm{x_1-x_2}_X.
        \end{aligned}
    \end{equation*}

    We now show that $F^\circ(x;h)=F'(x)h$ for all $h\in X$. Take again sequences $\{x_n\}_{n\in\N}$ and $\{t_n\}_{n\in\N}$ with $x_n\to x$ and $t_n\to 0^+$ realizing the $\limsup$. Applying again the mean value \cref{thm:frechet:mean} and using the continuity of $F'$ yields for any $h\in X$ that
    \begin{equation*}
        \begin{aligned}
            F^\circ(x;h) &= \lim_{n\to\infty} \frac{F(x_n+t_nh)-F(x_n)}{t_n}\\
            &=\lim_{n\to\infty} \int_{0}^1 \frac1{t_n}\dual{F'(x_n+t(t_nh)),t_nh}_X\,dt\\
            &=\dual{F'(x),h}_X
        \end{aligned}
    \end{equation*}
    since the integrand converges uniformly in $t\in [0,1]$ to $\dual{F'(x),h}_X$.
    Hence by definition, $x^*\in \partial_C F(x)$ if and only if $\dual{x^*,h}_X \leq \dual{F'(x),h}_X$ for all $h\in X$, which is only possible for $x^*=F'(x)$.
\end{proof}
However, if $F$ is merely Fréchet differentiable, we only have that $F'(x)\in \partial_C F(x)$.

\begin{theorem}\label{thm:clarke:convex}
    Let $F:X\to\R$ be convex and lower semicontinuous. Then, $\partial_C F(x) = \partial F(x)$ for all $x\in X$.
\end{theorem}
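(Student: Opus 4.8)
The plan is to show the two inclusions $\partial F(x) \subset \partial_C F(x)$ and $\partial_C F(x) \subset \partial F(x)$ separately, with the main work consisting in relating the two notions of directional derivative. Since $F$ is convex and lower semicontinuous and real-valued, \Cref{thm:convex:cont} tells us that $F$ is locally Lipschitz continuous everywhere (here $(\dom F)^o = X$), so $\partial_C F(x)$ is well-defined and the machinery of \cref{lem:clarke:dir} applies. The key observation is that for convex $F$ the ordinary directional derivative $F'(x;h)$ exists and is finite by \cref{lem:convex:direct}\,(iii), and one expects $F^\circ(x;h) = F'(x;h)$.

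First I would prove $F^\circ(x;h) \geq F'(x;h)$, which is immediate: taking $y = x$ in the $\limsup$ defining $F^\circ(x;h)$ already recovers the limit $F'(x;h)$, so $F^\circ(x;h) \geq \limsup_{t\to 0^+} \frac{F(x+th)-F(x)}{t} = F'(x;h)$. For the reverse inequality $F^\circ(x;h) \leq F'(x;h)$, I would use convexity together with local Lipschitz continuity. Fix sequences $y_n \to x$ and $t_n \to 0^+$ realizing the $\limsup$ in $F^\circ(x;h)$. Using that for convex $F$ the difference quotient $t\mapsto \frac{F(y+th)-F(y)}{t}$ is increasing (by the argument in \cref{lem:convex:direct}\,(i) applied at $y$ rather than $x$), one can bound $\frac{F(y_n+t_nh)-F(y_n)}{t_n} \leq \frac{F(y_n + sh) - F(y_n)}{s}$ for any fixed $s > t_n$; then let $n\to\infty$ using continuity of $F$ (so $F(y_n) \to F(x)$ and $F(y_n+sh)\to F(x+sh)$) to get $F^\circ(x;h) \leq \frac{F(x+sh)-F(x)}{s}$ for all $s>0$, and finally let $s\to 0^+$ to conclude $F^\circ(x;h)\leq F'(x;h)$.

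Once $F^\circ(x;h) = F'(x;h)$ is established, the equality of subdifferentials follows almost immediately. By definition, $x^* \in \partial_C F(x)$ iff $\dual{x^*,h}_X \leq F^\circ(x;h) = F'(x;h)$ for all $h\in X$, which by \cref{lem:convex:equiv} (the equivalence of statements (i) and (ii) there) is exactly the condition $\dual{x^*,\tilde x - x}_X \leq F(\tilde x) - F(x)$ for all $\tilde x \in X$, i.e., $x^* \in \partial F(x)$ by the definition \eqref{eq:convex:def_subdiff}. Note that since $F$ is real-valued we have $x \in \dom F$ automatically, so there is no edge case to worry about regarding the domain.

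The main obstacle is the inequality $F^\circ(x;h) \leq F'(x;h)$, i.e., controlling the ``moving base point'' $y$ in the $\limsup$. The monotonicity-of-difference-quotients trick handles this cleanly, but one must be slightly careful that the intermediate step $s > t_n$ can be taken uniformly once $n$ is large (since $t_n \to 0$, any fixed $s>0$ works for large $n$) and that the convexity argument of \cref{lem:convex:direct}\,(i) indeed localizes to an arbitrary base point $y$ rather than only $x$ — it does, since convexity of $F$ is a global hypothesis. Everything else is bookkeeping with the definitions already in the text.
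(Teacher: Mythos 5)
Your proposal is correct and follows essentially the same route as the paper: both reduce the claim to the identity $F^\circ(x;h)=F'(x;h)$ via the monotonicity of convex difference quotients (\cref{lem:convex:direct}\,(i)) and then conclude with \cref{lem:convex:equiv}. The only difference is cosmetic — you pass to the limit along the realizing sequences at a fixed $s>0$ using continuity of $F$, whereas the paper controls the moving base point with an explicit $2L\delta$ Lipschitz estimate over shrinking balls; both are valid.
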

\begin{proof}
    Since $F$ is finite-valued, $(\dom F)^o=X$, and hence $F$ is local Lipschitz continuous in every $x\in X$ by \cref{thm:convex:cont}.
    We now show that $F^\circ(x;h)=F'(x;h)$ for all $h\in X$, which together with the definition \eqref{eq:convex:subdiff_dir} of the convex subdifferential (which is equivalent to definition \eqref{eq:convex:def} by \cref{lem:convex:equiv}) yields the claim. First, we always have that
    \begin{equation*}
        F'(x;h) = \lim_{t\to 0^+} \frac{F(x+th)-F(x)}{t} \leq  \limsup_{\substack{y\to x\\t\to 0^+}} \frac{F(y+th)-F(y)}{t}
        = F^\circ(x;h).
    \end{equation*}
    To show the reverse inequality, let $\delta>0$ be arbitrary.
    Since the difference quotient of convex functionals is increasing by \cref{lem:convex:direct}\,(i), we obtain that
    \begin{equation*}
        \begin{aligned}
            F^\circ(x;h) &= \lim_{\eps\to 0^+} \sup_{y\in K_{\delta\eps}(x)}\sup_{0<t<\eps} \frac{F(y+th)-F(y)}{t}\\
            &\leq \lim_{\eps\to 0^+} \sup_{y\in K_{\delta\eps}(x)} \frac{F(y+\eps h)-F(y)}{\eps}\\
            &\leq \lim_{\eps\to 0^+} \frac{F(x+\eps h)-F(x)}{\eps} + 2L \delta\\
            &= F'(x;h) + 2L \delta,
        \end{aligned}
    \end{equation*}
    where the last inequality follows by adding two productive zeros and using the local Lipschitz continuity in $x$. Since $\delta>0$ was arbitrary, this implies that $F^\circ(x;h) \leq F'(x;h)$, and the claim follows.
\end{proof}
A locally Lipschitz continuous functional $F:X\to\R$ with $F^\circ(x;h)=F'(x;h)$ for all $h\in X$ is called \emph{regular} in $x\in X$. We have just shown that every continuously differentiable and every convex and lower semicontinuous functional is regular; intuitively, a function is thus regular in any points in which it is either differentiable or has at most a \enquote{convex kink}.

Finally, similarly to \cref{lem:lebesgue:subdiff} one can show the following pointwise characterization of the Clarke subdifferential of integral functionals with Lipschitz continuous integrands. We again assume that $\Omega\subset\R^d$ is open and bounded.
\begin{theorem}\label{thm:clarke:pointwise}
    Let $f:\R\to\R$ be Lipschitz continuous and $F:L^p(\Omega)\to\Rbar$ with $1\leq p <\infty$ as in \cref{lem:lebesgue:lsc}. Then we have for all $u\in L^p(\Omega)$ with $q=\frac{p}{p-1}$ (where $q=\infty$ for $p=1$) that
    \begin{equation*}
        \partial_C F(u)  \subset \setof{u^*\in L^q(\Omega)}{u^*(x) \in\partial_C f(u(x))\text{ for almost every } x\in\Omega}.
    \end{equation*}
    If $f$ is regular at $u(x)$ for almost every $x\in \Omega$, then $F$ is regular at $u$, and equality holds.
\end{theorem}
\begin{proof}
    First, by the properties of the Lebesgue integral and the Lipschitz continuity of $f$, we have for any $u,v\in L^p(\Omega)$ that
    \begin{equation*}
        |F(u)-F(v)| \leq \int_\Omega |f(u(x))-f(v(x))|\,dx \leq L \int_\Omega |u(x)-v(x)|\,dx \leq L C_p \norm{u-v}_{L^p},
    \end{equation*}
    where $L$ is the Lipschitz constant of $f$ and $C_p$ the constant from the continuous embedding $L^p(\Omega)\hookrightarrow L^1(\Omega)$ for $1\leq p\leq \infty$. Hence $F:L^p(\Omega)\to \R$ is Lipschitz continuous and therefore finite-valued as well.

    Let now $\xi\in \partial_C F(u)\subset L^p(\Omega)^*$ be given and $h\in L^p(\Omega)$ be arbitrary. By definition, we thus have
    \begin{equation}
        \label{eq:clarke:pointwise:integral-ineq}
        \begin{aligned}[t]
            \dual{\xi,h}_{L^p} \leq F^\circ(u; h) &=\limsup_{\substack{v\to u\\t\to 0}} \frac{F(v+th)-F(v)}{t}\\
            &\leq \int_\Omega \limsup_{\substack{v\to u\\t\to 0}} \frac{f(v(x)+th(x))-f(v(x))}{t} \,dx\\
            &\leq \int_\Omega \limsup_{\substack{v_x\to u(x)\\t_x\to 0}} \frac{f(v_x+t_xh(x))-f(v_x)}{t_x} \,dx\\
            &= \int_\Omega f^\circ(u(x); h(x))\,dx,
        \end{aligned}
    \end{equation}
    where we were able to use the Reverse Fatou Lemma to exchange the $\limsup$ with the integral in the first inequality since the integrand is bounded from above by the integrable function $L |h|$ due to \cref{lem:clarke:dir}\,(i); the second inequality follows by bounding for almost every $x\in \Omega$ the (pointwise) limit over the sequences realizing the $\limsup$ in the second line by the $\limsup$ over all admissible sequences.

    To interpret \eqref{eq:clarke:pointwise:integral-ineq} pointwise, we define for $x\in \Omega$
    \begin{equation*}
        g_x:\R\to\R,\qquad g_x(t) := f^\circ(u(x);t).
    \end{equation*}
    From \cref{lem:clarke:dir}\,(ii)--(iii), it follows that $g_x$ is convex; \cref{lem:clarke:dir}\,(i) further implies that the function $x\mapsto g_x(h(x))$ is measurable for any $h\in L^p(\Omega)$. Since $g_x(0) = 0$, \eqref{eq:clarke:pointwise:integral-ineq} implies that
    \begin{equation*}
        \dual{\xi,h-0}_{L^p} \leq \int_\Omega g_x(h(x))\,dx - \int_\Omega g_x(0)\,dx,
    \end{equation*}
    i.e., $\xi\in \partial G(0)$ for the superposition operator $G(h) :=\int_\Omega g_x(h(x))\,dx$. Arguing exactly as in the proof of \cref{lem:lebesgue:subdiff} (using that the spatially-varying(!) integrand $g_x(t)$ is measurable in $x$), this implies that $\xi = u^*\in L^q(\Omega)$ with $u^*(x)\in \partial g_x(0)$ for almost every $x\in \Omega$, i.e.,
    \begin{equation*}
        u^*(x)h(x) = u^*(x)(h(x)-0) \leq g_x(h(x)) - g_x(0) = f^\circ(u(x); h(x))
    \end{equation*}
    for almost every $x\in \Omega$.
    Since $h\in L^p(\Omega)$ was arbitrary, this implies that $u^*(x)\in \partial_C f(u(x))$ almost everywhere as claimed.

    It remains to show the remaining assertions when $f$ is regular. In this case, it follows from \eqref{eq:clarke:pointwise:integral-ineq} that for any $h\in L^p(\Omega)$,
    \begin{equation}
        \label{eq:clarke:pointwise:regular-est}
        \begin{aligned}[t]
            F^\circ(u; h) & \leq \int_\Omega f^\circ(u(x);h(x))\,dx = \int_\Omega f'(u(x); h(x))\,dx
            \\
            &
            \leq \lim_{t\to 0} \frac {F(u+th)-F(u)}{t} = F'(u; h) \leq F^\circ(u; h),
        \end{aligned}
    \end{equation}
    where the second inequality is obtained by applying Fatou's Lemma, this time appealing to the integrable lower bound $- L|h(x)|$. This shows that $F'(u; h)= F^\circ(u; h)$ and hence that $F$ is regular. We further obtain for any $u^* \in L^q(\Omega)$ with $u^*(x) \in \partial_C f(u(x))$ almost everywhere and any $h\in L^p(\Omega)$, that
    \begin{equation*}
        \dual{u^*,h}_{L^p} = \int_\Omega u^*(x) h(x) \, dx \leq \int_\Omega f^\circ(u(x); h(x))\,dx
        \leq F^\circ(u, h),
    \end{equation*}
    where we have used \eqref{eq:clarke:pointwise:regular-est} in the last inequality.
    Since $h\in L^p(\Omega)$ was arbitrary, this implies that $u^*\in \partial_C F(u)$.
\end{proof}
Under additional assumptions similar to those of \cref{thm:superpos:continuous} and with more technical arguments, this result can be extended to spatially varying integrands $f:\Omega\times \R\to \R$; see, e.g., \cite[Theorem 2.7.5]{Clarke:1990a}.

\section{Calculus rules}\label{sec:clarke:calculus}

We now turn to calculus rules. The first one still follows directly from the definition.
\begin{theorem}\label{thm:clarke:scalar}
    Let $F:X\to\R$ be locally Lipschitz continuous in $x\in X$ and $\alpha\in\R$. Then,
    \begin{equation*}
        \partial_C(\alpha F)(x) = \alpha \partial_C(F)(x).
    \end{equation*}
\end{theorem}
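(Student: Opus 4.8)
The plan is to reduce everything to the corresponding statement for the generalized directional derivative, since the Clarke subdifferential is defined purely in terms of $F^\circ(x;\cdot)$ via \eqref{eq:clarke:def}. First I would split into the two cases $\alpha \geq 0$ and $\alpha < 0$, because the behaviour of the $\limsup$ under scaling differs. For $\alpha > 0$, \cref{lem:clarke:dir}\,(iii) already gives $(\alpha F)^\circ(x;h) = F^\circ(x;\alpha h)$, but what I actually need is the cleaner identity $(\alpha F)^\circ(x;h) = \alpha\, F^\circ(x;h)$, which follows directly from pulling the positive constant $\alpha$ out of the difference quotient $\frac{\alpha F(y+th)-\alpha F(y)}{t}$ and out of the $\limsup$; the case $\alpha = 0$ is trivial since both sides are $\{0\}$ (note $0 \cdot \partial_C F(x) = \{0\}$ because $\partial_C F(x)$ is nonempty by \cref{lem:clarke:dir}\,(i), which guarantees $F^\circ(x;\cdot)$ is finite and the subdifferential is nonempty). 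For $\alpha < 0$, write $\alpha = -|\alpha|$ and use \cref{lem:clarke:dir}\,(iv), the reflectivity $(-F)^\circ(x;h) = F^\circ(x;-h)$, combined with the positive-scaling identity, to get $(\alpha F)^\circ(x;h) = |\alpha|\,F^\circ(x;-h) = |\alpha|\,(-F)^\circ(x;h)$; equivalently $(\alpha F)^\circ(x;h) = -\alpha\, F^\circ(x;-h)$.

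Next I would translate these directional-derivative identities into set identities for the subdifferential. For $\alpha > 0$: $\xi \in \partial_C(\alpha F)(x)$ iff $\dual{\xi,h}_X \leq (\alpha F)^\circ(x;h) = \alpha F^\circ(x;h)$ for all $h$, iff $\dual{\alpha^{-1}\xi,h}_X \leq F^\circ(x;h)$ for all $h$, iff $\alpha^{-1}\xi \in \partial_C F(x)$, iff $\xi \in \alpha\,\partial_C F(x)$. For $\alpha < 0$: $\xi \in \partial_C(\alpha F)(x)$ iff $\dual{\xi,h}_X \leq -\alpha\, F^\circ(x;-h)$ for all $h$; substituting $h \mapsto -h$ (which ranges over all of $X$) this reads $\dual{-\xi,h}_X \leq -\alpha\, F^\circ(x;h)$, i.e. $\dual{(-\alpha)^{-1}(-\xi),h}_X \leq F^\circ(x;h)$ for all $h$, i.e. $(-\alpha)^{-1}(-\xi) \in \partial_C F(x)$, i.e. $\xi \in (-\alpha)\cdot(-\partial_C F(x)) = \alpha\,\partial_C F(x)$. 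So in both cases $\partial_C(\alpha F)(x) = \alpha\,\partial_C F(x)$.

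I do not expect any serious obstacle here; the proof is essentially a bookkeeping exercise with the properties already collected in \cref{lem:clarke:dir}. The one point requiring a little care is the case $\alpha < 0$, where one must not confuse $\alpha\,\partial_C F(x)$ (scaling by a negative number, which reflects the set through the origin) with $|\alpha|\,\partial_C F(x)$, and must correctly exploit the change of variables $h \mapsto -h$ in the defining inequality together with reflectivity of $F^\circ$. The scalar identity $(\alpha F)^\circ(x;h) = \alpha\, F^\circ(x;h)$ for $\alpha>0$ — slightly stronger than what is literally stated as \cref{lem:clarke:dir}\,(iii) but proved the same way — is worth recording explicitly in the proof to keep the argument transparent.
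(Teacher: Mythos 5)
Your proof is correct and takes essentially the same route as the paper's: everything is reduced to the identities for $F^\circ(x;\cdot)$ from \cref{lem:clarke:dir}, the case $\alpha>0$ is handled via $(\alpha F)^\circ(x;h)=\alpha F^\circ(x;h)$, and negative $\alpha$ is treated through reflectivity (the paper factors this through the single case $\alpha=-1$ and composes, while you do the negative case in one combined computation --- a purely organizational difference). The only point to tighten is your justification in the case $\alpha=0$: nonemptiness of $\partial_C F(x)$ does not follow from finiteness of $F^\circ(x;\cdot)$ alone but requires Hahn--Banach applied to the sublinear functional $F^\circ(x;\cdot)$ (equivalently, the support-functional argument of \cref{lem:clarke:support1}), though the paper's own treatment of this case is equally terse.
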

\begin{proof}
    First, $\alpha F$ is clearly locally Lipschitz continuous for any $\alpha\in\R$. If $\alpha = 0$, both sides of the claimed equality are zero (which is easiest seen from \cref{thm:clarke:frechet}).
    If $\alpha > 0$, we have that $(\alpha F)^\circ(x;h) = \alpha F^\circ (x;h)$ for all $h\in X$ from the definition. Hence,
    \begin{equation*}
        \begin{aligned}
            \alpha \partial_C F(x)
            &= \setof{\alpha x^*\in X^*}{\dual{x^*,h}_X \leq F^\circ(x;h)\quad\text{for all }h\in X}\\
            &= \setof{\alpha x^*\in X^*}{\dual{\alpha x^*,h}_X \leq \alpha F^\circ(x;h)\quad\text{for all }h\in X}\\
            &= \setof{y^*\in X^*}{\dual{y^*,h}_X \leq (\alpha F)^\circ(x;h)\quad\text{for all }h\in X}\\
            &= \partial_C(\alpha F)(x).
        \end{aligned}
    \end{equation*}
    To conclude the proof, it suffices to show the claim for $\alpha=-1$.
    For that, we use \cref{lem:clarke:dir}\,(iv) to obtain that
    \begin{equation*}
        \begin{split}
            \begin{aligned}[b]
                \partial_C (-F)(x)
                &= \setof{x^*\in X^*}{\dual{x^*,h}_X \leq (-F)^\circ(x;h)\quad\text{for all }h\in X}\\
                &= \setof{x^*\in X^*}{\dual{-x^*,-h}_X \leq F^\circ(x;-h)\quad\text{for all }h\in X}\\
                &= \setof{-y^*\in X^*}{\dual{y^*,g}_X \leq F^\circ(x;g)\quad\text{for all }g\in X}\\
                &= -\partial_C(F)(x).
            \end{aligned}
            \qedhere
        \end{split}
    \end{equation*}
\end{proof}
\begin{cor}\label{lem:clarke:fermat2}
    Let $F:X\to\R$ be locally Lipschitz continuous in $\bar x\in X$. If $F$ has a local maximum in $\bar x$, then $0\in \partial_C F(\bar x)$.
\end{cor}
\begin{proof}
    If $\bar x$ is a local maximizer of $F$, it is a local minimizer of $-F$. Hence, \cref{thm:clarke:fermat} implies that
    \begin{equation*}
        0\in \partial_C(-F)(\bar x) = -\partial_C F(\bar x),
    \end{equation*}
    i.e., $0=-0\in \partial_C F(\bar x)$.
\end{proof}

\subsection*{Support functionals}

The remaining rules are again significantly more involved. As in the previous proofs, a key step is to relate different sets of the form \eqref{eq:clarke:def}, for which the following lemmas will be helpful.
\begin{lemma}\label{lem:clarke:support1}
    Let $S:X\to\R$ be positively homogeneous, subadditive, and lower semicontinuous, and let
    \begin{equation*}
        A=\setof{x^*\in X^*}{\dual{x^*,x}_X\leq S(x)\quad\text{for all }x\in X}.
    \end{equation*}
    Then
    \begin{equation}\label{eq:clarke:support1}
        S(x) =  \sup_{x^*\in A} \dual{x^*,x}_X\qquad\text{for all }x\in X.
    \end{equation}
\end{lemma}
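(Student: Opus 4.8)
The plan is to show the two inequalities ``$\geq$'' and ``$\leq$'' in \eqref{eq:clarke:support1}. The inequality
\begin{equation}
    F(x) \geq \sup_{x^*\in A}\dual{x^*,x}_X
\end{equation}
is immediate: by the very definition of $A$, every $x^*\in A$ satisfies $\dual{x^*,x}_X\leq F(x)$, so taking the supremum over all such $x^*$ preserves the bound. (Note that $A\neq\emptyset$ is not needed for this direction, with the usual convention $\sup\emptyset = -\infty$; but in fact the argument below will exhibit elements of $A$.)

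For the reverse inequality, fix $x_0\in X$; I want to produce an $x^*\in A$ with $\dual{x^*,x_0}_X \geq F(x_0)$, which combined with the first direction gives equality at $x_0$. Here the hypotheses on $F$ are exactly what is needed to invoke Hahn--Banach. Since $F$ is positive homogeneous and subadditive, it is \emph{convex} on $X$: indeed for $\lambda\in[0,1]$,
\begin{equation}
    F(\lambda x + (1-\lambda)y) \leq F(\lambda x) + F((1-\lambda)y) = \lambda F(x) + (1-\lambda)F(y).
\end{equation}
It is also lower semicontinuous by assumption, and finite-valued (hence proper). Therefore $\epi F \subset X\times\R$ is nonempty, convex, and closed by \cref{lem:convex:epi}. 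Now $F(0) = F(2\cdot 0) = 2F(0)$ forces $F(0)=0$; consequently, for any $\eps>0$ the point $(x_0, F(x_0)-\eps)$ lies strictly below the graph and hence is \emph{not} in $\epi F$. The singleton $\{(x_0,F(x_0)-\eps)\}$ is compact and disjoint from the closed convex set $\epi F$, so \cref{thm:hb_separation}\,(ii) yields a $z^*\in(X\times\R)^*$ and $\alpha\in\R$ separating them.

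The remaining work is the same ``decompose $z^*$ and normalize'' bookkeeping already carried out in the proof of \cref{lem:convex:gamma}: write $\dual{z^*,(x,t)}_{X\times\R} = \dual{x^*,x}_X + st$ for a suitable $x^*\in X^*$ and $s\in\R$; using that $\epi F$ contains $(x,t')$ for all $t'\geq F(x)$ shows $s\leq 0$, and since $(0,0)\in\epi F$ but $(x_0,F(x_0)-\eps)$ is strictly separated one can rule out $s=0$ (otherwise the affine functional $x\mapsto\dual{x^*,x}_X$ would strictly separate $x_0$ from $0\in\epi F$-slice, but homogeneity of $F$ plus the separation bound applied at $(tx_0, tF(x_0))$ with $t\to\infty$ forces $\dual{x^*,x_0}_X \le 0 < \dual{x^*,x_0}_X$ after rescaling — more cleanly, $s=0$ gives $\dual{x^*,x}_X\le\alpha$ for all $x\in\dom F = X$, forcing $x^*=0$, contradicting $\dual{x^*,x_0}_X > \alpha \ge 0$). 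Hence $s<0$, and $\tilde x^* := s^{-1}x^*$ satisfies $\dual{\tilde x^*,x}_X \leq F(x)$ for all $x\in X$ (so $\tilde x^*\in A$) while $\dual{\tilde x^*,x_0}_X \geq F(x_0)-\eps$. Letting $\eps\to 0$ — or more simply, observing that $A$ is closed and applying the argument once with the strict separation — gives $\sup_{x^*\in A}\dual{x^*,x_0}_X \geq F(x_0)$, completing the proof.

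The main obstacle is purely the separation/normalization argument for the ``$\leq$'' direction; it is morally identical to the one in \cref{lem:convex:gamma}, so the only care needed is to exploit homogeneity ($F(0)=0$) to place the origin in $\epi F$, which is what makes the degenerate case $s=0$ impossible and what lets us avoid the extra case distinction present in the proof of \cref{lem:convex:gamma}.
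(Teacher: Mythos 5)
Your strategy is sound and does prove the lemma, but it takes a genuinely different route from the paper. The paper's proof is a two-line application of machinery already in place: using positive homogeneity and a case distinction as in \cref{ex:convex:fenchel}\,(iii), one computes $F^* = \delta_A$; since subadditivity plus positive homogeneity give convexity and $F$ is lower semicontinuous by assumption, the Fenchel--Moreau--Rockafellar theorem (\cref{thm:convex:moreau}) then yields $F = F^{**} = (\delta_A)^* = \sup_{x^*\in A}\dual{x^*,\cdot}_X$. You instead re-run the Hahn--Banach separation of $\epi F$ from a point strictly below the graph, i.e., you inline the special case of \cref{lem:convex:gamma}/\cref{thm:convex:moreau} that is needed. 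What your version buys is self-containedness and the nice observation that $\dom F = X$ kills the degenerate case $s=0$ outright (so no second separation as in \cref{lem:convex:gamma} is needed); what the paper's buys is brevity and no re-derivation of the separation bookkeeping. Both ultimately rest on the same separation theorem.

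Two points in your bookkeeping need patching, and they are exactly where the "same as \cref{lem:convex:gamma}" shortcut is not quite enough, because that lemma only produces \emph{affine} minorants while membership in $A$ requires a \emph{linear} one. With $t = F(x)$ the separation gives $\dual{x^*,x}_X + sF(x) \leq \alpha$ for all $x\in X$, so after dividing by $s<0$ the minorant carries the constant $s^{-1}\alpha \leq 0$, which sits on the wrong side of the inequality $\dual{\tilde x^*,x}_X \leq F(x)$. You must use positive homogeneity once more: evaluating the bound at $\lambda x$ and letting $\lambda\to\infty$ forces $\dual{x^*,x}_X + sF(x)\leq 0$ for all $x$, i.e., the constant may be replaced by $0$; and $(0,0)\in\epi F$ gives $\alpha\geq 0$, so the strict inequality at $x_0$ survives the same replacement. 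Second, with the orientation of \cref{thm:hb_separation}\,(ii) you chose (epigraph on the \enquote{$\leq\alpha$} side), the correct normalization is $\tilde x^* := -s^{-1}x^*$, not $s^{-1}x^*$ as written. With these fixes, $\tilde x^*\in A$ and $\dual{\tilde x^*,x_0}_X > F(x_0)-\eps$, and letting $\eps\to 0$ gives the claim; note that only the supremum is asserted, so no attainment is needed -- in particular your parenthetical alternative of separating \enquote{once with the strict separation} cannot work, since $(x_0,F(x_0))$ itself lies in $\epi F$.
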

\begin{proof}
    By definition of $A$, the inequality $\dual{x^*,x}_X-S(x)\leq 0$ holds for all $x\in X$ if and only if $x^*\in A$. Thus a case distinction as in \cref{ex:convex:fenchel}\,(iii) using the positive homogeneity of $S$ (which in particular implies that $S(0)=0$) shows that
    \begin{equation*}
        S^*(x^*) = \sup_{x\in X}\, \dual{x^*,x}_X - S(x) = \begin{cases} 0 & x^* \in A,\\\infty & x^*\notin A,\end{cases}
    \end{equation*}
    i.e., $S^*=\delta_A$.
    Furthermore, by assumption $S$ is also subadditive and hence convex as well as lower semicontinuous.
    \Cref{thm:convex:moreau} thus yields
    \begin{equation*}
        \begin{split}
            S(x) = S^{**}(x) = (\delta_A)^*(x) = \sup_{x^*\in A} \dual{x^*,x}_X.
            \qedhere
        \end{split}
    \end{equation*}
\end{proof}
The right-hand side of \eqref{eq:clarke:support1} is called the \emph{support functional} of $A$.

\begin{lemma}\label{lem:clarke:support3}
    Let $A,B\subset X^*$ be nonempty, convex, and weakly-$*$ closed. Then $A\subset B$ if and only if
    \begin{equation}\label{eq:clarke:support3}
        \sup_{x^*\in A}\,  \dual{x^*,x}_X \leq \sup_{x^*\in B}\, \dual{x^*,x}_X \qquad\text{for all }x\in X.
    \end{equation}
\end{lemma}
\begin{proof}
    If $A\subset B$, then the right-hand side of \eqref{eq:clarke:support3} is obviously not less than the left-hand side.
    Conversely, assume that there exists an $x^*\in A$ with $x^*\notin B$.
    By the assumptions on $A$ and $B$, we then obtain from  \cref{thm:clarke:hb} an $x\in X$ and a $\lambda\in \R$ with
    \begin{equation*}\label{eq:clarke:hb1}
        \dual{z^*,x}_X \leq \lambda < \dual{x^*,x}_X  \qquad\text{for all }z^*\in B.
    \end{equation*}
    Taking the supremum over all $z^*\in B$ and estimating the right-hand side by the supremum over all $x^*\in A$ then yields that
    \begin{equation*}\label{eq:clarke:hb2}
        \sup_{z^*\in B}\, \dual{z^*,x}_X < \sup_{x^*\in A}\,\dual{x^*,x}_X.
    \end{equation*}
    Hence \eqref{eq:clarke:support3} is violated, and the claim follows by contraposition.
\end{proof}
\begin{cor}\label{lem:clarke:support2}
    Let $A,B\subset X^*$ be nonempty, convex, and weakly-$*$ closed. Then $A=B$ if and only if
    \begin{equation}\label{eq:clarke:support2}
        \sup_{x^*\in A}\, \dual{x^*,x}_X = \sup_{x^*\in B}\, \dual{x^*,x}_X \qquad\text{for all }x\in X.
    \end{equation}
\end{cor}
\begin{proof}
    Again, the claim is obvious if $A=B$. Conversely, if \eqref{eq:clarke:support2} holds, then in particular \eqref{eq:clarke:support3} holds, and we obtain from \cref{lem:clarke:support3} that $A\subset B$. Exchanging the roles of $A$ and $B$ now yields the claim.
\end{proof}

\bigskip

\Cref{lem:clarke:support1} together with \cref{lem:clarke:dir} directly yields the following useful representation.
\begin{cor}\label{cor:clarke:support-dir}
    Let $F:X\to\R$ be locally Lipschitz continuous and $x\in X$. Then
    \begin{equation*}
        F^\circ(x;h) = \sup_{x^*\in\partial_C F(x)}\dual{x^*,h}_X\quad\text{for all }h\in X.
    \end{equation*}
\end{cor}
With its help, we can finally show the promised nonemptiness of the convex subdifferential.
\begin{cor}\label{cor:convex:nonempty}
    Let $F:X\to\Rbar$ be proper, convex, and lower semicontinuous, and $x\in(\dom F)^o$. Then, $\partial F(x)$ is nonempty, convex, weakly-$*$ closed, and bounded.
\end{cor}
\begin{proof}
    Since $x\in (\dom F)^o$, \cref{thm:clarke:convex} shows that $\partial_C F(x) = \partial F(x)$ and that $F$ is regular in $x$. It thus follows from \cref{cor:clarke:support-dir} and \cref{lem:convex:direct}\,(iii) that $\sup_{x^*\in\partial F(x)} \dual{x^*,h}_X = F'(x;h)\in \R$ for $x\in(\dom F)^o$, and hence the supremum cannot be over the empty set (for which any supremum is $-\infty$ by convention). The remaining properties follow from \cref{lem:clarke:properties}.
\end{proof}

\subsection*{Sum rule}

We now use these results to prove a sum rule.
\begin{theorem}\label{thm:clarke:sum}
    Let $F,G:X\to\R$ be locally Lipschitz continuous in $x\in X$. Then
    \begin{equation*}
        \partial_C (F+G)(x) \subset \partial_C F(x) + \partial_C G(x).
    \end{equation*}
    If $F$ and $G$ are regular in $x$, then $F+G$ is regular in $x$ and equality holds.
\end{theorem}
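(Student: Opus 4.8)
The plan is to reduce the statement to the single pointwise inequality $(F+G)^\circ(x;h)\le F^\circ(x;h)+G^\circ(x;h)$ and then feed it into the support-functional machinery of \cref{lem:clarke:support1,lem:clarke:support2,lem:clarke:support3}. First I observe that $F+G$ is again locally Lipschitz continuous in $x$ (with constant $L_F+L_G$ on the intersection of the two Lipschitz neighborhoods), so that $\partial_C(F+G)(x)$ is defined. Then, directly from the definition \eqref{eq:clarke:dir} and the elementary fact that $\limsup(a_n+b_n)\le\limsup a_n+\limsup b_n$, splitting the difference quotient via the productive zero $(F+G)(y+th)-(F+G)(y)=[F(y+th)-F(y)]+[G(y+th)-G(y)]$ gives $(F+G)^\circ(x;h)\le F^\circ(x;h)+G^\circ(x;h)$ for all $h\in X$.

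Next I introduce the three sublinear functions $p_F:=F^\circ(x;\cdot)$, $p_G:=G^\circ(x;\cdot)$ and $p:=(F+G)^\circ(x;\cdot)$, which by \cref{lem:clarke:dir} are positive homogeneous, subadditive and lower semicontinuous; the sum $p_F+p_G$ shares these properties. By \eqref{eq:clarke:def}, the sets $\partial_C F(x)$, $\partial_C G(x)$, $\partial_C(F+G)(x)$ are exactly the sets $A$ associated with $p_F$, $p_G$, $p$ in the sense of \cref{lem:clarke:support1}; as in the proof of \cref{cor:convex:nonempty} (evaluate \eqref{eq:clarke:support1} at $h=0$) each is non-empty, and each is moreover convex, weakly-$*$ closed and bounded. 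The key step is the identity
\[
\setof{x^*\in X^*}{\dual{x^*,h}_X\le p_F(h)+p_G(h)\ \text{for all }h\in X}=\partial_C F(x)+\partial_C G(x).
\]
Here ``$\supset$'' is immediate by adding the defining inequalities. For ``$\subset$'' I argue that $\partial_C F(x)$ and $\partial_C G(x)$, being bounded and weakly-$*$ closed, are weakly-$*$ compact by Banach--Alaoglu, so their Minkowski sum is weakly-$*$ compact, hence weakly-$*$ closed and convex; its support functional is $h\mapsto\sup_{x_1^*\in\partial_C F(x),\,x_2^*\in\partial_C G(x)}\dual{x_1^*+x_2^*,h}_X=p_F(h)+p_G(h)$ by \cref{lem:clarke:support1}, which is also the support functional of the left-hand set by \cref{lem:clarke:support1}. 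Two convex weakly-$*$ closed sets with equal support functionals coincide (\cref{lem:clarke:support2}), which proves the identity; alternatively one separates a putative extra point of the left-hand set from the Minkowski sum using \cref{thm:clarke:hb} and reaches a contradiction exactly as in the proof of \cref{lem:clarke:support3}.

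Granting the identity, the inclusion follows at once: from $p\le p_F+p_G$ and \cref{lem:clarke:support3}, $\partial_C(F+G)(x)=\setof{x^*}{\dual{x^*,h}_X\le p(h)\ \forall h}\subset\setof{x^*}{\dual{x^*,h}_X\le p_F(h)+p_G(h)\ \forall h}=\partial_C F(x)+\partial_C G(x)$. For the regular case, $p_F(h)=F'(x;h)$ and $p_G(h)=G'(x;h)$, so the classical directional derivative $(F+G)'(x;h)=F'(x;h)+G'(x;h)$ exists; since $(F+G)'(x;h)\le(F+G)^\circ(x;h)$ always (the value along $y\equiv x$, $t\to 0^+$, is one of the accumulation points in the $\limsup$) and $(F+G)^\circ(x;h)\le p_F(h)+p_G(h)=(F+G)'(x;h)$, all four quantities coincide. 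Thus $F+G$ is regular in $x$ and $p=p_F+p_G$ pointwise, whence the two sets in the displayed identity are equal and $\partial_C(F+G)(x)=\partial_C F(x)+\partial_C G(x)$. I expect the main obstacle to be this Minkowski-sum identity, and within it the point that $\partial_C F(x)+\partial_C G(x)$ is weakly-$*$ closed --- that is where Banach--Alaoglu and the weak-$*$ separation theorem \cref{thm:clarke:hb} are genuinely needed; the rest is routine manipulation of the definitions together with \cref{lem:clarke:dir}.
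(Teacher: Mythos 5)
Your proposal is correct and follows essentially the same route as the paper: first establish $(F+G)^\circ(x;\cdot)\le F^\circ(x;\cdot)+G^\circ(x;\cdot)$ (with equality to $(F+G)'(x;\cdot)$ in the regular case, giving regularity of $F+G$), and then identify $\partial_C F(x)+\partial_C G(x)$ with the set generated by $F^\circ(x;\cdot)+G^\circ(x;\cdot)$ by comparing support functionals via the lemmas on positively homogeneous, subadditive, lower semicontinuous functions and on convex weakly-$*$ closed sets. The only difference is that you explicitly verify the weak-$*$ closedness of the Minkowski sum via boundedness and weak-$*$ compactness --- a point the paper's proof asserts without comment --- which is a welcome addition (just note that the Banach--Alaoglu theorem as stated in the paper assumes separability of $X$, whereas the general compactness statement you need holds without it).
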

\begin{proof}
    It is clear that $F+G$ is locally Lipschitz continuous in $x$. Furthermore, from the properties of the $\limsup$ we always have for all $h\in X$ that
    \begin{equation*}
        (F+G)^\circ(x;h) \leq F^\circ (x;h) + G^\circ (x;h).
    \end{equation*}
    If $F$ and $G$ are regular in $x$, the calculus of limits yields that
    \begin{equation*}
        F^\circ (x;h) + G^\circ (x;h) = F'(x;h) + G'(x;h) = (F+G)'(x;h) \leq (F+G)^\circ(x;h),
    \end{equation*}
    which implies that $(F+G)^\circ(x;h) = (F+G)'(x;h)$, i.e., $F+G$ is regular.

    By \cref{lem:clarke:support3} we are thus finished if we can show that
    \begin{equation*}
        \partial_C F(x) + \partial_C G(x) = \setof{x^*\in X^*}{\dual{x^*,h}_X \leq F^\circ (x;h) + G^\circ (x;h)\text{ for all }h\in X}=:A.
    \end{equation*}
    For this, we use that $\partial_C F(x)$ and $\partial_C G(x)$ are convex and weakly-$*$ closed by \cref{lem:clarke:properties}, and hence so is their sum since both sets are bounded.
    Furthermore, as shown in \cref{lem:clarke:dir}, generalized directional derivatives and hence their sums are positively homogeneous, convex, and lower semicontinuous. We thus obtain from \cref{lem:clarke:support1} for all $h\in X$ that
    \begin{equation*}
        \begin{aligned}[b]
            \sup_{x^* \in \partial_C F(x) + \partial_C G(x)}\dual{x^*,h}_X
            &= \sup_{x_1^*\in \partial_C F(x)} \dual{x_1^*,h}_X + \sup_{x_2^*\in \partial_C G(x)} \dual{x_2^*,h}_X \\
            &= F^\circ(x;h) + G^\circ(x;h)
            = \sup_{x^*\in A}\,\dual{x^*,h}_X.
        \end{aligned}
    \end{equation*}
    The claimed equality of $A$ and the sum of the subdifferentials now follows from \cref{lem:clarke:support2}.
\end{proof}
Note the differences to the convex sum rule: The generic inclusion is now in the other direction; furthermore, \emph{both} functionals have to be regular, and in exactly the point where the sum rule is applied.
By induction, one obtains from this sum rule for an arbitrary number of functionals (which all have to be regular).

\subsection*{Chain rule}

To prove a chain rule, we need the following \enquote{nonsmooth} mean value theorem due to Lebourg.
\begin{theorem}\label{thm:clarke:mean}
    Let $F:X\to\R$ be locally Lipschitz continuous near $x\in X$ and $\tilde x$ be in the Lipschitz neighborhood of $x$. Then there exists an $x^*\in \partial_C F(x+\lambda(\tilde x-x))$ for some $\lambda\in (0,1)$ such that
    \begin{equation*}
        F(\tilde x)-F(x) = \dual{x^*,\tilde x-x}_X.
    \end{equation*}
\end{theorem}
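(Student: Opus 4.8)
The plan is to reduce the statement to the classical (one-dimensional) mean value theorem by composing $F$ with the affine path $t\mapsto x+t(y-x)$. Concretely, I would define $g:[0,1]\to\R$ by $g(t)=F(x+t(y-x))$. Since $y$ lies in the Lipschitz neighborhood of $x$, the whole segment $\{x+t(y-x):t\in[0,1]\}$ is contained in $O_\delta(x)$ (balls are convex), so $g$ is well-defined and, as a composition of a Lipschitz map with an affine map, Lipschitz continuous on $[0,1]$ — in particular continuous. The classical mean value theorem for Lipschitz (hence a.e.\ differentiable, but really we only need continuity plus an appropriate bound) real functions then produces a $\lambda\in(0,1)$ with $g(1)-g(0)=g'(\lambda)$, provided $g$ is differentiable at $\lambda$; more robustly, one applies the Lebourg mean value theorem at the scalar level, or — the cleaner route here — introduces the auxiliary function $h(t)=g(t)-t(g(1)-g(0))$, notes $h(0)=h(1)=F(x)$, so $h$ attains an interior extremum at some $\lambda\in(0,1)$, and invokes the Fermat principle for the Clarke subdifferential (\cref{thm:clarke:fermat} together with \cref{lem:clarke:fermat2}) to get $0\in\partial_C h(\lambda)$.

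From $0\in\partial_C h(\lambda)$ I would then unwind the scalar Clarke subdifferential back to the one of $F$. The affine reparametrization $t\mapsto x+t(y-x)$ is continuously (Fréchet) differentiable with derivative $h\mapsto (y-x)h$, and subtracting the linear term $t\mapsto t(g(1)-g(0))$ only shifts the subdifferential by the constant $-(g(1)-g(0))$ (this is the Clarke scalar/sum rule for a smooth summand, \cref{thm:clarke:scalar} and \cref{thm:clarke:sum}, using that $C^1$ functions are regular by \cref{thm:clarke:frechet}). Thus $0\in\partial_C h(\lambda)$ gives $g(1)-g(0)\in\partial_C g(\lambda)$. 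The remaining task is a chain-rule inclusion: the scalar Clarke subdifferential of $t\mapsto F(x+t(y-x))$ at $\lambda$ is contained in $\setof{\dual{x^*,y-x}_X}{x^*\in\partial_C F(x+\lambda(y-x))}$. This one-sided inclusion does not need a full chain rule for $\partial_C$ — it follows directly from the definition via generalized directional derivatives, since $g^\circ(\lambda;1)=\limsup_{s\to\lambda,\,r\to0^+}\tfrac{F(x+(s+r)(y-x))-F(x+s(y-x))}{r}\le F^\circ(x+\lambda(y-x);y-x)$ by the substitution $w=x+s(y-x)\to x+\lambda(y-x)$ (using continuity of the path), and similarly for $g^\circ(\lambda;-1)$. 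Hence any $c\in\partial_C g(\lambda)$ satisfies $c\le F^\circ(z;y-x)$ and $-c\le F^\circ(z;-(y-x))$ with $z:=x+\lambda(y-x)$, which exactly says $c=\dual{x^*,y-x}_X$ for some $x^*\in\partial_C F(z)$ (appealing to \cref{lem:clarke:support1} to realize $F^\circ(z;\cdot)$ as the support functional of $\partial_C F(z)$ and picking an $x^*$ attaining the relevant value).

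Putting the pieces together: with $z=x+\lambda(y-x)$ there is $x^*\in\partial_C F(z)$ such that $F(y)-F(x)=g(1)-g(0)=\dual{x^*,y-x}_X$, which is the claim. I would be a little careful about one point, which I expect to be the main obstacle: making the scalar step fully rigorous without quietly assuming differentiability of $g$. The robust fix is the one sketched above — use the Clarke Fermat principle on $h$ at an interior extremum rather than an ordinary derivative — so that the entire argument stays inside the Clarke calculus already developed (Fermat principle, scalar rule, support-functional characterization of $F^\circ$), and the only genuinely new ingredient is the elementary one-sided chain-rule inequality for the generalized directional derivative along an affine path, which is immediate from the definition of $F^\circ$ once one observes that $s\to\lambda$ forces $x+s(y-x)\to z$.
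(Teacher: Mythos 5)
Your proof is correct and is essentially the paper's own argument: your $g$ and $h$ are exactly the paper's $\psi$ and $\phi$, the interior extremum combined with the Clarke Fermat principle (plus \cref{thm:clarke:sum,thm:clarke:frechet} for the linear term) gives $g(1)-g(0)\in\partial_C g(\lambda)$, and the inequality $g^\circ(\lambda;s)\le F^\circ\bigl(x+\lambda(y-x);s(y-x)\bigr)$, obtained by allowing arbitrary base points instead of points on the path, is precisely the paper's key step. The only (harmless) difference is that you finish element-wise via the support-functional characterization of \cref{lem:clarke:support1} -- where, just as in the paper's appeal to \cref{lem:clarke:support2,lem:clarke:support3}, one implicitly uses that $\partial_C F(z)$ is convex, bounded, and weakly-$*$ closed so that the extreme values of $x^*\mapsto\dual{x^*,y-x}_X$ are attained -- rather than through the set-equality lemmas.
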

\begin{proof}
    Define $\psi,\phi:[0,1]\to\R$ as
    \begin{equation*}
        \psi(\lambda):= F(x+\lambda(\tilde x-x)),\qquad \phi(\lambda):=\psi(\lambda)+\lambda(F(x)-F(\tilde x)).
    \end{equation*}
    By the assumptions on $F$ and $\tilde x$, both $\psi$ and $\phi$ are Lipschitz continuous. In addition, $\phi(0) = F(x) = \phi(1)$, and hence $\phi$ has a local minimum or maximum in an interior point $\bar \lambda\in (0,1)$. From the Fermat principle \cref{thm:clarke:fermat} or \cref{lem:clarke:fermat2}, respectively, together with the sum rule from \cref{thm:clarke:sum} and the characterization of the subdifferential of the second term from \cref{thm:clarke:frechet}, we thus obtain that
    \begin{equation*}
        0\in \partial_C \phi(\bar \lambda) \subset \partial_C\psi(\bar \lambda) + \{F(x)-F(\tilde x)\}.
    \end{equation*}
    Hence we are finished if we can show for $x_{\bar \lambda} := x +\bar \lambda(\tilde x-x)$ that
    \begin{equation}\label{eq:clarke:mean1}
        \partial_C\psi(\bar \lambda) \subset\setof{\dual{x^*,\tilde x-x}_X}{x^*\in\partial_C F(x_{\bar\lambda})} =:A.
    \end{equation}
    For this purpose, consider for arbitrary $s\in \R$ the generalized directional derivative
    \begin{equation*}
        \begin{aligned}
            \psi^\circ(\bar\lambda;s) &= \limsup_{\substack{\lambda\to \bar\lambda\\t\to 0}} \frac{\psi(\lambda + ts)-\psi(\lambda)}{t}\\
            &=\limsup_{\substack{\lambda\to \bar\lambda\\t\to 0}} \frac{F(x+(\lambda + ts)(\tilde x-x))-F(x+\lambda(\tilde x-x))}{t}\\
            &\leq \limsup_{\substack{z\to x_{\bar\lambda}\\t\to 0}} \frac{F(z +ts(\tilde x-x))-F(z)}{t} = F^\circ(x_{\bar\lambda};s(\tilde x-x)),
        \end{aligned}
    \end{equation*}
    where the inequality follows from considering arbitrary sequences $z\to x_{\bar\lambda}$ (instead of special sequences of the form $z_n = x+\lambda_n(\tilde x-x)$) in the last $\limsup$. \Cref{lem:clarke:support3} thus implies that
    \begin{equation}\label{eq:clarke:mean2}
        \partial_C\psi(\bar \lambda) \subset\setof{t^*\in\R}{t^*s \leq F^\circ(x_{\bar\lambda};s(\tilde x-x))\text{ for all }s\in \R} =: B.
    \end{equation}
    It remains to show that the sets $A$ and $B$ from \eqref{eq:clarke:mean1} and \eqref{eq:clarke:mean2} coincide. But this follows again from \cref{lem:clarke:support1,lem:clarke:support2}, since for all $s\in\R$ we have that
    \begin{equation*}
        \sup_{t^*\in A}\,t^*s = \sup_{x^*\in \partial_C F(x_{\bar\lambda})} \dual{x^*,s(\tilde x-x)}_X = F^\circ (x_{\bar\lambda};s(\tilde x-x)) = \sup_{t^*\in B}\,t^*s.
        \qedhere
    \end{equation*}
\end{proof}
We also need the following generalization of the argument in \cref{thm:clarke:frechet}.
\begin{lemma}\label{lem:frechet:diffquot}
    Let $X,Y$ be Banach spaces and $F:X\to Y$ be continuously Fréchet differentiable at $x\in X$. Let $\{x_n\}_{n\in\N}\subset X$ be a sequence with $x_n\to x$ and $\{t_n\}_{n\in\N}\subset(0,\infty)$ be a sequence with $t_n\to 0$. Then for any $h\in X$,
    \begin{equation*}
        \lim_{n\to\infty} \frac{F(x_n+t_n h)-F(x_n)}{t_n} = F'(x)h.
    \end{equation*}
\end{lemma}
\begin{proof}
    Let $h\in X$ be arbitrary. By the Hahn--Banach extension \cref{thm:hb_extension}, for every $n\in \N$ there exists a $y_n^*\in Y^*$ with $\norm{y_n^*}_{Y^*}=1$ and
    \begin{equation*}
        \norm{t_n^{-1}(F(x_n+t_nh)-F(x_n)) - F'(x)h}_Y = \dual{y_n^*,t_n^{-1}(F(x_n+t_nh)-F(x_n)) - F'(x)h}_Y.
    \end{equation*}
    Applying now the classical mean value theorem to the scalar functions
    \begin{equation*}
        f_n :[0,1]\to\R,\qquad f_n(s) = \dual{y_n^*,F(x_n+st_nh)}_Y,
    \end{equation*}
    we obtain similarly to the proof of \cref{thm:frechet:mean} for all $n\in\N$ that
    \begin{equation*}
        \begin{aligned}
            \norm{t_n^{-1}(F(x_n+t_nh)-F(x_n)) - F'(x)h}_Y &= t_n^{-1}\int_0^1\dual{y_n^*,F'(x_n+st_nh)t_nh}_Y\,ds - \dual{y_n^*,F'(x)h}_Y\\
            &= \int_0^1 \dual{y_n^*,(F'(x_n+st_nh)-F'(x))h}_Y\,ds\\
            &\leq \int_0^1 \norm{F'(x_n+st_nh)-F'(x))}_{L(X; Y)}\,ds \,\norm{h}_X,
        \end{aligned}
    \end{equation*}
    where we have used \eqref{eq:functan:cs_banach} together with $\norm{y_n^*}_{Y^*}=1$ in the last step. Since $F'$ is continuous by assumption, the integrand goes to zero as $n\to\infty$ uniformly in $s\in[0,1]$, and the claim follows.
\end{proof}

We now come to the chain rule, which in contrast to the convex case does not require the inner mapping to be linear; this is one of the main advantages of the Clarke subdifferential in the context of nonsmooth optimization.
\begin{theorem}\label{thm:clarke:chain}
    Let $Y$ be a separable Banach space, $F:X\to Y$ be continuously Fréchet differentiable at $x\in X$, and $G:Y\to \R$ be locally Lipschitz continuous near $F(x)$. Then
    \begin{equation*}
        \partial_C (G\circ F)(x) \subset F'(x)^*\partial_C G(F(x)) := \setof{F'(x)^*y^*}{y^*\in\partial_C G(F(x))}.
    \end{equation*}
    If $G$ is regular at $F(x)$, then $G\circ F$ is regular at $x$, and equality holds.
\end{theorem}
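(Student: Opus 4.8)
The plan is to follow the same support-functional strategy used in the proof of the sum rule (\cref{thm:clarke:sum}): reduce the set inclusion to an inequality of support functionals, which in turn reduces to an inequality of generalized directional derivatives, and then close the gap via \cref{lem:clarke:support1,lem:clarke:support2,lem:clarke:support3}. The first step is to observe that $G\circ F$ is locally Lipschitz continuous near $x$: since $F$ is continuously Fréchet differentiable it is locally Lipschitz (as shown at the start of the proof of \cref{thm:clarke:frechet}), and composing a locally Lipschitz map with a locally Lipschitz map is again locally Lipschitz. Hence $\partial_C(G\circ F)(x)$ is well-defined, convex, weakly-$*$ closed, and bounded.

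The heart of the argument is the pointwise estimate
\begin{equation}
    (G\circ F)^\circ(x;h) \leq G^\circ(F(x);F'(x)h) \qquad\text{for all }h\in X,
\end{equation}
with equality when $G$ is regular at $F(x)$. To prove the inequality, I would take sequences $y_n\to x$ and $t_n\to 0^+$ realizing the $\limsup$ defining $(G\circ F)^\circ(x;h)$ and write
\begin{equation}
    \frac{G(F(y_n+t_nh)) - G(F(y_n))}{t_n}.
\end{equation}
Using the mean value \cref{thm:frechet:mean} for $F$, one has $F(y_n+t_nh) - F(y_n) = \int_0^1 F'(y_n + s t_n h)(t_n h)\,ds = t_n(F'(x)h + r_n)$ where $r_n\to 0$ by continuity of $F'$ and $y_n\to x$, $t_n\to 0$. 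Setting $z_n := F(y_n)\to F(x)$ and exploiting the local Lipschitz continuity of $G$ (with constant $L_G$) to absorb the error term $r_n$,
\begin{equation}
    \frac{G(z_n + t_n(F'(x)h + r_n)) - G(z_n)}{t_n} \leq \frac{G(z_n + t_n F'(x)h) - G(z_n)}{t_n} + L_G\norm{r_n}_Y,
\end{equation}
and taking $\limsup$ yields the bound $G^\circ(F(x);F'(x)h)$, since $z_n\to F(x)$ and $t_n\to 0^+$ are admissible in the $\limsup$ defining $G^\circ$. For the regular case, the reverse inequality follows because $(G\circ F)'(x;h) = G'(F(x);F'(x)h)$ by the classical chain rule for directional derivatives (valid since $F$ is Fréchet differentiable and $G'(F(x);\cdot)$ exists by regularity), and then $G^\circ(F(x);F'(x)h) = G'(F(x);F'(x)h) = (G\circ F)'(x;h) \leq (G\circ F)^\circ(x;h)$ gives equality throughout, showing $G\circ F$ is regular at $x$.

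Once the directional-derivative estimate is established, I would finish exactly as in \cref{thm:clarke:sum}. Set $A := F'(x)^*\partial_C G(F(x))$; this is convex and weakly-$*$ closed (as the image of a convex weakly-$*$ closed bounded set under the weakly-$*$ continuous adjoint $F'(x)^*$), and the map $h\mapsto G^\circ(F(x);F'(x)h)$ is positive homogeneous, subadditive, and lower semicontinuous by \cref{lem:clarke:dir} (composition with the linear continuous $F'(x)$ preserves these). Its support functional computes as
\begin{equation}
    \sup_{x^*\in A}\dual{x^*,h}_X = \sup_{y^*\in\partial_C G(F(x))}\dual{y^*,F'(x)h}_Y = G^\circ(F(x);F'(x)h),
\end{equation}
using \cref{lem:clarke:support1} applied to $G^\circ(F(x);\cdot)$. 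Comparing with $(G\circ F)^\circ(x;h)$, whose associated set is $\partial_C(G\circ F)(x)$, the inequality of support functionals together with \cref{lem:clarke:support3} gives $\partial_C(G\circ F)(x)\subset A$, and in the regular case the equality of support functionals together with \cref{lem:clarke:support2} gives $\partial_C(G\circ F)(x) = A$.

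**The main obstacle** I anticipate is the careful handling of the error term $r_n$ arising from the nonlinearity of $F$: one must ensure that the continuity of $F'$ (only \emph{at} $x$, or in a neighborhood) combined with $y_n\to x$, $t_n\to 0^+$ genuinely forces $\sup_{s\in[0,1]}\norm{F'(y_n+st_nh) - F'(x)}\to 0$ uniformly, so that the Bochner-integral representation of $F(y_n+t_nh)-F(y_n)$ yields $r_n\to 0$ — and then that the local Lipschitz bound on $G$ is applicable, i.e., that $z_n + t_n(F'(x)h+r_n)$ and $z_n$ both lie in the Lipschitz neighborhood of $F(x)$ for large $n$. The separability of $Y$ is needed to ensure $\partial_C G(F(x))$ has the expected properties (weak-$*$ compactness arguments via Banach--Alaoglu) so that $A$ is weakly-$*$ closed; I would cite this where the reduction to support functionals requires it. The rest is a routine transcription of the sum-rule proof.
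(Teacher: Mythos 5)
Your argument is correct, and it reaches the key inequality $(G\circ F)^\circ(x;h)\le G^\circ(F(x);F'(x)h)$ by a genuinely different route than the paper. The paper applies the nonsmooth mean value theorem (\cref{thm:clarke:mean}) to $G$ along the segments between $F(x_n)$ and $F(x_n+t_nh)$, producing subgradients $y_n^*\in\partial_C G(y_n)$, then extracts a weakly-$*$ convergent subsequence via \cref{thm:banachal} (this is where the separability of $Y$ enters) and uses the outer semicontinuity from \cref{lem:clarke:closed} to obtain a $y^*\in\partial_C G(F(x))$ with $(G\circ F)^\circ(x;h)=\dual{y^*,F'(x)h}_Y$; your perturbation argument --- the smooth mean value theorem (\cref{thm:frechet:mean}) for $F$ plus absorption of the error $r_n$ by the Lipschitz constant of $G$ --- yields the inequality directly, with no compactness argument and no selection of subgradients. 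What you lose is only the paper's slightly stronger intermediate statement (that the supremum defining $G^\circ(F(x);F'(x)h)$ is attained), which is never needed: the inequality, upgraded to an equality under regularity, is exactly what feeds the support-functional reduction via \cref{lem:clarke:support1,lem:clarke:support2,lem:clarke:support3}, which you and the paper then use identically. In your version the remaining role of separability is to ensure (via Banach--Alaoglu and the weak-$*$ continuity of $F'(x)^*$) that $F'(x)^*\partial_C G(F(x))$ is weakly-$*$ closed, a fact the paper also needs but leaves implicit in its final \enquote{proceeding as in the proof of \cref{thm:clarke:mean}} step. One gloss to tighten: in the regular case, $(G\circ F)'(x;h)=G'(F(x);F'(x)h)$ is not the bare classical chain rule for directional derivatives --- existence of $G'(F(x);\cdot)$ alone does not suffice; you need the same Lipschitz absorption of the $o(t)$ error in $F(x+th)=F(x)+tF'(x)h+o(t)$ that you already carried out in the first half (this is precisely the computation the paper writes out), so state that estimate explicitly there as well.
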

\begin{proof}
    The local Lipschitz continuity of $G\circ F$ follows from that of $G$ and $F$ (which follows from the assumption as in the proof of \cref{thm:clarke:frechet}). For the claimed inclusion (or equality), we argue as before using the support functional calculus.
    First we show that for every $h\in X$ there exists a $y^*\in \partial_C G(F(x))$ with
    \begin{equation}\label{eq:clarke:chain1}
        (G\circ F)^\circ(x;h) = \dual{y^*,F'(x)h}_Y.
    \end{equation}
    To this end, consider for given $h\in X$ sequences $\{x_n\}_{n\in\N}\subset X$ and $\{t_n\}_{n\in\N}\subset (0,\infty)$ with $x_n\to x$, $t_n \to 0$, and
    \begin{equation*}
        (G\circ F)^\circ(x;h) = \lim_{n\to\infty} \frac{G(F(x_n+t_nh))-G(F(x_n))}{t_n}.
    \end{equation*}
    Furthermore, by continuity of $F$, we can find $n_0\in\N$ such that $F(x_n), F(x_n+t_nh)$ lie in the Lipschitz neighborhood of $F(x)$ for all $n\geq n_0$. \Cref{thm:clarke:mean} thus yields for all $n\geq n_0$ a $y_n^*\in \partial_C G(y_n)$ with $y_n: = F(x_n)+\lambda_n(F(x_n+t_nh)-F(x_n))$ for some $\lambda_n\in(0,1)$ such that
    \begin{equation}\label{eq:clarke:chain1a}
        \frac{G(F(x_n+t_nh))-G(F(x_n))}{t_n} = \dual{y_n^*,q_n}_Y
        \quad\text{with}\quad
        q_n := \frac{F(x_n+t_nh)-F(x_n)}{t_n}
    \end{equation}
    Since $\lambda_n\in(0,1)$ is uniformly bounded, we also have that $y_n\to F(x)$ for $n\to \infty$.
    Hence $y_n$ is in the Lipschitz neighborhood of $F(x)$ for $n\in\N$ large enough,
    and \cref{lem:clarke:properties} yields that $y_n^*\in \partial_C G(y_n) \subset K_L(0)$ for $n\in \N$ sufficiently large.
    This implies that $\{y_n^*\}_{n\in\N}\subset Y^*$ is bounded, and the
    \nameref{thm:banachal} \cref{thm:banachal} yields a weakly-$*$ convergent subsequence with limit $y^*\in \partial_C G(F(x))$ by \cref{lem:clarke:closed}.
    Finally, since $F$ is continuously Fréchet differentiable, $q_n \to F'(x)h$ strongly in $Y$ by \cref{lem:frechet:diffquot}. Hence, $\dual{y_n^*,q_n}_Y \to \dual{y^*,F'(x)h}$ as the duality pairing of weakly-$*$ and strongly converging sequences. Passing to the limit in \eqref{eq:clarke:chain1a} therefore yields \eqref{eq:clarke:chain1} (first along the subsequence chosen above; by convergence of the left-hand side of \eqref{eq:clarke:chain1a} and the uniqueness of limits then for the full sequence as well). By definition of the Clarke subdifferential, we thus have for $y^*\in\partial_C G(F(x))$ that
    \begin{equation}\label{eq:clarke:chain2}
        (G\circ F)^\circ(x;h) = \dual{y^*,F'(x)h}_Y\leq G^\circ(F(x);F'(x)h).
    \end{equation}

    If $G$ is now regular at $x$, we have that $G^\circ(F(x);F'(x)h) = G'(F(x);F'(x)h)$ and hence by the local Lipschitz continuity of $G$ and the Fréchet differentiability of $F$ that
    \begin{multline*}
        G^\circ(F(x);F'(x)h) \\
        \begin{aligned}[t]
            &= \lim_{t\to 0} \frac{G(F(x)+tF'(x)h) - G(F(x))}{t}\\
            &= \lim_{t\to 0} \frac{G(F(x)+tF'(x)h) -G(F(x+th))+G(F(x+th))- G(F(x))}{t}\\
            &\leq \lim_{t\to 0} \left(L\norm{h}_X\frac{\norm{F(x)+F'(x)th - F(x+th)}_Y}{\norm{th}_X} + \frac{G(F(x+th))- G(F(x))}{t} \right)\\
            &= (G\circ F)'(x;h) \leq (G\circ F)^\circ(x;h).
        \end{aligned}
    \end{multline*}
    Together with \eqref{eq:clarke:chain2}, this implies that $(G\circ F)'(x;h)  = (G\circ F)^\circ(x;h)$ (i.e., $G\circ F$ is regular at $x$) and that
    \begin{equation}
        \label{eq:clarke:chain3}
        (G\circ F)^\circ(x;h) = G^\circ(F(x);F'(x)h).
    \end{equation}

    As before, \cref{lem:clarke:support1} now implies for all $h\in X$ that
    \begin{equation*}
        \sup_{x^*\in F'(x)^*\partial_CG(F(x))}\dual{x^*,h}_X = \sup_{y^*\in \partial_C G(F(x))}\dual{y^*,F'(x)h}_Y = G^\circ (F(x); F'(x)h)
    \end{equation*}
    and hence by \cref{lem:clarke:support3} that
    \begin{equation*}
        F'(x)^*\partial_C G(F(x)) = \setof{x^*\in X^*}{\dual{x^*,h}_X\leq G^\circ(F(x);F'(x)h)\text{ for all }h\in X}.
    \end{equation*}
    Combined with \eqref{eq:clarke:chain2} or \eqref{eq:clarke:chain3} and the definition of the Clarke subdifferential in \eqref{eq:clarke:def}, this now yields the claimed inclusion or equality, respectively, for the Clarke subdifferential of the composition.
\end{proof}
Again, the generic inclusion is the reverse of the one in the convex chain rule.
Note that equality in the chain rule also holds if $-G$ is regular, since we can then apply \cref{thm:clarke:chain} to $-G\circ F$ and use that $\partial_C (-G)(F(x)) = -\partial_C G(F(x))$ by \cref{thm:clarke:scalar}.
Furthermore, if $G$ is not regular but $F'(x)$ is surjective, a similar proof shows that equality (but not the regularity of $G\circ F$) holds in the chain rule; see \cite[Theorem 10.19]{Clarke:2013}.

\section{Characterization in finite dimensions}\label{sec:clarke:finitedim}

A more explicit characterization of the Clarke subdifferential is possible in finite-dimensional spaces. The basis is the following theorem, which only holds in $\R^N$; a proof can be found in, e.g., \cite[Theorem 23.2]{DiBenedetto} or \cite[Theorem 3.1]{Heinonen}.
\begin{theorem}[Rademacher]\label{thm:rademacher}
    Let $U\subset \R^N$ be open and $F:U\to\R$ be Lipschitz continuous. Then $F$ is Fréchet differentiable in almost every $x\in U$.
\end{theorem}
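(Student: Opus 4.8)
The plan is to run the classical three-step argument: reduce Fréchet differentiability first to the existence of genuine (two-sided) directional derivatives along lines, then to the linear dependence of those derivatives on the direction (for a.e.\ point, \emph{one direction at a time}), and finally recover the required uniformity in the direction by a density argument. Differentiability is a local property, so the geometry of $U$ plays no role beyond letting us invoke Fubini's theorem on slices; everywhere below, $L$ denotes the Lipschitz constant of $F$.

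\textbf{Step 1: directional derivatives exist a.e.} Fix a unit vector $v\in\R^N$ and rotate coordinates so that $v=e_N$. For each fixed $x'$, the slice $s\mapsto F(x',s)$ is Lipschitz on an open subset of $\R$, hence — by the classical fact that a monotone (a fortiori Lipschitz) function on an interval is differentiable almost everywhere — has a derivative for a.e.\ $s$; Fubini's theorem then shows that the genuine directional derivative $F'(x;v)=\lim_{t\to 0}t^{-1}(F(x+tv)-F(x))$ exists for a.e.\ $x\in U$. Applying this to $e_1,\dots,e_N$ shows that $\nabla F(x)=(\partial_1F(x),\dots,\partial_NF(x))$ exists a.e.; it is measurable, being an a.e.\ pointwise limit of the continuous difference quotients $x\mapsto t^{-1}(F(x+te_i)-F(x))$, and $\norm{\nabla F}\le L$, so $\nabla F\in L^\infty_{\mathrm{loc}}$.

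\textbf{Step 2: identifying $F'(x;v)$ with $\nabla F(x)\cdot v$.} This is the cleverest point. Fix $v$ and test against an arbitrary $\phi\in\Cinf(U)$; for $t$ small enough that the support of $\phi$ stays in $U$ under the translation, a change of variables gives
\[
    \int_U \frac{F(x+tv)-F(x)}{t}\,\phi(x)\,dx = -\int_U F(x)\,\frac{\phi(x)-\phi(x-tv)}{t}\,dx .
\]
Letting $t\to 0$, dominated convergence applies on both sides — the difference quotients of $F$ are bounded by $L$ and converge a.e., those of $\phi$ converge uniformly and have fixed compact support — yielding $\int_U F'(x;v)\phi(x)\,dx=-\int_U F(x)\,(\nabla\phi(x)\cdot v)\,dx$. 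Taking $v=e_i$ identifies $\partial_iF$ with the distributional derivative of $F$, and for general $v=\sum_iv_ie_i$ the linearity of the right-hand side in $v$ gives $\int_U F'(x;v)\phi\,dx=\int_U(\nabla F(x)\cdot v)\phi\,dx$ for all $\phi$, hence the pointwise identity $F'(x;v)=\nabla F(x)\cdot v$ for a.e.\ $x$.

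\textbf{Step 3: from directional to Fréchet differentiability (the main obstacle).} Fix a countable dense set $\{v_k\}$ of unit vectors and let $E\subset U$ be the full-measure set of points at which $\nabla F$ exists and $F'(x;v_k)=\nabla F(x)\cdot v_k$ for every $k$ (full measure as a countable intersection of full-measure sets). For $x\in E$ I claim $F$ is Fréchet differentiable with derivative $h\mapsto\nabla F(x)\cdot h$. Write $h=tv$ with $t=\norm{h}$, $\norm{v}=1$, and set $Q(v,t)=t^{-1}(F(x+tv)-F(x))-\nabla F(x)\cdot v$, so that $Q(v,t)=\norm{h}^{-1}(F(x+h)-F(x)-\nabla F(x)\cdot h)$. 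The Lipschitz bound on $F$ gives $|Q(v,t)-Q(v',t)|\le(L+\norm{\nabla F(x)})\norm{v-v'}$ \emph{uniformly in $t$}, while $Q(v_k,t)\to 0$ as $t\to 0$ for each $k$. Given $\eps>0$, pick finitely many $v_{k_1},\dots,v_{k_m}$ that are $\eps$-dense in the unit sphere and $\delta>0$ with $|Q(v_{k_j},t)|<\eps$ for $0<|t|<\delta$ and all $j$; then for an arbitrary unit $v$, choosing the nearest $v_{k_j}$ yields $|Q(v,t)|<(1+L+\norm{\nabla F(x)})\eps$ for $0<|t|<\delta$. Since $\eps$ was arbitrary, $F$ is Fréchet differentiable at every $x\in E$. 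The crux is precisely this uniformization: it succeeds only because the Lipschitz constant controls the modulus of $Q$ in the direction variable uniformly in the vanishing step size, which is what lets finitely many directions do the work of all of them.
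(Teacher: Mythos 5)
Your proposal is correct. Note that these notes do not prove Rademacher's theorem at all -- the statement is quoted with the proof deferred to the literature (\cite[Theorem 23.2]{DiBenedetto}, \cite[Theorem 3.1]{Heinonen}) -- and what you have written is precisely the classical argument given there: a.e.\ existence of directional derivatives from the one-dimensional Lebesgue theorem plus Fubini, identification of $F'(x;v)$ with $\nabla F(x)\cdot v$ a.e.\ by testing against $\Cinf$ functions and using linearity of the distributional derivative in $v$, and the upgrade from countably many directions to Fréchet differentiability via the Lipschitz bound, which makes the remainder quotients $Q(v,t)$ Lipschitz in $v$ uniformly in $t$ so that an $\eps$-net of directions suffices. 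All three steps are sound; the only points you leave tacit are routine bookkeeping: the measurability of the exceptional set needed for Fubini in Step 1 (obtained by expressing the $\limsup$ and $\liminf$ of the difference quotients through rational $t$, using continuity of $F$), and the fundamental lemma of the calculus of variations converting the integral identity of Step 2 into the pointwise a.e.\ identity. Neither is a gap in substance.
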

This result allows replacing the $\limsup$ in the definition of the Clarke subdifferential (now considered as a subset of $\R^N$, i.e., identifying the dual of $\R^N$ with $\R^N$ itself) with a proper limit.
\begin{theorem}\label{thm:clarke:gradient}
    Let $F:\R^N\to\R$ be locally Lipschitz continuous in $x\in\R^N$ and Fréchet differentiable on $\R^N\setminus E_F$ for a set $E_F\subset \R^N$ of Lebesgue measure $0$. Then
    \begin{equation}\label{eq:clarke:gradient}
        \partial_C F(x) = \co \setof{\lim_{n\to\infty} \nabla F(x_n)}{x_n\to x,\ x_n\notin E_F},
    \end{equation}
    where $\co A$ denotes the convex hull of $A\subset \R^N$.
\end{theorem}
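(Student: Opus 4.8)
The plan is to prove the two inclusions in \eqref{eq:clarke:gradient} separately, in each case comparing the two (compact, convex) sets directly or through support functionals, as in the proofs of the sum and chain rules above. Write $G(x)$ for the right-hand side of \eqref{eq:clarke:gradient} and $\Gamma(x):=\setof{\lim_{n\to\infty}\nabla F(x_n)}{x_n\to x,\ x_n\notin E_F}$ for its generating set, so that $G(x)=\co\Gamma(x)$. I would first record that these sets are well behaved: since $F$ is locally Lipschitz in $x$ with constant $L$, bounding difference quotients gives $|\nabla F(y)|\le L$ at every $y$ in the Lipschitz neighborhood where $F$ is differentiable, so $\Gamma(x)$ is bounded by $L$; a diagonal argument shows $\Gamma(x)$ is closed, hence compact; and $\Gamma(x)\neq\emptyset$ because $E_F$ has Lebesgue measure $0$, so there are sequences $x_n\to x$ with $x_n\notin E_F$, along which $\{\nabla F(x_n)\}$ has a convergent subsequence by Bolzano–Weierstraß. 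Hence $G(x)=\co\Gamma(x)$ is compact (Carathéodory in $\R^N$) and convex, and its support functional in a direction $h$ is $\max_{\xi\in\Gamma(x)}\inner{\xi,h}_{\R^N}$. The inclusion $G(x)\subset\partial_C F(x)$ is then the easy one: since $\partial_C F(x)$ is convex and closed it suffices to show $\Gamma(x)\subset\partial_C F(x)$, and if $\xi=\lim_n\nabla F(x_n)$ with $x_n\to x$, $x_n\notin E_F$, then for each $h$ we have $\inner{\nabla F(x_n),h}_{\R^N}=F'(x_n;h)\le F^\circ(x_n;h)$ together with the upper-semicontinuity estimate $\limsup_n F^\circ(x_n;h)\le F^\circ(x;h)$ established inside the proof of \cref{lem:clarke:closed}, so $\inner{\xi,h}_{\R^N}\le F^\circ(x;h)$ and $\xi\in\partial_C F(x)$ by \eqref{eq:clarke:def}.

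For the reverse inclusion $\partial_C F(x)\subset G(x)$ I would argue by contradiction: suppose there is $\zeta\in\partial_C F(x)\setminus G(x)$. Since $G(x)$ is nonempty, convex, and compact, \cref{thm:hb_separation}\,(ii) produces a $v\in\R^N$ with
\[
    \alpha:=\max_{\xi\in\Gamma(x)}\inner{\xi,v}_{\R^N}=\sup_{\xi\in G(x)}\inner{\xi,v}_{\R^N}<\inner{\zeta,v}_{\R^N}.
\]
As $\zeta\in\partial_C F(x)$ gives $\inner{\zeta,v}_{\R^N}\le F^\circ(x;v)$, it is enough to derive the contradiction $F^\circ(x;v)\le\alpha$. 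The first step is a gradient bound near $x$: if $\inner{\nabla F(y_i),v}_{\R^N}\to\beta$ along some $y_i\to x$, $y_i\notin E_F$, then a subsequence of $\{\nabla F(y_i)\}$ converges to some $\xi\in\Gamma(x)$, so $\beta=\inner{\xi,v}_{\R^N}\le\alpha$. Therefore, given $\eps>0$ there is a $\delta>0$ such that $\inner{\nabla F(y),v}_{\R^N}<\alpha+\eps$ for \emph{almost every} $y\in O_\delta(x)$, the null exceptional set being $E_F$.

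The heart of the proof is to upgrade this pointwise-a.e.\ bound to a bound on difference quotients $\tfrac1t(F(y+tv)-F(y))$ for $y$ near $x$ and $t>0$ small. One cannot integrate $\inner{\nabla F(\cdot),v}_{\R^N}$ along the segment $[y,y+tv]$ directly, because that segment may lie entirely in $E_F$. Instead, for $y$ and $t$ small enough that the whole segment lies in $O_\delta(x)$, I would apply Fubini's theorem to the family of parallel segments $[y',y'+tv]$ with $y'$ ranging over a small ball about $y$: for almost every such $y'$, the set $\setof{s\in[0,t]}{y'+sv\in E_F}$ has one-dimensional measure $0$, and since $s\mapsto F(y'+sv)$ is Lipschitz, hence absolutely continuous, with derivative $\inner{\nabla F(y'+sv),v}_{\R^N}$ wherever $F$ is differentiable, one gets
\[
    F(y'+tv)-F(y')=\int_0^t\inner{\nabla F(y'+sv),v}_{\R^N}\,ds\le t(\alpha+\eps).
\]
Letting $y'\to y$ and using $|F(y+tv)-F(y'+tv)|\le L|y-y'|$ and $|F(y)-F(y')|\le L|y-y'|$ gives $F(y+tv)-F(y)\le t(\alpha+\eps)$, hence $\tfrac1t(F(y+tv)-F(y))\le\alpha+\eps$ for all admissible $y$ and $t$. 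Taking the $\limsup$ in \eqref{eq:clarke:dir} yields $F^\circ(x;v)\le\alpha+\eps$, and $\eps\to0$ gives the contradiction, so $\partial_C F(x)\subset G(x)$.

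I expect the Fubini/absolute-continuity step to be the only genuinely delicate point: the perturbation $y\rightsquigarrow y'$ that dodges the Lebesgue-null set $E_F$, followed by a Lipschitz estimate to return to $y$, uses the finite dimension crucially (a measure-zero set need not be \enquote{thin} in infinite dimensions), which is precisely why the clean formula \eqref{eq:clarke:gradient} is a finite-dimensional phenomenon resting on \cref{thm:rademacher}. Everything else is the by-now-routine comparison of compact convex sets via separation together with the outer semicontinuity of $y\mapsto F^\circ(y;\cdot)$ and the positive homogeneity/subadditivity/lower semicontinuity of $h\mapsto F^\circ(x;h)$ from \cref{lem:clarke:dir}.
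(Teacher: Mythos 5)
Your proposal is correct and follows essentially the same route as the paper's proof: the easy inclusion via boundedness of the gradients, outer semicontinuity of $y\mapsto\partial_C F(y)$ (equivalently the $\limsup$ estimate for $F^\circ$), and convexity of $\partial_C F(x)$; the hard inclusion via an almost-everywhere gradient bound near $x$, Fubini to integrate $\inner{\nabla F(\cdot),v}$ along segments that avoid the null set $E_F$, and a Lipschitz/continuity argument to recover the bound on all difference quotients, hence on $F^\circ(x;v)$. The only difference is cosmetic: you package the final comparison as a separation-based contradiction for a single $\zeta\notin\co\Gamma(x)$, while the paper proves $F^\circ(x;h)\leq M(h)$ for all $h$ and then compares the two convex sets via the support-functional lemmas \cref{lem:clarke:support1,lem:clarke:support2}.
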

\begin{proof}
    We first note that the Rademacher Theorem ensures that such a set $E_F$ exists and has Lebesgue measure $0$. Hence there indeed exist sequences $\{x_n\}_{n\in\N}\in\R^N\setminus E_F$ with $x_n\to x$.
    Furthermore, the local Lipschitz continuity of $F$ yields that for any $x_n$ in the Lipschitz neighborhood of $x$ and any $h\in \R^N$, we have that
    \begin{equation*}
        |\inner{\nabla F(x_n),h}| = \left|\lim_{t\to 0^+}\frac{F(x_n+th)-F(x_n)}{t}\right| \leq L\norm{h}
    \end{equation*}
    and hence that $\norm{\nabla F(x_n)} \leq L$. This implies that $\{\nabla F(x_n)\}_{n\in\N}$ is bounded and thus contains a convergent subsequence. The set on the right-hand side of \eqref{eq:clarke:gradient} is therefore nonempty.

    Let now $\{x_n\}_{n\in\N}\subset \R^N\setminus E_F$ be an arbitrary sequence with $x_n\to x$ and $\{\nabla F(x_n)\}_{n\in\N}\to x^*$ for some $x^*\in\R^N$.
    Since $F$ is differentiable in every $x_n\notin E_F$, we have that
    \begin{equation*}
        \dual{\nabla F(x_n),h} = F'(x; h) \leq F^\circ(x; h)
    \end{equation*}
    and hence that $\nabla F(x_n) \in \partial_C F(x_n)$ by definition. \Cref{lem:clarke:closed} thus yields that $x^*\in \partial_C F(x)$.
    The convexity of $\partial_C F(x)$ from \cref{lem:clarke:properties} now implies that any convex combination of such limits $x^*$ is contained in $\partial_C F(x)$, which shows the inclusion ``$\supset$'' in \eqref{eq:clarke:gradient}.

    For the other inclusion, we first show for all $h\in \R^N$ and $\eps>0$ that
    \begin{equation}\label{eq:clarke:gradient1a}
        F^\circ(x;h) - \eps \leq \limsup_{E_F\not\ni y\to x}\, \inner{\nabla F(y),h}=:M(h).
    \end{equation}
    Indeed, by definition of $M(h)$ and of the $\limsup$, for every $\eps>0$ there exists a $\delta>0$ such that
    \begin{equation*}
        \inner{\nabla F(y),h} \leq  M(h)+\eps \qquad\text{for all }y\in O_\delta(x)\setminus E_F.
    \end{equation*}
    Here, $\delta>0$ can be chosen sufficiently small for $F$ to be Lipschitz continuous on $O_\delta(x)$. In particular, $E_F\cap O_\delta(x)$ is a set of zero measure. Hence, $F$ is differentiable in $y+th$ for almost all $y\in O_{\delta/2}(x)$ and almost all $t\in (0,\frac\delta{2\norm{h}})$ by Fubini's Theorem. The classical mean value theorem therefore yields for all such $y$ and $t$ that
    \begin{equation}\label{eq:clarke:gradient1}
        F(y+th)-F(y) = \int_0^t \inner{\nabla F(y+sh),h}\,ds \leq t(M(h)+\eps)
    \end{equation}
    since $y+sh\in O_{\delta}(x)$ for all $s\in (0,t)$ by the choice of $t$.
    The continuity of $F$ implies that the full inequality \eqref{eq:clarke:gradient1} even holds for \emph{all} $y\in O_{\delta/2}(x)$ and \emph{all} $t\in (0,\frac\delta{2\norm{h}})$. Dividing by $t>0$ and taking the $\limsup$ over all $y\to x$ and $t\to 0$ now yields \eqref{eq:clarke:gradient1a}. Since $\eps>0$ was arbitrary, we conclude that $F^\circ(x;h)\leq M(h)$ for all $h\in \R^N$.

    As in \cref{lem:clarke:dir}, one can show that the mapping $h\mapsto M(h)$ is positively homogeneous, subadditive, and lower semicontinuous. We are thus finished if we can show that the set on the right-hand side of \eqref{eq:clarke:gradient} -- hereafter denoted by $\co A$ --  can be written as
    \begin{equation*}
        \co A= \setof{x^*\in\R^N}{\inner{x^*,h} \leq M(h)\quad\text{for all }h\in\R^N}.
    \end{equation*}
    For this, we once again appeal to \cref{lem:clarke:support2} (since both sets are closed and convex). First, we note that the definition of the convex hull implies for all $h\in\R^N$ that
    \begin{equation*}
        \sup_{x^*\in \co A} \inner{x^*,h}
        = \sup_{\substack{x_i^*\in A\\\sum_i t_i = 1,t_i\geq 0}} \sum_i t_i\inner{x_i^*,h} = \sup_{\sum_i t_i = 1,t_i\geq 0} \sum_i t_i \sup_{x_i^*\in A}\inner{x_i^*,h} = \sup_{x^*\in A}\inner{x^*,h}
    \end{equation*}
    since the sum is maximal if and only if each summand is maximal. Now we have that
    \begin{equation*}
        M(h) = \limsup_{E_F\not\ni y\to x} \,\inner{\nabla F(y),h} = \sup_{E_F \not\ni x_n\to x}\inner{\lim\nolimits_{n\to\infty} \nabla F(x_n),h} = \sup_{x^*\in A}\inner{x^*,h},
    \end{equation*}
    and hence the claim follows from \cref{lem:clarke:support1}.
\end{proof}

\chapter{Semismooth Newton methods}\label{chap:semismooth}

The proximal point and splitting methods in \cref{chap:proximal} are generalizations of gradient methods and in general have the same only linear convergence. In this chapter, we will therefore consider a generalization of Newton methods which admit (locally) superlinear convergence.

\section{Convergence of generalized Newton methods}

As a motivation, we first consider the most general form of a Newton-type method. Let $X$ and $Y$ be Banach spaces and $F:X\to Y$ be given and suppose we are looking for an $\bar x \in X$ with $F(\bar x)=0$. A Newton-type method to find such an $\bar x$ then consists of repeating the following steps:
\begin{enumerate}
    \item choose an invertible $M_k := M(x^k)\in L(X,Y)$;
    \item solve the \emph{Newton step} $M_k s^k = -F(x^k)$;
    \item update $x^{k+1} = x^k + s^k$.
\end{enumerate}
We can now ask under which conditions this method converges to $\bar x$, and in particular, when the convergence is \emph{superlinear}, i.e.,
\begin{equation}\label{eq:newton:superlinear}
    \lim_{k\to\infty} \frac{\norm{x^{k+1}-\bar x}_X}{\norm{x^{k}-\bar x}_X} = 0.
\end{equation}
For this purpose, we set $e^k := x^k-\bar x$ and use the Newton step together with the fact that $F(\bar x)=0$ to obtain that
\begin{equation*}
    \begin{aligned}[t]
        \norm{x^{k+1}-\bar x}_X &=  \norm{x^k - M(x^k)^{-1}F(x^{k}) - \bar x}_X\\
        &= \norm{M(x^k)^{-1}[F(x^k) - F(\bar x) - M(x^k)(x^k-\bar x)]}_X\\
        &= \norm{M(\bar x+e^k)^{-1}[F(\bar x+e^k) - F(\bar x) - M(\bar x+e^k) e^k]}_X\\
        &\leq \norm{M(\bar x+e^k)^{-1}}_{L(Y,X)}\norm{F(\bar x+e^k) - F(\bar x) - M(\bar x+e^k)e^k}_Y.
    \end{aligned}
\end{equation*}
Hence, \eqref{eq:newton:superlinear} holds under
\begin{enumerate}[(i)]
    \item a \emph{regularity condition}: there exists a $C>0$ with
        \begin{equation*}
            \norm{M(x^k)^{-1}}_{L(Y,X)} \leq C \qquad\text{for all }k\in\N;
        \end{equation*}
    \item an \emph{approximation condition}:
        \begin{equation*}
            \lim_{k\to\infty} \frac{\norm{F(\bar x+ e^k) - F(\bar x) - M(\bar x +e^k) e^k}_Y}{\norm{e^k}_X}= 0.
        \end{equation*}
\end{enumerate}

This motivates the following definition: We call $F:X\to Y$ \emph{Newton differentiable} in $x\in X$ if there exists a neighborhood $U\subset X$ of $x$ and a mapping $D_N F: U \to L(X;Y)$ such that
\begin{equation}\label{eq:newton:semismooth}
    \lim_{\norm{h}_X\to 0} \frac{\norm{F(x+ h) - F(x) - D_N F(x +h)h }_Y}{\norm{h}_X}= 0.
\end{equation}
We then call $D_NF(x)$ a \emph{Newton derivative} of $F$ at $x$.
Note the differences to the Fréchet derivative: First, the Newton derivative is evaluated in $x+h$ instead of $x$. More importantly, we have not required \emph{any} connection between $D_N F$ with $F$, while the only possible candidate for the Fréchet derivative was the Gâteaux derivative (which itself was linked to $F$ via the directional derivative). A function thus can only be Newton differentiable (or not) with respect to a concrete choice of $D_N F$. In particular, Newton derivatives are not unique.\footnote{Here we follow \cite{Chen:2000a,Kunisch:2008a,Schiela:2008a} and only consider single-valued Newton derivatives (called \emph{slanting functions} in the first-named work).  Alternatively, one could fix for each $x\in X$ a set $\partial_N F(x)$, from which the linear operator $M(x)$ in the Newton step has to be taken. If the approximation condition together with a boundedness condition hold \emph{uniformly} for all $M\in \partial_N F(x)$, the function $F$ is called \emph{semismooth} (explaining the title of this chapter). This approach is followed in, e.g., \cite{Mifflin:1977,Kummer:1988,Ulbrich:2011}.}

If $F$ is Newton differentiable with Newton derivative $D_N F$, we can set $M(x^k) = D_N F(x^k)$ and obtain the \emph{semismooth Newton method}
\begin{equation}\label{eq:SSN}
    x^{k+1} = x^k - D_N F(x^k)^{-1} F(x^k).
\end{equation}
Its local superlinear convergence follows directly from the construction.
\begin{theorem}\label{thm:newton:superlinear}
    Let $X,Y$ be Banach spaces and let $F:X\to Y$ be Newton differentiable in $\bar x\in X$ with $F(\bar x)=0$ with Newton derivative $D_N F(\bar x)$. Assume further that there exist $\delta>0$ and $C>0$ with $\norm{D_NF(x)^{-1}}_{L(Y,X)}\leq C$ for all $x\in O_\delta(\bar x)$. Then the semismooth Newton method \eqref{eq:SSN} converges to $\bar x$ for all $x^0$ sufficiently close to $\bar x$.
\end{theorem}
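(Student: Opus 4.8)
The plan is to carry out a standard local-convergence argument for Newton-type iterations, following exactly the motivating computation that precedes the theorem. First I would fix the constant $C$ and radius $\delta$ from the hypothesis, so that $\norm{D_NF(x)^{-1}}_{L(Y,X)}\leq C$ for all $x\in O_\delta(\bar x)$. Then, using the Newton differentiability of $F$ in $\bar x$ with $F(\bar x)=0$, the defining limit \eqref{eq:newton:semismooth} provides, for the choice $\eps := \frac{1}{2C}$, a radius $\delta'\in(0,\delta]$ such that
\begin{equation*}
    \norm{F(\bar x + h) - F(\bar x) - D_NF(\bar x + h)h}_Y \leq \frac{1}{2C}\norm{h}_X \qquad\text{for all } h\in X \text{ with }\norm{h}_X\leq \delta'.
\end{equation*}

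Next I would show that $O_{\delta'}(\bar x)$ is invariant under the iteration and that the error contracts. Assuming $x^k\in O_{\delta'}(\bar x)$, set $e^k := x^k - \bar x$, so $\norm{e^k}_X\leq \delta'\leq \delta$ and hence $D_NF(x^k)^{-1}$ exists with norm $\leq C$; thus $x^{k+1}$ is well-defined. The chain of equalities from the excerpt (inserting $F(\bar x)=0$ and the Newton step $D_NF(x^k)s^k=-F(x^k)$) gives
\begin{equation*}
    \norm{x^{k+1}-\bar x}_X \leq \norm{D_NF(\bar x + e^k)^{-1}}_{L(Y,X)}\,\norm{F(\bar x + e^k) - F(\bar x) - D_NF(\bar x + e^k)e^k}_Y \leq C\cdot \frac{1}{2C}\norm{e^k}_X = \tfrac12\norm{e^k}_X.
\end{equation*}
In particular $\norm{x^{k+1}-\bar x}_X\leq \tfrac12\delta' < \delta'$, so $x^{k+1}\in O_{\delta'}(\bar x)$ and the induction closes. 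Starting from any $x^0\in O_{\delta'}(\bar x)$ we obtain $\norm{x^k-\bar x}_X\leq 2^{-k}\norm{x^0-\bar x}_X\to 0$, i.e.\ convergence.

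Finally, for the superlinear rate I would not use the crude bound $\tfrac12$ but keep $\eps$ variable: for arbitrary $\eps>0$ choose (again by \eqref{eq:newton:semismooth}) a radius so that the approximation defect is $\leq \eps\norm{e^k}_X$ once $\norm{e^k}_X$ is small enough, which holds for all large $k$ since $e^k\to 0$; then $\norm{x^{k+1}-\bar x}_X\leq C\eps\norm{e^k}_X$, and since $\eps$ was arbitrary this is precisely \eqref{eq:newton:superlinear}. The argument is essentially a bookkeeping exercise; the only point requiring a little care is the logical order of quantifiers — one must first fix $\delta'$ (via $\eps=\tfrac{1}{2C}$) to guarantee well-definedness and the invariant region, and only afterwards exploit the full strength of Newton differentiability with $\eps\to 0$ along the tail of the sequence to upgrade linear to superlinear convergence. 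No genuine obstacle is expected here, since the regularity (uniform bound on $D_NF^{-1}$) is assumed outright rather than derived.
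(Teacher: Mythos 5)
Your proposal is correct and takes essentially the same route as the paper's proof: the error estimate from the motivating computation, combined with the approximation condition \eqref{eq:newton:semismooth}, gives contraction of the error and hence convergence, and superlinearity follows by letting the tolerance tend to zero. The only cosmetic difference is that you first fix $\eps=\tfrac1{2C}$ to secure well-definedness and the invariant ball and then use arbitrary $\eps$ along the tail, whereas the paper works with an arbitrary $\eps\in(0,1)$ from the start and concludes superlinearity from the arbitrariness of $\eps$ — your handling of that last step is, if anything, slightly more explicit.
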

\begin{proof}
    The proof is virtually identical to that for the classical Newton method. We have already shown that for any $x^0\in O_\delta(\bar x)$,
    \begin{equation}\label{eq:newton:superlinear1}
        \norm{e^1}_X \leq C \norm{F(\bar x+e^0) - F(\bar x) - D_NF(\bar x+e^0)e^0}_Y.
    \end{equation}
    Let now $\eps\in(0,1)$ be arbitrary. The Newton differentiability of $F$ then implies that there exists a $\rho>0$ such that
    \begin{equation*}
        \norm{F(\bar x+h) - F(\bar x) - D_NF(\bar x+h)h}_Y \leq \frac\eps{C}\norm{h}_X \qquad\text{for all }\norm{h}_X\leq \rho.
    \end{equation*}
    Hence, if we choose $x^0$ such that $\norm{\bar x- x^0}_X\leq \min\{\delta,\rho\}$, the estimate \eqref{eq:newton:superlinear1} implies that  $\norm{\bar x- x^1}_X\leq \eps\norm{\bar x- x^0}_X$. By induction, we obtain from this that $\norm{\bar x-x^k}_X\leq \eps^k \norm{\bar x- x^0}_X \to 0$. Since $\eps\in(0,1)$ was arbitrary, we can take in each step $k$ a different $\eps_k\to 0$, which shows that the convergence is in fact superlinear.
\end{proof}

\section{Newton derivatives}

The remainder of this chapter is dedicated to the construction of Newton derivatives (although it should be pointed out that the verification of the approximation condition is usually the much more involved step in practice).
We begin with the obvious connection with the Fréchet derivative.
\begin{theorem}\label{thm:newton:frechet}
    If $F:X\to Y$ is continuously differentiable in $x\in X$, then $F$ is also Newton differentiable in $x$ with Newton derivative $D_N F(x) = F'(x)$.
\end{theorem}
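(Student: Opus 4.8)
The plan is to verify the defining limit \eqref{eq:newton:semismooth} directly for the choice $D_N F(x) := F'(x)$, using the integral mean value theorem \cref{thm:frechet:mean} and then the continuity of $F'$. Since \enquote{continuously differentiable in $x$} means that $F$ is Fréchet differentiable on a neighborhood $U$ of $x$ with $F'$ continuous there, for $\norm{h}_X$ small enough the whole segment $\setof{x+th}{t\in[0,1]}$ lies in $U$, so \cref{thm:frechet:mean} is applicable. Subtracting the trivial identity $F'(x+h)h = \int_0^1 F'(x+h)h\,dt$ from $F(x+h)-F(x) = \int_0^1 F'(x+th)h\,dt$ then gives
\[
    F(x+h) - F(x) - F'(x+h)h = \int_0^1 \bigl(F'(x+th) - F'(x+h)\bigr)h\,dt.
\]

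Next I would estimate the right-hand side using the Bochner bound \eqref{eq:bochner_est} together with $\norm{\bigl(F'(x+th)-F'(x+h)\bigr)h}_Y \leq \norm{F'(x+th)-F'(x+h)}_{L(X,Y)}\norm{h}_X$, which yields
\[
    \frac{\norm{F(x+h) - F(x) - F'(x+h)h}_Y}{\norm{h}_X} \leq \int_0^1 \norm{F'(x+th) - F'(x+h)}_{L(X,Y)}\,dt.
\]
The continuity of $F'$ at $x$ then closes the argument: given $\eps>0$, choose $\delta>0$ so that $K_\delta(x)\subset U$ and $\norm{F'(z)-F'(x)}_{L(X,Y)}\leq \eps/2$ whenever $\norm{z-x}_X\leq\delta$; then for $\norm{h}_X\leq\delta$ both $x+th$ (for all $t\in[0,1]$) and $x+h$ lie in $K_\delta(x)$, so the triangle inequality bounds the integrand — and hence the integral — by $\eps$. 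Letting $\norm{h}_X\to 0$ yields \eqref{eq:newton:semismooth}, i.e., $F$ is Newton differentiable in $x$ with $D_N F(x) = F'(x)$.

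There is essentially no obstacle here; the only point worth stressing is that \enquote{continuously differentiable in $x$} must be read as Fréchet differentiability on an entire neighborhood of $x$ with continuous derivative — exactly what makes \cref{thm:frechet:mean} applicable on the segment $[x,x+h]$ and what makes the continuity estimate bite. This is the same reading already used implicitly in the proof of \cref{thm:clarke:frechet}.
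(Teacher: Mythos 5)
Your argument is correct, but it takes a different route from the paper. The paper's proof is a one-line add-and-subtract: it inserts $\pm F'(x)h$ and estimates
\begin{equation}
    \norm{F(x+h)-F(x)-F'(x+h)h}_Y \leq \norm{F(x+h)-F(x)-F'(x)h}_Y + \norm{F'(x)-F'(x+h)}_{L(X,Y)}\norm{h}_X,
\end{equation}
where the first term is $o(\norm{h}_X)$ by the definition of the Fréchet derivative at $x$ and the second by the continuity of $F'$ at $x$. You instead invoke the mean value theorem (\cref{thm:frechet:mean}) to represent $F(x+h)-F(x)$ as a Bochner integral over the segment, subtract the constant integrand $F'(x+h)h$, and bound the difference via \eqref{eq:bochner_est} and the modulus of continuity of $F'$ near $x$. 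Your route is the same technique the paper uses for the superposition-operator result (\cref{thm:superpos:differentiable}) and inside \cref{thm:clarke:frechet}, and it yields a slightly more quantitative estimate, namely $\int_0^1\norm{F'(x+th)-F'(x+h)}_{L(X,Y)}\,dt$; the price is that you need differentiability along the whole segment $[x,x+h]$, which your reading of \enquote{continuously differentiable in $x$} correctly supplies. The paper's decomposition is marginally more economical, since it uses the Fréchet expansion only at the single point $x$ (existence of $F'(x+h)$ and continuity of $F'$ at $x$ being needed in either case). Both proofs are complete and correct.
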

\begin{proof}
    We have for arbitrary $h\in X$ that
    \begin{equation*}
        \begin{aligned}
            \norm{F(x+h)-F(x)-F'(x+h)h}_Y &\leq \norm{F(x+h)-F(x)-F'(x)h}_Y\\
            \MoveEqLeft[-1]+ \norm{F'(x)-F'(x+h)}_{L(X,Y)}\norm{h}_X,
        \end{aligned}
    \end{equation*}
    where the first summand is $o(\norm{h}_X)$ by definition of the Fréchet derivative and the second by the continuity of $F'$.
\end{proof}
Calculus rules can be shown similarly to those for Fréchet derivatives. For the sum rule this is immediate; here we prove a chain rule by way of example.
\begin{theorem}\label{thm:newton:chain}
    Let $X$, $Y$, and $Z$ be Banach spaces, and let $F:X\to Y$ be Newton differentiable in $x\in X$ with Newton derivative $D_N F(x)$ and $G:Y\to Z$ be Newton differentiable in $y:=F(x)\in Y$ with Newton derivative $D_N G(y)$.
    If $D_N F$ and $D_N G$ are uniformly bounded in a neighborhood of $x$ and $y$, respectively, then $G\circ F$ is also Newton differentiable in $x$ with Newton derivative
    \begin{equation*}
        D_N (G\circ F)(x) = D_N G(F(x))\circ D_N F(x).
    \end{equation*}
\end{theorem}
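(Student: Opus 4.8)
The plan is to mimic the proof of the Fréchet chain rule (\cref{thm:frechet_chain}) but track the Newton-derivative errors instead. Write $y := F(x)$ and for $h \in X$ set $g := F(x+h) - F(x)$, so that $F(x+h) = y + g$. The quantity to estimate is
\begin{equation}
    R(h) := \norm{(G\circ F)(x+h) - (G\circ F)(x) - D_N G(F(x+h))\,D_N F(x+h)\,h}_Z,
\end{equation}
and we must show $R(h)/\norm{h}_X \to 0$ as $\norm{h}_X \to 0$. First I would insert the productive zero $\pm\, D_N G(y+g)\,g$ and split $R(h)$ via the triangle inequality into
\begin{equation}
    R(h) \leq \norm{G(y+g) - G(y) - D_N G(y+g)\,g}_Z + \norm{D_N G(y+g)}_{L(Y,Z)}\,\norm{g - D_N F(x+h)\,h}_Y,
\end{equation}
using that $D_N G(F(x+h)) = D_N G(y+g)$ since $F(x+h) = y+g$.

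Now the two summands are handled separately. For the second, the Newton differentiability of $F$ in $x$ gives $\norm{g - D_N F(x+h)\,h}_Y = \norm{F(x+h)-F(x)-D_N F(x+h)h}_Y = o(\norm{h}_X)$; combined with the assumed uniform boundedness of $D_N G$ near $y$ (which applies since $g \to 0$ as $h \to 0$, because Newton differentiability of $F$ forces $\norm{g}_Y \le \norm{D_N F(x+h)h}_Y + o(\norm{h}_X)$ and $D_N F$ is uniformly bounded near $x$, hence $F$ is continuous at $x$), this summand is $o(\norm{h}_X)$. For the first summand I would use Newton differentiability of $G$ in $y$: it gives $\norm{G(y+g)-G(y)-D_N G(y+g)g}_Z = r(\norm{g}_Y)$ with $r(t)/t \to 0$ as $t \to 0$. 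To convert this into a bound in terms of $\norm{h}_X$, estimate $\norm{g}_Y \leq \norm{D_N F(x+h)}_{L(X,Y)}\norm{h}_X + o(\norm{h}_X) \leq (C_F + 1)\norm{h}_X$ for $\norm{h}_X$ small, where $C_F$ is the uniform bound on $\norm{D_N F(\cdot)}$ near $x$. Then $r(\norm{g}_Y) \le r\big((C_F+1)\norm{h}_X\big)$, using that $r$ may be taken nondecreasing (replace $r(t)$ by $\sup_{0 \le s \le t} r(s)$), and dividing by $\norm{h}_X$ shows this term is $o(1)$ as well.

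The main obstacle I anticipate is the bookkeeping needed to legitimately pass from the $o(\norm{g}_Y)$ estimate for $G$ to an $o(\norm{h}_X)$ estimate — specifically, making precise that $\norm{g}_Y$ is bounded by a constant multiple of $\norm{h}_X$ near $h = 0$. This is exactly where the hypothesis that $D_N F$ is uniformly bounded in a neighborhood of $x$ is used (it is not automatic from Newton differentiability alone, unlike the Fréchet case where $F'(x)$ is a single fixed bounded operator), and it also delivers the continuity of $F$ at $x$ that is needed to invoke the uniform bound on $D_N G$ near $y = F(x)$. Once these uniform bounds are in place, the remaining steps are routine triangle-inequality estimates exactly parallel to the proof of \cref{thm:frechet_chain}. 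Finally, one notes that $D_N G(F(x)) \circ D_N F(x) \in L(X,Z)$ as a composition of bounded linear operators, so it is a legitimate candidate for the Newton derivative.
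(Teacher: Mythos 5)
Your proof is correct and follows essentially the same route as the paper's: the same productive zero $\pm D_N G(y+g)g$, the same splitting into the two residuals $r_1(\norm{g}_Y)$ and $\norm{D_N G(y+g)}_{L(Y,Z)}\,r_2(\norm{h}_X)$, and the same use of the uniform bounds on $D_N F$ and $D_N G$. If anything, you are slightly more explicit than the paper about the one subtle point -- that passing from $o(\norm{g}_Y)$ to $o(\norm{h}_X)$ needs the linear bound $\norm{g}_Y \leq (C_F+1)\norm{h}_X$ (and a monotone majorant for $r$), not merely $\norm{g}_Y \to 0$ -- which the paper's proof uses implicitly.
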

\begin{proof}
    We proceed as in the proof of \cref{thm:frechet_chain}. For $h\in X$ and  $g := F(x+h)-F(x)$ we have that
    \begin{equation*}
        (G\circ F)(x+h) - (G\circ F)(x) = G(y+g)  - G(y).
    \end{equation*}
    The Newton differentiability of $G$ then implies that
    \begin{equation*}
        \norm{(G\circ F)(x+h) - (G\circ F)(x) - D_N G(y+g)g}_Z =  r_1(\norm{g}_Y)
    \end{equation*}
    with $r_1(t)/t \to 0$ for $t\to 0$.
    The Newton differentiability of $F$ further implies that
    \begin{equation*}
        \norm{g - D_N F(x+h)h}_Y = r_2(\norm{h}_X)
    \end{equation*}
    with $r_2(t)/t \to 0$ for $t\to 0$. In particular,
    \begin{equation*}
        \norm{g}_Y \leq \norm{D_N F(x+h)}_{L(X,Y)}\norm{h}_Y + r_2(\norm{h}_X).
    \end{equation*}
    The uniform boundedness of $D_N F$ now implies that $\norm{g}_Y\to 0$ for $\norm{h}_X\to 0$. Hence,
    \begin{multline*}
        \norm{(G\circ F)(x+h) - (G\circ F)(x) -  D_N G(F(x+h))D_NF(x+h)h}_Z \\
        \begin{aligned}[t]
            &\leq \norm{G(y+g)-G(y)-D_NG(y+g)g}_Z\\
            \MoveEqLeft[-1]+ \norm{D_N G(y+g)\left[g-D_NF(x+h)h\right]}_Z\\
            &\leq r_1(\norm{g}_Y) +  \norm{D_N G(y+g)}_{L(Y,Z)} r_2(\norm{h}_X),
        \end{aligned}
    \end{multline*}
    and the claim thus follows from the uniform boundedness of $D_N G$.
\end{proof}

Finally, it follows directly from the definition of the product norm and Newton differentiability that Newton derivatives of vector-valued functions can be computed componentwise.
\begin{theorem}\label{thm:newton:vector}
    Let $X,Y_i$ be Banach spaces and let $F_i:X\to Y_i$ be Newton differentiable with Newton derivative $D_N F_i$ for $1\leq i\leq m$. Then
    \begin{equation*}
        F:X\to (Y_1\times\cdots\times Y_m), \qquad x\mapsto (F_1(x),\dots,F_m(x))^T,
    \end{equation*}
    is also Newton differentiable with Newton derivative
    \begin{equation*}
        D_N F(x) = (D_N F_1(x),\dots, D_N F_m(x))^T.
    \end{equation*}
\end{theorem}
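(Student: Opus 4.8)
The plan is to reduce the statement to the componentwise form of the Newton difference quotient \eqref{eq:newton:semismooth} together with the equivalence of norms on the finite product $Y := Y_1\times\cdots\times Y_m$. First I would fix a concrete norm on $Y$, for definiteness $\norm{(y_1,\dots,y_m)}_Y := \sum_{i=1}^m \norm{y_i}_{Y_i}$; since $Y$ is a finite product, every other natural choice (e.g.\ the maximum, or the Euclidean combination used elsewhere in these notes for product spaces) is equivalent to this one, so no generality is lost. Next I would record that $D_N F(x) := (D_N F_1(x),\dots,D_N F_m(x))^T$ genuinely defines an element of $L(X,Y)$: it is linear because each $D_N F_i(x)$ is, and it is bounded since $\norm{D_N F(x)h}_Y = \sum_{i=1}^m \norm{D_N F_i(x)h}_{Y_i} \leq \big(\sum_{i=1}^m \norm{D_N F_i(x)}_{L(X,Y_i)}\big)\norm{h}_X$ for all $h\in X$.

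The core of the argument is the observation that, by the definitions of $F$ and of $D_N F$, the $i$-th component of $F(x+h)-F(x)-D_N F(x+h)h$ is exactly $F_i(x+h)-F_i(x)-D_N F_i(x+h)h$. Consequently,
\begin{equation}
    \norm{F(x+h)-F(x)-D_N F(x+h)h}_Y = \sum_{i=1}^m \norm{F_i(x+h)-F_i(x)-D_N F_i(x+h)h}_{Y_i}.
\end{equation}
Dividing by $\norm{h}_X$ and letting $\norm{h}_X\to 0$, each of the $m$ summands tends to $0$ by the Newton differentiability of $F_i$ in $x$; since a finite sum of null sequences is null, the left-hand side divided by $\norm{h}_X$ tends to $0$ as well. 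This is precisely \eqref{eq:newton:semismooth} for $F$ with the asserted Newton derivative.

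There is no genuine obstacle here — the result is a bookkeeping consequence of the way norms on finite products interact with the limit \eqref{eq:newton:semismooth}, which is exactly why the text states it without proof. The only two points deserving a word of care are the choice of product norm (settled by norm equivalence on finite products) and the verification that $D_N F(x)\in L(X,Y)$, both handled above. If in addition one needs uniform boundedness of $x\mapsto D_N F(x)$ near a point — as required, for instance, to apply the chain rule \cref{thm:newton:chain} with $F$ as an inner or outer map — one notes that the same summation estimate $\norm{D_N F(x)}_{L(X,Y)} \leq \sum_{i=1}^m \norm{D_N F_i(x)}_{L(X,Y_i)}$ transfers uniform boundedness of each $D_N F_i$ to $D_N F$.
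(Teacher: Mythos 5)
Your argument is correct and is precisely the componentwise estimate the paper has in mind when it says the result "follows directly from the definition of the product norm and Newton differentiability" (the text states \cref{thm:newton:vector} without proof). Your added checks — that $D_N F(x)\in L(X,Y_1\times\cdots\times Y_m)$ and that the choice of product norm is immaterial by norm equivalence on finite products — are exactly the right bookkeeping and match the intended reasoning.
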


\bigskip

Since the definition does not include a constructive prescription of Newton derivatives, the question remains how to obtain a candidate for which the approximation condition can be verified. For two classes of functions, such an explicit construction is known.

\subsection*{Locally Lipschitz continuous functions on \texorpdfstring{$\scriptstyle\R^N$}{ℝⁿ}}

If $F:\R^N\to \R$ is locally Lipschitz continuous, candidates can be taken from the Clarke subdifferential, which has an explicit characterization by \cref{thm:clarke:gradient}. Under some additional assumptions, each candidate is indeed a Newton derivative.\footnote{This is the original derivation of semismooth Newton methods.}

A function $F:\R^N\to\R$ is called \emph{piecewise (continuously) differentiable} or \emph{PC$^1$ function}, if
\begin{enumerate}[(i)]
    \item $F$ is continuous on $\R^N$;
    \item for all $x\in\R^N$ there exists an open neighborhood $U\subset \R^N$ of $x$ and a finite set $\{F_i:U\to \R\}_{i\in I}$ of continuously differentiable functions with
        \begin{equation*}
            F(\tilde x) \in \{F_i(\tilde x)\}_{i\in I}\qquad\text{for all }\tilde x\in U.
        \end{equation*}
\end{enumerate}
In this case, we call $F$ a \emph{measurable selection} of the $F_i$ in $U$. The set
\begin{equation*}
    I_a(x) := \setof{i\in I}{F(x) = F_i(x)}
\end{equation*}
is called the \emph{active index set} at $x$. Since the $F_i$ are continuous, we have that $F(\tilde x) \neq F_j(\tilde x)$ for all $j\notin I_a(x)$ and $\tilde x$ sufficiently close to $x$. Hence, indices that are only active on sets of zero measure do not have to be considered in the following. We thus define the \emph{essentially active index set}
\begin{equation*}
    I_e(x) := \setof{i\in I}{x\in \mathrm{cl}\left(\setof{\tilde x\in U}{F(\tilde x) = F_i(\tilde x)}^o\right)}\subset I_a(x).
\end{equation*}
An example of an active but not essentially active index set is the following.
\begin{example}
    Consider the function $f:\R\to\R$, $t\mapsto \max\{0,t,t/2\}$, i.e., $f_1(t)=0$, $f_2(t)=t$ and $f_3(t)=t/2$. Then $I_a(0)=\{1,2,3\}$ but $I_e(0)=\{1,2\}$, since $f_3$ is active only in $t=0$ and hence $\setof{t\in\R}{f(t)=f_3(t)}^o=\emptyset=\mathrm{cl}\,\emptyset$.
\end{example}

Since any $C^1$ function $F_i:U_x\to\R$ is Lipschitz continuous with Lipschitz constant $L_i:= \sup_{\tilde x\in U_x}|\nabla F(\tilde x)|$, PC$^1$ functions are always locally Lipschitz continuous; see \cite[Corollary 4.1.1]{Scholtes:2012}.
\begin{theorem}
    Let $F:\R^N\to\R$ be piecewise differentiable. Then $F$ is locally Lipschitz continuous in all $x\in\R^N$ with local constant $L(x)=\max_{i\in I_a(x)} L_i$.
\end{theorem}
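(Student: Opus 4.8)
The plan is to localize around an arbitrary point $x\in\R^N$, use the finite family $\{F_i:U\to\R\}_{i\in I}$ guaranteed by the definition of a PC$^1$ function, and then argue that on a sufficiently small ball $F$ agrees pointwise with one of the continuously differentiable $F_i$, \emph{all} of which are Lipschitz near $x$ with explicit constants.

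First I would fix $x\in\R^N$ and invoke property (ii) of the definition: there is an open neighborhood $U$ of $x$ and finitely many $C^1$ functions $F_i:U\to\R$, $i\in I$, with $F(y)\in\{F_i(y)\}_{i\in I}$ for all $y\in U$. Each $F_i$ is continuously differentiable, hence (as in the first part of the proof of \cref{thm:clarke:frechet}, using the mean value \cref{thm:frechet:mean}) locally Lipschitz continuous at $x$; pick $\delta_i>0$ and $L_i>0$ with $|F_i(y_1)-F_i(y_2)|\leq L_i\norm{y_1-y_2}$ for all $y_1,y_2\in O_{\delta_i}(x)$. Next I would use continuity of the $F_j$ to shrink the neighborhood: for $j\notin I(x)$ we have $F_j(x)\neq F(x)=F_i(x)$ for $i\in I(x)$, so there is $\delta_0>0$ such that for $y\in O_{\delta_0}(x)$ the value $F(y)$ can only equal $F_i(y)$ for some $i\in I(x)$ (the non-active selections are bounded away from $F$ near $x$, using continuity of $F$ and of all the $F_i$). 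Set $\delta:=\min\big(\delta_0,\min_{i\in I(x)}\delta_i\big)>0$ and $L:=\max_{i\in I(x)}L_i$.

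The key step is the Lipschitz estimate on $O_\delta(x)$. Take $y_1,y_2\in O_\delta(x)$. By the above, $F(y_1)=F_{i}(y_1)$ and $F(y_2)=F_{k}(y_2)$ for some $i,k\in I(x)$. Here one must be slightly careful: $F(y_1)-F(y_2)=F_i(y_1)-F_k(y_2)$ mixes two selections. The standard trick (see \cite[Corollary 4.1.1]{Scholtes:2012}) is to connect $y_1$ and $y_2$ by the segment $[y_1,y_2]\subset O_\delta(x)$ (balls are convex) and use that a PC$^1$ function restricted to a segment is piecewise $C^1$ in one variable, so one can partition $[y_1,y_2]$ into finitely many subintervals on each of which $F$ coincides with a single $F_i$, $i\in I(x)$; summing the Lipschitz bounds $L_i\leq L$ over consecutive pieces and using the triangle inequality along the segment gives $|F(y_1)-F(y_2)|\leq L\norm{y_1-y_2}$. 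Since $x$ was arbitrary this yields local Lipschitz continuity everywhere with the claimed constant $L(x)=\max_{i\in I(x)}L_i$.

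The main obstacle I anticipate is precisely the mixing of selections just described: one cannot directly bound $|F_i(y_1)-F_k(y_2)|$ by $L\norm{y_1-y_2}$ when $i\neq k$, so the segment-subdivision argument (continuity of $F$ forces the selection to change only where two $F_i$ agree) is essential, and making the finiteness of the partition rigorous is the delicate point. An alternative, if one is willing to cite it, is to simply invoke \cite[Corollary 4.1.1]{Scholtes:2012} as the excerpt already does and only verify the value of the constant; but carrying out the segment argument makes the statement self-contained.
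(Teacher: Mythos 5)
Your setup is sound: shrinking the neighborhood so that only indices in $I(x)$ can be active and taking $L=\max_{i\in I(x)}L_i$ is exactly the right localization, and since the paper itself only cites \cite[Corollary 4.1.1]{Scholtes:2012} rather than proving the statement, trying to make it self-contained is reasonable. However, the step you yourself flag as delicate is in fact false as stated: the segment $[y_1,y_2]$ cannot in general be partitioned into \emph{finitely many} subintervals on each of which $F$ coincides with a single $F_i$. The coincidence sets $T_i=\setof{t\in[0,1]}{F(\gamma(t))=F_i(\gamma(t))}$, with $\gamma(t):=y_1+t(y_2-y_1)$, are closed, but nothing prevents them from alternating infinitely often: take $F_1(t)=0$ and $F_2(t)=t^3\sin(1/t)$ (extended by $0$ at $t=0$, which is $C^1$) and $F=\max\{F_1,F_2\}$; on any segment with endpoint $0$ the active selection switches on infinitely many intervals accumulating at $0$, so no finite partition of the kind you describe exists. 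Hence ``making the finiteness of the partition rigorous'' is not possible in that form, and the triangle-inequality-over-subintervals argument does not close as written.

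The standard repair needs much less than what you asked for: coincidence at finitely many chain \emph{points} rather than on whole subintervals. Put $t_0=0$ and pick $i_0\in I(x)$ with $F(\gamma(t_0))=F_{i_0}(\gamma(t_0))$; given $t_j$ and $i_j$, set $t_{j+1}:=\max\setof{t\in[t_j,1]}{F(\gamma(t))=F_{i_j}(\gamma(t))}$, which exists because this set is nonempty and closed (continuity of $F$ and $F_{i_j}$). Since $F$ agrees with the single selection $F_{i_j}$ at both $\gamma(t_j)$ and $\gamma(t_{j+1})$, the Lipschitz bound for $F_{i_j}$ gives $|F(\gamma(t_{j+1}))-F(\gamma(t_j))|\le L\,(t_{j+1}-t_j)\norm{y_1-y_2}$. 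By maximality, $i_j$ is not active at any $t>t_{j+1}$ and so can never be used again; and if $t_{j+1}<1$, a short closedness/pigeonhole argument (a sequence $t\downarrow t_{j+1}$ meets some $T_i$ infinitely often, and that index is then active at $t_{j+1}$ as well as at later points) supplies a fresh index $i_{j+1}$ active at $t_{j+1}$. Since $I(x)$ is finite, the chain reaches $t=1$ after at most $|I(x)|$ steps, and summing the estimates gives $|F(y_1)-F(y_2)|\le L\norm{y_1-y_2}$ with exactly the claimed constant $L(x)=\max_{i\in I(x)}L_i$. With this chain argument replacing the finite partition, the rest of your proof goes through unchanged.
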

This yields the following explicit characterization of the Clarke subdifferential of a PC$^1$ function.
\begin{theorem}\label{thm:newton:clarke}
    Let $F:\R^N\to\R$ be piecewise differentiable and $x\in\R^N$. Then
    \begin{equation*}
        \partial_C F(x) = \co\setof{\nabla F_i(x)}{i\in I_e(x)}.
    \end{equation*}
\end{theorem}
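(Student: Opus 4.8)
The plan is to combine the finite-dimensional characterization of the Clarke subdifferential via limiting gradients (\cref{thm:clarke:gradient}) with the structure of PC$^1$ functions, namely that near $x$ the function $F$ coincides with finitely many $C^1$ pieces $F_i$, and that only the essentially active ones matter on sets of positive measure. First I would invoke the preceding theorem that $F$ is locally Lipschitz continuous, so that the Rademacher-based formula \eqref{eq:clarke:gradient} applies: $\partial_C F(x) = \co\setof{\lim_{n\to\infty}\nabla F(x_n)}{x_n\to x,\ x_n\notin E_F}$, where $E_F$ is the null set of non-differentiability points of $F$ (which we may take inside the neighborhood $U$).

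The core of the argument is to identify this set of limiting gradients with $\setof{\nabla F_i(x)}{i\in I_e(x)}$, after which taking convex hulls on both sides finishes the proof. For the inclusion ``$\subset$'': suppose $x_n\to x$ with $x_n\notin E_F$ and $\nabla F(x_n)\to v$. Since $I$ is finite, after passing to a subsequence there is a fixed index $i$ with $F(x_n)=F_i(x_n)$ for all $n$, and moreover $x_n$ lies in the interior of $\setof{y\in U}{F(y)=F_i(y)}$ for all $n$ (here one needs to be slightly careful: at $x_n$, differentiability of $F$ together with $F\le$ the pieces and $F=F_i$ in a neighborhood forces $\nabla F(x_n)=\nabla F_i(x_n)$; I would argue that for $x_n\notin E_F$ the piece $F_i$ realizing $F$ near $x_n$ is essentially active at $x_n$, so $x_n$ is in the closure — in fact interior — of the piece's coincidence set). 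Then $x\in\mathrm{cl}(\setof{y\in U}{F(y)=F_i(y)}^o)$, i.e. $i\in I_e(x)$, and by continuity of $\nabla F_i$ we get $v=\lim\nabla F_i(x_n)=\nabla F_i(x)$. For the inclusion ``$\supset$'': if $i\in I_e(x)$, then by definition there are points $y_n\to x$ in the interior of $\setof{y\in U}{F(y)=F_i(y)}$; on that open set $F\equiv F_i$ so $F$ is differentiable there with $\nabla F=\nabla F_i$, hence (discarding those $y_n$ in the measure-zero set $E_F$, which does not affect the limit since the interior set has positive measure near $x$) $\nabla F(y_n)=\nabla F_i(y_n)\to\nabla F_i(x)$, exhibiting $\nabla F_i(x)$ as one of the limiting gradients.

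With the set equality $\setof{\lim\nabla F(x_n)}{x_n\to x,\ x_n\notin E_F}=\setof{\nabla F_i(x)}{i\in I_e(x)}$ in hand, applying $\co$ to both sides and quoting \eqref{eq:clarke:gradient} gives $\partial_C F(x)=\co\setof{\nabla F_i(x)}{i\in I_e(x)}$, which is the claim. The main obstacle I expect is the careful bookkeeping in the ``$\subset$'' direction: showing that whenever $F$ is differentiable at a nearby point $x_n$ and $F=F_i$ there, the index $i$ is genuinely essentially active at $x$ (not merely active), so that the limiting gradient lands in the right finite set rather than in a spurious larger set. This uses that a point of differentiability of the PC$^1$ function cannot lie only on a lower-dimensional ``seam'' of the piece $F_i$ — essentially a local argument that at such $x_n$ the coincidence set $\{F=F_i\}$ has nonempty interior arbitrarily close to $x_n$, which one can extract from continuity of all the $F_j$ and the fact that $F_j(x_n)\ne F(x_n)$ for $j\notin I(x_n)$. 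The Lipschitz bound $\norm{\nabla F_i}\le L$ near $x$ (from the previous theorem) guarantees the relevant sequences of gradients are bounded, so all the limits invoked above exist along subsequences.
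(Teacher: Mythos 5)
Your proposal follows essentially the same route as the paper's proof: invoke \cref{thm:clarke:gradient}, identify the set of limiting gradients with $\setof{\nabla F_i(x)}{i\in I_e(x)}$ by passing to subsequences with a constant, essentially active index and using continuity of the $\nabla F_i$ (and, for the reverse inclusion, sequences taken from the interior of the coincidence set of each $i\in I_e(x)$), then take convex hulls. The delicate point you single out — that the index realized at nearby differentiability points can be taken essentially active — is treated in the paper exactly as you suggest, by absorbing the exceptional points into the null set $E_F$, so your argument is correct at the same level of detail as the paper's.
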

\begin{proof}
    Let $x\in\R^N$ be arbitrary. By \cref{thm:clarke:gradient} it suffices to show that
    \begin{equation*}
        \setof{\lim_{n\to\infty} \nabla F(x_n)}{x_n\to x,\ x_n\notin E_F} = \setof{\nabla F_i(x)}{i\in I_e(x)}.
    \end{equation*}
    For this, let $\{x_n\}_{n\in\N}\subset\R^N$ be a sequence with $x_n\to x$ such that $F$ is differentiable in $x_n$ for all $n\in \N$, and $\nabla F(x_n)\to x^* \in \R^N$. Since $F$ is differentiable in $x_n$, it must hold that $F(\tilde x) = F_{i_n}(\tilde x)$ for some $i_n\in I$ and all $\tilde x$ sufficiently close to $x_n$, which implies that $\nabla F(x_n) = \nabla F_{i_n} (x_n)$.
    For sufficiently large $n\in \N$, we can further assume that $i_n\in I_e(x)$ (if necessary, by adding $x_n$ with $i_n\notin I_e(x)$ to $E_F$, which does not increase its Lebesgue measure). If we now consider subsequences $\{x_{n_k}\}_{k\in\N}$ with constant index $i_{n_k}=:i\in I_e(x)$ (which exist since $I_e(x)$ is finite), we obtain using the continuity of $\nabla F_i$ that
    \begin{equation*}
        x^*=\lim_{k\to\infty} \nabla F(x_{n_k}) =\lim_{k\to\infty} \nabla F_i(x_{n_k})  \in \setof{\nabla F_i(x)}{i\in I_e(x)}.
    \end{equation*}

    Conversely, for every $\nabla F_i(x)$ with $i\in I_e(x)$ there exists by definition of the essentially active indices a sequence $\{x_n\}_{n\in\N}$ with $x_n\to x$ and $F=F_i$ in a sufficiently small neighborhood of each $x_n$ for $n$ large enough. The continuous differentiability of the $F_i$ thus implies that $\nabla F(x_n) = \nabla F_i(x_n)$ for all $n\in\N$ large enough and hence that
    \begin{equation*}
        \nabla F_i(x) = \lim_{n\to \infty} \nabla F_i(x_n) = \lim_{n\to \infty} \nabla F(x_n).
        \qedhere
    \end{equation*}
\end{proof}
From this, we obtain the Newton differentiability of PC$^1$ functions.
\begin{theorem}\label{thm:newton:clarke_ndiff}
    Let $F:\R^N\to\R$ be piecewise differentiable. Then $F$ is Newton differentiable for all $x\in\R^N$, and every $D_NF(x)\in \partial_C F(x)$ is a Newton derivative.
\end{theorem}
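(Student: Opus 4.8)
The plan is to fix $x\in\R^N$ together with the open neighbourhood $U$ and the finite family $\{F_i\}_{i\in I}$ of continuously differentiable selection functions from the definition of a PC$^1$ function, and to verify the approximation condition \eqref{eq:newton:semismooth} for an \emph{arbitrary} selection $y\mapsto D_NF(y)\in\partial_C F(y)$ (the statement asserts that \emph{every} such choice works, so no measurability or continuity of the selection may be used). The first step is the observation that $I(x+h)\subset I(x)$ once $\norm{h}$ is small enough that $x+h\in U$ and $[x,x+h]\subset U$: this is exactly the continuity remark made just before \cref{thm:newton:clarke} — if $j\notin I(x)$ then $F_j(x)\neq F(x)$, and since $F$ and $F_j$ are continuous this persists on a neighbourhood of $x$, so $j\notin I(y)$ there, and finiteness of $I$ makes the neighbourhood uniform. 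Combining this with \cref{thm:newton:clarke} and the trivial inclusion $I_e(x+h)\subset I(x+h)\subset I(x)$, we may write
\begin{equation}
    D_NF(x+h)=\sum_{j\in I(x)}t_j(h)\,\nabla F_j(x+h),\qquad t_j(h)\geq 0,\quad \sum_{j\in I(x)}t_j(h)=1,
\end{equation}
where $t_j(h)=0$ whenever $j\notin I_e(x+h)$.

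Next I would exploit that the weights sum to one. For every $j\in I(x)$ with $t_j(h)>0$ we have $j\in I_e(x+h)\subset I(x+h)$, hence $F(x+h)=F_j(x+h)$, and $j\in I(x)$ gives $F(x)=F_j(x)$; therefore $F(x+h)=\sum_j t_j(h)F_j(x+h)$ and $F(x)=\sum_j t_j(h)F_j(x)$, so that
\begin{equation}
    F(x+h)-F(x)-D_NF(x+h)h=\sum_{j\in I(x)}t_j(h)\Big(F_j(x+h)-F_j(x)-\nabla F_j(x+h)h\Big).
\end{equation}
Using $t_j(h)\geq 0$ and $\sum_j t_j(h)=1$ this yields the bound
\begin{equation}
    \bigl|F(x+h)-F(x)-D_NF(x+h)h\bigr|\leq \max_{j\in I(x)}\bigl|F_j(x+h)-F_j(x)-\nabla F_j(x+h)h\bigr|.
\end{equation}

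It then remains to estimate each term on the right, which is a routine $C^1$ Taylor estimate. Since $[x,x+h]\subset U$ and $F_j$ is continuously differentiable on $U$, \cref{thm:frechet:mean} gives $F_j(x+h)-F_j(x)=\int_0^1\dual{\nabla F_j(x+th),h}\,dt$, whence
\begin{equation}
    \bigl|F_j(x+h)-F_j(x)-\nabla F_j(x+h)h\bigr|\leq \norm{h}\int_0^1\norm{\nabla F_j(x+th)-\nabla F_j(x+h)}\,dt,
\end{equation}
and the integral tends to $0$ as $\norm{h}\to 0$ by continuity of $\nabla F_j$. As $I(x)$ is finite, this convergence is uniform in $j\in I(x)$, so dividing the previous bound by $\norm{h}$ and letting $\norm{h}\to 0$ gives the claim. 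The main (in fact the only real) obstacle is the bookkeeping of the first two paragraphs: establishing $I(x+h)\subset I(x)$ for small $h$, recognizing via \cref{thm:newton:clarke} that every element of $\partial_C F(x+h)$ is a convex combination of gradients $\nabla F_j(x+h)$ indexed by the \emph{fixed} finite set $I(x)$, and noting that each such $F_j$ agrees with $F$ both at $x$ and at $x+h$; once that structure is exposed, the estimate and hence \cref{thm:newton:clarke_ndiff} follow at once.
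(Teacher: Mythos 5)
Your proposal is correct and follows essentially the same route as the paper's proof: both use \cref{thm:newton:clarke} to write $D_NF(x+h)$ as a convex combination of $\nabla F_j(x+h)$ over $I_e(x+h)\subset I(x+h)\subset I(x)$ for small $h$, exploit that $F$ agrees with each such $F_j$ at both $x$ and $x+h$, and reduce to the $C^1$ Taylor estimate for the finitely many selection functions. The only cosmetic difference is that the paper invokes \cref{thm:newton:frechet} for the last step where you spell out the mean value argument (and make the uniformity over the finite index set explicit).
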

\begin{proof}
    Let $x\in \R^N$ be arbitrary and $h\in X$ with $x+h\in U$. By \cref{thm:newton:clarke}, every $D_N F(x+h)\in \partial_C F(x+h)$ is of the form
    \begin{equation*}
        D_N F(x+h) = \sum_{i\in I_e(x+h)} \lambda_i \nabla F_i(x+h)\qquad\text{for }\sum_{i\in I_e(x+h)} \lambda_i = 1, \lambda_i\geq 0.
    \end{equation*}
    Since $F$ is continuous, we have for all $h\in\R^N$ sufficiently small that $I_e(x+h)\subset I_a(x+h)\subset I_a(x)$, where the second inclusion follows from the fact that by continuity, $F(x)\neq F_i(x)$ implies that $F(x+h) \neq F_i(x+h)$. Hence, $F(x+h) = F_i(x+h)$ and $F(x) = F_i(x)$ for all $i\in I_e(x+h)$.
    \Cref{thm:newton:frechet} then yields that
    \begin{equation*}
        |F(x+h)-F(x)-D_N F(x+h)h| \leq \sum_{i\in I_e(x+h)} \lambda_i |F_i(x+h)-F_i(x) - \nabla F_i(x+h)h| = o(\norm{h}),
    \end{equation*}
    since all $F_i$ are continuously differentiable by assumption.
\end{proof}

A natural application of the above are proximal point reformulations of optimality conditions for convex optimization problems.
\begin{example}\label{ex:newton:rn}
    We consider the minimization of $F+G$ for a twice continuously differentiable functional $F:\R^N\to\R$ and $G = \norm{\cdot}_1$. Proceeding as in the derivation of the forward--backward splitting \eqref{eq:splitting:fb}, we can use the regularity of $F$ and $G$ to write the necessary optimality condition $0\in\partial_C (F+G)(\bar x)$ equivalently as
    \begin{equation*}
        \bar x - \prox_{\gamma G}(\bar x - \gamma \nabla F(\bar x))=0
    \end{equation*}
    for any $\gamma >0$.
    By \cref{ex:proximal:rn}\,(ii), the proximal point mapping for $G$ is given componentwise as
    \begin{equation*}
        [\prox_{\gamma G}(x)]_i =
        \begin{cases}
            x_i-\gamma & \text{if }x_i>\gamma,\\
            0 & \text{if }x_i \in[-\gamma,\gamma],\\
            x_i+\gamma & \text{if }x_i < -\gamma,
        \end{cases}
    \end{equation*}
    which is clearly piecewise differentiable.
    \Cref{thm:newton:clarke} thus yields (also componentwise) that
    \begin{equation*}
        [\partial_C (\prox_{\gamma G})(x)]_i =
        \begin{cases}
            \{1\} & \text{if }|x_i|>\gamma,\\
            \{0\} & \text{if }|x_i|<\gamma,\\
            [0,1] & \text{if }|x_i|=\gamma.
        \end{cases}
    \end{equation*}
    By \cref{thm:newton:clarke_ndiff,thm:newton:vector}, a possible Newton derivative is therefore given by
    \begin{equation*}
        [D_N \prox_{\gamma G}(x) h]_i = [\1_{\{|x|\geq \gamma\}}h]_i :=
        \begin{cases}
            h_i & \text{if }|x_i|\geq \gamma, \\
            0 & \text{if }|x_i|<\gamma.
        \end{cases}
    \end{equation*}
    (The choice which case to include the equality in is arbitrary here.)
    Now, $D_N \prox_{\gamma G}(x)$ and $D_N (\nabla F)(x) = \nabla^2 F(x)$ are locally uniformly bounded (obviously from the characterization and the continuous differentiability, respectively), and using the chain rule from \cref{thm:newton:chain} and rearranging yields the semismooth Newton step
    \begin{equation*}
        \left(\1_{\calI_k} + \gamma  \1_{\calA_k}\nabla^2F(x^k)\right)s^k = - x^k + \prox_{\gamma G}(x^k-\gamma \nabla F(x^k)),
    \end{equation*}
    where we have defined the \emph{active} and \emph{inactive sets}, respectively, as
    \begin{equation*}
        \calA_k := \setof{i\in\{1,\dots,N\}}{|x^k_i - \gamma [\nabla F(x^k)]_i|\geq \gamma},\qquad \calI_k := \{1,\dots,N\}\setminus \calA_k.
    \end{equation*}
    If we now also partition $s^k$ as well as the right-hand side in active and inactive components using the case distinction in the characterization of $\prox_{\gamma G}$ (which follows the same partition), we can rearrange this linear system into blocks corresponding to active and inactive components to observe that the Newton step coincides with an \emph{active set strategy}  similar to those used for solving quadratic subproblems in sequential programming methods with inequality constraints; cf.~\cite[Chapter 8.4]{Kunisch:2008a}.
\end{example}

\subsection*{Superposition operators on \texorpdfstring{$\scriptstyle L^p(\Omega)$}{Lᵖ(Ω)}}

Rademacher's Theorem does not hold in infinite-dimensional function spaces, and hence the Clarke subdifferential no longer yields an algorithmically useful candidate for a Newton derivative in general. One exception is the class of superposition operators defined by scalar Newton differentiable functions, for which the Newton derivative can be evaluated pointwise as well.

We thus again consider for an open and bounded domain $\Omega\subset \R^N$, a Carathéodory function $f:\Omega\times \R\to\R$ (i.e., $f$ is measurable in $x$ and continuous in $z$), and $1\leq p,q\leq \infty$ the corresponding superposition operator
\begin{equation*}
    F:L^p(\Omega)\to L^q(\Omega),\qquad [F(u)](x) = f(x,u(x))\quad\text{for almost every }x\in\Omega.
\end{equation*}
The goal is now to similarly obtain a Newton derivative $D_N F$ for $F$ as a superposition operator defined by the Newton derivative $D_N f(x,z)$ of $z\mapsto f(x,z)$. Here, the assumption that $D_N f$ is also a Carathéodory function is too restrictive, since we want to allow discontinuous derivatives as well (see \cref{ex:newton:rn}). Luckily, for our purpose, a weaker property is sufficient: A function is called \emph{Baire--Carathéodory function} if it can be written as a pointwise limit of Carathéodory functions, i.e., if
\begin{equation*}
    f(x,z) = \lim_{n\to \infty} f_n(x,z) \qquad\text{for almost every }x\in \Omega\text{ and all }z\in \R,
\end{equation*}
where $f_n$ is a Carathéodory function for all $n\in\N$; see \cite[Lemma 1.4]{Appell:1990}.

Under certain growth conditions on $f$ and $D_N f$,\footnote{which can be significantly relaxed; see \cite[Proposition \textsc{a}.1]{Schiela:2008a}} we can transfer the Newton differentiability of $f$ to $F$, but we again have to take a two norm discrepancy into account.
\begin{theorem}\label{thm:newton:super}
    Let $f:\Omega\times\R\to\R$ be a Carathéodory function. Furthermore, assume that
    \begin{enumerate}[(i)]
        \item $z\mapsto f(x,z)$ is uniformly Lipschitz continuous for almost every $x\in \Omega$ and $f(x,0)$ is bounded;
        \item $z\mapsto f(x,z)$ is Newton differentiable with Newton derivative $z\mapsto D_N f(x,z)$ for almost every $x\in \Omega$;
        \item $D_N f$ is a Baire--Carathéodory function and uniformly bounded.
    \end{enumerate}
    Then for any $1\leq q<p< \infty$, the corresponding superposition operator $F:L^p(\Omega)\to L^q(\Omega)$ is Newton differentiable with Newton derivative
    \begin{equation*}
        D_N F:L^p(\Omega)\to L(L^p(\Omega),L^q(\Omega)), \qquad
        [D_N F(u)h](x) = D_N f(x,u(x))h(x)
    \end{equation*}
    for almost every $x\in \Omega$ and all $h\in L^p(\Omega)$.
\end{theorem}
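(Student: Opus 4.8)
The plan is to dispatch the well-definedness of $F$ and of the candidate $D_N F$ quickly, and then to prove the approximation condition \eqref{eq:newton:semismooth} by contradiction, with the two-norm discrepancy $q<p$ carrying the essential weight. First I would note that (i) gives $|f(x,z)|\le |f(x,0)|+L|z|$, which (since $p/q\ge 1$ on the bounded domain $\Omega$) is a growth condition of type \eqref{eq:superpos:growth}, so \Cref{thm:superpos:continuous} yields that $F:L^p(\Omega)\to L^q(\Omega)$ is well-defined (and continuous). For $D_N F(u)$, the hypothesis that $D_N f$ is a Baire--Carathéodory function means it is a pointwise limit of Carathéodory functions, whence $x\mapsto D_N f(x,u(x))$ is measurable; together with the uniform bound $|D_N f(x,z)|\le C_N$ from (iii) and Hölder's inequality, the pointwise multiplication $h\mapsto D_N f(\cdot,u(\cdot))h$ is a bounded operator from $L^p(\Omega)$ to $L^q(\Omega)$ with $\norm{D_N F(u)}_{L(L^p(\Omega),L^q(\Omega))}\le C_N|\Omega|^{1/q-1/p}$, uniformly in $u$. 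The same measurability remark shows that, for every $u,h\in L^p(\Omega)$, the pointwise remainder $r(x):=f(x,u(x)+h(x))-f(x,u(x))-D_N f(x,u(x)+h(x))h(x)$ is measurable, and combining the Lipschitz bound in (i) with the bound in (iii) gives the crude estimate $|r(x)|\le (L+C_N)|h(x)|$ for almost every $x$.

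The heart of the proof is to show $\norm{r}_{L^q}/\norm{h}_{L^p}\to 0$ as $\norm{h}_{L^p}\to 0$, where $r$ denotes the remainder above. Suppose this fails: there are $c>0$ and $h_n\to 0$ in $L^p(\Omega)$, $h_n\ne 0$, with $\norm{r_n}_{L^q}\ge c\norm{h_n}_{L^p}$ for the corresponding remainders $r_n$. Passing to a subsequence (which still violates the estimate) I may assume $h_n\to 0$ pointwise almost everywhere. Fix $\eps>0$ and set
\[
E_k:=\bigcap_{n\ge k}\setof{x\in\Omega}{|r_n(x)|\le \eps|h_n(x)|},
\]
which is measurable; since $z\mapsto f(x,z)$ is Newton differentiable at $u(x)$ for almost every $x$ and $h_n(x)\to 0$ (the inequality being trivially satisfied where $h_n(x)=0$), almost every $x$ lies in some $E_k$, so $|\Omega\setminus E_k|\to 0$. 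On $E_k$ one has $|r_n|\le\eps|h_n|$ for $n\ge k$, while on $\Omega\setminus E_k$ one uses only $|r_n|\le (L+C_N)|h_n|$; applying Hölder's inequality in the form $\norm{h_n}_{L^q(S)}\le\norm{h_n}_{L^p}|S|^{1/q-1/p}$ on each piece and combining via $(a^q+b^q)^{1/q}\le a+b$ gives
\[
\norm{r_n}_{L^q}\le\Bigl(\eps\,|\Omega|^{1/q-1/p}+(L+C_N)\,|\Omega\setminus E_k|^{1/q-1/p}\Bigr)\norm{h_n}_{L^p}\qquad\text{for all }n\ge k.
\]
Here $1/q-1/p>0$ exactly because $q<p$. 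Choosing first $\eps$ so small that $\eps|\Omega|^{1/q-1/p}<c/2$ and then $k$ so large that $(L+C_N)|\Omega\setminus E_k|^{1/q-1/p}<c/2$ yields $\norm{r_n}_{L^q}<c\norm{h_n}_{L^p}$ for $n\ge k$, a contradiction. This establishes \eqref{eq:newton:semismooth} with the stated $D_N F$.

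I expect the main obstacle to be precisely the step that makes Newton differentiability effective uniformly in $x$: the scale $\delta(x)$ below which $|r(x)|\le\eps|h(x)|$ depends on $x$ with no uniform positive lower bound. The device that resolves this is the combination of a measurable exhaustion $E_k\uparrow\Omega$ with the Hölder gain $|S|^{1/q-1/p}$, available only under the two-norm discrepancy $q<p$, so that the uncontrolled set $\Omega\setminus E_k$ contributes an arbitrarily small multiple of $\norm{h}_{L^p}$. The only other point requiring care is that all the compositions in sight are measurable, which is the sole function of the Baire--Carathéodory assumption on $D_N f$; everything else (the growth/Lipschitz bookkeeping, continuity of $F$) is routine and follows the pattern of \Cref{thm:superpos:continuous} and \Cref{thm:superpos:differentiable}.
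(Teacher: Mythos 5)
Your proof is correct, and it takes a genuinely different route through the key limiting step than the paper does. The paper factors the remainder as $R(u+h)\,h$ with the pointwise difference-quotient residual $R(u+h)(x)$, bounds it uniformly by $L+C$, shows $R(u+h)\to 0$ in $L^{s}(\Omega)$ for the intermediate exponent $s=\tfrac{pq}{p-q}$ via pointwise a.e.\ convergence and dominated convergence, and concludes with a single H\"older inequality $\norm{R(u+h)h}_{L^q}\leq\norm{R(u+h)}_{L^s}\norm{h}_{L^p}$; this recycles the machinery of \cref{thm:superpos:continuous,thm:superpos:differentiable}. You instead argue by contradiction, pass to a pointwise a.e.\ convergent subsequence, and run an Egorov-type exhaustion: the sets $E_k$ on which the pointwise Newton-differentiability estimate $|r_n|\leq\eps|h_n|$ has become effective increase to a.e.\ all of $\Omega$, and on the exceptional set you only pay the crude bound $(L+C_N)|h_n|$ times the factor $|\Omega\setminus E_k|^{1/q-1/p}$, which is where the two-norm discrepancy $q<p$ enters. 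In effect you reprove the dominated-convergence step by hand, which buys a more elementary and more transparent argument (no auxiliary exponent $s$, and the role of the norm gap is made explicit through the measure factor), at the cost of a contradiction/subsequence setup; the paper's version is shorter once $s$ is introduced and makes the failure for $p=q$ visible in the same way (the factor $|\Omega\setminus E_k|^{1/q-1/p}$ degenerates to $1$, just as $\norm{R}_{L^\infty}$ need not tend to zero). All hypotheses are used in the same places in both proofs: (i) and (iii) for the uniform bound $L+C_N$ on the residual and the mapping properties of $F$ and $D_NF(u)$, (ii) for the pointwise a.e.\ convergence of the residual quotient, and the Baire--Carath\'eodory assumption solely for measurability of $x\mapsto D_Nf(x,\tilde u(x))$, exactly as you note.
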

\begin{proof}
    First, the uniform Lipschitz continuity together with the reverse triangle inequality yields that
    \begin{equation*}
        |f(x,z)| \leq |f(x,0)|+L|z| \leq C +L|z|^{q/q}\quad\text{for almost every }x\in\Omega\text{ and all } z\in\R,
    \end{equation*}
    and hence the growth condition \eqref{eq:superpos:growth} is satisfied for all $1\leq q<\infty$. Due to the continuous embedding $L^p(\Omega)\hookrightarrow L^q(\Omega)$ for all $1\leq q<p<\infty$, the superposition operator $F:L^p(\Omega)\to L^q(\Omega)$ is therefore well-defined and continuous by \cref{thm:superpos:continuous}.

    For any measurable $u:\Omega\to\R$, we have that $x\mapsto D_N f(x,u(x))$ is by assumption (iii) the pointwise limit of measurable functions and hence itself measurable. Furthermore, its uniform boundedness in particular implies the growth condition \eqref{eq:superpos:growth} for $p':=p$ and $q':=p-q>0$. As in the proof of \cref{thm:superpos:differentiable}, we deduce that the corresponding superposition operator $D_N F:L^p(\Omega)\to L^s(\Omega)$ is well-defined and continuous for $s:=\frac{pq}{p-q}$, and that for any $u\in L^p(\Omega)$, the mapping $h\mapsto D_NF(u)h$ defines a bounded linear operator $D_NF(u):L^p(\Omega)\to L^q(\Omega)$.
    (This time, we do not distinguish in notation between the linear operator and the function defining this operator by pointwise multiplication.)

    To show that $D_N F(u)$ is a Newton derivative for $F$ in $u\in L^p(\Omega)$, we consider the pointwise residual
    \begin{equation*}
        r:\Omega\times\R\to\R,\qquad r(x,z) :=
        \begin{cases}
            \frac{|f(x,z)-f(x,u(x))-D_N f(x,z)(z-u(x))|}{|z-u(x)|} & \text{if }z\neq u(x),\\
            0 & \text{if }z=u(x).
        \end{cases}
    \end{equation*}
    Since $f$ is a Carathéodory function and $D_N f$ is a Baire--Carathéodory function, the function $x\mapsto r(x,\tilde u(x))=:R(\tilde u)$ is measurable for any measurable $\tilde u:\Omega\to\R$ (since sums, products, and quotients of measurable functions are again measurable).
    Furthermore, for $\tilde u\in L^p(\Omega)$, the uniform Lipschitz continuity of $f$ and the uniform boundedness of $D_N f$ imply that
    \begin{equation}\label{eq:newton:superpos1}
        |[R(\tilde u)](x)| = \frac{|f(x,\tilde u(x))-f(x,u(x))-D_N f(x,\tilde u(x))(\tilde u(x)-u(x))|}{|\tilde u(x)-u(x)|}
        \leq L + C
    \end{equation}
    and thus that $R(\tilde u)\in L^\infty(\Omega)$. Hence, the superposition operator $R:L^p(\Omega)\to L^{s}(\Omega)$ is well-defined.

    Let now $\{u_n\}_{n\in\N}\subset L^p(\Omega)$ be a sequence with $u_n\to u\in L^p(\Omega)$. Then there exists a subsequence, again denoted by $\{u_n\}_{n\in\N}$, with $u_n(x)\to u(x)$ for almost every $x\in \Omega$. Since $z\mapsto f(x,z)$ is Newton differentiable almost everywhere, we have by definition that $r(x,u_n(x))\to 0$ for almost every $x\in\Omega$.
    Together with the boundedness from \eqref{eq:newton:superpos1}, Lebesgue's dominated convergence theorem therefore yields that $R(u_n)\to 0$ in $L^{s}(\Omega)$ (and hence along the full sequence since the limit is unique).\footnote{This step fails for $F:L^\infty(\Omega)\to L^\infty(\Omega)$ since pointwise convergence and boundedness together do not imply uniform convergence almost everywhere.}
    For any $\tilde u\in L^p(\Omega)$, the Hölder inequality with $\frac1p+\frac1{s}=\frac1q$ thus yields that
    \begin{equation*}
        \begin{aligned}
            \norm{F(\tilde u)-F(u)-D_NF(\tilde u)(\tilde u-u)}_{L^q} =
            \norm{R(\tilde u)(\tilde u-u)}_{L^q}
            \leq \norm{R(\tilde u)}_{L^{s}}\norm{\tilde u-u}_{L^p}.
        \end{aligned}
    \end{equation*}
    If we now set $\tilde u := u+h$ for $h\in L^p(\Omega)$ with $\norm{h}_{L^p}\to 0$, we have that $\norm{R(u+h)}_{L^{s}}\to 0$ and hence by definition the Newton differentiability of $F$ in $u$ with Newton derivative $h\mapsto D_N F(u)h$ as claimed.
\end{proof}
For $p=q\in[1,\infty]$, however, the claim is false in general, as can be shown by counterexamples.
\begin{example}
    We take
    \begin{equation*}
        f:\R\to\R,\qquad f(z) = \max\{0,z\} :=
        \begin{cases}
            0 &\text{if }z\leq 0, \\
            z &\text{if }z\geq 0.
        \end{cases}
    \end{equation*}
    This is a piecewise differentiable function, and hence by \cref{thm:newton:clarke_ndiff} we can for any $\delta\in[0,1]$ take as Newton derivative
    \begin{equation*}
        D_N f(z)h =
        \begin{cases}
            0 & \text{if }z<0,\\
            \delta h&\text{if } z=0,\\
            h & \text{if }z>0.
        \end{cases}
    \end{equation*}
    We now consider the corresponding superposition operators $F:L^p(\Omega)\to L^p(\Omega)$ and $D_N F(u)\in L(L^p(\Omega);L^p(\Omega))$ for any $p\in [1,\infty)$ and show that the approximation condition \eqref{eq:newton:semismooth} is violated for $\Omega=(-1,1)$, $u(x) = -|x|$, and
    \begin{equation*}
        h_n(x) =
        \begin{cases}
            \frac1n & \text{if } |x| < \frac1n,\\
            0 & \text{if } |x|\geq \frac1n.
        \end{cases}
    \end{equation*}
    First, it is straightforward to compute $\norm{h_n}_{L^p}^p = \frac2{n^{p+1}}$. Then since  $[F(u)](x) = \max\{0,-|x|\} = 0$ almost everywhere, we have that
    \begin{equation*}
        [F(u+h_n) - F(u) - D_N F(u+h_n)h_n](x) =
        \begin{cases}
            - |x| & \text{if }|x| < \frac1n,\\
            0    & \text{if }|x| > \frac1n,\\
            -\frac\delta{n} & \text{if } |x| = \frac1n,
        \end{cases}
    \end{equation*}
    and thus
    \begin{equation*}
        \norm{F(u+h_n) - F(u) - D_N F(u+h_n)h_n}^p_{L^p} = \int_{-\frac1n}^{\frac1n} |x|^p\,dx = \frac2{p+1}\left(\frac1n\right)^{p+1}.
    \end{equation*}
    This implies that
    \begin{equation*}
        \lim_{n\to\infty} \frac{\norm{F(u+h_n) - F(u) - D_N F(u+h_n)h_n}_{L^p}}{\norm{h_n}_{L^p}} = \left(\frac1{p+1}\right)^{\frac1p} \neq 0
    \end{equation*}
    and hence that $F$ is not Newton differentiable from $L^p(\Omega)$ to $L^p(\Omega)$ for any $p<\infty$.

    \bigskip

    For the case $p=q=\infty$, we take $\Omega=(0,1)$, $u(x) = x$, and
    \begin{equation*}
        h_n(x) =
        \begin{cases}
            nx-1 & \text{if }x\leq \frac1n,\\
            0 & \text{if }x\geq \frac1n,
        \end{cases}
    \end{equation*}
    such that $\norm{h_n}_{L^\infty}=1$ for all $n\in\N$. We also have that $x + h_n = (1+n)x-1\leq 0$ for $x\leq \frac{1}{n+1}\leq \frac1n$ and hence that
    \begin{equation*}
        [F(u+h_n) - F(u) - D_N F(u+h_n)h_n](x) =
        \begin{cases}
            (1+n)x-1 & \text{if }x \leq \frac1{n+1},\\
            0    & \text{if }x \geq \frac1{n+1}
        \end{cases}
    \end{equation*}
    since either $h_n=0$ or $F(u+h_n)=F(u)+D_NF(u)h_n$ in the second case. Now,
    \begin{equation*}
        \sup_{x\in(0,\frac{1}{n+1}]}|(1+n)x-1| = 1 \qquad\text{for all }n\in\N,
    \end{equation*}
    which implies that
    \begin{equation*}
        \lim_{n\to\infty} \frac{\norm{F(u+h_n) - F(u) - D_N F(u+h_n)h_n}_{L^p}}{\norm{h_n}_{L^p}} = 1 \neq 0
    \end{equation*}
    and hence that $F$ is not Newton differentiable from $L^\infty(\Omega)$ to $L^\infty(\Omega)$ either.
\end{example}

Due to the two norm discrepancy, we can no longer apply the semismooth Newton method directly to proximal point reformulations in function spaces. We therefore have to fall back on the Moreau--Yosida regularization.
\begin{example}
    We consider as in \cref{ex:newton:rn} the minimization of $F+G$ for a twice continuously differentiable functional $F:L^2(\Omega)\to \R$ and $G=\norm{\cdot}_{L^1}$. The proximal point reformulation of $0\in \partial (F+G)(\bar u)$,
    \begin{equation*}
        \bar u - \prox_{\gamma G}(\bar u - \gamma \nabla F(\bar u))=0,
    \end{equation*}
    now has to be considered as an equation in $L^2(\Omega)$; however, $\prox_{\gamma G}$ is \emph{not} Newton differentiable from $L^2(\Omega)$ to $L^2(\Omega)$. We therefore replace in the original optimality conditions
    \begin{equation*}
        \left\{\begin{aligned}
                -\bar p &= \nabla F(\bar u),\\
                \bar u &\in \partial G^*(\bar p),
        \end{aligned}\right.
    \end{equation*}
    the subdifferential of $G^*$ with its Moreau--Yosida regularization $H_\gamma:=(\partial G^*)_\gamma$, which by \cref{lem:lebesgue:proximal} and \cref{ex:moreau} is given pointwise as  $[H_\gamma(p)](x) = h_\gamma(p(x))$ for
    \begin{equation*}
        h_\gamma:\R\to\R,\qquad t\mapsto
        \begin{cases}
            \frac1\gamma (t - 1) & \text{if }t>1,\\
            0                & \text{if }t \in[-1,1],\\
            \frac1\gamma (t + 1) & \text{if }t<-1.
        \end{cases}
    \end{equation*}
    This function is clearly piecewise differentiable, and \cref{thm:newton:clarke} yields that
    \begin{equation*}
        \partial_C h_\gamma(t) =
        \begin{cases}
            \left\{\tfrac1\gamma\right\} & \text{if }|t|>1,\\
            \{0\} & \text{if }|t|<1,\\
            \left[0,\tfrac1\gamma\right] & \text{if }|t|=1.
        \end{cases}
    \end{equation*}
    By \cref{thm:newton:clarke_ndiff,thm:newton:vector}, a possible Newton derivative is therefore given by
    \begin{equation*}
        D_N h_\gamma(t)h  = \tfrac1\gamma \1_{\{|t|\geq 1\}}h :=
        \begin{cases}
            \frac1\gamma h &\text{if } |t| \geq 1,\\
            0 &\text{if } |t| < 1.
        \end{cases}
    \end{equation*}
    The function $D_N h_\gamma$ is now uniformly bounded (by $\frac1\gamma$) and can be approximated by the obvious pointwise limit of continuous functions. By \cref{thm:newton:super}, the superposition operator $H_\gamma:L^p(\Omega)\to L^2(\Omega)$ is therefore Newton differentiable for all $p>2$, and a possible Newton derivative is given by
    \begin{equation*}
        [D_NH_\gamma(p)h](x) = \tfrac1\gamma \1_{\{|p(x)|\geq 1\}}h(x),
    \end{equation*}
    Assume now that $F$ is such that $\bar p = -\nabla F(\bar u) \in L^p(\Omega)$ for some $p>2$. (This is the case, e.g., if $F$ involves the solution operator to a partial differential equation.) Then the reduced regularized optimality condition
    \begin{equation*}
        u_\gamma - H_\gamma(-\nabla F(u_\gamma))=0
    \end{equation*}
    is Newton differentiable by \cref{thm:newton:frechet,thm:newton:chain},
    and we arrive at the semismooth Newton step
    \begin{equation*}
        \left(\Id + \tfrac1\gamma \1_{\{|\nabla F(u^k)|\geq 1\}}\nabla^2 F(u^k)\right)s^k  = -u^k + H_\gamma(-\nabla F(u^k)),
    \end{equation*}
    where in a slight abuse of notation, $\1_{\{|p|\geq 1\}}$ denotes the function $x\mapsto \1_{\{|p(x)|\geq 1\}}$.

    In practice, the radius of convergence for semismooth Newtons applied to such a Moreau--Yosida regularization shrinks with $\gamma\to0$. A possible way of dealing with this is the following \emph{continuation strategy}: Starting with a sufficiently large value of $\gamma$, solve a sequence of problems with decreasing $\gamma$ (e.g., $\gamma^k = \gamma^0/2^k$), taking the solution of the previous problem as the starting point for the next (for which it hopefully close enough to the solution to lie within the convergence region; otherwise the continuation has to be terminated or the reduction strategy for $\gamma$ adapted).
\end{example}

\chapter{Limiting subdifferentials}
\label{chap:limiting}

While the Clarke subdifferential is a suitable concept for nonsmooth but convex or nonconvex but smooth functionals, it has severe drawbacks for nonsmooth \emph{and} nonconvex functionals: As shown in \cref{lem:clarke:fermat2}, its Fermat principle cannot distinguish minimizers from maximizers. The reason is that the Clarke subdifferential is always convex, which is a direct consequence of its construction \eqref{eq:clarke:def} via polarity with respect to (generalized) directional derivatives. To obtain sharper results for such functionals, it is therefore necessary to construct \emph{nonconvex} subdifferentials directly via a \emph{dual} limiting process. On the other hand, deriving calculus rules for the previous subdifferentials crucially exploited their convexity by applying Hahn--Banach separation theorems, and calculus rules for nonconvex subdifferentials are thus significantly more difficult to obtain.
As in \cref{chap:clarke}, we will assume throughout this chapter that $X$ is a Banach space unless stated otherwise.

\section{Bouligand subdifferentials}

The first definition is motivated by \cref{thm:clarke:gradient}: We \emph{define} a subdifferential as a suitable limit of classical derivatives (without convexification). For $F:X\to \Rbar$, we first define the \emph{set of Gâteaux points}
\begin{equation*}
    G_F := \setof{x\in X}{F \text{ is Gâteaux differentiable at }x} \subset \dom F
\end{equation*}
and then the \emph{Bouligand subdifferential} of $F$ at $x$ as
\begin{equation}\label{eq:limiting:bouligand}
    \partial_B F(x) :=\setof{x^*\in X^*}{DF(x_n)\weakto^* x^* \text{ for some } G_F\ni x_n\to x}.
\end{equation}
For $F:\R^N\to \R$ locally Lipschitz, it follows from \cref{thm:clarke:gradient} that $\partial_C F(x) = \co \partial_B F(x)$.
However, unless $X$ is finite-dimensional, it is not clear a priori that the Bouligand subdifferential is nonempty even for $x\in \dom F$.\footnote{Although in special cases it is possible to give a full characterization in Hilbert spaces; see, e.g., \cite{CCMW:2017}.} Furthermore, the subdifferential does not admit a satisfactory calculus; not even a Fermat principle holds.

\begin{example}\label{ex:limiting:bouligand}
    Let $F:\R\to\R$, $F(x) := |x|$. Then $F$ is differentiable at every $x\neq 0$ with $F'(x) = \sign(x)$. Correspondingly,
    \begin{equation*}
        0\notin \{-1,1\} = \partial_B F(0).
    \end{equation*}
\end{example}

To make this approach work therefore requires a more delicate limiting process. The remainder of this chapter is devoted to one such approach, where we only give an overview and state important results following \cite{Mordukhovich:2006}. For an alternative, more axiomatic, approach to generalized derivatives of nonconvex functionals, we refer to \cite{Penot:2013,ioffe2017variational}.

\section{Fréchet subdifferentials}
\label{sec:limiting:frechet}

We begin with the following limiting construction, which combines the characterizations of both the Fréchet derivative and the convex subdifferential. Let $X$ be a Banach space and $F:X\to\Rbar$. The \emph{Fréchet subdifferential} (or \emph{regular subdifferential} or \emph{presubdifferential}) of $F$ at $x$ is then defined as\footnote{The equivalence of \eqref{eq:limiting:frechet} with the usual definition based on corresponding normal cones follows from, e.g., \cite[Theorem 1.86]{Mordukhovich:2006}.}
\begin{equation}\label{eq:limiting:frechet}
    \partial_F F(x) := \setof{x^*\in X^*}{\liminf_{y\to x} \frac{F(y)-F(x)-\dual{x^*,y-x}_X}{\norm{y-x}_X}\geq 0}.
\end{equation}
Note how this \enquote{localizes} the definition of the convex subdifferential around the point of interest: the numerator does not need to be nonnegative for all $y$; it suffices if this holds for any $y$ sufficiently close to $x$. By a similar argument as for \cref{thm:convex:fermat}, we thus obtain a Fermat principle for \emph{local} minimizers.
\begin{theorem}\label{thm:limiting:frechet:fermat}
    Let $F:X\to\Rbar$ be proper and $\bar x \in \dom F$ be a local minimizer. Then $0\in \partial_F F(\bar x)$.
\end{theorem}
\begin{proof}
    Let $\bar x\in \dom F$ be a local minimizer. Then there exists an $\eps>0$ such that $F(\bar x)\leq F(y)$ for all $y\in O_\eps(\bar x)$, which is equivalent to
    \begin{equation*}
        \frac{F(y)-F(\bar x)-\dual{0,y-\bar x}_X}{\norm{y-\bar x}_X}\geq 0 \quad\text{for all }y\in O_\eps(\bar x)\setminus\{\bar x\}.
    \end{equation*}
    Now for any strongly convergent sequence $y_n\to \bar x$, we have that $y_n\in O_\eps(\bar x)$ for $n$ large enough. Taking the $\liminf$ in the above inequality thus yields $0\in \partial_F(\bar x)$.
\end{proof}

For convex functionals, of course, the numerator is always nonnegative by definition, and the Fréchet subdifferential reduces to the convex subdifferential.
\begin{theorem}\label{thm:limiting:frechet:convex}
    Let $F:X\to\Rbar$ be proper, convex, and lower semicontinuous and $x\in \dom F$. Then $\partial_F F(x) = \partial F(x)$.
\end{theorem}
\begin{proof}
    By definition of the convex subdifferential, any $x^*\in\partial F(x)$ satisfies
    \begin{equation*}
        F(y)-F(x)-\dual{x^*,y-x}_X \geq 0 \quad\text{for all } y\in X.
    \end{equation*}
    Dividing by $\norm{x-y}_X>0$ for $y\neq x$ and taking the $\liminf$ as $y\to x$ thus yields $x^*\in \partial_F F(x)$.

    Conversely, let $x^*\in \partial_F F(x)$ and $h\in X\setminus\{0\}$ be arbitrary. Then for any $\delta >0$, there exists an $\eps >0$ such that
    \begin{equation*}
        \frac{F(x+th)-F(x)-\dual{x^*,th}_X}{t\norm{h}_X} \geq -\delta \quad\text{for all }t\in (0,\eps).
    \end{equation*}
    Multiplying by $\norm{h}_X>0$ and letting $t\to 0$, we obtain from \cref{lem:convex:direct} that
    \begin{equation}
        \dual{x^*,h}_X \leq \frac{F(x+th)-F(x)}{t} + \delta  \to F'(x; h) + \delta.
    \end{equation}
    Since $\delta>0$ was arbitrary, this implies by \cref{lem:convex:equiv} that $x^*\in \partial F(x)$.
\end{proof}

Similarly, for Fréchet differentiable functionals, the limit in \eqref{eq:limiting:frechet} is zero for all sequences.
\begin{theorem}\label{lem:limiting:frechet:frechet}
    Let $F:X\to\R$ be Fréchet differentiable at $x\in X$. Then $\partial_F F(x) = \{F'(x)\}$.
\end{theorem}
\begin{proof}
    The definition of the Fréchet derivative immediately yields
    \begin{equation*}
        \lim_{y\to x} \frac{F(y)-F(x)-\dual{F'(x),y-x}_X}{\norm{x-y}_X} = \lim_{\norm{h}_X\to 0} \frac{F(x+h)-F(x)-F'(x)h}{\norm{h}_X} = 0
    \end{equation*}
    and hence $F'(x)\in \partial_F F(x)$.

    Conversely, let $x^*\in \partial_F F(x)$ and let again $h\in X\setminus\{0\}$ be arbitrary. As in the proof of \cref{thm:limiting:frechet:convex}, we then obtain that
    \begin{equation}
        \dual{x^*,h}_X \leq F'(x; h) = \dual{F'(x),h}_X.
    \end{equation}
    Applying the same argument to $-h$ then yields $\dual{x^*,h}_X = \dual{F'(x),h}_X$ for all $h\in X$, i.e., $x^* = F'(x)$.
\end{proof}

For nonsmooth and nonconvex functionals, the Fréchet subdifferential can be strictly smaller than the Clarke subdifferential.
\begin{example}\label{ex:limiting:frechet}
    Consider $F:\R\to\R$, $F(x) := -|x|$. For any $x\neq 0$, it follows from \cref{lem:limiting:frechet:frechet} that $\partial_F F(x) = \{-\sign x\}$. But for $x=0$ and arbitrary $x^*\in \R$, we have that
    \begin{equation*}
        \liminf_{y\to 0} \frac{F(y)-F(0)-\dual{x^*,y-0}}{|y-0|} = \liminf_{y\to 0} (-1-x^*\cdot \sign(y)) = -1-|x^*| < 0
    \end{equation*}
    and hence that
    \begin{equation*}
        \partial_F F(0) = \emptyset \subsetneq [-1,1] = \partial_C F(0).
    \end{equation*}
\end{example}
Note that $0\in \dom F$ in this example. Although the Fréchet subdifferential does not pick up a maximizer in contrast to the Clarke subdifferential, the fact that $\partial_F F(x)$ can be empty even for $x\in \dom F$ is a problem when trying to derive calculus rules that hold with equality. In fact, as \cref{ex:limiting:frechet} shows, the set-valued mapping $x\mapsto \partial_F F(x)$ fails to be closed, which is also not desirable. This leads to the next and final definition.

\section{Mordukhovich subdifferentials}
\label{sec:limiting:mordukhovich}

Let $X$ be a reflexive Banach space and $F:X\to \Rbar$. The \emph{Mordukhovich subdifferential} (or \emph{basic subdifferential} or \emph{limiting subdifferential}) of $F$ at $x\in \dom F$ is then defined as the strong-to-weak$^*$ closure of $\partial_F F(x)$, i.e.,%
\footnote{The equivalence of this definition with the original geometric definition -- which holds in \emph{reflexive} Banach spaces -- follows from \cite[Theorem 2.34]{Mordukhovich:2006}.}
\begin{equation}\label{eq:limiting:mordukhovich}
    \begin{aligned}[t]
        \partial_M F(x) &:= \text{w-$*$-}\limsup_{y\to x}\partial_F F(y) \\
        &= \setof{x^*\in X^*}{x_n^* \weakto^* x^* \text{ for } x_n^*\in \partial_F F(x_n) \text{ with }x_n\to x},
    \end{aligned}
\end{equation}
which can be seen as a generalization of the definition \eqref{eq:limiting:bouligand} of the Bouligand subdifferential. Note that in contrast to \eqref{eq:limiting:bouligand}, this definition includes the constant sequence $x_n^* \equiv x^*$ even at nondifferentiable points, which makes this a more useful concept in general. This also implies that $\partial_F F(x) \subset \partial_M F(x)$ for any $F$, and \cref{thm:limiting:frechet:fermat} immediately yields a Fermat principle.
\begin{cor}\label{thm:limiting:mordukhovich}
    Let $F:X\to\Rbar$ be proper and $\bar x \in \dom F$ be a local minimizer. Then $0\in \partial_M F(\bar x)$.
\end{cor}
As for the Fréchet subdifferential, maximizers do not satisfy the Fermat principle.
\begin{example}
    Consider again $F:\R\to\R$, $F(x) := -|x|$. Using \cref{ex:limiting:frechet}, we directly obtain from \eqref{eq:limiting:mordukhovich} that $\partial_M F(0) = \{-1,1\} = \partial_B F(0)$.
\end{example}

Since the convex subdifferential is strong-to-weak$^*$ closed, the Mordukhovich subdifferential reduces to the convex subdifferential as well.
\begin{theorem}
    Let $X$ be a reflexive Banach space, $F:X\to\Rbar$ be proper, convex, and lower semicontinuous, and $x\in \dom F$. Then $\partial_M F(x) = \partial F(x)$.
\end{theorem}
\begin{proof}
    From \cref{thm:limiting:frechet:convex}, it follows that $\partial F(x) = \partial_F F(x) \subset \partial_M F(x)$. Let therefore $x^*\in \partial_M F(x)$ be arbitrary. Then by definition there exists a sequence $\{x_n^*\}_{n\in\N}\subset X^*$ with $x_n^*\weakto^* x^*$ and $x_n^*\in \partial_F F(x_n)=\partial F(x_n)$ for $x_n\to x$. As in the proof of \cref{cor:monoton:subdiff}, it then follows that $x^*\in \partial F(x)$ as well.
\end{proof}
A similar result holds for \emph{continuously} differentiable functionals.
\begin{theorem}
    Let $X$ be a reflexive Banach space and $F:X\to\Rbar$ be continuously differentiable at  $x\in X$. Then $\partial_M F(x) = \{F'(x)\}$.
\end{theorem}
\begin{proof}
    From \cref{thm:limiting:frechet:convex}, it follows that $\{F'(x)\} = \partial_F F(x) \subset \partial_M F(x)$. Let therefore $x^*\in \partial_M F(x)$ be arbitrary. Then by definition there exists a sequence $\{x_n^*\}_{n\in\N}\subset X^*$ with $x_n^*\weakto^* x^*$ and $x_n^*\in \partial_F F(x_n) =\{F'(x_n)\}$ for $x_n\to x$. The continuity of $F'$ then immediately implies that $F'(x_n)\to F(x)$, and since strong limits are also weak-$*$ limits, we obtain $x^*=F'(x)$.
\end{proof}

We also have the following relation to Clarke subdifferentials, which should be compared to \cref{thm:clarke:gradient}.

\begin{theorem}[\protect{\cite[Theorem 3.57]{Mordukhovich:2006}}]\label{thm:limiting:clarke}
    Let $X$ be a reflexive Banach space and $F:X\to\R$ be locally Lipschitz continuous around $x\in X$. Then $\partial_C F(x) = \mathrm{cl}^* \co \partial_M F(x)$, where $\mathrm{cl}^*A $ stands for the weak-$*$ closure of the set $A\subset X^*$.%
    \footnote{Of course, in reflexive Banach spaces the weak-$*$ closure coincides with the weak closure. The statement holds more general in so-called \emph{Asplund spaces} which include some nonreflexive Banach spaces.}
\end{theorem}
The following example illustrates that the Mordukhovich subdifferential can be nonconvex.
\begin{example}
    Let $F:\R^2\to\R$, $F(x_1,x_2) = |x_1| - |x_2|$. Since $F$ is continuously differentiable for any $(x_1,x_2)$ where $x_1,x_2\neq 0$ with
    \begin{equation*}
        \nabla F(x_1,x_2)\in \{(1,1),(-1,1),(1,-1),(-1,-1)\},
    \end{equation*}
    we obtain from \eqref{eq:limiting:frechet} that
    \begin{equation*}
        \partial_F F(x_1,x_2) =
        \begin{cases}
            \{(1,-1)\} & \text{if } x_1>0,x_2>0,\\
            \{(-1,-1)\} & \text{if } x_1<0,x_2>0,\\
            \{(-1,1)\} & \text{if } x_1<0,x_2<0,\\
            \{(1,1)\} & \text{if } x_1>0,x_2<0,\\
            \setof{(t,-1)}{t\in [-1,1]} & \text{if } x_1=0,x_2>0,\\
            \setof{(t,1)}{t\in [-1,1]} & \text{if } x_1=0,x_2<0,\\
            \emptyset & \text{if } x_2=0.
        \end{cases}
    \end{equation*}
    In particular, $\partial_F F(0,0) = \emptyset$. However, from \eqref{eq:limiting:mordukhovich} it follows that
    \begin{equation*}
        \partial_M F(0,0) = \setof{(t,-1)}{t\in [-1,1]}\cup \setof{(t,1)}{t\in [-1,1]}.
    \end{equation*}
    In particular, $0\notin \partial_M F(0,0)$.
    On the other hand, \cref{thm:limiting:clarke} then yields that
    \begin{equation}
        \partial_C F(0,0) = \setof{(t,s)}{t,s\in[-1,1]} = [-1,1]^2
    \end{equation}
    and hence $0\in \partial_C F(0,0)$.
    (Note that $F$ attains neither a minimum nor a maximum on $\R^2$, while $(0,0)$ is a nonsmooth saddle-point.)
\end{example}

In contrast to the Bouligand subdifferential, the Mordukhovich subdifferential admits a satisfying calculus, although the assumptions are understandably more restrictive than in the convex setting. The first rule follows as always straight from the definition.

\begin{theorem}\label{thm:limiting:scalar}
    Let $X$ be a reflexive Banach space and $F:X\to\Rbar$. Then for any $\lambda\geq 0$ and $x\in X$,
    \begin{equation*}
        \partial_M (\lambda F)(x) = \lambda \partial_M F(x).
    \end{equation*}
\end{theorem}

Full calculus in infinite-dimensional spaces holds only for a rather small class of mappings.

\begin{theorem}[{\protect\cite[Proposition 1.107]{Mordukhovich:2006}}]\label{thm:limiting:sum}
    Let $X$ be a reflexive Banach space, $F:X\to\R$ be continuously differentiable, and $G:X\to\Rbar$ be arbitrary. Then for any $x\in \dom G$,
    \begin{equation*}
        \partial_M (F+G)(x) = \{F'(x)\} + \partial_M G(x).
    \end{equation*}
\end{theorem}
While the previous two theorems also hold for the Fréchet subdifferential (the latter even for merely Fréchet differentiable $F$), the following chain rule is only valid for the Mordukhovich subdifferential. Compared to \cref{thm:clarke:chain}, it also allows for the outer functional to be extended-real valued.
\begin{theorem}[{\protect\cite[Proposition 1.112]{Mordukhovich:2006}}]\label{thm:limiting:chain}
    Let $X$ be a reflexive Banach space, $F:X\to Y$ be continuously differentiable, and $G:Y\to \Rbar$ be arbitrary. Then for any $x\in X$ with $F(x)\in \dom G$ and $F'(x):X\to Y$ surjective,
    \begin{equation*}
        \partial_M (G\circ F)(x) = F'(x)^*\partial_M G(F(x)).
    \end{equation*}
\end{theorem}
More general calculus rules require $X$ to be a reflexive Banach\footnote{or Asplund} space as well as additional, nontrivial, assumptions on $F$ and $G$; see, e.g., \cite[Theorem 3.36 and Theorem 3.41]{Mordukhovich:2006}.

\backmatter

\printbibliography

\end{document}